\documentclass[lettersize,onecolumn]{IEEEtran}
\usepackage{amsmath,amsfonts}
\usepackage{algorithmic}
\usepackage{algorithm}
\usepackage{array}
\usepackage[caption=false,font=normalsize,labelfont=sf,textfont=sf]{subfig}
\usepackage{textcomp}
\usepackage{stfloats}
\usepackage{url}
\usepackage{verbatim}
\usepackage{graphicx}
\usepackage{cite}
\RequirePackage{amsthm,amsmath,amsfonts,amssymb}
\RequirePackage[colorlinks,citecolor=blue,urlcolor=blue]{hyperref}
\RequirePackage{bm}
\usepackage{xcolor}
\usepackage{subfig}
\usepackage{booktabs}
\usepackage{multirow}

\newtheorem{theorem}{Theorem}
\newtheorem{lemma}{Lemma}
\newtheorem{proposition}{Proposition}
\newtheorem{corollary}[lemma]{Corollary}

\newtheorem{condition}{Condition}
\newtheorem{remark}{Remark}

\newcommand{\indep}{\mathrel{\perp\!\!\!\!\perp}}
\newcommand{\op}{\textnormal{op}}
\newcommand{\var}{\operatorname{Var}}
\newcommand{\cov}{\operatorname{Cov}}
\newcommand{\vectorize}{\operatorname{Vec}}
\newcommand{\circulant}{\operatorname{Circ}}
\newcommand{\F}{\operatorname{F}}
\newcommand{\exptext}{\operatorname{exp}}

\begin{document}

\title{Simultaneous Inference for Covariance\\ and Precision Matrices of \\Long-Range Dependent Time Series}

\author{Percy S. Zhai, Mladen Kolar, Wei Biao Wu
\thanks{P. S. Zhai is with Booth School of Business, The University of Chicago, Chicago, IL 60637, USA.}
\thanks{M. Kolar is with Marshall School of Business, University of Southern California, Los Angeles, CA 90089, USA, and Department of Statistics and Data Science, Mohamed bin Zayed University of Artificial Intelligence, Abu Dhabi, United Arab Emirates.}
\thanks{W. B. Wu is with Department of Statistics, The University of Chicago, Chicago, IL 60637, USA. Wei Biao Wu's research is partially supported by the NSF (Grant NSF/DMS-2311249).}
}

\markboth{Accepted for Publication in IEEE Transactions on Information Theory}%
{Zhai, Kolar, Wu: Inference for Covariance and Precision Matrices of High-Dimensional Long-Range Dependent Time Series}


\maketitle

\begin{abstract}
For time series with long-range temporal dependence, inference for covariance and precision matrices is non-trivial.
We propose a Berry-Esseen type Gaussian approximation result that gives a finite-sample bound for the Kolmogorov distance between the infinity norms of the estimation error of sample covariance matrix and the corresponding Gaussian approximation.
The method utilizes martingale and $m$-dependent approximation and relies on constructing triadic blocks.
We also establish a bootstrapping result with block sampling method, which preserves validity despite strong temporal dependence.
Our results on covariance allow ultra-high-dimensional settings where the dimension of time series can grow sub-exponentially with sample size.
Similar results can be built for precision matrix under low-dimensional settings.
No assumption is required on the structure of covariance and precision matrices.
\end{abstract}

\begin{IEEEkeywords}
Long-memory time series, Gaussian approximation, block bootstrap, high-dimensional statistics, spatio-temporal processes.
\end{IEEEkeywords}

\section{Introduction}
\IEEEPARstart{L}{et} $\{X_i\}_{i=1}^n$ be a stationary process in $\mathbb{R}^p$ where $p$ may be much larger than $n$.
Assume that $X_i$ has mean 0 and covariance matrix $\Sigma = \mathbb{E}[X_i X_i^T] \in\mathbb{R}^{p\times p}$.
In this paper, we assume that $X_i$ is a Gaussian linear process
\begin{equation}\label{LinearProcess}
    X_i = \sum_{t=0}^\infty A_t \epsilon_{i-t}.
\end{equation}
Here, $\epsilon_t, t\in\mathbb{Z}$ are i.i.d.~Gaussian random vectors on $\mathbb{R}^d$ with mean zero and covariance matrix $\mathbb{E}[\epsilon_t\epsilon_t^\top]=I_d$.
The coefficients $A_t\in\mathbb{R}^{p\times d}$ decay with $t$, which is formally characterized in Condition \ref{cond.A}.
The decay rate of $A_t$ represents the extent of temporal dependence of the process.
Slower decay generally corresponds to longer memory.
Without loss of generality, we assume that the innovation term $\epsilon_t$ is Gaussian; the extension to sub-Gaussian innovations is straightforward.

A common estimator of $\Sigma$ is the sample covariance matrix $\hat{\Sigma}_n = n^{-1}\sum_{i=1}^n X_i X_i^\top$.
We are interested in the simultaneous inference problem for $\hat{\Sigma}_n$ even when the process \eqref{LinearProcess} has strong temporal dependence.
The purpose of this paper is to establish a finite-sample Gaussian approximation guarantee for
the maximum deviation $|\hat{\Sigma}_n - \Sigma|_\infty$.
To be precise, define the $p^2$-dimensional process $\mathcal{X}_i = \vectorize(X_i X_i^T-\Sigma)$,
and the standardized partial sum $S_n = n^{-1/2}\sum_{i=1}^n \mathcal{X}_i$.
Note that $S_n = n^{1/2}\vectorize(\hat{\Sigma}_n - \Sigma)$.
Let $Z$ be a $p^2$-dimensional Gaussian random vector with mean zero and the same covariance structure as $S_n$.
Since this paper focuses on the high-dimensional simultaneous inference setting, an appropriate metric for evaluating the validity of the Gaussian approximation is the Kolmogorov distance between $|S_n|_\infty$ and $|Z|_\infty$, defined as
\begin{equation}\label{EQ_primary_objective}
    \rho(n) = \sup_{u\in\mathbb{R}} \left|\mathbb{P}\left(n^{1/2}|\hat{\Sigma}_n-\Sigma|_\infty\geq u\right) - \mathbb{P}\left(|Z|_\infty\geq u\right)\right|.
\end{equation}
See, for example, \cite{chernozhukov2013gaussian, chernozhukov2015comparison, chernozhukov2017central, zhang2017gaussian}.
The primary objective of this work is to derive a non-asymptotic upper bound on $\rho(n)$. 
We show that this bound converges to zero as $n \to \infty$ under mild conditions, 
even when the process \eqref{LinearProcess} exhibits strong temporal dependence and the dimension $p$ grows exponentially with $n$.

Simultaneous inference on the sample covariance $\hat{\Sigma}_n$ has been well studied. For Gaussian populations with diagonal covariance, \cite{jiang2004asymptotic} found that the largest off-diagonal entry of sample correlation follows an asymptotic Gumbel distribution. Refined results are given by \cite{li2006some, zhou2007asymptotic, liu2008asymptotic, li2010necessary}.
\cite{cai2011limiting} extended this to banded covariance matrices.
\cite{xiao2013asymptotic} developed Gumbel convergence for non-Gaussian random vectors with general dependency, allowing tests for stationary, banded, or Toeplitz covariance matrices.
For Gaussian populations with equi-correlation, \cite{fan2019largest} found asymptotic Gaussianity of the largest off-diagonal sample correlation.
\cite{chen2018gaussian} proposed a two-step Gaussian approximation for U-statistics of order 2, enabling simultaneous inference on sample covariance for sub-Gaussian populations without structural assumptions on $\Sigma$.
These works require $X_1,\cdots,X_n$ to be i.i.d.~random vectors.
The extension to temporally dependent $X_i$ was achieved by \cite{zhang2017gaussian}. However, the results were limited to weakly dependent time series, with an admissible dimension rate of $p\left(\log(pn)\right)^{3q/2} = o(n^{q/2-1})$, covering only polynomially high dimensions.
\cite{xiao2011asymptotic} developed asymptotic inference for autocovariances of stationary processes using maximum and quadratic deviation statistics.
Under long-range temporal dependence, \cite{shu2019estimation} studied the estimation of large covariance and precision matrices. 
The estimation of long-run covariance and precision matrices was further developed by \cite{baek2023local}. 
However, simultaneous inference for the covariance matrix under both long-memory and high-dimensional settings remains an open problem.

A related but distinct line of work studies inference for high-dimensional regression models.
\cite{van2014asymptotically} and \cite{javanmard2014confidence} developed inference for low-dimensional components in high-dimensional regression models.
For temporally dependent settings, \cite{zhang2023statistical} studied inference for sparse high-dimensional VAR models with nonstationary innovations, while \cite{dai2025statistical} considered high-dimensional binary time series.
These works study regression coefficients rather than covariance or precision matrices, but they illustrate the broader development of high-dimensional inference under temporal dependence.

Our proposed simultaneous inference method satisfies the following three criteria:
First, it achieves exponentially high dimension, i.e., $\log(p)=o(n^\alpha)$ for some constant $\alpha$. Second, it is {\it free of structural assumptions} on the true covariance $\Sigma$. And third, it allows for strong temporal dependency (i.e., long-memory).
Our first main result, Theorem \ref{thm:GA.cov}, which provides a finite-sample upper bound on the Kolmogorov distance \eqref{EQ_primary_objective}, is the first theoretical guarantee satisfying these three criteria simultaneously that the authors are aware of.
We emphasize the importance of avoiding structural assumptions whenever possible. 
In the existing literature, estimation methods for large covariance and precision matrices often rely heavily on sparsity assumptions, 
which are typically difficult to verify empirically. 
Although the sample covariance matrix $\hat{\Sigma}_n$ may not provide an accurate estimate of $\Sigma$, 
simultaneous inference based on $\hat{\Sigma}_n$ can still reveal key features of the true underlying covariance structure. 
Our proposed method not only enables testing of structural assumptions and identification of important graphical connections, 
but also serves as a foundational tool for a broad class of accurate estimation procedures by providing a mechanism to validate their underlying conditions.

The covariance structure of the Gaussian approximation $Z$ in \eqref{EQ_primary_objective} is typically unknown in practice.
To achieve reliable empirical results, bootstrap methods are necessary.
Two popular techniques are the multiplier bootstrap and empirical bootstrap methods \cite{chernozhukov2013gaussian, chernozhukov2017central}.
However, these methods become invalid when dealing with time-dependent data.
\cite{zhang2017gaussian} introduced a method for estimating long-run covariance matrices as a remedy.
Another solution is the blockwise multiplier bootstrap method proposed by \cite{zhang2014bootstrapping}.
Both approaches require the time series to exhibit short memory.
To overcome this limitation, we employ a block resampling method suggested by \cite{zhang2013block}.
This technique, unlike many others, remains valid even in long-memory scenarios.
For implementation, we first choose a block length and slide an overlapping window through the observation.
From each window, we compute the quantity that drives the estimation error, so each window produces one bootstrap replicate. We then use the collection of these block replicates to form an empirical distribution, which serves as a fully data-driven approximation to the distribution of the estimation error.
Our result provides a finite-sample guarantee on the validity of block sampling method, which is characterized by \eqref{eq:rho.bootstrap.cov}.
The detailed result is given in Theorem \ref{thm:bootstrap.cov}.


The conditions for Gaussian approximation and bootstrap for sums of independent random vectors were identified by \cite{chernozhukov2013gaussian}, using a Slepian-Stein method and an anti-concentration inequality \cite{chernozhukov2015comparison}.
\cite{chernozhukov2017central} refined the result with Nazarov's Inequality and extended it to sparsely convex sets.
\cite{chernozhukov2019improved} enhanced the Gaussian approximation error bound and dimension rate using an iterative randomized Lindeberg method. For non-degenerate cases, \cite{fang2021high} achieved a sharper rate with Stein's method, proving near-optimality for log-concave $X_i$.
\cite{chernozhukov2020nearly} derived a near-optimal rate of Gaussian approximation for bounded $X_i$ in the non-degenerate case.
Developments in U-statistics are in \cite{song2019approximating}, \cite{chen2019randomized}, and \cite{chen2020jackknife}, requiring $X_1,\cdots,X_n$ to be independent random vectors.
\cite{koike2019gaussian} studied Gaussian approximation for smooth Wiener functionals with temporal dependency from nonsynchronicity.
\cite{zhang2014bootstrapping} developed Gaussian approximation for weakly dependent processes under a geometric moment contraction condition.
\cite{chernozhukov2019inference} extended this to dependent data with general mixing conditions.
\cite{zhang2017gaussian} addressed stationary time series using functional dependence measure.
\cite{chernozhukov2021lasso} applied these results to Lasso tuning parameter selection, considering strong temporal dependency.

The proof of the Gaussian approximation result relies on a three-step framework.
First, we derive tail bounds for the martingale approximation and $m$-dependence truncation using concentration inequalities.
Second, we construct conditionally independent temporal blocks using the triadic method of \cite{berkes2014komlos}, allowing extension to a strong temporal dependence.
Third, we apply Gaussian approximation results for i.i.d.~random vectors from \cite{chernozhukov2017central} twice: on conditionally independent blocks and their unconditional Gaussian approximations.
The ultra-high-dimensional property of \cite{chernozhukov2017central} is inherited, making the dimensionality bound $\log(p) = o(n^\alpha)$, up to a logarithmic factor.
We use an anti-concentration inequality from \cite{chernozhukov2015comparison} for Gaussian maxima to aggregate the results.

Apart from the Gaussian approximation result and bootstrap validity result for the covariance matrix, we develop a similar theoretical guarantee for the precision matrix $\Omega$ (the inverse of covariance matrix).
We focus on the low-dimensional setting, i.e. the regime where $p<n$, such that the sample covariance matrix is invertible, yielding a natural estimate $\hat{\Omega}_n = \hat{\Sigma}_n^{-1}$.
We show the validity of the Gaussian approximation to $n^{1/2}\vectorize(\hat{\Omega}_n - \Omega)$ by a $p^2$-dimensional Gaussian random vector $Z^\Omega$ with the same covariance structure, and provide a finite-sample bound on the Kolmogorov distance; see \eqref{eq:kolm.distance.Omega}.
Meanwhile, without knowing the specific covariance structure of $Z^\Omega$, we can approximate the distribution of the precision estimation error by block bootstrap.
We shall provide a finite-sample guarantee of its validity; see \eqref{eq:kolm.distance.Omega.boot}.
These results will be elaborated in Theorems \ref{thm:GA.prec} and \ref{thm:bootstrap.prec}, respectively.
For Gaussian processes, specifically, the conditional independence graph (CIG) is characterized by the precision matrix.
These results allow for simultaneous tests on the graph structure.
In the high-dimensional regime $p\geq n$, the precision matrix $\Omega$ can be estimated via graphical Lasso \cite{meinshausen2006high} or CLIME \cite{cai2011constrained}.
These methods require structural assumptions like sparsity on the true precision matrix $\Omega$.
While the asymptotic normality of these regularized estimators for time series is a very important problem, our goal in this paper is to impose as few structural assumptions as possible.
Therefore, we focus on the natural estimator $\hat\Omega_n = \hat\Sigma_n^{-1}$ in the $p<n$ regime, and leave the study of high-dimensional estimators for future study.

The theoretical results discussed in this paper offer new perspectives on a wide range of statistical topics. A notable example is the examination of covariance (or precision) structures in spatio-temporal processes, which often exhibit strong temporal dependence \cite{han2020test}. Applications in econometrics are also addressed, as seen in \cite{belloni2018high}. Furthermore, exploring extensions in the domain of spatio-temporal processes could be intriguing \cite{stein1999interpolation, wu2004simulating}.

The asymptotic theory for maximum deviations of sample covariance has practical applications. Using EEG or fMRI data as multivariate time series, researchers determine functional connectivity in brain graphical models through highest correlations between time series \cite{deco2013resting, hipp2012large, hutchison2013dynamic, larson2013adding, zhao2024high}. These issues are also relevant to financial topics. A significant characteristic of financial data is volatility clustering: large returns often follow large returns, and small returns follow small returns \cite{mandelbrot1967variation}. This shows strong temporal dependency in financial time series \cite{ding1993long}.

\subsection{Organization of the Paper}

The organization of this paper is as follows.
Section \ref{sec:overview} gives an overview of the long-range dependence settings and identifies two key technical conditions.
Section \ref{sec:GA.cov} establishes a finite-sample result to the Gaussian approximation problem for covariance matrices.
A block bootstrap method for covariance matrices is introduced in Section \ref{sec:bootstrap.cov}, along with a finite-sample result on its validity.
In the low-dimensional regime, we establish a Gaussian approximation result on sample precision matrices in Section \ref{sec:GA.prec}, and establish a bootstrap method for precision in Section \ref{sec:bootstrap.prec}.
All the main theorems are empirically examined with the simulation studies given in Section \ref{sec:simulation}.
Finally, Section \ref{sec:real.data} provides a real data analysis of fMRI data that studies the brain connectome structure of patients with autism.

\subsection{Notation}

We represent the matrix 2-norm of a vector or a matrix using $|\cdot|$.
The matrix 1-norm is denoted by $|\cdot|_1$, and the operator norm by $|\cdot|_\op$.
In this paper, the {\it infinity norm} of a matrix refers to its maximum absolute entry, denoted by $|\cdot|_\infty$.
The element located at the intersection of the $j$-th row and $l$-th column in a matrix is represented as $(\cdot)_{jl}$. The long column vector generated by concatenating columns of a matrix is denoted as $\vectorize(\cdot)$. The element of this extended column vector, $(\cdot)_{(s,t)}$, represents the element located at the $s$-th row and $t$-th column in the associated matrix. We let $[n]$ denote the set of integers $\{1,2,\cdots,n\}$.
We write $f(n) = o(g(n))$ if $f(n) / g(n) \rightarrow 0$, and $f(n) = \tilde{o}(g(n))$ if there exists $\alpha$ such that $f(n) = o(g(n)\log^\alpha(n))$.
Unless otherwise stated, all the constants in this paper may vary in value depending on the context.
We also define $\| \cdot\|_k$ as $\|X\|_k = (\mathbb{E}|X|^k)^{1/k}$.


\section{Overview}\label{sec:overview}

The temporal dependence of a stationary time series is characterized by its autocovariance matrices $\Gamma_k = \cov(X_t, X_{t-k})$, which typically decay with lag $k$. For a Gaussian linear process \eqref{LinearProcess}, $\Gamma_k = \sum_{t=0}^\infty A_t A_{t+k}^\top$. A slower decay of $\Gamma_k$ with $k$ indicates stronger temporal dependence. If $\sum_{k=0}^\infty \Gamma_k$ (the long-run covariance) diverges, we say that the time series exhibits {\it long-memory} or {\it long-range dependence}; see \cite{beran1994statistics}. All main results in this paper apply to long-range dependent time series.

\subsection{Technical Conditions}

Before introducing the main results, we impose two technical assumptions. The first specifies the decay rate for the coefficients in the linear process \eqref{LinearProcess}, $A_t = \{(A_{t})_{jl}\}_{1\leq j\leq p, 1\leq l\leq d}$, with respect to lag $t$.
\begin{condition}\label{cond.A}
For some $\beta>3/4$ and a constant $C_0>0$, the coefficient matrices satisfy
\begin{equation*}
    \max_{1\leq j\leq p}|(A_{t})_{j\cdot}| = \max_{1\leq j\leq p} \left(\sum_{l=1}^d (A_{t})^2_{jl}\right)^{1/2} \leq C_0(1\vee t)^{-\beta}.
\end{equation*}
\end{condition}
The parameter $\beta$ determines the decay rate of $A_t$.
When $\beta > 1$, the process exhibits short-range dependence.
Conversely, for $\beta < 1$, the autocovariances $\Gamma_k$ are not summable, which indicates long-range dependence.
It is important to note that the Gaussian approximation holds only if $\beta > 3/4$. When $1/2 < \beta \leq 3/4$, the asymptotic distribution of the sample covariance functions transitions from Gaussian to the Rosenblatt distribution, which is characterized by a lighter left tail and a heavier right tail; we refer to \cite{cai2011limiting,jiang2004asymptotic} for more details.

To motivate the second assumption, first consider $X_t$ as a unidimensional process.
Its lag-$k$ autocovariance is denoted by $\gamma_k$ and its sample version by $\hat{\gamma}_k$.
Bartlett's Formula \cite{bartlett1978introduction} states that the covariance of $\sqrt{n}\hat{\gamma}_k$ and $\sqrt{n}\hat{\gamma}_l$ converges to
\begin{equation*}
\sum_{i=-\infty}^\infty (\gamma_{i+l}\gamma_{i+k} + \gamma_{i+l}\gamma_{i-k}) + A_\kappa(k,l)
\end{equation*}
as $n\rightarrow\infty$. Here, the last term $A_\kappa(k,l)$ depends on the fourth order cumulants of the process $X_t$ and $A_\kappa(k,l)=0$ for Gaussian processes.
Specifically, by setting $k=l=0$, the sample covariance $\sqrt{n}\hat{\sigma}_n$ of a unidimensional Gaussian process has an asymptotic variance of $2\sum_{i=-\infty}^\infty \gamma_i^2$.
The following second assumption is a generalization to multidimensional case.
It necessitates that the asymptotic variance of all entries of $S_n$ are lower bounded, so that the limiting Gaussian distribution does not collapse to a point mass.
\begin{condition}\label{cond.G}
There exists a positive constant $c_0$ so that for all $1\leq s,t\leq p$,
\begin{equation}\label{eq:cond.G}
    \sum_{k=-\infty}^\infty \left((\Gamma_k)_{ss}(\Gamma_k)_{tt} + (\Gamma_k)_{st}(\Gamma_k)_{ts}\right) \geq c_0.
\end{equation}
\end{condition}
Observe that the expression on the left hand side represents the asymptotic variance of $n^{1/2}((\hat{\Sigma}_n)_{st} - \Sigma_{st})$. This condition is introduced to prevent the trivial scenario where both distributions in the Kolmogorov distance $\rho(n)$ are degenerate and concentrated on a single point.

\subsection{Main Results}


The first main result gives a finite sample upper bound for $\rho(n)$ in \eqref{EQ_primary_objective}. Theorem \ref{thm:GA.cov} shows $\rho(n)\rightarrow 0$ when $\log(p) = o(n^\alpha)$ with $\alpha$ explicitly related to $\beta$. See Section \ref{sec:GA.cov} for detailed discussions that lead to this result.

\begin{theorem}[Gaussian approximation for covariance]\label{thm:GA.cov}
Under Conditions \ref{cond.A} and \ref{cond.G}, 
\begin{equation*}
    \rho(n)\leq C \Psi(p,n),
\end{equation*}
where $C>0$ is a constant related to $C_0$ and $c_0$, and
\begin{equation}\label{eq:Theta.pn}
    \Psi(p,n) = \frac{\log^{(5\Tilde{\beta}+12)/(4\Tilde{\beta}+8)}(pn)}{n^{\Tilde{\beta}/(4\Tilde{\beta}+8)}},
\end{equation}
with $\tilde{\beta} = (4\beta-3)\wedge(2\beta-1)$.
In particular, if $\Psi(p,n) \rightarrow 0$, then $\rho(n) \rightarrow 0$.
\end{theorem}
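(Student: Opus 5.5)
Our plan follows the three-step scheme announced in the introduction: approximate $S_n$ by a sum of (conditionally) independent block sums, apply the independent-vector Gaussian approximation of \cite{chernozhukov2017central}, and return to $|Z|_\infty$ via the anti-concentration inequality of \cite{chernozhukov2015comparison}.

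\emph{Step 1 ($m$-dependent truncation).} Write $X_i = X_i^{(m)} + R_i^{(m)}$, where $X_i^{(m)}=\sum_{t=0}^{m}A_t\epsilon_{i-t}$. Each entry of $\mathcal{X}_i$ is a quadratic form in the Gaussian innovations, so Hanson--Wright (or hypercontractivity for second Wiener chaos) gives exponential tails. We control $\|n^{-1/2}\sum_i (\mathcal{X}_i-\mathcal{X}_i^{(m)})_{(s,t)}\|_k$ through its variance, which involves sums of products $\sum_{|k|} \Gamma_k\Gamma_k$-type terms built from tails of $A_t$. Under Condition \ref{cond.A}, $|(\Gamma_k)_{st}|\lesssim k^{1-2\beta}$, and the truncated-remainder autocovariances are bounded by $m^{1/2-\beta}k^{1/2-\beta}$-type products. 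Squaring and summing over lags is finite exactly when $4\beta-2>1$, i.e.\ $\beta>3/4$, and the remainder variance decays like $m^{-\tilde\beta}$ up to constants; this is where $\tilde\beta=(4\beta-3)\wedge(2\beta-1)$ appears. A union bound over $p^2$ entries then yields $\mathbb{P}(|S_n-S_n^{(m)}|_\infty>\delta)\lesssim p^2\exp(-c\,\delta\, m^{\tilde\beta/2})$, so we may take $\delta\asymp m^{-\tilde\beta/2}\log(pn)$. For the remaining coupling terms between distant blocks we will also use a martingale approximation, projecting onto $\epsilon$'s.

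\emph{Step 2 (triadic blocks).} Split $[n]$ into big blocks of length $L\gg m$ separated by small blocks of length $m$, following \cite{berkes2014komlos}. Given the innovations in the small gaps, the big-block sums of the $m$-dependent process are independent. Small-block contributions are negligible, by a Step 1 type bound with error of order $(m/L)^{1/2}\log(pn)$. We then apply \cite{chernozhukov2017central} conditionally to the $n/L$ independent block sums. Each summand is sub-exponential with $\psi_1$ norm bounded by $C\log$, giving an error of order $(L\log^7(pn)/n)^{1/6}$. Applying the same result unconditionally lets us compare the conditional Gaussian with $Z$. This requires showing that the conditional covariance concentrates around the unconditional one in max-norm (Gaussian comparison, \cite{chernozhukov2015comparison}), which is again a quadratic-form concentration. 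Condition \ref{cond.G} guarantees that the variances are bounded below, as needed for the comparison and anti-concentration bounds.

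\emph{Step 3 (aggregation and optimization).} Anti-concentration, $\mathbb{P}(||Z|_\infty-u|\le\delta)\lesssim\delta\sqrt{\log p}$, converts the perturbations $\delta$ from Steps 1--2 into Kolmogorov error. This yields a bound of the form
\[
\rho(n)\lesssim m^{-\tilde\beta/2}\log^{3/2}(pn)+(m/L)^{1/2}\log^{3/2}(pn)+(L/n)^{1/6}\log^{7/6}(pn)+n^{-1}.
\]
Balancing over $m$ and $L$ (polynomial in $n$) should give $n^{-\tilde\beta/(4\tilde\beta+8)}$ with the stated log power. The main obstacle is Step 1 together with the conditional covariance comparison in Step 2: under long memory the variance of the remainder sum is not $O(1)$ per term, and it must be bounded carefully to yield the exponent $\tilde\beta$. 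We would first compute the exact variance of the quadratic form via Isserlis' formula and bound the resulting lag double sums using Condition \ref{cond.A}.
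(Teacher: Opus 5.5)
\textbf{There is a genuine gap: the final balancing in your Step 3 does not give $\Psi(p,n)$.}

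Your Step 1 is a legitimate substitute for the paper's martingale and $m$-dependent approximations (Lemmas \ref{lm:mtg.approx} and \ref{lm:m.dep.approx}). Truncating the coefficients at lag $m$ leaves a remainder whose entrywise variance is of the same order $m^{-\tilde\beta}$ as the paper's $Q_2(m)$, uniformly in $n$.

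The problem is Step 3. Write $\ell=\log(pn)$ and equate the three terms $m^{-\tilde\beta/2}\ell^{3/2}$, $(m/L)^{1/2}\ell^{3/2}$ and $(L/n)^{1/6}\ell^{7/6}$. This forces $L\asymp m^{1+\tilde\beta}$ and $m\asymp n^{1/(1+4\tilde\beta)}$ up to logarithms. The minimum of your bound is then of order $(\ell^{10+3/\tilde\beta}/n)^{\tilde\beta/(8\tilde\beta+2)}$, and no other choice of $(m,L)$ does better. This is not $n^{-\tilde\beta/(4\tilde\beta+8)}$.

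Since $\tilde\beta/(8\tilde\beta+2)<\tilde\beta/(4\tilde\beta+8)$ exactly when $\tilde\beta>3/2$, i.e.\ $\beta>5/4$, your argument gives a polynomially weaker bound on that range. For $\beta=2$ you get $n^{-3/26}$ instead of $n^{-3/20}$; as $\tilde\beta\to\infty$ you get $n^{-1/8}$ instead of $n^{-1/4}$. So the theorem is not proved as stated. For $\tilde\beta\le 3/2$, which includes every long-memory case, your bound would in fact suffice, because the claim is trivial when $\Psi(p,n)>1$.

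\textbf{Where the loss comes from, and how the paper avoids it.} The loss comes from the Bernstein big-block/small-block scheme, which is not the triadic construction the paper takes from \cite{berkes2014komlos}. Discarding gaps of length $m$ costs $(m/L)^{1/2}$, which forces a second, much longer block length $L$. The $1/6$-power bound of \cite{chernozhukov2017central} is then applied to only $n/L$ summands.

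Note also that with gaps of length $m$, the big-block sums of an $m$-dependent sequence are already unconditionally independent. So the conditioning step, and the conditional-versus-unconditional covariance comparison you flag as the main obstacle, play no role in your scheme.

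The paper removes the second parameter altogether:
\begin{itemize}
\item It uses blocks of length $3m$, each split into three sub-blocks of length $m$, and discards nothing.
\item It obtains independence of the blocks $B_j$ only conditionally on the innovations $\bm\epsilon_0$ of the first sub-blocks.
\item The identity \eqref{eq:var.identity} turns the conditional Gaussian into $T_n^Y$, a sum of unconditionally independent terms.
\item This lets it apply the $1/4$-power bound of \cite{chernozhukov2019improved} twice (Lemmas \ref{lm:cond.CCK} and \ref{lm:uncond.CCK}).
\item Only $(m^4\log^5(pn)/n)^{1/4}$ and $\log(p)/m^{\tilde\beta/2}$ (Lemma \ref{lm:GA.comparison}) then compete. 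Taking $m\asymp(n\log(pn))^{1/(2\tilde\beta+4)}$ and $\eta$ as in \eqref{eq:choice.of.eta.m} gives exactly $\Psi(p,n)$.
\end{itemize}

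Both features matter. If you keep your blocks but switch to the $1/4$-power bound, or remove the discarded blocks but keep the $1/6$-power bound, the balance is still only of order $n^{-\tilde\beta/(6\tilde\beta+2)}$. That falls short of $\Psi(p,n)$ for $\tilde\beta>3$.
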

\begin{remark}
For $\rho(n)$ to converge to zero, the dimension $p$ is required to satisfy $\log(p) = {o}(n^{\tilde{\beta}/(5\tilde{\beta}+12)})$.
A shorter memory generally corresponds to a sharper bound of $\rho(n)$ and a higher dimension allowed.
In the particular scenario where $X_t$ are independent, $\tilde{\beta}$ approaches infinity, resulting in the rate $\rho(n) = {o}(\log^{5/4}(pn)/n^{1/4})$, which matches the order of the bound found in \cite{chernozhukov2019improved}.
This rate converges to zero as the dimension satisfies $\log(p) = {o}(n^{1/5})$.
On the other hand, in the borderline scenario that $\tilde{\beta}$ is barely larger than $0$ (i.e., $\beta$ is barely larger than $3/4$, the phase transition point between asymptotic Gaussian and Rosenblatt distributions), the upper bound of $\rho(n)$ converges extremely slowly to $0$.
This agrees with the fact that the asymptotic distribution is no longer Gaussian when $\beta \leq 3/4$.

Also note that Theorem \ref{thm:GA.cov} can be extended to the case where the innovation $\epsilon_t$ follows a more general distribution.
One example is when $\epsilon_t$ is sub-Gaussian.
In this case, the sub-Gaussian norm $\|\epsilon_t\|_{\psi_2}$ is upper bounded, but is no longer a fixed value $\sqrt{2}$ as in the Gaussian case.
This will lead to a different constant in Hanson-Wright inequality (Lemma \ref{lm:hanson.wright}).
Passing it through the proof of Theorem \ref{thm:GA.cov} will still lead to a same rate $\Psi(p,n)$, but a different constant $C$ that increases with $\|\epsilon_t\|_{\psi_2}$.
Another example is when $\epsilon_t$ is sub-exponential.
In this case, the Hanson-Wright inequality, which requires sub-Gaussian innovations, no longer applies.
Instead, \cite{goetze2021concentration} developed an alternative concentration inequality for sub-exponential innovations, with a slower rate than the Hanson-Wright inequality.
That will lead to a slower rate in Theorem \ref{thm:GA.cov}.
In either case, the left hand side of \eqref{eq:cond.G} in Condition \ref{cond.G} need to be updated. It should contain an extra term related to the fourth order cumulants of the process $X_t$, which reflects the change in the asymptotic long-run covariance according to Bartlett's Formula.
\end{remark}


The second main result ensures the validity of the block bootstrap method for long-range dependent processes. Let the block size $l=l_n\rightarrow\infty$ and $l/n\rightarrow 0$ as $n\rightarrow\infty$. Construct blocks $\check{B}_{i,l} = \sum_{j=i-l+1}^i \mathcal{X}_j$, and define the empirical distribution function
\begin{equation*}
    \hat{F}_{n,l}(u) = \frac{1}{n-l+1}\sum_{i=l}^n \mathbf{1}\{l^{-1/2}|\check{B}_{i,l}|_\infty\leq u\}.
\end{equation*}
The validity of the block bootstrap is characterized by
\begin{equation}\label{eq:rho.bootstrap.cov}
    \rho_B(n,l) = \sup_{u\in\mathbb{R}}\left|\hat{F}_{n,l}(u)-P\left(n^{1/2}|\hat{\Sigma}_n-\Sigma|_\infty \leq u\right)\right|.
\end{equation}
The following theorem shows that for a certain order of $l$, $\rho_B(n,l)$ in \eqref{eq:rho.bootstrap.cov} is bounded by a quantity converging to 0 in probability, if $\log(p) = o(n^\alpha)$. See Section \ref{sec:bootstrap.cov} for details and sketch of proof.

\begin{theorem}[Block bootstrap consistency for covariance]\label{thm:bootstrap.cov}
Under Conditions \ref{cond.A} and \ref{cond.G},
for any $0<\epsilon<1$, if we take $l\asymp n^\phi\log^\psi(p)$, where $\phi = \frac{2\tilde{\beta}+4}{(3-\epsilon)\tilde{\beta} + 4}$ and $\psi = \frac{5\tilde{\beta}+12}{(3-\epsilon)\tilde{\beta} + 4}(1-\epsilon)$, then we have
\begin{equation*}
\rho_B(n,l) \leq C (\Psi_B(p,n,\epsilon))^{1-\epsilon}
\end{equation*}
with probability no less than $1-3 (\Psi_B(p,n,\epsilon))^{\epsilon}$, where $C>0$ is a constant related to $C_0$ and $c_0$, and
\begin{equation}\label{eq:Psi.B.pne}
    \Psi_B(p,n,\epsilon) = \frac{\log^{(5\tilde{\beta}+12)/[(6-2\epsilon)\tilde{\beta}+8]}(pn)} {n^{\tilde{\beta} / [(6-2\epsilon)\tilde{\beta}+8]}}.
\end{equation}
In particular, if $\Psi_B(p,n,\epsilon) \rightarrow 0$, then $\rho_B(n,l) \rightarrow 0$ in probability.
\end{theorem}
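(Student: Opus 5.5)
The plan is to split $\rho_B(n,l)$ with the triangle inequality into three pieces:
\begin{align*}
\rho_B(n,l) \le{}& \sup_u\bigl|\hat F_{n,l}(u)-F_l(u)\bigr| \\
&+ \sup_u\bigl|F_l(u)-\mathbb{P}(|Z|_\infty\le u)\bigr| + \rho(n),
\end{align*}
where $F_l(u)=\mathbb{P}(l^{-1/2}|\check B_{l,l}|_\infty\le u)$ is the distribution function of a single normalized block. By stationarity, $\check B_{i,l}$ has the same law as $\sqrt{l}\,\vectorize(\hat\Sigma_l-\Sigma)$. Theorem \ref{thm:GA.cov} then applies with $n$ replaced by $l$, giving
\[
\sup_u\bigl|F_l(u)-\mathbb{P}(|Z^{(l)}|_\infty\le u)\bigr|\le C\Psi(p,l),
\]
where $Z^{(l)}$ is Gaussian with the covariance of $S_l$. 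The last term $\rho(n)$ is at most $C\Psi(p,n)\le C\Psi(p,l)$.

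The second step compares $Z^{(l)}$ with $Z=Z^{(n)}$. Entrywise, $\cov(S_l)$ and $\cov(S_n)$ both converge to the Bartlett-type long-run covariance of Condition \ref{cond.G}. Under Condition \ref{cond.A}, the cross-covariances $\sum_k(\Gamma_k)_{ss'}(\Gamma_k)_{tt'}$ have summands of order $k^{2-4\beta}$, or $k^{1-2\beta}$ after the $A_t$ convolution. Truncating at $|k|<l$ and applying the $(1-|k|/l)$ weights then gives
\[
|\cov(S_l)-\cov(S_n)|_\infty\lesssim l^{-\tilde\beta/2}
\]
up to constants. This accounts for the exponent $\tilde\beta=(4\beta-3)\wedge(2\beta-1)$. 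The Gaussian comparison inequality of \cite{chernozhukov2015comparison} is used here; its variance lower bound comes from Condition \ref{cond.G}. It converts this into a Kolmogorov bound of order $(l^{-\tilde\beta/2})^{1/3}\log^{2/3}(p)$.

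The third step, the fluctuation term $\sup_u|\hat F_{n,l}(u)-F_l(u)|$, is the main obstacle, because the indicators $\mathbf 1\{l^{-1/2}|\check B_{i,l}|_\infty\le u\}$ are strongly dependent under long memory. I would bound the variance of $\hat F_{n,l}(u)$ for each fixed $u$ by $\frac{1}{n}\sum_{|h|<n}|\cov(\mathbf 1_0,\mathbf 1_h)|$. Blocks overlapping at lag $h<l$ contribute at most $l/n$ in total. For $h\ge l$, I would use the $m$-dependent truncation and the martingale approximation from the proof of Theorem \ref{thm:GA.cov}, which couple the two blocks with independent copies up to an error controlled by Hanson--Wright (Lemma \ref{lm:hanson.wright}) and the decay $h^{-\beta}$. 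Anti-concentration of $|Z|_\infty$ turns this error into a probability. The result should be
\[
\var(\hat F_{n,l}(u))\lesssim l/n+\Psi(p,l)+\text{(coupling error)}.
\]
To obtain uniformity in $u$, I would discretize on a grid of about $\Psi_B^{-1}$ quantiles of $|Z|_\infty$ and use monotonicity of distribution functions. Chebyshev's inequality with threshold $\Psi_B^{1-\epsilon}$ then yields the exceptional probability $3\Psi_B^{\epsilon}$; the constant $3$ comes from a union over the three error sources.

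Finally, I would balance the terms: $l/n$ against $\Psi(p,l)=\log^{(5\tilde\beta+12)/(4\tilde\beta+8)}(pl)\,l^{-\tilde\beta/(4\tilde\beta+8)}$. Choosing $l\asymp n^{\phi}\log^{\psi}(p)$ with the stated $\phi,\psi$ equalizes them at $\Psi_B(p,n,\epsilon)$, with the $\epsilon$-perturbations absorbing the Chebyshev loss. This gives $\rho_B\le C\Psi_B^{1-\epsilon}$ on the stated event.
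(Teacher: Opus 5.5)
Your third step does not go through, and the whole difficulty of the theorem sits in that step. With the stated $l\asymp n^{\phi}\log^{\psi}(p)$ one computes (up to $\log p$ versus $\log(pn)$) that $l/n\asymp\Psi_B^{2(1-\epsilon)}$ and $\Psi(p,l)\asymp\Psi_B$. So the stated $\phi,\psi$ equalize $\sqrt{l/n}$ with $\Psi(p,l)^{1-\epsilon}$, not $l/n$ with $\Psi(p,l)$ as you claim. Now plug your variance bound $l/n+\Psi(p,l)$ into Chebyshev at threshold $t=C\Psi_B^{1-\epsilon}$. For a single $u$ the exceptional probability is of order $C^{-2}\bigl(1+\Psi_B^{2\epsilon-1}\bigr)$, which does not tend to zero and blows up as $\Psi_B\to 0$ when $\epsilon<1/2$. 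This is before the union bound over your $\Psi_B^{-1}$ grid points, which multiplies it by $\Psi_B^{-1}$. Second-moment tails treat the $\Psi(p,l)$ contribution at scale $\sqrt{\Psi(p,l)}$, and they cannot survive a union over a polynomially large grid. So no choice of constants yields $C\Psi_B^{1-\epsilon}$ with probability $1-3\Psi_B^{\epsilon}$. Moreover, the long-lag indicator covariances (your ``coupling error'') are only asserted. Under long memory, normalized blocks at lag $h\ge l$ stay correlated at a slowly decaying rate, and converting that into a bound on $\cov(\mathbf 1_0,\mathbf 1_h)$ for maxima over $p^2$ coordinates is exactly what needs proof.

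The missing idea is to never compute the dependence between the original blocks. The paper replaces $\check B_{i,l}$ by the exactly $m$-dependent $\tilde B_{i,l,m}=\sum_{j=i-l+1}^{i}\tilde D_{j,m}$ and treats two error sources with different tools. First, the swap $\hat F_{n,l}\to\tilde F_{n,l,m}$ is paid at first-moment level. The two block statistics are within Kolmogorov distance $C\Psi(p,l)$ (Corollary~\ref{cor:B.check.B.tilde}), so a Markov bound gives $C\Psi(p,l)^{1-\epsilon}$ off an event of probability $\Psi(p,l)^{\epsilon}$; this is where the $(1-\epsilon,\epsilon)$ pair comes from. Second, the fluctuation of $\tilde F_{n,l,m}$ gets exponential, uniform-in-$u$ tails. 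Since $\tilde B_{i,l,m}$ and $\tilde B_{i',l,m}$ are independent once $|i-i'|\ge l+m-1$, the indices split into $l+m-1$ residue classes of i.i.d.\ blocks. The DKW--Massart inequality in each class, plus a union over classes, gives a deviation of order $\sqrt{l(\lambda^2+\log l)/n}$ with probability $2e^{-2\lambda^2}$. Taking $\lambda^2=\frac{\epsilon}{2}\log l$ contributes $2l^{-\epsilon}\le 2\Psi(p,l)^{\epsilon}$, hence the constant $3$. Balancing $\Psi(p,l)^{1-\epsilon}$ against $\sqrt{l\log l/n}$ then gives exactly the stated $\phi,\psi$. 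A secondary error is in your first step: $\check B_{i,l}$ is centred at $\hat\Sigma_n$, not $\Sigma$. Hence $l^{-1/2}\check B_{i,l}=l^{-1/2}\sum_{j=i-l+1}^{i}\mathcal X_j-\sqrt{l/n}\,S_n$ does not have the law of $\sqrt{l}\,\vectorize(\hat\Sigma_l-\Sigma)$. The common shift $\sqrt{l/n}\,S_n$ must be controlled separately, as in Lemma~\ref{lm:B.check.B.tilde}, via Theorem~\ref{thm:GA.cov} and $\mathbb{E}|Z|_\infty\lesssim\sqrt{\log p}$.
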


\begin{remark}
A smaller $\epsilon$ generally gives a tighter upper bound of $\rho_B(n,l)$ at a lower probability. When $\epsilon$ is close to 0, the upper bound becomes
\begin{equation*}
    \rho_B(n,l) \lesssim \frac{\log^{(5\tilde{\beta} + 12) / (6\tilde{\beta}+8)}(pn)}{n^{\tilde{\beta}/(6\tilde{\beta}+8)}}
\end{equation*}
with a probability that converges to 1 very slowly as $n$ increases. This is slightly relaxed compared to the bound for Kolmogorov distance $\rho(n)$ in Theorem \ref{thm:GA.cov}. When $\epsilon$ is closer to~1, the upper bound holds with a probability that converges to 1 faster as $n$ increases, but the upper bound converges slower.
For any $0<\epsilon<1$, when $n\rightarrow\infty$, $\rho_B(n,l)$ converges to zero with probability that converges to one, which shows that the block sampling method is valid. Theorem \ref{thm:bootstrap.cov} is the first finite-sample result on the validity of block sampling for long-memory time series that the authors are aware of.
\end{remark}

\begin{remark}
For $\rho_B(n,l)$ to probabilistically converge to zero, it is necessary that
\begin{equation*}
    \log(p) = o\left( n^{\tilde{\beta}/({5\tilde{\beta}+12})}\right),
\end{equation*}
which matches the dimensional condition for $\rho(n)$ to converge to zero as stated in Theorem \ref{thm:GA.cov}.
Note that both the Gaussian approximation result and this bootstrap validity guarantee are applicable to ultra-high-dimensional long-memory time series.
\end{remark}

\begin{remark}\label{rmk:choice.of.l}
Theorem \ref{thm:bootstrap.cov} specifies the theoretically optimal order of bootstrap block size $l$ with respect to $n$ and $\log(p)$, which demonstrates the best-case scenario when the finite-sample upper bound of $\rho_B(n,l)$ is minimized.
In practice, the specific value of block length $l$ can be chosen by data-driven methods; see, e.g. \cite{buhlmann1999block} and \cite{politis2004automatic}.
A trivial choice of $l = n^{1/3}$ is mentioned in \cite{buhlmann1999block}, which arises from general theoretical considerations in variance estimation.
Note that this trivial choice has a lower order than the optimal choice of $l$ in Theorem \ref{thm:bootstrap.cov}.
This is because long-memory time series generally requires a larger block size to capture the temporal dependence structure.
With $\epsilon$ close to zero, the optimal $\phi$ ranges between $2/3$ for short memory processes with very large $\tilde{\beta}$ and $1$ for long-memory processes with $\tilde{\beta}$ close to zero, suggesting a trivial choice of $l$ between $n^{2/3}$ and $n$.
\end{remark}

A valid bootstrap method enables the construction of data-driven simultaneous confidence regions for the unknown population covariance matrix $\Sigma$.
Consider any significance level $\alpha \in (0,1)$.
We define the $(1-\alpha)$-quantile of the bootstrap CDF $\hat F_{n,l}$ as
\[
    \hat{q}_{1-\alpha} = \inf \{ u \in \mathbb{R} : \hat{F}_{n,l}(u) \geq 1-\alpha \}.
\]
Note that $\hat q_{1-\alpha}$ is purely data-driven, and does not rely on any unknown quantities.
We can thus build a simultaneous confidence region $\hat{C}_\alpha(\hat{\Sigma}_n)$ centered at $\hat{\Sigma}_n$. Specifically,
\[
    \hat{\mathcal C}_\alpha(\hat{\Sigma}_n) = \left\{ \tilde\Sigma \in \mathbb{R}^{p \times p} : n^{1/2}|\tilde\Sigma - \hat{\Sigma}_n|_\infty \leq \hat{q}_{1-\alpha} \right\}.
\]
In practice, the event that the true covariance matrix $\Sigma$ landing within the confidence region $\hat{\mathcal C}_\alpha(\hat\Sigma_n)$ is equivalent to all of its entries landing within their respective confidence intervals,
\[
\Sigma_{jk} \in \left[ (\hat{\Sigma}_n)_{jk} - n^{-1/2}\hat{q}_{1-\alpha}, \quad (\hat{\Sigma}_n)_{jk} + n^{-1/2}\hat{q}_{1-\alpha} \right], \text{ for all }1\leq j,k\leq p.
\]
In fact, the probability of this event converges to $1-\alpha$.
The following corollary confirms that the rate of convergence is of order $\rho_B(n,l)$, ensuring the validity of the simultaneous confidence region $\hat{\mathcal C}_\alpha $.
\begin{proposition}[Confidence Regions for Covariance Matrices]\label{prop:CI.cov}
Under the conditions of Theorem \ref{thm:bootstrap.cov},
\[
\left|\mathbb{P}\left(\Sigma \in \hat{\mathcal C}_\alpha (\hat\Sigma_n)\right) - (1-\alpha)\right| \leq 3\rho_B(n,l).
\]
In particular, if $\Psi_B(p,n,\epsilon)\rightarrow 0$, then the left-hand side converges to zero in probability.
\end{proposition}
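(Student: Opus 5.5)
Write $T_n = n^{1/2}|\hat\Sigma_n - \Sigma|_\infty$ and let $F(u) = \mathbb{P}(T_n \le u)$ denote its distribution function. The event $\{\Sigma \in \hat{\mathcal C}_\alpha(\hat\Sigma_n)\}$ is exactly $\{T_n \le \hat q_{1-\alpha}\}$. The plan is to compare $F$ with $\hat F_{n,l}$ uniformly, using the definition of $\rho_B(n,l)$ in \eqref{eq:rho.bootstrap.cov}, and to control the possible jumps of $F$ through the Gaussian approximation of Theorem \ref{thm:GA.cov}. Throughout, we work on the event where the bound of Theorem \ref{thm:bootstrap.cov} holds and treat $\rho_B(n,l)$ as the (random) quantity appearing on the right-hand side. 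This is how the statement should be read: $\rho_B$ is data dependent, and the final "in probability" claim follows directly from Theorem \ref{thm:bootstrap.cov}.

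\emph{Step 1: quantile sandwich.} By the definition of $\hat q_{1-\alpha}$ and the right-continuity of $\hat F_{n,l}$, we have $\hat F_{n,l}(\hat q_{1-\alpha}) \ge 1-\alpha$ and $\hat F_{n,l}(\hat q_{1-\alpha}-) \le 1-\alpha$. Since $\sup_u |\hat F_{n,l}(u) - F(u)| = \rho_B$, and the same bound passes to left limits, it follows that
\[
F(\hat q_{1-\alpha}) \ge 1-\alpha-\rho_B, \qquad F(\hat q_{1-\alpha}-) \le 1-\alpha+\rho_B .
\]
The gap between these two values is the jump of $F$ at $\hat q_{1-\alpha}$. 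That jump is at most $\sup_u \mathbb{P}(T_n = u)$, which is $0$ because $T_n$ has a continuous distribution: it is a maximum of quadratic forms in Gaussian variables that are nondegenerate by Condition \ref{cond.G}. Alternatively, the jump is at most $2\rho_B$, because the jumps of $\hat F_{n,l}$ are of size $1/(n-l+1)$, which is absorbed. In either case this yields a deterministic-in-$u$ sandwich $1-\alpha-\rho_B \le F(\hat q_{1-\alpha}) \le 1-\alpha+2\rho_B$, say.

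\emph{Step 2: handle the randomness of $\hat q_{1-\alpha}$.} This is the main obstacle. $\hat q_{1-\alpha}$ depends on the same data as $T_n$, so $\mathbb{P}(T_n \le \hat q_{1-\alpha}) \ne \mathbb{E} F(\hat q_{1-\alpha})$ in general. I would bracket $\hat q_{1-\alpha}$ between deterministic quantiles. Let $q^\pm$ be the $(1-\alpha\pm\rho_B)$ quantiles of $F$. On the event of Theorem \ref{thm:bootstrap.cov}, Step 1 gives $q^- \le \hat q_{1-\alpha} \le q^+$, so
\[
\{T_n \le q^-\} \subset \{T_n \le \hat q_{1-\alpha}\} \subset \{T_n \le q^+\} .
\]
Taking probabilities then gives
\[
1-\alpha-\rho_B \le \mathbb{P}(\Sigma\in\hat{\mathcal C}_\alpha) \le 1-\alpha+\rho_B .
\]
Any slack from the jumps and left limits in Step 1 is absorbed by enlarging the constant to $3\rho_B$. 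Here $\rho_B$ is interpreted as the deterministic bound $C\Psi_B^{1-\epsilon}$, and the failure probability $3\Psi_B^{\epsilon}$ is added. This accounts for the constant $3$ in the statement.

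\emph{Step 3: conclusion.} Combining Steps 1 and 2 yields the displayed inequality. The final assertion then follows because Theorem \ref{thm:bootstrap.cov} gives $\rho_B(n,l) \le C\Psi_B^{1-\epsilon}$ with probability at least $1-3\Psi_B^{\epsilon}$, and both quantities tend to $0$ when $\Psi_B(p,n,\epsilon)\to 0$.
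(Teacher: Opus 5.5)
Your Step 1 is, in substance, the paper's whole proof. The paper splits $|\mathbb{P}(\Sigma\in\hat{\mathcal C}_\alpha(\hat\Sigma_n))-(1-\alpha)|$ into two pieces:
\begin{itemize}
\item $|\mathbb{P}(n^{1/2}|\hat\Sigma_n-\Sigma|_\infty\le\hat q_{1-\alpha})-\hat F_{n,l}(\hat q_{1-\alpha})|$, bounded by $\rho_B(n,l)$ simply by evaluating the supremum defining $\rho_B(n,l)$ at $u=\hat q_{1-\alpha}$;
\item $|\hat F_{n,l}(\hat q_{1-\alpha})-(1-\alpha)|\le 2\rho_B(n,l)$, obtained from $\hat F_{n,l}(\hat q_{1-\alpha}-)\le 1-\alpha$ and continuity of $F$.
\end{itemize}
The constant $3$ is just $1+2$. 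The first piece treats the coverage probability as the plug-in value $F(\hat q_{1-\alpha})$, which is exactly the quantity you control in Step 1. Your inequalities plus $F(\hat q_{1-\alpha})=F(\hat q_{1-\alpha}-)$ even give constant $1$. So under the paper's reading you are finished after Step 1. You were right to notice in Step 2 that the paper silently ignores the dependence between $\hat q_{1-\alpha}$ and $\hat\Sigma_n$.

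Step 2, however, does not deliver the stated inequality for the genuine coverage probability. If $q^\pm$ are the quantiles of $F$ at levels $1-\alpha\pm\rho_B(n,l)$, those levels are random. Then $q^\pm$ are data-dependent and $\mathbb{P}(n^{1/2}|\hat\Sigma_n-\Sigma|_\infty\le q^\pm)$ is not $1-\alpha\pm\rho_B(n,l)$. You have only relocated the dependence problem, and taking probabilities cannot produce a bound with a random right-hand side.

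If instead you use the deterministic level $r=C\Psi_B^{1-\epsilon}$ (with $\Psi_B=\Psi_B(p,n,\epsilon)$), the sandwich $q^-\le\hat q_{1-\alpha}\le q^+$ holds only on $E=\{\rho_B(n,l)\le r\}$. The argument then yields
\[
\left|\mathbb{P}\left(\Sigma\in\hat{\mathcal C}_\alpha(\hat\Sigma_n)\right)-(1-\alpha)\right|\le C\Psi_B^{1-\epsilon}+3\Psi_B^{\epsilon}.
\]
This is a correct deterministic bound, and it implies the ``in particular'' clause. But it is not $3\rho_B(n,l)$. The failure probability $\mathbb{P}(E^c)\le 3\Psi_B^{\epsilon}$ cannot be absorbed into any multiple of $\rho_B(n,l)$: it exceeds $\Psi_B^{1-\epsilon}$ whenever $\epsilon<1/2$, and $\rho_B(n,l)$ is random and may be much smaller. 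The factor $3$ there is also unrelated to the $3$ in the statement. Your sentence claiming this ``accounts for the constant $3$'' is the gap. Either adopt the plug-in reading and stop after Step 1, or state and prove the displayed deterministic bound as your result.
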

\begin{proof}[Proof of Proposition \ref{prop:CI.cov}]
Note that 
\[
\left|\mathbb{P}\left(\Sigma \in \hat{\mathcal C}_\alpha (\hat\Sigma_n)\right) - (1-\alpha)\right| \leq \left| \mathbb{P}\left( n^{1/2}|\hat{\Sigma}_n - \Sigma|_\infty \leq \hat{q}_{1-\alpha} \right) - \hat F_{n,l}(\hat q_{1-\alpha}) \right| + \left|\hat F_{n,l}(\hat q_{1-\alpha}) - (1-\alpha)\right|.
\]
The supremum of the first term over $\alpha \in (0,1)$ is tightly upper bounded by $\rho_B(n,l)$.
The second term stems from discreteness of the bootstrap CDF.
Note that $\lim_{q\uparrow \hat q_{1-\alpha}}\hat F_{n,l}(q) < 1-\alpha$, and the distribution of $n^{1/2}|\hat\Sigma_n - \Sigma|_\infty$ is continuous under our process \eqref{LinearProcess}, therefore
\begin{multline*}
    \hat F_{n,l}(\hat q_{1-\alpha}) - (1-\alpha) \leq \hat F_{n,l}(\hat q_{1-\alpha}) - \lim_{q\uparrow \hat q_{1-\alpha}}\hat F_{n,l}(q)\\
    \leq \left|\hat F_{n,l}(\hat q_{1-\alpha}) - \mathbb{P}\left(n^{1/2}|\hat\Sigma_n - \Sigma|_\infty\leq \hat q_{1-\alpha}\right) \right| + \left| \lim_{q\uparrow \hat q_{1-\alpha}}\mathbb{P}\left(n^{1/2}|\hat\Sigma_n - \Sigma|_\infty\leq q\right) -\lim_{q\uparrow \hat q_{1-\alpha}}\hat F_{n,l}(q) \right|
    \leq 2\rho_B(n,l).
\end{multline*}
Meanwhile, $\hat F_{n,l}(\hat q_{1-\alpha}) \geq 1-\alpha$.
The proof is finished by combining the above inequalities.
\end{proof}


The remaining two main results are related to the precision matrix $\Omega = \Sigma^{-1}$.
In this paper, we focus on the case where $p<n$, where the sample covariance matrices are invertible.
Let $\hat{\Omega}_n = \hat{\Sigma}_n^{-1}$.
To exclude the nondegenerate case where the covariance is barely invertible, we add a mild assumption throughout the precision-matrix-related discussions that all eigenvalues of the true underlying covariance matrix are upper and lower bounded, $0<c_e \leq \lambda_{\min}(\Sigma) \leq \lambda_{\max}(\Sigma) \leq C_e$.
The third main result of this paper is a finite-sample upper bound on the Kolmogorov distance
\begin{equation}\label{eq:kolm.distance.Omega}
    \rho^\Omega(n) = \sup_{u\in\mathbb{R}} \left|\mathbb{P}\left(n^{1/2}|\hat{\Omega}_n - \Omega|_\infty \geq u\right) - \mathbb{P}(|Z^\Omega|_\infty \geq u)\right|
\end{equation}
where $Z^\Omega$ is a $p^2$-dimensional Gaussian random vector with the same covariance structure as $n^{1/2}\vectorize(\hat{\Omega}_n - \Omega)$.
The following theorem shows that the Kolmogorov distance $\rho^\Omega(n)$ is upper bounded by a finite-sample rate that converges to $0$ if the dimension $p$ is low enough.
See Section \ref{sec:GA.prec} for detailed discussion.

\begin{theorem}[Gaussian approximation for precision]\label{thm:GA.prec}
    Under Conditions \ref{cond.A} and \ref{cond.G}, for a constant $C'>0$ that is related to $C_0$, $c_0$, $C_e$ and $c_e$,
    \begin{equation*}
        \rho^\Omega(n) \leq C' \Psi^\Omega(p,n),
    \end{equation*}
    where
    \begin{equation}
        \Psi^\Omega(p,n) = \frac{|\Omega|_1^2\log^{(5\tilde{\beta}+12)/(4\tilde{\beta}+8)}(pn)}{n^{\tilde{\beta}/(4\tilde{\beta}+8)}} 
        + \frac{p^4\log^{5/2}(n)}{n^{(2\beta-1/2)\wedge 3/2}}
        + \frac{|\Omega|_1 p^{2}\log(p)}{n^{(\beta-1/4)\wedge 3/4}}.
    \end{equation}
    In particular, if $\Psi^\Omega(p,n) \rightarrow 0$, then $\rho^\Omega(n) \rightarrow 0$.
\end{theorem}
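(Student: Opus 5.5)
The plan is to linearize the inverse and reduce to Theorem \ref{thm:GA.cov}. Write $\Delta = \hat\Sigma_n - \Sigma$. The resolvent identity gives
\[
\hat\Omega_n - \Omega = -\Omega\Delta\Omega + \Omega\Delta\Omega\Delta\hat\Omega_n ,
\]
so $n^{1/2}(\hat\Omega_n-\Omega) = L_n + R_n$. Here $L_n = -n^{1/2}\Omega\Delta\Omega$ is linear in $\Delta$, and $R_n = n^{1/2}\Omega\Delta\Omega\Delta\hat\Omega_n$ is a quadratic remainder. Because $\vectorize(L_n) = -(\Omega\otimes\Omega)S_n$, the linear part is a fixed linear image of the partial sum $S_n = n^{-1/2}\sum_i \mathcal{X}_i$. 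The first step is therefore a Gaussian approximation for $|(\Omega\otimes\Omega)S_n|_\infty$ by $|(\Omega\otimes\Omega)Z|_\infty$. I would rerun the proof of Theorem \ref{thm:GA.cov} with $\mathcal{X}_i$ replaced by $\tilde{\mathcal X}_i = (\Omega\otimes\Omega)\mathcal{X}_i$. Each coordinate of $\tilde{\mathcal X}_i$ equals $a^\top X_iX_i^\top b - \mathbb{E}[\cdot]$ with $a,b$ columns of $\Omega$, and $|a|_1,|b|_1\le|\Omega|_1$. The steps of that proof (martingale and $m$-dependent approximation, Hanson--Wright tails, triadic blocks, and \cite{chernozhukov2017central}) depend on the coordinates only through sub-exponential scale constants and the variance lower bound. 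The scale constants inflate by $|\Omega|_1^2$, which gives the first term of $\Psi^\Omega$. The variance lower bound should follow from Condition \ref{cond.G} together with $c_e\le\lambda_{\min}(\Sigma)\le\lambda_{\max}(\Sigma)\le C_e$. One small gap: the natural target is the Gaussian with the covariance of $L_n$, whereas $Z^\Omega$ matches the covariance of the full $n^{1/2}\vectorize(\hat\Omega_n-\Omega)$. I would close this with a Gaussian comparison inequality \cite{chernozhukov2015comparison}, using that the covariances differ by a lower-order amount controlled by the moment bounds below.

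The second step bounds the remainder on a high-probability event. From the Hanson--Wright-based tail bounds already used for Theorem \ref{thm:GA.cov}, the operator norm $|\Delta|_\op$ (at most $p|\Delta|_\infty$) is of order $p\,n^{-(\beta-1/2)\wedge 1/2}$ up to logarithmic factors. This rate comes from $\var((\hat\Sigma_n)_{st}) \asymp n^{-1}\sum_{|k|<n}\Gamma_k^2$, which is of order $n^{-(4\beta-2)\wedge 1}$. On the event $|\Delta|_\op\le c_e/2$, the bound $|\hat\Omega_n|_\op\le 2/c_e$ holds. Then
\[
|R_n|_\infty \le n^{1/2}|\Omega|_1\,|\Delta|_\infty\,|\Omega\Delta\hat\Omega_n|_\op \lesssim n^{1/2}|\Omega|_1\, p\,|\Delta|_\infty^2,
\]
or an analogous bound using operator norms. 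The probability of the complementary event, and the Chebyshev-type tail of $|\Delta|_\op$, produce the polynomial term $p^4\log^{5/2}(n)/n^{(2\beta-1/2)\wedge 3/2}$. This term arises as a fourth-moment bound $p^4\,\mathbb{E}|\Delta|_\infty^4$ multiplied by $n^{1/2}$, with logarithmic factors from the maximum.

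Third, I would combine the pieces with the standard perturbation argument. For any $\delta>0$,
\[
\mathbb{P}\bigl(|L_n+R_n|_\infty\ge u\bigr)\le \mathbb{P}\bigl(|L_n|_\infty\ge u-\delta\bigr)+\mathbb{P}\bigl(|R_n|_\infty>\delta\bigr),
\]
and a symmetric lower bound holds. I then pass from $L_n$ to its Gaussian analogue using the first step. The shift by $\delta$ costs at most $C\delta\sqrt{\log p}$ by the anti-concentration inequality of \cite{chernozhukov2015comparison}, since the coordinate variances are bounded below. Choosing $\delta \asymp |\Omega|_1 p^2 n^{-(\beta-1/4)\wedge 3/4}\sqrt{\log p}$ or similar balances the remainder's tail and yields the third term $|\Omega|_1p^2\log(p)/n^{(\beta-1/4)\wedge3/4}$.

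The main obstacle I expect is the first step: verifying that the proof of Theorem \ref{thm:GA.cov} transfers to the transformed vector $(\Omega\otimes\Omega)\mathcal{X}_i$ with constants exactly $|\Omega|_1^2$, and that the variance lower bound survives the transformation. For the latter I would first try writing the asymptotic variance of the $(s,t)$ coordinate as
\[
\sum_k\bigl(\omega_s^\top\Gamma_k\omega_s\,\omega_t^\top\Gamma_k\omega_t+(\omega_s^\top\Gamma_k\omega_t)^2\bigr),
\]
where $\omega_s$ is the $s$-th column of $\Omega$. Its $k=0$ term alone is at least $\Omega_{ss}\Omega_{tt}\ge C_e^{-2}$. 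I would then hope that the sum over all $k$ stays bounded below by positivity of the spectral representation $\sum_k\Gamma_k\otimes\Gamma_k$.
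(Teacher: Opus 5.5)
\paragraph{Where the proposal breaks.} Your architecture matches the paper's. The linear term is handled by rerunning the covariance argument with $A_t$ replaced by $\Omega A_t$ (Lemma~\ref{lm:GA.cov.linear}, which gives the $|\Omega|_1^2\Psi(p,n)$ term). The mismatch between the covariance of $L_n$ and that of $Z^\Omega$ is handled by a Gaussian comparison (Lemma~\ref{lm:GA.comparison.prec}). The remainder is handled by a tail bound plus anti-concentration.

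The gap is in your remainder step and in your account of where the second and third terms of $\Psi^\Omega$ come from; as written it cannot work. Your variance formula gives $\var((\hat\Sigma_n)_{st})\asymp n^{-1}$ for every $\beta>3/4$. Hence $|\Delta|_\infty$ is of order $\sqrt{\log(pn)/n}$, and the rate $n^{-(\beta-1/2)}$ you quote for $\beta<1$ contradicts this. At this scale the remainder is genuinely of order $n^{-1/2}$. For instance, with $p=d=1$, $A_0=1$ and $A_t=0$ for $t\geq 1$, one has $R_n=n^{1/2}(\hat\sigma^2-1)^2/\hat\sigma^2$ and $n^{1/2}R_n\Rightarrow 2\chi^2_1$.

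In that example your thresholds are $o(n^{-1/2})$: this holds for $\delta\asymp|\Omega|_1p^2n^{-(\beta-1/4)\wedge 3/4}$ up to logarithms, and for anything of order $p^4n^{-(2\beta-1/2)\wedge 3/2}$. So $\mathbb{P}(|R_n|_\infty>\delta)\to 1$. Also, $n^{1/2}p^4\mathbb{E}|\Delta|_\infty^4$ is not what Markov's or Chebyshev's inequality yields from your bound on $|R_n|_\infty$.

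The paper attributes the terms differently.
\begin{itemize}
\item The second term is $\eta\sqrt{\log(p/\eta)}$ with $\eta=C_\eta p^4\log^2(n)/n^{(2\beta-1/2)\wedge 3/2}$, chosen so that the residual tail \eqref{eq:residual.tail.bound} is negligible.
\item The third term is the comparison cost $\sqrt{|\Sigma^\Omega-\Sigma^{\mathfrak S}|_\infty}\,\log p$, with $|\Sigma^\Omega-\Sigma^{\mathfrak S}|_\infty\lesssim|\Omega|_1^2p^4/n^{(2\beta-1/2)\wedge 3/2}$. You never quantify this quantity.
\end{itemize}

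\paragraph{Why the paper's route does not close the gap.} Both fast exponents descend from Lemma~\ref{lm:Sigma.op.small}. In the same example, that lemma bounds $\mathbb{P}(n^{1/2}|\hat\sigma^2-1|>1)$ by $2\exp(-c_2n^{1/2}+3)\to 0$, which contradicts $n^{1/2}(\hat\sigma^2-1)\Rightarrow N(0,2)$. The error is in \eqref{eq:Sigma.xi.norm}: in fact $|\Sigma^\xi|_\F^2\geq n(\gamma_0^\xi)^2\geq nc_e^2$. So the Hanson--Wright exponent loses its factor $n^{(2\beta-1)\wedge 1}$, and \eqref{eq:residual.tail.bound} fails with it.

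Your instinct that $R_n$ is of order $n^{-1/2}$ (times powers of $p$ and $|\Omega|_1$) is the correct one. But then the moment-based comparison you sketch, $|\Sigma^\Omega-\Sigma^{\mathfrak S}|_\infty\leq\mathbb{E}|R_n|_\infty^2+2(\mathbb{E}|R_n|_\infty^2)^{1/2}\max_{(s,t)}(\mathbb{E}(L_n)_{(s,t)}^2)^{1/2}$, gives a Kolmogorov cost no better than order $p^{1/2}n^{-1/4}\log p$. Indeed, for i.i.d.\ data with $\Sigma=I$ the diagonal of $R_n$ already has mean about $p/\sqrt n$. With $\beta>2$ and $p=n^{1/5}$ this cost is $n^{-3/20}\log p$, whereas $\Psi^\Omega$ decays like $n^{-\tilde\beta/(4\tilde\beta+8)}$ up to logarithms, which is faster.

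To reach the stated rate you need a sharper estimate of $\Sigma^\Omega-\Sigma^{\mathfrak S}$, or else you must settle for a weaker rate. One option is to compute $\cov(L_n,R_n)$ and $\var(R_n)$ directly: the mean of $R_n$ drops out of a covariance, and the leading cross term involves only third moments of $\Delta$.

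\paragraph{Variance lower bound.} The lower bound in the $\Omega$-transformed coordinates, which you flag, is a real issue. The paper's justification, $\lambda_{\min}(\Sigma^*)\geq C\lambda_{\min}^2(\Sigma)$, cannot hold for $p\geq 2$, because $\Sigma^*\vectorize(V)=0$ for every antisymmetric $V$. Condition~\ref{cond.G} constrains only the original coordinates, so this step needs its own argument.
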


\begin{remark}\label{rmk:GA.prec}
    With a growing dimension, it is likely that the norm $|\Omega|_1$ could also be growing.
    The rate could generally lie between $O(1)$ and $O(p)$, depending on the sparsity pattern of $\Omega$.
    $|\Omega|_1 = O(1)$ corresponds to the sparsest cases.
    Examples include
    (1) when each node of the graph represented by $\Omega$ is connected to a fixed number of other nodes,
    (2) when the precision matrix $\Omega$ is banded with a fixed bandwidth, and
    (3) when $\Omega$ is Toeplitz with a decay rate faster than $|\Omega_{st}| \leq C|s-t|^{-1}$.
    On the contrary, we could potentially have $|\Omega|_1 = O(p)$ in the very dense case, as the worst-case scenario.
    A lower order of $|\Omega|_1$ generally corresponds to a sharper rate $\Psi^\Omega(p,n)$.
    In order to have it converge to zero, the following two conditions are necessary.
    First, we need $|\Omega|_1 = \tilde o(n^{\tilde{\beta}/(8\tilde{\beta}+16)})$.
    Second, we need $|\Omega|_1 p^2 = \tilde o(n^{(\beta-1/4)\wedge 3/4})$.
    These conditions allow the dimension $p$ to grow at a polynomial rate of $n$ at most, up to a logarithm factor.
    For denser precision matrix $\Omega$, this requirement gets stricter.
    That being said, this Gaussian approximation result does not require a specific density assumption on the true underlying $\Omega$.
    Although the Gaussian approximation result for the precision matrix only holds valid in the low-dimensional settings, it still enjoys the desired property of allowing long-memory time series.
\end{remark}


For the fourth and final main result of this paper, we shall establish a data-adaptive and finite-sample validity result for block bootstrap method for precision.
Let
\[
\check{B}_{i,l}^\Omega = \sum_{j=i-l+1}^i \check{\mathcal{X}}_j^\Omega
\]
where
$\check{\mathcal{X}}_j^\Omega = \vectorize(\hat{\Omega}_n (X_j X_j^\top - \hat{\Sigma}_n) \hat{\Omega}_n)$, and let the empirical cdf be
\begin{equation}
    \hat{F}_{n,l}^\Omega(u) = \frac{1}{n-l+1} \sum_{i=l}^n \mathbf{1}\{l^{-1/2} |\check{B}_{i,l}^\Omega|_\infty \leq u\}.
\end{equation}
The validity of this bootstrap method is characterized by
\begin{equation}\label{eq:kolm.distance.Omega.boot}
    \rho^\Omega_B(n,l) = \sup_{u\in\mathbb{R}}\left|\hat{F}_{n,l}^\Omega (u) - \mathbb{P}\left(n^{1/2}|\hat{\Omega}_n - \Omega|_\infty \leq u\right)\right|.
\end{equation}
The following theorem shows that given a certain order of $l$, $\rho^\Omega_B(n,l)$ is upper bounded by a finite-sample rate that converges to $0$ in probability under the same dimensional requirement as in Theorem \ref{thm:GA.prec}.
More details are deferred to Section \ref{sec:bootstrap.prec}.

\begin{theorem}[Block bootstrap consistency for precision]\label{thm:bootstrap.prec}
    Assume that Conditions \ref{cond.A} and \ref{cond.G} hold. For any $0<\epsilon<1$, choose $l\asymp n^\phi \log^\psi(p)$, where $\phi = \frac{2\tilde{\beta}+4}{(3-\epsilon)\tilde{\beta} + 4}$ and $\psi = \frac{5\tilde{\beta}+12}{(3-\epsilon)\tilde{\beta} + 4}(1-\epsilon)$. Then with probability no less than $1-3(\Psi^\Omega_B(p,n,\epsilon))^{\epsilon}$,
    \begin{equation*}
        \rho_B^\Omega(n,l) \leq C'(\Psi^\Omega_B(p,n,\epsilon))^{1-\epsilon},
    \end{equation*}
    where $C'>0$ is a constant related to $C_0$, $c_0$, $C_e$ and $c_e$, and
    \begin{equation}
        \Psi^\Omega_B(p,n,\epsilon) = \frac{|\Omega|_1^2\log^{(5\tilde{\beta}+12)/[(6-2\epsilon)\tilde{\beta}+8]}(pn)}{n^{\tilde{\beta}/[(6-2\epsilon)\tilde{\beta}+8]}}
        + \frac{p^4\log^{5/2}(n)}{n^{(2\beta-1/2)\wedge 3/2}}
        + \frac{|\Omega|_1 p^{2}\log(p)}{n^{(\beta-1/4)\wedge 3/4}}.
    \end{equation}
    In particular, if $\Psi_B^\Omega(p,n,\epsilon) \rightarrow 0$, then $\rho_B^\Omega(n,l)\rightarrow 0$ in probability.
\end{theorem}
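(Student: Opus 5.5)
The plan is to reduce Theorem \ref{thm:bootstrap.prec} to Theorem \ref{thm:bootstrap.cov} through a first-order linearization of the inverse. We use the expansion
\[
\hat\Omega_n - \Omega = -\Omega(\hat\Sigma_n-\Sigma)\Omega + R_n, \qquad R_n = \Omega(\hat\Sigma_n-\Sigma)\Omega(\hat\Sigma_n-\Sigma)\hat\Omega_n .
\]
The leading term $-n^{1/2}\vectorize(\Omega(\hat\Sigma_n-\Sigma)\Omega)$ equals $-(\Omega\otimes\Omega)S_n$, a fixed linear map of $S_n$. The argument of Theorem \ref{thm:bootstrap.cov} therefore applies verbatim to the transformed process $\mathcal{X}^\Omega_i = \vectorize(\Omega(X_iX_i^\top-\Sigma)\Omega)$, which is again a quadratic form of a Gaussian linear process. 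Its coefficient rows satisfy Condition \ref{cond.A} after multiplication by $\Omega$, up to a factor $|\Omega|_1$ (entrywise bounds $|(\Omega A_t)_{j\cdot}|\le |\Omega|_1 C_0(1\vee t)^{-\beta}$). Each summand has sub-exponential scale of order $|\Omega|_1^2$. The long-run variance lower bound follows from Condition \ref{cond.G} together with $c_e\le\lambda(\Sigma)\le C_e$. This yields the first term of $\Psi_B^\Omega$, with the $|\Omega|_1^2$ prefactor. The same reasoning (Theorem \ref{thm:GA.prec}) controls the distance between $n^{1/2}|\hat\Omega_n-\Omega|_\infty$ and $|(\Omega\otimes\Omega)S_n|_\infty$.

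Next I control the remainder. I use $|R_n|_\infty \le |\Omega|_1\,|\hat\Sigma_n-\Sigma|_{\op}^2\,|\Omega|_{\op}|\hat\Omega_n|_{\op}$, with crude bounds of the type $|\cdot|_\op\le p|\cdot|_\infty$. Each entry of $\hat\Sigma_n-\Sigma$ has standard deviation of order $n^{-(\beta-1/4)\wedge 1/2}$ under long memory, which gives $n^{1/2}|R_n|_\infty \lesssim |\Omega|_1p^2\log(p)\,n^{-(\beta-1/4)\wedge 3/4}$ with high probability. The event where $\lambda_{\min}(\hat\Sigma_n)<c_e/2$ is handled by Hanson--Wright (Lemma \ref{lm:hanson.wright}). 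A perturbation of size $\delta$ inside $|\cdot|_\infty$ is converted into a Kolmogorov error via the Gaussian anti-concentration inequality of \cite{chernozhukov2015comparison}, paying $\delta\sqrt{\log p}$ plus the failure probability. These two contributions produce the second and third terms of $\Psi^\Omega_B$; the $p^4$ term comes from a fourth-moment (Chebyshev) bound on $|\hat\Sigma_n-\Sigma|_\F^2$.

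The bootstrap side needs two replacements. In $\check{\mathcal X}^\Omega_j$, I first replace $\hat\Omega_n$ by $\Omega$ and $\hat\Sigma_n$ by $\Sigma$. This gives
\[
l^{-1/2}\check B^\Omega_{i,l} = l^{-1/2}(\Omega\otimes\Omega)\check B_{i,l} + \Delta_i,
\]
where $\Delta_i$ collects two pieces. The first is $l^{1/2}\hat\Omega_n(\hat\Sigma_n-\Sigma)\hat\Omega_n$ from centering. The second is $(\hat\Omega_n\otimes\hat\Omega_n-\Omega\otimes\Omega)$ applied to $l^{-1/2}\check B_{i,l}$. The first piece is of order $(l/n)^{1/2}$ times the covariance error. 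The second is bounded uniformly in $i$ by $|\hat\Omega_n-\Omega|_1$ times $\max_i l^{-1/2}|\check B_{i,l}|_\infty$, which is $O_P(\sqrt{\log(pn)})$ by a union bound over $i$ and Hanson--Wright. Since $l\ll n$, both pieces are dominated by the remainder terms already present. I then shift the empirical CDF by $\max_i|\Delta_i|_\infty$ and apply anti-concentration a final time. On the intersection of all good events, whose total probability loss is absorbed into $3(\Psi^\Omega_B)^\epsilon$, the triangle inequality gives $\rho^\Omega_B \le C'(\Psi_B^\Omega)^{1-\epsilon}$.

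I expect the main obstacle to be the transfer of Theorem \ref{thm:bootstrap.cov} to the $\Omega$-transformed process with the correct dependence on $|\Omega|_1$. This requires re-checking that the martingale/$m$-dependence tail bounds and the triadic-block construction scale only through the row norms of $\Omega A_t$, and that the variance lower bound survives. I would verify that these bounds scale only through the row norms of $\Omega A_t$, which is all that enters the proof of Theorem \ref{thm:bootstrap.cov}.
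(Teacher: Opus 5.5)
Your treatment of the leading term follows the paper. Running the covariance machinery on $\mathcal X_i^\Omega=(\Omega\otimes\Omega)\mathcal X_i$, with $C_0$ replaced by $C_0|\Omega|_1$, is exactly what Lemmas \ref{lm:B.check.B.tilde.Omega} and \ref{lm:B.tilde.GA} do via Condition \ref{cond.A.alt}. On the bootstrap side you take a different route: a uniform-in-$i$ coupling bound on $\max_i|\Delta_i|_\infty$, then a deterministic shift of the empirical CDF. The paper instead uses a per-$i$ Kolmogorov bound (Corollary \ref{cor:B.check.B.tilde.Omega}) plus the Markov step of Lemma \ref{lm:F.check.F.tilde.Omega}. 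Your device is legitimate, and unlike a pointwise-in-$u$ Markov bound it controls all $u$ at once. One caveat: the $\hat\Omega_n$-replacement piece is of order $|\Omega|_1|\hat\Omega_n-\Omega|_1\sqrt{\log(pn)}$ and does not shrink with $l/n$, so ``$l\ll n$'' does not dispose of it.

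\textbf{The gap: your remainder rates do not follow from your argument.} The second and third terms of $\Psi^\Omega_B$ are asserted, not derived, and your own ingredients give something larger.
\begin{itemize}
\item For every $\beta>3/4$ the long-run variances are bounded (Lemma \ref{prop:cond.A.cor}). Hence $(\beta-1/4)\wedge 1/2=1/2$, and $|\hat\Sigma_n-\Sigma|_\infty$ is of order $\sqrt{\log(p)/n}$.
\item With $|\cdot|_{\op}\le p|\cdot|_\infty$ this gives $n^{1/2}|R_n|_\infty\lesssim p^2\log(p)\,n^{-1/2}$. After anti-concentration the cost is of order $p^2\log^{3/2}(p)\,n^{-1/2}$, not $|\Omega|_1p^2\log(p)\,n^{-(\beta-1/4)\wedge 3/4}$.
\item Likewise $\mathbb{E}|\hat\Sigma_n-\Sigma|_{\F}^2\asymp p^2/n$, so no moment bound on the Frobenius norm can produce $p^4/n^{3/2}$.
\item The loss cannot be absorbed. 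Take $\beta=2$ (so $\tilde\beta=3$), $|\Omega|_1$ bounded and $p\asymp n^{1/5}$. All three terms of $\Psi^\Omega_B$ are then $O(n^{-3/26})$ up to logarithms, while your remainder bound is of order $n^{-1/10}$.
\end{itemize}

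\textbf{Where the paper gets these terms, and why that will not rescue you.}
\begin{itemize}
\item The $p^4$ term comes from the operator-norm tail bound of Lemma \ref{lm:Sigma.op.small}, inserted into \eqref{eq:residual.tail.bound} with $\eta\asymp p^4\log^2(n)/n^{(2\beta-1/2)\wedge 3/2}$.
\item The $|\Omega|_1p^2\log(p)$ term is the covariance-comparison cost between $Z^{\mathfrak S}$ and $Z^\Omega$ in Lemma \ref{lm:GA.comparison.prec}. Your route compares $n^{1/2}|\hat\Omega_n-\Omega|_\infty$ directly with the linear statistic, so it does not need that step at all.
\item Both terms rest on \eqref{eq:residual.tail.bound}, and Lemma \ref{lm:Sigma.op.small} cannot hold as stated. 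Take $p=1$ and i.i.d.\ $N(0,1)$ data, which satisfy Condition \ref{cond.A} for every $\beta$. The lemma would give $\mathbb{P}(n^{1/2}|\hat\Sigma_n-\Sigma|>1)\le 2e^{3-c_2n^{1/2}}\to 0$, contradicting the central limit theorem.
\item The slip is in Appendix \ref{app:Sigma.xi.norm}, where a factor $n$ is dropped. Since $\Sigma^\xi$ has constant diagonal $v^\top\Sigma v$, one has $|\Sigma^\xi|_{\F}^2\ge n(v^\top\Sigma v)^2$.
\end{itemize}

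With the tools at hand, what you can actually establish is the first term of $\Psi^\Omega_B$ plus remainder terms decaying like $n^{-1/2}$ times powers of $p$ and $|\Omega|_1$. Reaching the stated second and third terms would need a genuinely sharper entrywise control of $\Omega\mathfrak S\Omega\mathfrak S\hat\Omega_n$ than the crude operator-norm route provides.
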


\begin{remark}\label{rmk:dim.condition.prec.boot}
    When $\epsilon$ approaches one, the rate $\Psi_B^\Omega(p,n,\epsilon)$ approaches the rate $\Psi^\Omega(p,n)$ in Theorem \ref{thm:GA.prec}.
    The dependence on $|\Omega|_1$ is also similar.
    For $\Psi_B^\Omega(p,n,\epsilon) \rightarrow 0$, we need
    (1) $|\Omega|_1 = \tilde o(n^{\tilde{\beta}/[(12-4\epsilon)\tilde{\beta}+16]})$ such that the first term converges, and
    (2) $|\Omega|_1 p^2 = \tilde o(n^{(\beta-1/4)\wedge 3/4})$ which controls the second and third terms.
    If we choose an $\epsilon$ close to one, the first condition is a similar rate as the first condition mentioned in Remark \ref{rmk:GA.prec}, while the second condition is identical to that in Remark \ref{rmk:GA.prec}.
    Similar to the Gaussian approximation result in Theorem \ref{thm:GA.prec}, the dimension $p$ is allowed to grow at a polynomial rate of $n$ at most, up to a logarithm factor, while denser precision matrix $\Omega$ makes it stricter.
    That being said, no specific structural assumption on the true underlying $\Omega$ is needed.
    Moreover, Theorem \ref{thm:bootstrap.prec} enjoys the desired property of allowing for long-memory time series.
\end{remark}

\begin{remark}
    The optimal order of block size $l$ for precision bootstrap in Theorem \ref{thm:bootstrap.prec} is identical to that for covariance bootstrap in Theorem \ref{thm:bootstrap.cov}.
    As pointed out in Remark \ref{rmk:choice.of.l}, the specific value of $l$ can be chosen in practice either by a data-driven method in the current literature or by trivially specifying $l$ between $n^{2/3}$ and $n$.
\end{remark}

Similar to Theorem \ref{thm:bootstrap.cov}, Theorem \ref{thm:bootstrap.prec} also enables the construction of data-driven simultaneous confidence regions for the precision matrix $\Omega$.
For any significance level $\alpha\in(0,1)$, we can also define the data-driven $(1-\alpha)$-quantile of the bootstrap CDF $\hat F_{n,l}^\Omega$ as
\[
\hat q_{1-\alpha}^\Omega = \inf\{u\in \mathbb{R}: \hat F_{n,l}^\Omega(u) \geq 1-\alpha\}.
\]
The simultaneous confidence region is thus defined as
\[
\hat{\mathcal C}_\alpha^\Omega (\hat\Omega_n) = \left\{\tilde\Omega\in \mathbb{R}^{p\times p}: n^{1/2} |\tilde\Omega - \hat\Omega_n|_\infty \leq \hat q_{1-\alpha}^\Omega \right\}.
\]
In practice, the event that the true precision matrix $\Omega$ landing within the confidence region $\hat{\mathcal C}_\alpha^\Omega (\hat\Omega_n)$ is equivalent to all of its entries landing within their respective confidence intervals,
\[
\Omega_{jk} \in \left[ (\hat{\Omega}_n)_{jk} - n^{-1/2}\hat{q}_{1-\alpha}^\Omega, \quad (\hat{\Omega}_n)_{jk} + n^{-1/2}\hat{q}_{1-\alpha}^\Omega \right], \text{ for all } 1\leq j,k\leq p.
\]
Following the same proof procedure as Proposition \ref{prop:CI.cov}, the accuracy of the confidence region $\hat{\mathcal C}_\alpha^\Omega$ is guaranteed by the following proposition, which is a corollary of Theorem \ref{thm:bootstrap.prec}.
\begin{proposition}[Confidence Regions for Precision Matrices]\label{prop:CI.prec}
    Under the conditions of Theorem \ref{thm:bootstrap.prec},
    \[
    \left|\mathbb{P}\left(\Omega \in \hat{\mathcal C}_\alpha^\Omega (\hat\Omega_n)\right) - (1-\alpha)\right| \leq 3\rho_B^\Omega(n,l).
    \]
    In particular, if $\Psi_B^\Omega(p,n,\epsilon)\rightarrow 0$, then the left-hand side converges to zero in probability.
\end{proposition}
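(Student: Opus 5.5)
The proof will mirror the argument given for Proposition \ref{prop:CI.cov}, with $\hat{\Sigma}_n$, $\hat F_{n,l}$, $\hat q_{1-\alpha}$ and $\rho_B(n,l)$ replaced by $\hat\Omega_n$, $\hat F^\Omega_{n,l}$, $\hat q^\Omega_{1-\alpha}$ and $\rho^\Omega_B(n,l)$.

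First, the event $\Omega\in\hat{\mathcal C}^\Omega_\alpha(\hat\Omega_n)$ is exactly the event $n^{1/2}|\hat\Omega_n-\Omega|_\infty\le \hat q^\Omega_{1-\alpha}$. The triangle inequality therefore splits the coverage error into two terms:
\[
\left|\mathbb{P}\left(n^{1/2}|\hat\Omega_n-\Omega|_\infty\le \hat q^\Omega_{1-\alpha}\right)-\hat F^\Omega_{n,l}(\hat q^\Omega_{1-\alpha})\right| \quad\text{and}\quad \left|\hat F^\Omega_{n,l}(\hat q^\Omega_{1-\alpha})-(1-\alpha)\right|.
\]
The first term is a comparison of the true CDF with the bootstrap CDF at the point $u=\hat q^\Omega_{1-\alpha}$, so it is bounded by $\sup_u|\cdot|=\rho^\Omega_B(n,l)$. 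As in the covariance case, this step treats $\hat q^\Omega_{1-\alpha}$ as the evaluation point of a supremum over all $u$.

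The second term comes from the discreteness of $\hat F^\Omega_{n,l}$. By the definition of $\hat q^\Omega_{1-\alpha}$ as an infimum, $\hat F^\Omega_{n,l}(\hat q^\Omega_{1-\alpha})\ge 1-\alpha$, while the left limit satisfies $\lim_{q\uparrow\hat q^\Omega_{1-\alpha}}\hat F^\Omega_{n,l}(q)\le 1-\alpha$. So the second term is at most the jump of $\hat F^\Omega_{n,l}$ at $\hat q^\Omega_{1-\alpha}$. I will bound this jump by inserting the true CDF $G(u)=\mathbb{P}(n^{1/2}|\hat\Omega_n-\Omega|_\infty\le u)$:
\[
\hat F^\Omega_{n,l}(\hat q)-\hat F^\Omega_{n,l}(\hat q-)\le |\hat F^\Omega_{n,l}(\hat q)-G(\hat q)|+|G(\hat q)-G(\hat q-)|+|G(\hat q-)-\hat F^\Omega_{n,l}(\hat q-)|.
\]
The first and last pieces are each at most $\rho^\Omega_B(n,l)$, the last after taking left limits inside the supremum. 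The middle piece vanishes provided $G$ is continuous. Adding the first term gives the bound $3\rho^\Omega_B(n,l)$.

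The one point that needs genuine care, and the main obstacle, is the continuity of $G$. The natural route is this: since $p<n$ and the innovations are Gaussian, $\hat\Sigma_n$ is almost surely invertible, and $\hat\Omega_n$ is a rational, hence real-analytic, function of the Gaussian vector $(\epsilon_t)$ on the full-measure set where $\det\hat\Sigma_n\ne0$. Each entry of $\hat\Omega_n-\Omega$ is then a nonconstant analytic function of a Gaussian vector, so each level set $\{|(\hat\Omega_n-\Omega)_{jk}|=u/n^{1/2}\}$ has measure zero. The nonconstancy follows from the nondegeneracy in Condition \ref{cond.G} together with the eigenvalue bounds on $\Sigma$. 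It follows that $G$ has no atoms. If this argument becomes delicate because of the infinite-order linear process, I will condition on the innovations outside a finite window and apply the same argument to finitely many Gaussian coordinates. Finally, the convergence-in-probability statement follows directly from Theorem \ref{thm:bootstrap.prec}, since $\rho^\Omega_B(n,l)\le C'(\Psi^\Omega_B)^{1-\epsilon}$ with probability at least $1-3(\Psi^\Omega_B)^\epsilon$, and both $\Psi^\Omega_B\to0$ and $(\Psi^\Omega_B)^\epsilon\to0$.
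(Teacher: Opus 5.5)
Your argument is the paper's argument. The paper proves this proposition by rerunning the proof of Proposition~\ref{prop:CI.cov}: the same split into a bootstrap-versus-truth term at $\hat q^\Omega_{1-\alpha}$, bounded by $\rho^\Omega_B(n,l)$, plus a discreteness term bounded by the jump of $\hat F^\Omega_{n,l}$ at $\hat q^\Omega_{1-\alpha}$, which is at most $2\rho^\Omega_B(n,l)$ because $G$ is continuous. Your analytic-function argument supplies the continuity that the paper only asserts. Two small remarks on it. First, no conditioning is needed, since $(X_1,\dots,X_n)$ is itself a finite-dimensional Gaussian vector. Second, $\{\det\hat\Sigma_n\neq 0\}$ need not be connected, so you need nonconstancy on every component; the homogeneity $\hat\Omega_n(tX_1,\dots,tX_n)=t^{-2}\hat\Omega_n(X_1,\dots,X_n)$ reduces this to ruling out entries of $\hat\Omega_n$ that vanish identically.

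The step you flag yourself, however, is a genuine gap, and the paper's proof has the same gap. Write $T=n^{1/2}|\hat\Omega_n-\Omega|_\infty$. The coverage $\mathbb{P}(T\le\hat q^\Omega_{1-\alpha})$ is a fixed number, while $\hat q^\Omega_{1-\alpha}$ is computed from the same sample as $T$. So this probability is not $G(\hat q^\Omega_{1-\alpha})$, and it cannot be compared with $\hat F^\Omega_{n,l}(\hat q^\Omega_{1-\alpha})$ through $\sup_u|\hat F^\Omega_{n,l}(u)-G(u)|$. The claimed inequality, a deterministic quantity bounded by the random variable $3\rho^\Omega_B(n,l)$, is therefore not established by this argument.

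The repair uses only your ingredients, applied on an event. Fix $r>0$ and let $E_r=\{\rho^\Omega_B(n,l)\le r\}$. On $E_r$, the facts $\hat F^\Omega_{n,l}(\hat q^\Omega_{1-\alpha})\ge 1-\alpha$ and $\hat F^\Omega_{n,l}(\hat q^\Omega_{1-\alpha}-)\le 1-\alpha$, together with continuity of $G$, give $1-\alpha-r\le G(\hat q^\Omega_{1-\alpha})\le 1-\alpha+r$. Define the deterministic points $q_-=\inf\{u:G(u)\ge 1-\alpha-r\}$ and $q_+=\sup\{u:G(u)\le 1-\alpha+r\}$. They satisfy $G(q_\pm)=1-\alpha\pm r$, with the obvious conventions if $1-\alpha\pm r\notin(0,1)$. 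On $E_r$ we then have $q_-\le\hat q^\Omega_{1-\alpha}\le q_+$. Therefore $\{T\le q_-\}\cap E_r\subseteq\{T\le\hat q^\Omega_{1-\alpha}\}$ and $\{T\le\hat q^\Omega_{1-\alpha}\}\cap E_r\subseteq\{T\le q_+\}$, so
\[
\bigl|\mathbb{P}\bigl(\Omega\in\hat{\mathcal C}^\Omega_\alpha(\hat\Omega_n)\bigr)-(1-\alpha)\bigr|\le r+\mathbb{P}\bigl(\rho^\Omega_B(n,l)>r\bigr).
\]
Take $r=C'(\Psi^\Omega_B(p,n,\epsilon))^{1-\epsilon}$. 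Theorem~\ref{thm:bootstrap.prec} then bounds the coverage error by $C'(\Psi^\Omega_B(p,n,\epsilon))^{1-\epsilon}+3(\Psi^\Omega_B(p,n,\epsilon))^{\epsilon}$. This is a deterministic quantity that tends to zero whenever $\Psi^\Omega_B(p,n,\epsilon)\to0$, and it is the rigorous form of the proposition's conclusion.
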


\section{Gaussian Approximation for Covariance Matrix}\label{sec:GA.cov}

This section provides a finite-sample bound for $\rho(n)$ using a three-step framework similar to \cite{berkes2014komlos}. First, we use martingale and $m$-dependent approximations to $n^{1/2}|\hat{\Sigma}_n - \Sigma|_\infty$ and construct triadic blocks that are conditionally independent. Second, we exploit this conditional independence and apply Gaussian approximation results for independent random variables. Third, we apply Gaussian approximation on the conditioned sub-blocks to obtain an unconditional result. These steps lead to Theorem \ref{thm:GA.cov}, a Gaussian approximation result for $\rho(n)$.


We begin by creating a martingale approximation for $S_n$.
Let $\mathcal{F}^i$ represent the $\sigma$-algebra generated by the random vectors $\{\epsilon_{i}, \epsilon_{i-1}, \cdots\}$.
Note that $S_n$ is not a martingale, but is still an adapted process with respect to filtration $\{\mathcal{F}^n\}_{n\in\mathbb{Z}}$.
For $1\leq i\leq n$, define
\begin{equation}\label{eq:D}
D_i = \sum_{t=i}^\infty \mathcal{P}_i(\mathcal{X}_t) = \sum_{t=i}^\infty \left(\mathbb{E}[\mathcal{X}_t\mid\mathcal{F}^i] - \mathbb{E}[\mathcal{X}_t\mid\mathcal{F}^{i-1}]\right),
\end{equation}
see \cite{wu2007strong}.
It turns out that $T_n = \sum_{i=1}^n D_i$ is a martingale approximation of $n^{1/2}S_n$. The following lemma provides a tail bound for the approximation error.
\begin{lemma}[Martingale approximation]\label{lm:mtg.approx}
Assume Condition \ref{cond.A} holds, then for $\delta > 0$,
\begin{equation*}
    \mathbb{P}\left(|S_n-n^{-1/2}T_n|_\infty\geq\delta\right)\leq 2p^2\exp\left\{-c_1\left(\frac{\delta^2}{Q_1(n)}\wedge\frac{\delta}{\sqrt{Q_1(n)}}\right)\right\}
\end{equation*}
where $c_1$ is a constant related to $C_0$, and $Q_1(n)$ equals $n^{-1}$ for $\beta>1$, $n^{-1}\log^2 n$ for $\beta=1$, and $n^{-4\beta+3}$ for $3/4<\beta<1$.
\end{lemma}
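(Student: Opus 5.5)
Fix an entry $(s,t)$ and write the approximation error as a quadratic form in the Gaussian innovations. Then apply the Hanson--Wright inequality (Lemma \ref{lm:hanson.wright}) entrywise and finish with a union bound over the $p^2$ entries, which produces the factor $2p^2$.

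\textbf{Step 1 (quadratic-form representation).} With $a_{s,k}$ the $s$-th row of $A_k$, we have $(X_tX_t^\top)_{st} = \sum_{j,k\ge 0} a_{s,j}\epsilon_{t-j}\epsilon_{t-k}^\top a_{t,k}^\top$. The projection $\mathcal{P}_i(\epsilon_{t-j}^\top u\,\epsilon_{t-k}^\top v)$ is nonzero only when $\max(t-j,t-k)$ involves index $i$. Concretely, $\mathcal{P}_i$ keeps the pairs in which one index equals $i$ and the other is $\le i$; for the diagonal pair $(i,i)$ it keeps the centered term. Summing over $t\ge i$ and then over $i=1,\dots,n$ gives
\[
(T_n)_{(s,t)} = \sum_{i=1}^n \sum_{r\le i} \epsilon_i^\top M_{i,r}\,\epsilon_r - \mathbb{E}[\cdot], \qquad M_{i,r}=\sum_{\tau\ge i}\bigl(a_{s,\tau-i}^\top a_{t,\tau-r} + a_{t,\tau-i}^\top a_{s,\tau-r}\bigr)\ (\text{halved on } r=i).
\]
Similarly, $n^{1/2}(S_n)_{(s,t)}$ is the same kind of bilinear sum with the time range $\tau$ restricted to $\{1,\dots,n\}$. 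So the difference $R=n^{1/2}(S_n)_{(s,t)} - (T_n)_{(s,t)}$ is a centered Gaussian quadratic form $\mathbf{\epsilon}^\top B\mathbf{\epsilon}-\mathbb{E}$ in the stacked innovations. Its coefficients are of three kinds: (i) pairs with $\max(i,r)\le 0$, which contribute in $S_n$ but not in $T_n$; (ii) pairs with $\max(i,r)\in[1,n]$, where $T_n$ includes the tail $\tau>n$ but $S_n$ does not; (iii) there are no pairs with $\max>n$ in either. Hanson--Wright then gives
\[
\mathbb{P}(|R|\ge n^{1/2}\delta)\le 2\exp\bigl\{-c\bigl(n\delta^2/|B|_F^2\wedge n^{1/2}\delta/|B|_\op\bigr)\bigr\}.
\]
Since $|B|_\op\le|B|_F$, it suffices to show $|B|_F^2\lesssim nQ_1(n)$, that is, $|B|_F^2\lesssim 1$, $\log^2 n$, or $n^{4-4\beta}$ in the three regimes. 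This gives the stated bound with $Q_1(n)=|B|_F^2/n$ (up to constants) and $\sqrt{Q_1}$ in the linear term.

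\textbf{Step 2 (Frobenius bound; main obstacle).} This is the technical core. Using Condition \ref{cond.A} and Cauchy--Schwarz, $|a_{s,j}^\top a_{t,k}|\le C_0^2(1\vee j)^{-\beta}(1\vee k)^{-\beta}$.

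For the type-(ii) blocks, the coefficient for the pair $(i,r)$, $r\le i\le n$, is bounded by $\sum_{\tau>n}(\tau-i)^{-\beta}(\tau-r)^{-\beta}$. For $\beta<1$ this is $\lesssim (n-i)^{1-2\beta}$ when $i-r\lesssim n-i$, and $\lesssim (n-i)^{1-\beta}(i-r)^{-\beta}$ when $i-r\gg n-i$. Squaring and summing over $r\le i$ and then over $i$ gives roughly $\sum_{m=1}^n\bigl[m^{3-4\beta}+m^{2-2\beta}\sum_{k>m}k^{-2\beta}\bigr]\asymp n^{4-4\beta}$. Here $\beta>3/4$ makes the inner sums finite and the outer sum is dominated by $m\sim n$. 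For $\beta=1$ the same computation gives $\log^2 n$, and for $\beta>1$ the sums are bounded.

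The type-(i) blocks, with pre-sample innovations ($i,r\le 0$), are handled symmetrically. The coefficient is $\sum_{\tau=1}^n(\tau-i)^{-\beta}(\tau-r)^{-\beta}$, which is controlled by the same integral comparisons with $m=|i|$. The only care needed is when $|i|\gg n$, where the coefficient becomes $\lesssim n|i|^{-\beta}|r|^{-\beta}$; this gives the tail $\sum_{|i|>n}n^2|i|^{-2\beta}\sum_{|r|\ge |i|}|r|^{-2\beta}\asymp n^{4-4\beta}$.

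The diagonal pairs $r=i$ need a separate check, but they are covered by the same bounds. I expect the delicate point to be organising these double sums cleanly across the three $\beta$-regimes, particularly the logarithmic borderline $\beta=1$ and the cross regime $i-r$ versus $n-i$.

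\textbf{Step 3 (conclusion).} Substitute $|B|_F^2\le CnQ_1(n)$ into the entrywise Hanson--Wright bound, absorb constants into $c_1$ (which depends only on $C_0$), and take the union bound over the $p^2$ pairs $(s,t)$. This yields the inequality of Lemma \ref{lm:mtg.approx}.
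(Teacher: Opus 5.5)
Your proof is essentially the paper's. The paper also writes $n^{1/2}(S_n)_{(s,t)}-(T_n)_{(s,t)}$ as a centered Gaussian quadratic form, with coefficients given by tail sums over $\tau>n$ when the larger time index lies in $[1,n]$ and by sums over $\tau\in[1,n]$ when both indices are $\le 0$. It shows the squared Frobenius norm is $\lesssim nQ_1(n)$, and concludes with Hanson--Wright (using $|\cdot|_{\op}\le|\cdot|_{\F}$) and a union bound over the $p^2$ entries. The only organizational difference is that the paper splits the Frobenius sum into the blocks $[1,n]^2$, $(-\infty,0]^2$ and the cross block, and factors the two diagonal blocks by Cauchy--Schwarz, whereas you estimate each coefficient by lag.

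One intermediate estimate in your Step 2 is wrong, though it does not affect the conclusion. Take $3/4<\beta<1$ and $i-r\gg n-i$. Then the coefficient $\sum_{\tau>n}(\tau-i)^{-\beta}(\tau-r)^{-\beta}$ is of order $(i-r)^{1-2\beta}$, not $(n-i)^{1-\beta}(i-r)^{-\beta}$. The range $n<\tau\le n+(i-r)$ alone contributes $(i-r)^{-\beta}\cdot(i-r)^{1-\beta}$, since the partial sum of $(\tau-i)^{-\beta}$ over that range is $\asymp(i-r)^{1-\beta}$ rather than $(n-i)^{1-\beta}$. The form you wrote is the correct one only for $\beta>1$. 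With the corrected bound, squaring and summing over $i-r>n-i$ still gives $\sum_{k>n-i}k^{2-4\beta}\asymp(n-i)^{3-4\beta}$. This is exactly where $\beta>3/4$ is needed, and your total of order $n^{4-4\beta}$ stands.
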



We proceed with an $m$-dependent approximation for $T_n$. Define $\mathcal{F}^i_{i-k}$ as the $\sigma$-algebra generated by $\{\epsilon_{i}, \epsilon_{i-1}, \ldots, \epsilon_{i-k}\}$. The $m$-dependent approximation of $D_i$ is given by
\begin{equation}\label{eq:D.tilde}
\tilde{D}_{i,m} = \mathbb{E}[D_i \mid \mathcal{F}^i_{i-m+1}],
\end{equation}
where $m \in \mathbb{N}$ is a parameter associated with $n$ such that $m \leq n$ and $m \rightarrow \infty$ as $n \rightarrow \infty$. The partial sum $\tilde{T}_{n,m} = \sum_{i=1}^n \tilde{D}_{i,m}$ represents the $m$-dependent approximation of $T_n$. The following lemma provides a tail bound for the error resulting from the $m$-dependent approximation.

\begin{lemma}[$m$-dependent approximation]\label{lm:m.dep.approx}
Assume Condition \ref{cond.A} holds, then
\begin{equation*}
    \mathbb{P}\left(n^{-1/2}|T_n-\Tilde{T}_{n,m}|_\infty\geq\delta\right)\leq 2p^2\exp\left\{-c_2\left(\frac{\delta^2}{Q_2(m)}\wedge\frac{\delta}{\sqrt{Q_2(m)}}\right)\right\},
\end{equation*}
where $c_2$ is a constant related to $C_0$, and $Q_2(m)=m^{(-2\beta+1)\vee(-4\beta+3)}$.
\end{lemma}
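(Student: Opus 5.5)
The plan is to write each coordinate of $T_n-\tilde T_{n,m}$ as a quadratic form in the Gaussian innovations and apply the Hanson--Wright inequality (Lemma \ref{lm:hanson.wright}), followed by a union bound over the $p^2$ coordinates. This union bound accounts for the factor $p^2$ in the statement.

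\textbf{Step 1: explicit form of $D_i$ and $\tilde D_{i,m}$.} Fix a coordinate $(s,t)$. Since $X_t=\sum_k A_k\epsilon_{t-k}$, we have $(X_tX_t^\top)_{st}=\sum_{a,b}\epsilon_{t-a}^\top (A_a)_{s\cdot}^\top (A_b)_{t\cdot}\epsilon_{t-b}$ (abusing notation for rows). The projection $\mathcal P_i$ keeps only the terms involving $\epsilon_i$ together with innovations of index $\le i$. Summing over $t\ge i$ then gives
\[
(D_i)_{(s,t)} = \epsilon_i^\top M_0\epsilon_i - \mathbb E[\epsilon_i^\top M_0\epsilon_i] + \sum_{k\ge1}\epsilon_i^\top M_k\epsilon_{i-k},
\]
with $M_0=\sum_{j\ge0}(A_j)_{s\cdot}^\top(A_j)_{t\cdot}$ and $M_k=\sum_{j\ge0}\big[(A_j)_{s\cdot}^\top(A_{j+k})_{t\cdot}+(A_j)_{t\cdot}^\top(A_{j+k})_{s\cdot}\big]$.

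Conditioning on $\mathcal F^i_{i-m+1}$ simply truncates the sum to $k\le m-1$, because the remaining innovations are independent with mean zero. Hence
\[
T_n-\tilde T_{n,m}=\sum_{i=1}^n\sum_{k\ge m}\epsilon_i^\top M_k\epsilon_{i-k}.
\]
This is an off-diagonal Gaussian quadratic form $\boldsymbol\epsilon^\top \mathbf M\boldsymbol\epsilon$ in the stacked vector $\boldsymbol\epsilon=(\epsilon_j)_{j\le n}$, and it is centered.

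\textbf{Step 2: bound $|M_k|$.} By Condition \ref{cond.A} and Cauchy--Schwarz, $|M_k|\le |M_k|_{\F}\le 2C_0^2\sum_{j\ge0}(1\vee j)^{-\beta}(1\vee (j+k))^{-\beta}$. This sum is $\lesssim k^{-2\beta+1}$ when $\beta<1$, $\lesssim k^{-1}\log k$ when $\beta=1$, and $\lesssim k^{-\beta}$ when $\beta>1$. Call this bound $a_k$.

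\textbf{Step 3: Frobenius and operator norms of $\mathbf M$.} The block matrix $\mathbf M$ is block-Toeplitz with off-diagonal blocks $M_k$, $k\ge m$ (infinitely many rows extend into the past, which is harmless).

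The Frobenius norm satisfies
\[
|\mathbf M|_{\F}^2\le n\sum_{k\ge m}a_k^2\lesssim n\, m^{(-4\beta+3)\vee(-2\beta+1)},
\]
since $\sum_{k\ge m}k^{-4\beta+2}\asymp m^{-4\beta+3}$ for $3/4<\beta<1$ and $\sum_{k\ge m}k^{-2\beta}\asymp m^{-2\beta+1}$ for $\beta>1$. The case $\beta=1$ is absorbed into these up to constants, since the logarithm is dominated for our exponent choice, or one can enlarge $c_2$.

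For the operator norm, a Schur/Young bound for Toeplitz operators gives $|\mathbf M|_{\op}\le 2\sum_{k=m}^{n+\cdot} a_k$. This sum may not be summable when $\beta<1$, so the operator-norm step is the main obstacle. I would try two routes. The first is to use $|\mathbf M|_{\op}\le|\mathbf M|_{\F}$ directly. This already yields the form $\delta/\sqrt{Q_2}$ after normalization: with $\sigma^2=|\mathbf M|_{\F}^2/n\lesssim Q_2(m)$, Hanson--Wright gives
\[
\mathbb P\big(|\boldsymbol\epsilon^\top\mathbf M\boldsymbol\epsilon|\ge n^{1/2}\delta\big)\le 2\exp\Big\{-c\Big(\frac{n\delta^2}{|\mathbf M|_{\F}^2}\wedge\frac{n^{1/2}\delta}{|\mathbf M|_{\op}}\Big)\Big\}.
\]
The bound $|\mathbf M|_{\op}\le|\mathbf M|_{\F}\lesssim\sqrt{nQ_2}$ makes the second term $\delta/\sqrt{Q_2}$ as stated. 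The second route, if needed, is the Fourier-symbol bound $|\mathbf M|_{\op}\le\sup_\theta|\sum_{k\ge m}M_ke^{ik\theta}|$. That would be sharper, but it is not needed for the statement.

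\textbf{Step 4: conclusion.} Dividing by $n^{1/2}$, each coordinate satisfies the tail bound $2\exp\{-c_2(\delta^2/Q_2(m)\wedge\delta/\sqrt{Q_2(m)})\}$. A union bound over the $p^2$ pairs $(s,t)$ then yields the lemma.
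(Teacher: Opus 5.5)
Your proposal is correct and follows essentially the paper's own route. You write each coordinate of $T_n-\tilde T_{n,m}$ as a centered Gaussian quadratic form; your one-triangle matrix $\mathbf M$ with blocks $M_k$ is just the paper's $\bm H^{(s,t)}$ with its two triangles folded together. You then bound its Frobenius norm by $n\sum_{k\ge m}a_k^2\lesssim nQ_2(m)$, bound the operator norm by the Frobenius norm, and finish with Hanson--Wright and a union bound over the $p^2$ coordinates, exactly as the paper does.

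The only caveat is your remark on $\beta=1$. There $a_k\asymp k^{-1}\log k$ gives $\sum_{k\ge m}a_k^2\asymp m^{-1}\log^2 m$, and a $\log^2 m$ factor cannot be absorbed by enlarging $c_2$. The paper's computation silently drops the same logarithm, so at $\beta=1$ the stated $Q_2(m)=m^{-1}$ should really read $m^{-1}\log^2 m$, as with $Q_1$ in Lemma \ref{lm:mtg.approx}.
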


In contrast to $D_i$, $\tilde{D}_{i,m}$ possesses the following characteristic: for any $i$ and $j$ such that $|i-j|\geq m$, $\tilde{D}_{i,m}$ is independent of $\tilde{D}_{j,m}$. To leverage this independence structure, we create conditionally independent triadic blocks (refer to \cite{berkes2014komlos}) and utilize the existing Gaussian approximation result from \cite{chernozhukov2017central} for independent random variables.

We decompose the partial sum $\tilde{T}_{n,m}$ into $q_n = \lfloor n/3m \rfloor$ many blocks $\{B_j\}_{j=0}^{q_n}$ as
\begin{equation}\label{eq:T.tilde.decomp}
\Tilde{T}_{n,m} = \sum_{j=0}^{q_n} B_j, \hspace{20pt}\text{where }B_j = \sum_{i=3jm+1}^{3(j+1)m \wedge n} \Tilde{D}_{i,m},\hspace{10pt} 0\leq j\leq q_n.
\end{equation}
The choice of block size depends on the strength of temporal dependence.
Intuitively, a larger block size is required to capture longer memory.
This implication is confirmed by the optimal order of $m$ in \eqref{eq:choice.of.eta.m}.
Each block $B_j$ can be further split into triadic sub-blocks,
\begin{align*}
    B_0 &= F_{0,1} + F_{0,2},\\
    B_j &= F_{j,0} + F_{j,1} + F_{j,2}, \quad 1\leq j\leq q_n,
\end{align*}
where
\begin{equation*}
F_{j,k} = \sum_{i=(3j+k)m+1}^{(3j+k+1)m \wedge n}\tilde{D}_{i,m}, \quad k=0,1,2.
\end{equation*}
Consider the partition of the index set
\[
[n] = \mathcal{I}^m_{0,1} \cup \mathcal{I}^m_{0,2} \cup \mathcal{I}^m_{1,0} \cup \mathcal{I}^m_{1,1} \cup \mathcal{I}^m_{1,2} \cup \cdots \cup \mathcal{I}^m_{q_n, 0} \cup \mathcal{I}^m_{q_n, 1} \cup \mathcal{I}^m_{q_n, 2} \cup \mathcal{I}^m_{q_n+1,0},
\]
where $\mathcal{I}^m_{j,k} = \{(3j+k-1)m+l: l\in[m]\} \cap [n]$.
Assume without loss of generality that $n$ is exactly divisible by $3m$, ensuring that the last three subsets are nonempty and each subset $\mathcal{I}^m_{j,k}$ has a cardinality of $m$.
We denote the set $\{\epsilon_i: i\in\mathcal{I}^m_{j,k}\}$ by $\bm{\epsilon}_{j,k}$. Recall that $\tilde{D}_{i,m}$ depends on $\epsilon_{i-m+1},\cdots,\epsilon_{i}$. Thus, $F_{j,0}$ depends on $\bm{\epsilon}_{j,0}$ and $\bm{\epsilon}_{j,1}$, $F_{j,1}$ on $\bm{\epsilon}_{j,1}$ and $\bm{\epsilon}_{j,2}$, and $F_{j,2}$ on $\bm{\epsilon}_{j,2}$ and $\bm{\epsilon}_{j+1,0}$. For $j \geq 1$, we have that $B_{j-1} \indep B_{j} \,\mid\, \bm{\epsilon}_{j,0}$.
Let
\[
\bm{\epsilon}_0 = \bigcup_{j=1}^{q_n+1}\bm{\epsilon}_{j,0}, \quad \bm{\epsilon}_1 = \bigcup_{j=0}^{q_n}\bm{\epsilon}_{j,1}, \quad \bm{\epsilon}_2 = \bigcup_{j=0}^{q_n}\bm{\epsilon}_{j,2}.
\]
Given $\bm{\epsilon}_0$, the blocks $\{B_j\}_{j=0}^{q_n}$ are independent. Thus, we derive a Gaussian approximation for
\[
\tilde{T}_{n,m} - \mathbb{E}[\tilde{T}_{n,m}\mid\bm{\epsilon}_0]
\]
using a theorem for independent random variables from~\cite{chernozhukov2017central}.

Conditional on $\bm{\epsilon}_{0}$, the variance of $B_0$ depends only on $\bm{\epsilon}_{1,0}$, and for $B_j$ ($j \geq 1$), it depends on $\bm{\epsilon}_{j,0}$ and $\bm{\epsilon}_{j+1,0}$. Thus, we define
\begin{align*}
    V^*_0(\bm{\epsilon}_{1,0}) &= \var(B_0 \mid \bm{\epsilon}_0),\\
    V^*_j(\bm{\epsilon}_{j,0},\bm{\epsilon}_{j+1,0}) &= \var(B_j \mid \bm{\epsilon}_0), \quad 1 \leq j \leq q_n.
\end{align*}
Consider $Y^*_0, Y^*_1, \ldots, Y^*_{q_n} \in \mathbb{R}^{p^2}$ as i.i.d.~standard Gaussian random vectors that are independent of $\bm{\epsilon}_0$. Given $\bm{\epsilon}_0$, the partial sum
\begin{equation}\label{eq:T.star}
T^*_n = [V^*_0(\bm{\epsilon}_{1,0})]^{1/2}Y^*_0 + \sum_{j=1}^{q_n} [V^*_j(\bm{\epsilon}_{j,0},\bm{\epsilon}_{j+1,0})]^{1/2}Y^*_j
\end{equation}
follows a Gaussian distribution with mean 0 and variance
\begin{equation*}
V^*_0(\bm{\epsilon}_{1,0}) + \sum_{j=1}^{q_n} V^*_j(\bm{\epsilon}_{j,0},\bm{\epsilon}_{j+1,0}),
\end{equation*}
and hence serves as a Gaussian approximation to $\tilde{T}_{n,m} - \mathbb{E}[\tilde{T}_{n,m}\mid\bm{\epsilon}_0]$ conditional on $\bm{\epsilon}_0$.
Note that the summands in \eqref{eq:T.star} are not independent, because the conditional variance of $B_j$ given $\bm{\epsilon}_0$ depends on both $\bm{\epsilon}_{j,0}$ and $\bm{\epsilon}_{j+1,0}$.
In order to build mutually independent triadic blocks, the following identity is useful.
As shown in Appendix \ref{app:var.identity}, we have
\begin{equation}\label{eq:var.identity}
V^*_0(\bm{\epsilon}_{1,0}) + \sum_{j=1}^{q_n} V^*_j(\bm{\epsilon}_{j,0},\bm{\epsilon}_{j+1,0}) = \sum_{j=1}^{q_n+1} V_j(\bm{\epsilon}_{j,0}),
\end{equation}
where $V_{q_n+1}(\bm{\epsilon}_{q_n+1,0}) = V^*_0(\bm{\epsilon}_{q_n+1,0})$ and $V_j(\bm{\epsilon}_{j,0}) = V^*_j(\bm{\epsilon}_{j,0},\bm{\epsilon}_{j,0})$ for $1 \leq j \leq q_n$. Let $Y_1,\cdots,Y_{q_n}, Y_{q_n+1}\in\mathbb{R}^{p^2}$  be a set of independent standard Gaussian random vectors. The partial sum
\begin{equation}\label{eq:TnY.def}
T_n^Y = \sum_{j=1}^{q_n+1} [V_j(\bm{\epsilon}_{j,0})]^{1/2}Y_j
\end{equation}
is identically distributed to $T_n^*$ conditional on $\bm{\epsilon}_0$. Thus, $T_n^Y$ is a Gaussian approximation to $\tilde{T}_{n,m} - \mathbb{E}[\tilde{T}_{n,m}\mid\bm{\epsilon}_0]$ if conditioned on $\bm{\epsilon}_0$.
The following lemma asserts that this Gaussian approximation holds with high probability in the probability space of $\bm{\epsilon}_0$.
The proof is given in Appendix \ref{sec:pf.cond.CCK}.

\begin{lemma}[Conditional Gaussian approximation]\label{lm:cond.CCK}
Assume that Conditions \ref{cond.A} and \ref{cond.G} hold. Then
\begin{equation*}
\sup_{u\in\mathbb{R}}\left|\mathbb{P}\left(n^{-1/2}|\tilde{T}_{n,m} - \mathbb{E}[\tilde{T}_{n,m}\mid \bm{\epsilon}_0]|_\infty \geq u \mid \bm{\epsilon_0}\right) - \mathbb{P}\left(n^{-1/2}|T^Y_n|_\infty\geq u\mid\bm{\epsilon}_0\right)\right|
\leq C\left(\frac{m^4 \log^5(pn)}{n}\right)^{1/4}
\end{equation*}
with probability no less than $1-4p^2\exp(-c_2\delta n^{1/2}/m^{1/2}) - 2\exp(-c_2 m^{2\epsilon})$, for any choice of $\delta>0$ and $0 < \epsilon < 1$. Here, $C>0$ is a constant related to $C_0$ and $c_0$, and $c_2>0$ is a constant related to $C_0$.
\end{lemma}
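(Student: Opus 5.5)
Conditionally on $\bm{\epsilon}_0$, the centered blocks $\xi_j = B_j - \mathbb{E}[B_j\mid\bm{\epsilon}_0]$, $0\le j\le q_n$, are independent mean-zero random vectors in $\mathbb{R}^{p^2}$. Their conditional covariance sum equals that of $T_n^Y$ by \eqref{eq:var.identity}. So the plan is to apply the high-dimensional CLT for independent, non-identically distributed vectors of \cite{chernozhukov2017central} (max-norm / hyperrectangle version) to $n^{-1/2}\sum_j \xi_j = (q_n m)^{-1/2}\cdot(3)^{-1/2}\sum_j\xi_j$, with $q_n+1\asymp n/m$ summands. That theorem gives a bound of the form
\[
C\left(\frac{B_n^2\log^7(pn)}{q_n}\right)^{1/6} \quad\text{or, in the sub-exponential version,}\quad C\left(\frac{B_n^4\log^5(pn)}{q_n}\right)^{1/4}\cdot(\text{polylog}),
\]
and we aim for the form matching $(m^4\log^5(pn)/n)^{1/4}$. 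This requires checking three conditions on the normalized summands $\xi_j/\sqrt{3m}$, all conditionally on $\bm{\epsilon}_0$ and on a high-probability event $\mathcal{E}$ of $\bm{\epsilon}_0$:
\begin{itemize}
\item[(i)] a lower bound $\min_{s,t} (3m(q_n+1))^{-1}\sum_j \var(\xi_{j,(s,t)}\mid\bm{\epsilon}_0)\ge c_0/2$;
\item[(ii)] a moment bound $\frac{1}{q_n}\sum_j \mathbb{E}[|\xi_{j,(s,t)}/\sqrt{3m}|^{2+k}\mid\bm{\epsilon}_0]\le B_n^k$ for $k=1,2$;
\item[(iii)] a conditional sub-exponential (Orlicz $\psi_1$) bound $\|\xi_{j,(s,t)}/\sqrt{3m}\|_{\psi_1\mid\bm{\epsilon}_0}\le B_n$, with $B_n\lesssim m$.
\end{itemize}

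For (ii) and (iii), note that each coordinate of $\tilde D_{i,m}$ is a centered quadratic form in the Gaussian innovations $\epsilon_{i-m+1},\dots,\epsilon_i$. Hence each coordinate of $B_j$ is a quadratic form $\bm\epsilon^\top M\bm\epsilon - \mathbb{E}$ in at most $4m$ innovations. Condition \ref{cond.A} bounds the Frobenius and operator norms of $M$: Frobenius by $O(m^{1/2})$ (up to constants in $C_0$), operator norm by $O(m)$ at worst. Splitting $\bm\epsilon$ into the conditioned part $\bm{\epsilon}_{j,0},\bm{\epsilon}_{j+1,0}$ and the free part gives the conditional summand. Its free part is a Gaussian chaos of order $\le2$ whose coefficients are linear in the conditioned variables. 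Hanson--Wright (Lemma \ref{lm:hanson.wright}) then gives the conditional $\psi_1$ norm, which is controlled by the operator norm and the Euclidean norm of the random linear coefficient. The linear coefficient is itself a Gaussian vector in the conditioned variables, so its size is bounded by $C\sqrt{m}\cdot m^{\epsilon}$ outside an event of probability $2\exp(-c_2m^{2\epsilon})$. This is the source of the second failure term, and it yields $B_n\lesssim m$ on $\mathcal{E}$.

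For (i), write the conditional variance of coordinate $(s,t)$ as its unconditional variance minus $\var(\mathbb{E}[\cdot\mid\bm{\epsilon}_0])$, plus a fluctuation. Concretely, the random quantity $n^{-1}\sum_j \var(B_{j,(s,t)}\mid\bm{\epsilon}_0)$ is a sum over $j$ of quadratic forms in $\bm{\epsilon}_0$. Its mean is close to the long-run variance in Condition \ref{cond.G} (up to $m$-truncation errors handled as in Lemma \ref{lm:m.dep.approx}, and the $O(1/m)$ boundary loss from conditioning only on one-third of the innovations). Its deviation is a sum of $q_n$ independent-in-$j$ (two-dependent) sub-exponential terms. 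Bernstein/Hanson--Wright and a union bound over $p^2$ coordinates show the deviation is at most $\delta$, outside probability $4p^2\exp(-c_2\delta n^{1/2}/m^{1/2})$. This is the first failure term. Choosing $\delta$ small relative to $c_0$ gives (i).

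Plugging $B_n\asymp m$ and $q_n\asymp n/m$ into the Chernozhukov--Chetverikov--Kato bound yields the stated rate $C(m^4\log^5(pn)/n)^{1/4}$ up to constants. The main obstacle is (i): $V_j$ conditions only on the $\bm{\epsilon}_{\cdot,0}$ sub-blocks, and the conditional variance must not lose too much mass relative to Condition \ref{cond.G}. I would handle this by showing $\mathbb{E}[\var(B_j\mid\bm\epsilon_0)] = \var(B_j)-\var(\mathbb{E}[B_j\mid\bm\epsilon_0])$. The second term involves only products coupling $\bm{\epsilon}_{j,0}$ with itself, which by Condition \ref{cond.A} contribute at most a constant fraction, hence is dominated after adjusting $c_0$.
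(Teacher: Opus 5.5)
Your architecture matches the paper's. You condition on $\bm{\epsilon}_0$, apply Theorem~\ref{prop:CCK17} to the centred blocks $B_j-\mathbb{E}[B_j\mid\bm{\epsilon}_0]$, and check its three conditions on an $\bm{\epsilon}_0$-event. The $4p^2\exp(-c_2\delta n^{1/2}/m^{1/2})$ term comes from Hanson--Wright for the $\bm{\epsilon}_0$-quadratic forms in $\mathcal{V}(\bm{\epsilon}_0)$, and the $2\exp(-c_2m^{2\epsilon})$ term from concentration of the conditional variances that control the $\psi_1$ norm. Your $\sqrt{3m}$ normalisation is cleaner than the paper's unnormalised blocks with $\beta_n\asymp m^2$.

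The step that fails as written is (i). The mean of $n^{-1}\sum_j\var(B_{j,(s,t)}\mid\bm{\epsilon}_0)$ is not the long-run variance up to an $O(1/m)$ loss. Because $F_{j,0}$ and $F_{j,1}$ have free current innovations, $\mathbb{E}[B_j\mid\bm{\epsilon}_0]=\Lambda_{j,2}(\bm{\epsilon}_{j+1,0})$, and this carries almost all of the variance of $F_{j,2}$. For $p=d=1$ with $\gamma_k=\sum_{l\ge0}a_la_{l+k}$, one has $\tilde D_{i,m}=\gamma_0(\epsilon_i^2-1)+2\epsilon_i\sum_{k=1}^{m-1}\gamma_k\epsilon_{i-k}$. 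Then $\mathbb{E}[\var(F_{j,2}\mid\bm{\epsilon}_0)]=4\sum_{r=1}^{m}\sum_{k=r}^{m-1}\gamma_k^2=o(m)$, while $\var(F_{j,2})\asymp m$. So conditioning removes asymptotically one third of every block's variance.

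Your fallback does not close the step either. You argue that Condition~\ref{cond.A} makes $\var(\mathbb{E}[B_j\mid\bm{\epsilon}_0])$ ``at most a constant fraction'', dominated ``after adjusting $c_0$''. But Condition~\ref{cond.A} gives only an absolute bound $C(C_0)\,m$. That need not lie below the lower bound $3mc_0$ that Condition~\ref{cond.G} gives for $\var(B_{j,(s,t)})$, and $c_0$ cannot be adjusted upward.

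What you need is the relative bound $\var(\Lambda_{j,2})\le\var(F_{j,2})=\tfrac13\var(B_j)$. It comes from the orthogonality structure. Given $\bm{\epsilon}_0$, the pieces $F_{j,0}$, $F_{j,1}$ and $F_{j,2}-\Lambda_{j,2}$ are centred and pairwise uncorrelated, so $\mathbb{E}[\var(F_{j,k}\mid\bm{\epsilon}_0)]=\var(F_{j,k})$ for $k=0,1$. Coordinatewise this gives
\[
\mathbb{E}\bigl[\var(B_{j,(s,t)}\mid\bm{\epsilon}_0)\bigr]\ \ge\ \var\bigl((F_{j,0})_{(s,t)}\bigr)+\var\bigl((F_{j,1})_{(s,t)}\bigr)=2m\,\mathbb{E}\bigl[(\tilde D_{1,m})_{(s,t)}^2\bigr]\ \ge\ 2m\bigl(c_0-Cm^{-\tilde\beta}\bigr).
\]
The last two steps use stationarity, \eqref{EQ_zeta}, Condition~\ref{cond.G}, and the tail bound $\sum_{j\ge m}\zeta_j\lesssim m^{-\tilde\beta}$ from the proof of Lemma~\ref{lm:m.dep.approx}. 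With this, your concentration step with $\delta<c_0/6$ gives (i) with constant $c_0/2$. You were right to flag (i) as the crux: the paper's verification of (M.1) glosses over the same point, equating $\mathbb{E}[\mathcal{V}(\bm{\epsilon}_0)]$ with $(q_n+1)^{-1}\sum_j\mathbb{E}[(B_j)^2_{(s,t)}]$.

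Two smaller points. First, your displayed bound $(B_n^4\log^5(pn)/q_n)^{1/4}$ with $B_n\asymp m$ and $q_n\asymp n/m$ gives $(m^5\log^5(pn)/n)^{1/4}$, not the stated rate. Theorem~\ref{prop:CCK17} has $\beta_n^2$ in place of your $B_n^4$, which yields $(m^3\log^5(pn)/n)^{1/4}$ and hence the claim.

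Second, (iii) has to hold simultaneously for all $q_n+1$ blocks and all $p^2$ coordinates. The second failure probability therefore needs a union bound, i.e.\ a factor of order $(q_n+1)p^2$ in front of $\exp(-c_2m^{2\epsilon})$. The paper omits this as well, so it is a defect of the stated probability rather than of your route.
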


Without conditioning on $\bm{\epsilon}_0$, $T_n^Y$ is not Gaussian.
However, due to the $m$-dependence structure, $\{\bm{\epsilon}_{j,0}\}_{j=1}^{q_n+1}$ are mutually independent.
Thus, all terms on the right-hand side of \eqref{eq:TnY.def} are also mutually independent.
We can therefore reapply the Gaussian approximation result for independent variables by \cite{chernozhukov2019improved} to $T_n^Y$, as in the following lemma.

\begin{lemma}[Unconditional Gaussian approximation]\label{lm:uncond.CCK}
Assume that Conditions \ref{cond.A} and \ref{cond.G} hold.
For each $j \in [q_n+1]$, let $Z_j \in \mathbb{R}^{p^2}$ be a Gaussian random vector with zero mean and variance $\var([V_j(\bm{\epsilon}_{j,0})]^{1/2}Y_j)$.
Then the normalized partial sum $S_n^Z = n^{-1/2}\sum_{j=1}^{q_n+1}Z_j$ satisfies
\begin{equation*}
\sup_{u\in\mathbb{R}}\left| \mathbb{P}(n^{-1/2}|T^Y_n|_\infty \geq u) - \mathbb{P}(|S_n^Z|_\infty \geq u)\right| \leq C\left(\frac{m^4 \log^5(pn)}{n}\right)^{1/4},
\end{equation*}
where $C>0$ is a constant related to $C_0$ and $c_0$.
\end{lemma}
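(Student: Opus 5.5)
The plan is to apply a high-dimensional CLT for sums of independent, non-identically distributed vectors, in the form of \cite{chernozhukov2017central} or \cite{chernozhukov2019improved}, to the summands $W_j = [V_j(\bm{\epsilon}_{j,0})]^{1/2}Y_j$, $j\in[q_n+1]$. These are mutually independent because the sets $\bm{\epsilon}_{j,0}$ are disjoint, each $V_j$ depends only on $\bm{\epsilon}_{j,0}$, and the $Y_j$ are independent standard Gaussians independent of $\bm{\epsilon}_0$. Each $W_j$ has mean zero and $\var(W_j)=\mathbb{E}[V_j(\bm{\epsilon}_{j,0})]$, which is exactly the covariance of $Z_j$. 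The comparison class is the hyperrectangles $\{x: |x|_\infty<u\}$, so the theorem yields the Kolmogorov bound directly, once it is rescaled by $q_n^{-1/2}$ (using $n = 3mq_n$).

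The CLT needs three inputs, which I will verify for the normalized vectors $(3m)^{-1/2}W_j$.

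\emph{(i) Non-degeneracy.} I need $\min_{(s,t)} q_n^{-1}\sum_j \var((3m)^{-1/2}W_{j,(s,t)})\ge b>0$. The average of $\mathbb{E}V_j$ is the variance of $(3m)^{-1/2}(B_j - \mathbb{E}[B_j\mid\bm{\epsilon}_0])$. I expect this to be close to the full block variance $\var(F_{j,0}+F_{j,1}+F_{j,2})/(3m)$, minus the part explained by $\bm{\epsilon}_{j,0},\bm{\epsilon}_{j+1,0}$, which only affect boundary sub-blocks. The full block variance converges to the long-run variance in Condition \ref{cond.G}, using the martingale structure of the $D_i$ and Lemmas \ref{lm:mtg.approx}--\ref{lm:m.dep.approx} to compare $\tilde D_{i,m}$ with $\mathcal{X}_i$. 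Conditioning on $\bm{\epsilon}_0$ only removes variance from a $1/3$ fraction of the block, so the conditional variance should stay bounded below by a constant multiple of $c_0$. This non-degeneracy step is the main obstacle. If the direct comparison fails, I will lower-bound the conditional variance using only the middle sub-block $F_{j,1}$, which is $\bm{\epsilon}_{j,1},\bm{\epsilon}_{j,2}$-measurable and hence independent of $\bm{\epsilon}_0$. Its variance is $\sum_{i\in\mathcal I_{j,1}}\var(\tilde D_{i,m})$, and I will show this is $\gtrsim m c_0$ via Condition \ref{cond.G}.

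\emph{(ii) Moment conditions.} I need $\max_{(s,t)} q_n^{-1}\sum_j\mathbb{E}|(3m)^{-1/2}W_{j,(s,t)}|^{2+k}\le B_n^k$ for $k=1,2$, together with a sub-exponential tail $\|\max_{(s,t)}|(3m)^{-1/2}W_{j,(s,t)}|\|_{\psi_1}\le B_n$. Conditional on $\bm{\epsilon}_{j,0}$, each coordinate is Gaussian with variance $(V_j)_{(s,t),(s,t)}$. This diagonal entry is a conditional variance of a block sum of quadratic forms in Gaussian innovations, so it is itself a degree-$\le 2$ polynomial in $\bm{\epsilon}_{j,0}$, normalized by $m$. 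Hanson--Wright (Lemma \ref{lm:hanson.wright}) with Condition \ref{cond.A} gives it bounded moments of all orders, so $W_{j,(s,t)}/\sqrt{m}$ is a Gaussian mixture with sub-Weibull tails. A union bound over the $p^2$ coordinates then gives a $\psi$-norm of the maximum of order $\log(pn)$. I will therefore take $B_n\asymp m\log(pn)$ or smaller; the $m$-power will be tuned so that the bound matches the stated rate.

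\emph{(iii) Assembly.} Plugging these into the bound $C(B_n^2\log^5(pq_n)/q_n)^{1/4}$ from \cite{chernozhukov2017central} and using $q_n\asymp n/m$ gives at most $C(m^4\log^5(pn)/n)^{1/4}$ after absorbing constants, with $C$ depending only on $C_0$ and $c_0$. The crude $B_n$ is deliberately generous, since the target rate has $m^4$.
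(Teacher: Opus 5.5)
Your architecture is the paper's. You apply Theorem \ref{prop:CCK17} to the independent summands $U_j=[V_j(\bm{\epsilon}_{j,0})]^{1/2}Y_j$. You get (M.1) from Condition \ref{cond.G} through the martingale-difference structure of $\tilde D_{i,m}$. You get the moment and tail conditions from conditional Gaussianity given $\bm{\epsilon}_{j,0}$, together with the fact that $(V_j)_{(s,t),(s,t)}$ is a positive semidefinite Gaussian quadratic form. The paper handles tails via Lemma \ref{lm:Lk.norm.U}, combining Gaussian moments of $Y_j$ with $\|V\|_k\le C_u k\|V\|_1$ from Lemma \ref{lm:Lk.norm}, which needs only $\mathbb{E}V_j$. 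Your Hanson--Wright route also works, but it forces you to estimate Frobenius and operator norms of the quadratic-form matrix. Your normalization by $\sqrt{3m}$ with $q_n+1\asymp n/m$ summands is cleaner than the paper's unnormalized bookkeeping with $\beta_n\asymp m^2$. Two steps, however, do not go through as written.

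\textbf{First, the union bound in (ii).} Condition (E.1) in Theorem \ref{prop:CCK17} is coordinatewise, so the union bound over $p^2$ coordinates is unnecessary, and it is harmful. The target has slack only in the power of $m$, not in $\log^5(pn)$. With $B_n\asymp m\log(pn)$ your assembly gives $(m^3\log^7(pn)/n)^{1/4}$. This is below $C(m^4\log^5(pn)/n)^{1/4}$ only when $m\gtrsim\log^2(pn)$, which the lemma does not assume. Without the union bound, the normalized summands satisfy $\|(3m)^{-1/2}(U_j)_{(s,t)}\|_{\psi_1}=O(1)$ and have bounded fourth moments. Hence $B_n=O(1)$, and you get $C(m\log^5(pn)/n)^{1/4}$, which implies the claim. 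Also, the $1/4$-power, $\log^5$ rate comes from \cite{chernozhukov2019improved}; \cite{chernozhukov2017central} only gives $(B_n^2\log^7(pn)/n)^{1/6}$.

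\textbf{Second, the fallback for (M.1).} Independence of $F_{j,1}$ from $\bm{\epsilon}_0$ does not by itself give $(V_j)_{(s,t),(s,t)}\ge\var((F_{j,1})_{(s,t)})$. You also need the conditional cross-covariances to vanish, and they do:
\begin{itemize}
\item $\cov(F_{j,0},F_{j,1}\mid\bm{\epsilon}_0)=0$ by the martingale-difference property.
\item Given $\bm{\epsilon}_{j+1,0}$, the term $F_{j,2}-\Lambda_{j,2}$ is linear in $\bm{\epsilon}_{j,2}$ (see \eqref{eq:F.j.2}). Meanwhile $F_{j,1}$ is a centered degree-two chaos with no linear part, so odd Gaussian moments kill that cross term.
\end{itemize}
This gives the three-term diagonal decomposition of $V_j^*$ that the paper records in the proof of Lemma \ref{lm:cond.CCK} (cf.\ \eqref{eq:V.partition}). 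It holds for every value of the arguments, hence also for $V_j(\bm{\epsilon}_{j,0})=V_j^*(\bm{\epsilon}_{j,0},\bm{\epsilon}_{j,0})$. Then $\mathbb{E}[(U_j)^2_{(s,t)}]\ge m\,\mathbb{E}[(\tilde D_{1,m})^2_{(s,t)}]\ge mc_0/2$ once $m$ exceeds a constant depending on $C_0,c_0$. This holds uniformly in $(s,t)$ by Lemma \ref{lm:m.dep.conv} and the tail estimate behind \eqref{eq:Sigma.S.close}, which is more direct than going through Lemmas \ref{lm:mtg.approx}--\ref{lm:m.dep.approx}. This lower bound is also sturdier than the paper's identification $\mathbb{E}[(U_j)^2_{(s,t)}]=\mathbb{E}[(B_j)^2_{(s,t)}]$. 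Conditioning on $\bm{\epsilon}_{j+1,0}$ does remove $\mathbb{E}[(\Lambda_{j,2})^2_{(s,t)}]$, but only the $F_{j,0}$ and $F_{j,1}$ parts are needed for the lower bound.
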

The proof of Lemma \ref{lm:uncond.CCK} is given in Appendix \ref{sec:pf.uncond.CCK}.
For finite $n$, $S_n^Z$ and its Gaussian approximation $Z$ are not identically distributed. However, their covariances converge as $n\rightarrow\infty$ (see Appendix \ref{sec:apdx.GA.comparison} for detailed discussion). The lemma below provides a finite sample bound for the Kolmogorov distance between $|Z|_\infty$ and $|S_n|_\infty$.
\begin{lemma}[Comparison of Gaussian approximations]\label{lm:GA.comparison}
    Assume that Conditions \ref{cond.A} and \ref{cond.G} hold. Then there exists a constant $C>0$ related to $C_0$ and $c_0$, such that
    \begin{equation*}
        \sup_{u\in \mathbb{R}}\left|\mathbb{P}(|Z|_\infty \geq u) - \mathbb P (|S_n^Z|_\infty \geq u)\right| \leq C \frac{\log(p)}{m^{\tilde\beta/2}}.
    \end{equation*}
\end{lemma}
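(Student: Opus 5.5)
Both $Z$ and $S_n^Z$ are centered Gaussian vectors in $\mathbb{R}^{p^2}$, so the plan is a Gaussian comparison argument driven by the entrywise distance between their covariance matrices. Let $\Sigma^Z = \var(S_n) = n^{-1}\var(\sum_{i=1}^n \mathcal{X}_i)$ and $\Sigma^{S} = \var(S_n^Z) = n^{-1}\sum_{j=1}^{q_n+1}\mathbb{E}[V_j(\bm{\epsilon}_{j,0})]$. We will invoke the Gaussian comparison inequality of \cite{chernozhukov2015comparison}: if both vectors have diagonal covariance entries bounded below by a constant and $\Delta = |\Sigma^Z - \Sigma^S|_\infty$, then
\[
\sup_{u}\left|\mathbb{P}(|Z|_\infty \ge u) - \mathbb{P}(|S_n^Z|_\infty\ge u)\right| \le C \Delta^{1/3}\log^{2/3}(p).
\]
To get the claimed $\log(p)\,m^{-\tilde\beta/2}$, I would instead use the sharper form $C\sqrt{\Delta}\log(p)$, as in the improved comparison bounds used in \cite{chernozhukov2019improved}. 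The $\max$ over $p^2$ coordinates only changes constants in the logarithm. So the goal reduces to showing
\[
\Delta \lesssim m^{-\tilde\beta},
\]
together with a lower bound on the diagonals.

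For the lower bound, Condition \ref{cond.G} gives $(\Sigma^Z)_{(s,t),(s,t)} \ge c_0/2$ for large $n$, since the left side of \eqref{eq:cond.G} is the limit of this variance. Once $\Delta$ is small, the same lower bound transfers to $\Sigma^S$.

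To bound $\Delta$, I would insert intermediate covariances and use the triangle inequality three times.
\begin{itemize}
\item[(i)] $\var(S_n)$ versus $n^{-1}\var(T_n)$. Using the $L^2$ version of Lemma \ref{lm:mtg.approx} with Cauchy--Schwarz, this difference is $O(\sqrt{Q_1(n)})$ per entry. That is at most $O(m^{-\tilde\beta/2})$, or better, since $m\le n$.
\item[(ii)] $n^{-1}\var(T_n)$ versus $n^{-1}\var(\tilde T_{n,m})$. Lemma \ref{lm:m.dep.approx} in $L^2$ gives $\|n^{-1/2}(T_n-\tilde T_{n,m})_{(s,t)}\|_2^2\lesssim Q_2(m)=m^{-\tilde\beta}$, with the cross term handled by Cauchy--Schwarz. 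Working directly with the martingale structure, $\var(T_n) = \sum_i \mathbb{E}D_iD_i^\top$, so the covariance difference is $n^{-1}\sum_i\mathbb{E}[D_iD_i^\top - \tilde D_{i,m}\tilde D_{i,m}^\top]$. This is bounded entrywise by $\|D_i-\tilde D_{i,m}\|_2\cdot\|D_i\|_2 \lesssim m^{-\tilde\beta/2}$.
\item[(iii)] $n^{-1}\var(\tilde T_{n,m})$ versus $\Sigma^S$. By the law of total variance, $\var(\tilde T_{n,m}) = \mathbb{E}\var(\tilde T_{n,m}\mid\bm\epsilon_0) + \var(\mathbb{E}[\tilde T_{n,m}\mid \bm\epsilon_0])$. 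By \eqref{eq:var.identity}, the first term equals $\sum_j \mathbb{E}V_j$, which is exactly $n\Sigma^S$. So this step reduces to showing
\[
n^{-1}\left|\var(\mathbb{E}[\tilde T_{n,m}\mid\bm\epsilon_0])\right|_\infty \lesssim m^{-\tilde\beta}.
\]
\end{itemize}

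Step (iii) is the main obstacle. The conditional mean $\mathbb{E}[F_{j,k}\mid \bm\epsilon_0]$ keeps only the part of $\tilde D_{i,m}$ that depends on the innovations in $\bm\epsilon_0$. Since $\tilde D_{i,m}$ is a quadratic form in $\epsilon_{i-m+1},\dots,\epsilon_i$ that must involve $\epsilon_i$, these surviving terms come from cross products $\epsilon_i$ times $A_t$-weighted $\epsilon_{i-t}$ with $i-t$ in an $\bm\epsilon_0$ block. They also include diagonal terms in which both factors lie in $\bm\epsilon_0$.

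There are two kinds of contribution to bound. Entries from $i\in\mathcal I^m_{j,0}$ with both indices in the same $\bm\epsilon_0$ block are not small on their own, but they are a fraction $1/3$ of the indices. These I expect to need regrouping: they already appear inside $V_j$, through $F_{j,0}$'s dependence on $\bm\epsilon_{j,0}$, so I would re-examine whether \eqref{eq:var.identity} is meant to absorb them. If not, the argument must redefine the split so that $\mathbb{E}[\tilde T_{n,m}\mid\bm\epsilon_0]$ only carries cross-block terms. The cross-block terms with lag at least $m$ are bounded using Condition \ref{cond.A}. Their coefficients behave like $\sum_{t\ge m}|A_t|\,|A_{t+k}|$ summed over the martingale lag, and squaring and summing over $k$ gives an order of $m^{-(4\beta-3)}\vee m^{-(2\beta-1)} = m^{-\tilde\beta}$. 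That matches the rate $Q_2(m)$ from Lemma \ref{lm:m.dep.approx}, which is why I expect this bookkeeping to close. Collecting (i)–(iii) gives $\Delta\lesssim m^{-\tilde\beta}$, and the comparison inequality then yields the lemma.
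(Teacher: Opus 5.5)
Step (iii) is where the argument fails, and regrouping cannot repair it, because the bound $n^{-1}|\var(\mathbb{E}[\tilde T_{n,m}\mid\bm\epsilon_0])|_\infty\lesssim m^{-\tilde\beta}$ is false. Every term of $\tilde D_{i,m}$ contains $\epsilon_i$, either linearly or through $\epsilon_i\epsilon_i^\top-I_d$. Hence only the sub-blocks $F_{j,2}$, whose indices lie in $\mathcal I^m_{j+1,0}$, have a nonzero conditional mean, and $\mathbb{E}[\tilde T_{n,m}\mid\bm\epsilon_0]=\sum_{j=0}^{q_n}\Lambda_{j,2}(\bm\epsilon_{j+1,0})$. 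For the $r$-th index of its block, $\Lambda_{j,2}$ keeps the diagonal term and the lags $1,\dots,r-1$ of $\tilde D_{i,m}$. These pieces are uncorrelated, so $n^{-1}\sum_j\var(\Lambda_{j,2})_{(s,t),(s,t)}$ is asymptotically one third of the long-run variance $\Sigma^*_{(s,t),(s,t)}\ge c_0$, where $\Sigma^*=\mathbb{E}[D_1D_1^\top]$. The ``fraction $1/3$'' you noticed is therefore of order one. Nor does \eqref{eq:var.identity} absorb it: that identity relates conditional variances only, and the $F_{j,2}$-part of $\mathbb{E}[V_j]$ is $\var(F_{j,2})-\var(\Lambda_{j,2})$, not $\var(F_{j,2})$.

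A clean check is $p=d=2$, $A_0=I_2$, $A_t=0$ for $t\ge1$, which satisfies Conditions \ref{cond.A} and \ref{cond.G}. Then $\tilde D_{i,m}=\mathcal X_i$, each $F_{j,2}$ is $\bm\epsilon_0$-measurable, $V_j=2m\var(\mathcal X_1)$ for $1\le j\le q_n$, and $\var(S_n^Z)\to\frac23\var(\mathcal X_1)=\frac23\var(Z)$. So the Kolmogorov distance between $|Z|_\infty$ and $|S_n^Z|_\infty$ stays bounded away from zero while $\log(p)/m^{\tilde\beta/2}\to0$. With $S_n^Z$ built from the conditional variances $V_j$, as in the statement, no argument can give the claimed bound.

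The paper's proof does not confront this term. In \eqref{eq:Sigma.S} it writes $\mathbb{E}[\sum_j\var(B_j\mid\bm\epsilon_0)]=\sum_{j,k}\mathbb{E}[F_{j,k}F_{j,k}^\top]$. In the proof of Lemma \ref{lm:uncond.CCK} it equates $\mathbb{E}[\mathbb{E}[(F_{j,2}-\Lambda_{j,2})^2_{(s,t)}\mid\bm\epsilon_{j+1,0}]]$ with $\mathbb{E}[(F_{j,2})^2_{(s,t)}]$. Both amount to $\var(\mathbb{E}[\tilde T_{n,m}\mid\bm\epsilon_0])=0$, which your law-of-total-variance step rightly does not assume. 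From there the paper compares the explicit entries of $\Sigma_S$ (your $\Sigma^S$) and of \eqref{eq:covariance.of.Z} with $\Sigma^*$ and applies Theorem \ref{thm:comparison}. The proof of Theorem \ref{thm:GA.cov} drops the same conditional mean when it evaluates $\mathbb{E}[\Delta(u)]$.

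What does work is to keep the conditional means inside the independent summands, i.e.\ to use $\Lambda_{j-1,2}(\bm\epsilon_{j,0})+[V_j(\bm\epsilon_{j,0})]^{1/2}Y_j$ for $1\le j\le q_n+1$. These are still independent over $j$. The added terms are second-order Gaussian chaoses of variance $O(m)$, so the conditions of Theorem \ref{prop:CCK17} are unaffected. The conditional step tolerates the shift because, with $\mu$ the rescaled conditional mean, $\{x:|x+\mu|_\infty\le u\}$ is a hyperrectangle. The Gaussian surrogate then has covariance exactly $n^{-1}\var(\tilde T_{n,m})=\mathbb{E}[\tilde D_{1,m}\tilde D_{1,m}^\top]$, which is what the paper's entrywise formula for $\Sigma_S$ actually computes, and your (iii) becomes an identity.

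Even so, (i)--(ii) as written give only $\Delta\lesssim m^{-\tilde\beta/2}$, hence $m^{-\tilde\beta/4}\log p$, because of the Cauchy--Schwarz cross terms. In (ii), use that $\tilde D_{i,m}=\mathbb{E}[D_i\mid\mathcal F^i_{i-m+1}]$ is an $L^2$ projection. Then $\mathbb{E}[D_iD_i^\top]-\mathbb{E}[\tilde D_{i,m}\tilde D_{i,m}^\top]=\mathbb{E}[(D_i-\tilde D_{i,m})(D_i-\tilde D_{i,m})^\top]$, which is entrywise $\lesssim Q_2(m)=m^{-\tilde\beta}$. In (i), computing $\var(Z)-\Sigma^*$ directly from \eqref{eq:covariance.of.Z}, as the paper does, gives a bound much smaller than $\sqrt{Q_1(n)}$.
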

Putting these lemmata together, we are ready to prove the Gaussian approximation result in Theorem \ref{thm:GA.cov}.

\begin{proof}[Proof of Theorem \ref{thm:GA.cov}]
We start by marginalizing out $\bm{\epsilon}_0$ in Lemma \ref{lm:cond.CCK}. Let
\begin{equation*}
\Delta(u) = \mathbb{P}\left(n^{-1/2}|\tilde{T}_{n,m} - \mathbb{E}[\tilde{T}_{n,m}\mid\bm{\epsilon}_0]|_\infty \geq u \mid \bm{\epsilon_0}\right) - \mathbb{P}\left(n^{-1/2}|T^Y_n|_\infty\geq u\mid\bm{\epsilon}_0\right).
\end{equation*}
Note that for any fixed $u$, $\Delta(u)$ is a random variable related to $\bm{\epsilon}_0$, and
\begin{equation*}
\mathbb{E}[\Delta(u)] = \mathbb{P}\left(n^{-1/2}|\tilde{T}_{n,m}|_\infty \geq u\right) - \mathbb{P}\left(n^{-1/2}|T^Y_n|_\infty\geq u\right).
\end{equation*}
As a first step, we shall bound $|\mathbb{E}[\Delta(u)]|$.
Note that $|\mathbb{E}[\Delta(u)]| \leq \mathbb{E}[|\Delta(u)|]$, and the convenient fact that $|\Delta(u)| \leq 1$.
We shall use the following property; for a bounded random variable $0\leq X\leq A$, $\mathbb{E}[X]\leq \mathbb{P}(X\geq x) A + \mathbb{P}(X\leq x) x$.
Indeed, for any $u$, $\delta>0$ and $0<\epsilon<1$,
\begin{align}
\begin{split}
    \mathbb{E}[|\Delta(u)|] &\leq \mathbb{P}\left(|\Delta(u)|\leq C\left(\frac{m^4\log^5(pn)}{n}\right)^{1/4}\right)C\left(\frac{m^4\log^5(pn)}{n}\right)^{1/4} + \mathbb{P}\left(|\Delta(u)|> C\left(\frac{m^4\log^5(pn)}{n}\right)^{1/4}\right)\\
    &\leq C\left(\frac{m^4\log^5(pn)}{n}\right)^{1/4} + 4p^2\exp(-c_2\delta n^{1/2}/m^{1/2}) + 2\exp(-c_2 m^{2\epsilon}).\label{eq:E.Delta}
\end{split}
\end{align}
The second inequality follows Lemma \ref{lm:cond.CCK}.
The first term on the right hand side of \eqref{eq:E.Delta} dominates for our choice of $m$ in \eqref{eq:choice.of.eta.m}. Therefore
\begin{equation}\label{eq:lemma.3.corollary}
\sup_{u\in\mathbb{R}}\left|\mathbb{P}\left(n^{-1/2}|\tilde{T}_{n,m}|_\infty \geq u\right) - \mathbb{P}\left(n^{-1/2}|T^Y_n|_\infty\geq u\right)\right| \lesssim \left(\frac{m^4\log^5(pn)}{n}\right)^{1/4}
\end{equation}
up to a constant related to $C_0$ and $c_0$.
Combine \eqref{eq:lemma.3.corollary} with Lemma \ref{lm:uncond.CCK}, we have the following Berry-Esseen bound for $S^Z_n$ as a Gaussian approximation to $n^{-1/2}\tilde{T}_{n,m}$:
\begin{equation}\label{eq:lemma.34}
\sup_{u\in\mathbb{R}}\left|\mathbb{P}\left(n^{-1/2}|\tilde{T}_{n,m}|_\infty \geq u\right) - \mathbb{P}\left(|S^Z_n|_\infty\geq u\right)\right| \lesssim \left(\frac{m^4\log^5(pn)}{n}\right)^{1/4}.
\end{equation}

As a result of \eqref{eq:Sigma.S.close}, the covariance matrix of $S^Z_n$ converges to that of $Z$ under our choice of $m$ in \eqref{eq:choice.of.eta.m}.
Condition \ref{cond.G} implies that the variances of all entries in $Z$ are lower bounded.
Therefore, the variances of all entries in $S^Z_n$ are also lower bounded by a positive constant related to $c_0$ when $n$ is large.
Similarly, under Condition \ref{cond.A}, the variances are upper bounded by a positive constant related to $C_0$ when $n$ is large.
By Theorem 3 of \cite{chernozhukov2015comparison}, for any $\eta>0$, we have the following anti-concentration inequality
\begin{equation}\label{eq:conc.ineq}
\sup_{u\in\mathbb{R}}\mathbb{P}\left(||S^Z_n|_\infty - u|<\eta\right) \lesssim \eta\sqrt{1\vee\log(p/\eta)}
\end{equation}
up to a constant that depends on $C_0$ and $c_0$.
Combining \eqref{eq:lemma.34} and \eqref{eq:conc.ineq} gives
\begin{equation}\label{eq:conc.ineq.Tnm}
\sup_{u\in\mathbb{R}}\mathbb{P}\left(|n^{-1/2}|\tilde{T}_{n,m}|_\infty - u|<\eta\right) \lesssim \eta\sqrt{1\vee\log(p/\eta)} + \left(\frac{m^4\log^5(pn)}{n}\right)^{1/4}.
\end{equation}
The validity of $n^{-1/2}\tilde{T}_{n,m}$ as an approximation of $S_n$ can be therefore verified as follows.
By triangle inequality, for every $\eta>0$,
\begin{align}
\begin{split}
&\sup_{u\in\mathbb{R}}\left|\mathbb{P}\left(|S_n|_\infty\geq u\right) - \mathbb{P}\left(n^{-1/2}|\tilde{T}_{n,m}|_\infty\geq u\right) \right|\\
\leq & \mathbb{P}\left(|S_n - n^{-1/2}\tilde{T}_{n,m}|_\infty \geq \eta\right) + \sup_{u\in\mathbb{R}}\mathbb{P}\left(|n^{-1/2}|\tilde{T}_{n,m}|_\infty - u|<\eta\right)\\
\leq & \mathbb{P}\left(|S_n - n^{-1/2}T_n|_\infty \geq \eta/2\right) + \mathbb{P}\left(n^{-1/2}|T_n - \tilde{T}_{n,m}|_\infty \geq \eta/2\right)
 + \sup_{u\in\mathbb{R}}\mathbb{P}\left(|n^{-1/2}|\tilde{T}_{n,m}|_\infty - u|<\eta\right).\label{eq:lemma.12}
\end{split}
\end{align}
Combining \eqref{eq:lemma.12} with \eqref{eq:lemma.34}, by Lemmas \ref{lm:mtg.approx}, \ref{lm:m.dep.approx}, \ref{lm:GA.comparison} and \eqref{eq:conc.ineq.Tnm}, for any choice of $\eta>0$ such that $\eta^2/Q_1(n)\gtrsim 1$ and $\eta^2/Q_2(m)\gtrsim 1$, up to a positive constant related to $C_0$ and $c_0$,
\begin{equation}
    \rho(n) \lesssim p^2\exp\left(-c_1 \eta / Q_1^{1/2}(n)\right) + p^2\exp\left(-c_2 \eta / Q_2^{1/2}(m)\right) + 
    \eta\sqrt{1\vee\log(p/\eta)} +\left(\frac{m^4\log^5(pn)}{n}\right)^{1/4} + \frac{\log(p)}{m^{\tilde\beta/2}}.\label{eq:rho.n.bound.eta}
\end{equation}

We finish the proof by selecting the optimal order of parameters $\eta$ and $m$, such that the bound of $\rho(n)$ given in \eqref{eq:rho.n.bound.eta} is minimized.
For constants $C_\eta>0$ and $C_m > 0$, let
\begin{equation}\label{eq:choice.of.eta.m}
\eta = C_\eta \frac{\log^{(3\tilde{\beta}+8) / (4\tilde{\beta}+8)}(pn)}{n^{\tilde{\beta} / (4\tilde{\beta}+8)}},\quad m=C_m(n\log(pn))^{1/(2\tilde{\beta}+4)}.
\end{equation}
The detailed reasoning of such a choice is given in Appendix \ref{sec:choice.of.eta.m}.
It can be verified that $m\rightarrow \infty$, $m/n \rightarrow 0$, and the right hand side of \eqref{eq:E.Delta} is dominated by the first term.
It also holds that $\eta^2/Q_1(n)\gtrsim 1$ and $\eta^2/Q_2(m)\gtrsim 1$, which is the assumption to \eqref{eq:rho.n.bound.eta}.
We can thus obtain the upper bound of $\rho(n)$ given in Theorem \ref{thm:GA.cov}.
A longer temporal dependence (i.e., smaller $\tilde\beta$) indeed corresponds to a larger choice of $m$.
\end{proof}

\section{Block Bootstrap for Covariance Matrix}\label{sec:bootstrap.cov}

In Theorem \ref{thm:GA.cov}, we have shown that $\mathbb{P}(n^{1/2}|\hat{\Sigma}_n - \Sigma|_\infty \geq u)$ can be well approximated by $\mathbb{P}(|Z|_\infty \geq u)$.
In practice, however, the covariance of the Gaussian approximation $Z$ contains true underlying autocovariance matrices that are generally unknown; see \eqref{eq:covariance.of.Z}.
This section provides an attainable data-dependent bootstrap method to approximate $\mathbb{P}(|Z|_\infty \geq u)$, and hence $\mathbb{P}(n^{1/2}|\hat{\Sigma}_n - \Sigma|_\infty \geq u)$.

As a vital step in realizing high-dimensional CLT, various bootstrap methods have been explored in the literature.
\cite{chernozhukov2013gaussian} and \cite{chernozhukov2017central} applied multiplier and empirical bootstrap techniques on independent random variables.
The same method was extended to short-range dependent time series by \cite{zhang2014bootstrapping} using blockwise bootstrap procedure.
Alternatively, \cite{zhang2017gaussian} implemented a batched-mean estimator of long-run covariance matrices for short-memory high-dimensional time series.
These bootstrap results require either independence or weak temporal dependence, and hence cannot be extended to the long-range dependence setting.
\cite{hall1998sampling} proposed a block sampling method for the sample mean of long-memory processes.
This result was generalized by \cite{zhang2013block} to nonlinear transforms of linear processes.
Both results on validity of block sampling method are asymptotic.
By applying block sampling technique on covariance of Gaussian linear process, we develop a finite sample result on validity.

Let $l=l_n$ be the size of sampling window.
Assume that $l_n\rightarrow\infty$ and $l_n/n\rightarrow 0$.
To ease the notation, the subscript will be omitted hereafter.
Let
\[
\check{B}_{i,l} = \sum_{j=i-l+1}^{i} \check{\mathcal{X}}_j \quad \text{where } \check{\mathcal{X}}_j = \vectorize(X_j X_j^\top - \hat{\Sigma}_n).
\]
Then $\mathbb{P}(|Z|_\infty \leq u)$ is approximated by the following empirical distribution function,
\begin{equation*}
    \hat{F}_{n,l}(u) = \frac{1}{n-l+1}\sum_{i=l}^n \mathbf{1}\{l^{-1/2}|\check{B}_{i,l}|_\infty \leq u\}.
\end{equation*}
The validity of block sampling method is measured by
\begin{equation*}
    \rho_B(n) = \sup_{u\in\mathbb{R}}\left|\hat{F}_{n,l}(u) - \mathbb{P}\left(n^{1/2}|\hat{\Sigma}_n-\Sigma|_\infty\leq u\right)\right|.
\end{equation*}
If $\rho_B(n)\rightarrow 0$ in probability, then the block sampling method is valid.
As a primary objective of this section, we will give an explicit rate of convergence for $\rho_B(n)$.

Recall \eqref{eq:D.tilde} for the definition of $\tilde{D}_{j,m}$. Let
\begin{equation*}
    \tilde{B}_{i,l,m} = \sum_{j=i-l+1}^{i} \tilde{D}_{j,m}.
\end{equation*}
The following lemma shows that $\tilde{B}_{i,l,m}$ is a good $m$-dependent estimate to $\check{B}_{i,l}$.
\begin{lemma}\label{lm:B.check.B.tilde}
    Under Conditions \ref{cond.A} and \ref{cond.G}, for any $\delta>0$, there exists a constant $C>0$ related to $C_0$ and $c_0$, and constants $c_1>0$ and $c_2>0$ that are related to $C_0$, such that
    \begin{multline*}
        \mathbb{P}\left(l^{-1/2}|\check{B}_{i,l} - \tilde{B}_{i,l,m}|_\infty > \delta\right) \leq C \Psi(p,n) + C\delta^{-1}\sqrt{\frac{l\log(p)}{n}} \\
        + 2p^2\exp\left(-c_1\left(\frac{\delta^2}{Q_1(l)} \wedge \frac{\delta}{\sqrt{Q_1(l)}}\right)\right) + 2p^2\exp\left(-c_2\left(\frac{\delta^2}{Q_2(m)} \wedge \frac{\delta}{\sqrt{Q_2(m)}}\right)\right).
    \end{multline*}
\end{lemma}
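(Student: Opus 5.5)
We split $\check{B}_{i,l}-\tilde{B}_{i,l,m}$ into three pieces and bound each separately, with $\delta/3$ allotted to each. Since $\check{\mathcal{X}}_j = \mathcal{X}_j - \vectorize(\hat\Sigma_n-\Sigma)$, we have
\begin{equation*}
\check{B}_{i,l} - \tilde{B}_{i,l,m} = \Bigl(\sum_{j=i-l+1}^{i}\mathcal{X}_j - \sum_{j=i-l+1}^{i} D_j\Bigr) + \Bigl(\sum_{j=i-l+1}^{i} (D_j - \tilde D_{j,m})\Bigr) - l\,\vectorize(\hat\Sigma_n - \Sigma).
\end{equation*}
The three pieces are handled as follows.

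\emph{First term.} By stationarity, $\sum_{j=i-l+1}^{i}\mathcal{X}_j$ has the same law as $l^{1/2}S_l$. The martingale part $\sum_j D_j$ is the corresponding $T_l$ shifted in time. Lemma \ref{lm:mtg.approx} with $n$ replaced by $l$ then gives $\mathbb{P}(l^{-1/2}|\cdot|_\infty > \delta/3) \le 2p^2\exp\{-c_1(\delta^2/Q_1(l)\wedge \delta/\sqrt{Q_1(l)})\}$. The factor $1/3$ is absorbed into $c_1$.

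\emph{Second term.} By stationarity again, this piece has the law of $T_l - \tilde T_{l,m}$. Lemma \ref{lm:m.dep.approx} with $n$ replaced by $l$ requires $m\le l$, which holds for our orders of $m$ and $l$. Because its bound depends only on $m$, this gives the $Q_2(m)$ exponential term.

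\emph{Third term.} This is the main obstacle, since it must produce the $C\Psi(p,n)$ and $C\delta^{-1}\sqrt{l\log(p)/n}$ terms. We have $l^{-1/2}\cdot l|\hat\Sigma_n-\Sigma|_\infty = \sqrt{l/n}\,|S_n|_\infty$. To bound $\mathbb{P}(\sqrt{l/n}|S_n|_\infty > \delta/3)$, we first pass to the Gaussian using Theorem \ref{thm:GA.cov}, which costs $C\Psi(p,n)$. Next we use Markov's inequality, giving $\mathbb{P}(|Z|_\infty > \delta\sqrt{n/l}/3) \le 3\delta^{-1}\sqrt{l/n}\,\mathbb{E}|Z|_\infty$. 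Finally, the maximal inequality for Gaussians gives $\mathbb{E}|Z|_\infty \lesssim \sqrt{\log(p^2)}\max_{s,t}\var(Z_{(s,t)})^{1/2}$. The entrywise variances of $Z$ are those of $S_n$, and under Condition \ref{cond.A} they are uniformly bounded by a constant depending on $C_0$. This holds because $\beta>3/4$ makes $\sum_k|\Gamma_k|_\infty^2$ summable, which in turn follows from $|(\Gamma_k)_{st}|\lesssim k^{1-2\beta}$ (the proof of Lemma \ref{lm:mtg.approx} already provides such bounds). Together these give $C\delta^{-1}\sqrt{l\log(p)/n}$. Condition \ref{cond.G} enters only through the use of Theorem \ref{thm:GA.cov}.

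Summing the three bounds with the union bound completes the proof. The one point that needs care is uniformity in $i$. Each of the three bounds uses either stationarity or a global quantity, namely $S_n$. Hence none depends on $i$, including for windows near the boundary, $i\ge l$.
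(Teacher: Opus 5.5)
Your proof is the same as the paper's. The paper also peels off the centering term $\check B_{i,l}-B_{i,l}=-l\,\vectorize(\hat\Sigma_n-\Sigma)$, where $B_{i,l}=\sum_{j=i-l+1}^i\mathcal X_j$. It bounds that term by Theorem~\ref{thm:GA.cov}, Markov's inequality and $\mathbb{E}|Z|_\infty\lesssim\sqrt{\log p}$, and it controls $B_{i,l}-\tilde B_{i,l,m}$ by Lemmas~\ref{lm:mtg.approx} and~\ref{lm:m.dep.approx} with $n$ replaced by $l$.

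One small correction: the entrywise bound is $|(\Gamma_k)_{st}|\lesssim k^{-(\beta\wedge(2\beta-1))}$, with an extra $\log k$ at $\beta=1$, not $k^{1-2\beta}$. The latter fails for $\beta>1$, since the $t=0$ term alone gives order $k^{-\beta}$. Square-summability of $|\Gamma_k|_\infty$ still holds because $\beta>3/4$, so your bound on the variances of $Z$ stands.
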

The proof of Lemma \ref{lm:B.check.B.tilde} is deferred to Appendix \ref{sec:apdx.B.check.B.tilde}.
Following the same proof strategy as Theorem \ref{thm:GA.cov}, the following Gaussian approximation result on $\tilde{B}_{i,l,m}$ holds.
\begin{lemma}[Gaussian Approximation for $\tilde{B}_{i,l,m}$]\label{lm:B.check.GA}
    Under Conditions \ref{cond.A} and \ref{cond.G}, for any $l\leq i\leq n$ and a constant $C>0$ related to $C_0$ and $c_0$,
    \begin{equation*}
    \sup_{u\in\mathbb{R}} \left|\mathbb{P}\left(l^{-1/2}|\tilde{B}_{i,l,m}|_\infty \leq u\right) - \mathbb{P}\left(|Z|_\infty \leq u\right) \right| \leq C\Psi(p,l).
    \end{equation*}
\end{lemma}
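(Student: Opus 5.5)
The plan is to rerun the proof of Theorem \ref{thm:GA.cov} with the sample size $n$ replaced by the window length $l$, applied to the window $\{i-l+1,\dots,i\}$. By stationarity of $\{\epsilon_t\}$, the law of $\tilde{B}_{i,l,m}=\sum_{j=i-l+1}^i\tilde{D}_{j,m}$ coincides with that of $\tilde{T}_{l,m}=\sum_{j=1}^l\tilde{D}_{j,m}$. It therefore suffices to prove the bound for $i=l$, i.e.\ to compare $l^{-1/2}|\tilde{T}_{l,m}|_\infty$ with $|Z|_\infty$. The block parameter $m$ may differ from the one used for $n$; I will choose it as a function of $l$ in the form \eqref{eq:choice.of.eta.m} with $n$ replaced by $l$. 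The Gaussian vector $Z$ is fixed, being the Gaussian counterpart of $S_n$.

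\textbf{Step 1 (triadic decomposition on the window).} Partition $\{1,\dots,l\}$ into $q_l=\lfloor l/3m\rfloor$ triadic blocks exactly as in \eqref{eq:T.tilde.decomp}. Conditioning on the corresponding $\bm{\epsilon}_0$, Lemma \ref{lm:cond.CCK} applies with $n\to l$ and gives the conditional Gaussian approximation by $T_l^Y$, with error $C(m^4\log^5(pl)/l)^{1/4}$ on a high-probability event. Marginalizing exactly as in \eqref{eq:E.Delta} then yields \eqref{eq:lemma.3.corollary} with $n\to l$. Since the $V_j(\bm{\epsilon}_{j,0})$ are independent, Lemma \ref{lm:uncond.CCK} with $n\to l$ gives an approximation by $S_l^Z=l^{-1/2}\sum_j Z_j$ with the same rate.

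\textbf{Step 2 (return to $Z$).} Use Lemma \ref{lm:GA.comparison} to replace $S_l^Z$ by $Z$ at cost $\log(p)/m^{\tilde\beta/2}$. Here I must check that the argument of Appendix \ref{sec:apdx.GA.comparison} only uses that the covariance of $S_l^Z$ is within $O(m^{-\tilde\beta/2})$ entrywise of the long-run covariance in \eqref{eq:covariance.of.Z}. If instead the comparison is to the finite-$n$ covariance of $S_n$, I add the discrepancy between $\cov(S_n)$ and its limit, which is of order $n^{-\tilde\beta/2}\le m^{-\tilde\beta/2}$. Either way the error stays of order $\log(p)/m^{\tilde\beta/2}$ by the Gaussian comparison inequality of \cite{chernozhukov2015comparison}. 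Condition \ref{cond.G} keeps the variances bounded below, so that comparison inequality is valid.

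\textbf{Step 3 (optimize $m$).} Here there is no $\eta$-smoothing step, because no martingale or $m$-dependence error appears: $\tilde{B}_{i,l,m}$ is already the $m$-dependent object. The total bound is therefore
\[
C\left(\frac{m^4\log^5(pl)}{l}\right)^{1/4}+C\frac{\log p}{m^{\tilde\beta/2}}+4p^2e^{-c_2\delta l^{1/2}/m^{1/2}}+2e^{-c_2m^{2\epsilon}}.
\]
Taking $m=C_m(l\log(pl))^{1/(2\tilde\beta+4)}$ balances the first two terms. The resulting rate is at most $\Psi(p,l)$, since the exponent of $\log$ is no larger and the power of $l$ is $\tilde\beta/(2\tilde\beta+4)$, which is at least $\tilde\beta/(4\tilde\beta+8)$. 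The exponential terms are negligible for suitable $\delta$, as in the proof of Theorem \ref{thm:GA.cov}.

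The main obstacle is Step 2. Lemma \ref{lm:GA.comparison} was stated for $S_n^Z$ built from $n$ with $m$ tied to $n$, so I must verify that its proof depends only on $m$ (through the truncation error $m^{-\tilde\beta/2}$) and not on the total length. The other point to verify in Step 1 is that the high-probability event of Lemma \ref{lm:cond.CCK}, which controls the conditional variances, keeps the same form when $n$ is replaced by $l$.
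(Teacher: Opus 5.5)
Your outline is the paper's own proof of this lemma almost step for step. It uses triadic blocks on the window, the conditional and unconditional CCK bounds with $n$ replaced by $l$, and then Lemma~\ref{lm:GA.comparison}, using that $\cov(S_l^Z)$ depends only on $m$, with $m$ as in \eqref{eq:choice.eta.m.boot}. But the step ``marginalizing as in \eqref{eq:E.Delta} yields \eqref{eq:lemma.3.corollary} with $n\to l$'' does not hold, and the paper's argument has the same hole.

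Lemma~\ref{lm:cond.CCK} concerns the conditionally centered sum. Averaging $\Delta(u)$ over $\bm{\epsilon}_0$ therefore controls $\mathbb{P}(l^{-1/2}|\tilde{T}_{l,m}-\mathbb{E}[\tilde{T}_{l,m}\mid\bm{\epsilon}_0]|_\infty\ge u)$, not $\mathbb{P}(l^{-1/2}|\tilde{T}_{l,m}|_\infty\ge u)$. The conditional mean $\sum_j\Lambda_{j,2}(\bm{\epsilon}_{j+1,0})$ is not negligible. Take $p=d=1$, $A_0=1$ and $A_t=0$ for $t\ge 1$, which satisfies Conditions~\ref{cond.A} and~\ref{cond.G}. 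Then $\tilde{D}_{i,m}=\epsilon_i^2-1$ and every $F_{j,2}$ is $\bm{\epsilon}_0$-measurable. So $l^{-1/2}\mathbb{E}[\tilde{T}_{l,m}\mid\bm{\epsilon}_0]$ has variance about $2/3$, against $2$ for $l^{-1/2}\tilde{T}_{l,m}$.

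The same omission defeats the check you planned in Step~2. We have $\cov(S_l^Z)=l^{-1}\sum_j\mathbb{E}[V_j]=l^{-1}\sum_j\{\var(B_j)-\var(\Lambda_{j,2})\}$. In this example that is about $4/3$, not within $O(m^{-\tilde\beta})$ of $\Sigma^*=2$. In the paper this is masked by a compensating slip: \eqref{eq:Sigma.S} and the proof of Lemma~\ref{lm:uncond.CCK} replace $\mathbb{E}[\var(F_{j,2}\mid\bm{\epsilon}_0)]$ by $\mathbb{E}[F_{j,2}F_{j,2}^\top]$.

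The repair is to carry the conditional mean along. Theorem~\ref{prop:CCK17} holds over all hyperrectangles. Given $\bm{\epsilon}_0$, the set $\{x:|x+a|_\infty\le u\}$ with $a=l^{-1/2}\mathbb{E}[\tilde{T}_{l,m}\mid\bm{\epsilon}_0]$ is one. So the conditional step compares $l^{-1/2}|\tilde{T}_{l,m}|_\infty$ directly with $l^{-1/2}|\mathbb{E}[\tilde{T}_{l,m}\mid\bm{\epsilon}_0]+T_l^Y|_\infty$, at the same rate.

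After the re-indexing behind \eqref{eq:var.identity}, the latter is $l^{-1/2}|\sum_j W_j|_\infty$ with $W_j=\Lambda_{j-1,2}(\bm{\epsilon}_{j,0})+[V_j(\bm{\epsilon}_{j,0})]^{1/2}Y_j$. These terms are independent across $j$ and satisfy the moment conditions by Lemmas~\ref{lm:Lk.norm} and~\ref{lm:Lk.norm.U}. Since the $\tilde{D}_{i,m}$ are stationary martingale differences, $\sum_j\var(W_j)=\sum_j\var(B_j)=l\,\mathbb{E}[\tilde{D}_{1,m}\tilde{D}_{1,m}^\top]$. This is entrywise within $O(l\,m^{-\tilde\beta})$ of $l\Sigma^*$. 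The unconditional CCK step and Theorem~\ref{thm:comparison} then deliver your Step~3 bound.

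Two minor corrections there:
\begin{itemize}
\item With your $m$, both terms carry $l^{-\tilde\beta/(4\tilde\beta+8)}$, not $l^{-\tilde\beta/(2\tilde\beta+4)}$, so the first term is exactly $\Psi(p,l)$.
\item The entrywise covariance error is $O(m^{-\tilde\beta})$. It is the $\sqrt{\Delta_\Sigma}\log p$ bound of Theorem~\ref{thm:comparison}, not the cube-root bound of \cite{chernozhukov2015comparison}, that turns it into $\log(p)/m^{\tilde\beta/2}$.
\end{itemize}
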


The proof of Lemma \ref{lm:B.check.GA} relies on the same technique as in Section \ref{sec:GA.cov}.
Analogous to the manipulation of $\Tilde{T}_{n,m}$, we divide $\Tilde{B}_{i,l,m}$ into triadic blocks given each fixed $i$, and exploit the $m$-dependent properties of $\tilde{D}_{j,m}$.
The detailed proof is deferred to Appendix \ref{sec:apdx.B.check.GA}.

The following corollary of Lemmas \ref{lm:B.check.B.tilde} and \ref{lm:B.check.GA} specifies an upper bound for the Kolmogorov distance between $l^{-1/2}|\check{B}_{i,l}|_\infty$ and $l^{-1/2}|\tilde{B}_{i,l,m}|_\infty$.
\begin{corollary}\label{cor:B.check.B.tilde}
    Under conditions \ref{cond.A} and \ref{cond.G}, up to a constant $C>0$ related to $C_0$ and $c_0$,
    \begin{equation}\label{eq:cor.B.check.B.tilde}
        \sup_{u\in\mathbb{R}}\left| \mathbb{P}\left(l^{-1/2}|\check{B}_{i,l}|_\infty \leq u\right) - \mathbb{P}\left(l^{-1/2}|\tilde{B}_{i,l,m}|_\infty \leq u\right) \right| \leq C\Psi(p,l).
    \end{equation}
\end{corollary}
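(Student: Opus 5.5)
We prove Corollary \ref{cor:B.check.B.tilde} with the same perturbation-plus-anti-concentration argument used in \eqref{eq:lemma.12}. For any $u\in\mathbb{R}$ and $\delta>0$, the triangle inequality for $|\cdot|_\infty$ gives
\[
\left|\mathbb{P}\left(l^{-1/2}|\check{B}_{i,l}|_\infty \leq u\right) - \mathbb{P}\left(l^{-1/2}|\tilde{B}_{i,l,m}|_\infty \leq u\right)\right| \leq \mathbb{P}\left(l^{-1/2}|\check{B}_{i,l}-\tilde{B}_{i,l,m}|_\infty>\delta\right) + \mathbb{P}\left(\left|l^{-1/2}|\tilde{B}_{i,l,m}|_\infty-u\right|\leq\delta\right).
\]
Lemma \ref{lm:B.check.B.tilde} controls the first term directly.

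For the second term, we transfer anti-concentration from $|Z|_\infty$ using Lemma \ref{lm:B.check.GA}. Since the Kolmogorov distance between $l^{-1/2}|\tilde{B}_{i,l,m}|_\infty$ and $|Z|_\infty$ is at most $C\Psi(p,l)$, we get $\mathbb{P}(|l^{-1/2}|\tilde B_{i,l,m}|_\infty-u|\le\delta)\le \mathbb{P}(||Z|_\infty-u|\le\delta)+2C\Psi(p,l)$. Under Conditions \ref{cond.A} and \ref{cond.G}, the coordinate variances of $Z$ are bounded above and below. Theorem 3 of \cite{chernozhukov2015comparison} then bounds the Gaussian term by $C\delta\sqrt{1\vee\log(p/\delta)}$, exactly as in \eqref{eq:conc.ineq}. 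Taking the supremum over $u$,
\[
\sup_u|\cdots|\lesssim \Psi(p,n)+\Psi(p,l)+\delta^{-1}\sqrt{\tfrac{l\log p}{n}}+\delta\sqrt{\log(p/\delta)}+p^2e^{-c_1(\delta^2/Q_1(l)\wedge\delta/\sqrt{Q_1(l)})}+p^2e^{-c_2(\delta^2/Q_2(m)\wedge\delta/\sqrt{Q_2(m)})}.
\]

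It remains to choose $\delta$. We take $\delta$ of the same form as $\eta$ in \eqref{eq:choice.of.eta.m} with $n$ replaced by $l$, namely $\delta\asymp \log^{(3\tilde\beta+8)/(4\tilde\beta+8)}(pl)\,l^{-\tilde\beta/(4\tilde\beta+8)}$. We also take $m$ of the order in \eqref{eq:choice.of.eta.m} with $n$ replaced by $l$. This is the choice under which Lemma \ref{lm:B.check.GA} holds, and it gives $m\to\infty$ and $m\le l$. With these choices, $\delta\sqrt{\log(p/\delta)}\lesssim\Psi(p,l)$. In addition, $\delta^2/Q_1(l)$ and $\delta^2/Q_2(m)$ exceed a large multiple of $\log(pn)$, so the exponential terms are $O(n^{-1})$ and negligible. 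Since $l\le n$ and $\Psi$ is decreasing in its second argument up to logs, we also have $\Psi(p,n)\lesssim\Psi(p,l)$.

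The main obstacle is the term $\delta^{-1}\sqrt{l\log(p)/n}$, which comes from centering at $\hat\Sigma_n$ rather than $\Sigma$. Under long memory, $n^{1/2}|\hat\Sigma_n-\Sigma|_\infty$ is $O_P(\sqrt{\log p})$ by Theorem \ref{thm:GA.cov}, so this term is at most $\Psi(p,l)$ only when $l/n$ is small enough relative to $\delta^2\Psi(p,l)^2$. For the block lengths used in Theorem \ref{thm:bootstrap.cov} I would check this condition explicitly. If it fails, I would instead keep the term in the bound and absorb it later when balancing $l$ in Theorem \ref{thm:bootstrap.cov}; the optimal $l$ there appears chosen precisely to make this term comparable to $\Psi(p,l)$. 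Granting that, every term is $O(\Psi(p,l))$, which yields \eqref{eq:cor.B.check.B.tilde}.
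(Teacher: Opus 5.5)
Your argument is the paper's proof step for step. You use the same split into $\mathbb{P}(l^{-1/2}|\check B_{i,l}-\tilde B_{i,l,m}|_\infty>\delta)$ plus an anti-concentration term. You bound the first piece with Lemma~\ref{lm:B.check.B.tilde} and the second with Lemma~\ref{lm:B.check.GA} and Theorem~3 of \cite{chernozhukov2015comparison}, and you use the tuning \eqref{eq:choice.eta.m.boot}. The paper then simply asserts that all terms, including $\delta^{-1}\sqrt{l\log(p)/n}$, are dominated by $\Psi(p,l)$. You were right to single that term out, but the step you ``grant'' is exactly where the argument fails, so the proposal is not yet a proof. With $\delta$ as in \eqref{eq:choice.eta.m.boot},
\[
\frac{\delta^{-1}\sqrt{l\log(p)/n}}{\Psi(p,l)}\asymp\frac{l^{(\tilde\beta+1)/(\tilde\beta+2)}}{n^{1/2}}\cdot\frac{\log^{1/2}(p)}{\log^{(2\tilde\beta+5)/(\tilde\beta+2)}(pl)},
\]
so this term is $O(\Psi(p,l))$ only when $l\lesssim n^{(\tilde\beta+2)/(2\tilde\beta+2)}$ up to logarithms. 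Re-tuning $\delta$ does not help. Balancing $\delta\sqrt{\log(p/\delta)}$ against $\delta^{-1}\sqrt{l\log(p)/n}$ gives at best $(l/n)^{1/4}$ up to logarithms, which is $\lesssim\Psi(p,l)$ under the same condition. The corollary imposes no such restriction, and some restriction on $l/n$ is unavoidable. Recentering at $\hat\Sigma_n$ lowers each coordinate variance of $l^{-1/2}\check B_{i,l}$ by a relative amount of order $l/n$ compared with $l^{-1/2}\tilde B_{i,l,m}$. For fixed $p$ this already produces a Kolmogorov discrepancy of order $l/n$, which is much larger than $\Psi(p,l)$ when, say, $l=n/\log n$.

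Your fallback rests on a misreading of Theorem~\ref{thm:bootstrap.cov}. Its block length balances $(\Psi(p,l))^{1-\epsilon}$ against the Glivenko--Cantelli term $\sqrt{(l/n)\log l}$ of Lemma~\ref{lm:B.check.CDF}, not against $\delta^{-1}\sqrt{l\log(p)/n}$. Its exponent $\phi=(2\tilde\beta+4)/((3-\epsilon)\tilde\beta+4)$ exceeds $(\tilde\beta+2)/(2\tilde\beta+2)$ for every $\epsilon\in(0,1)$. At that $l$, with $\delta$ from \eqref{eq:choice.eta.m.boot}, one gets $\delta^{-1}\sqrt{l/n}\asymp n^{\epsilon\tilde\beta/((6-2\epsilon)\tilde\beta+8)}$ up to logarithms, which diverges. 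So ``keeping the term and absorbing it later'' proves a different statement and would change the rate claimed downstream. What your argument does establish is
\[
\sup_{u\in\mathbb{R}}\left|\mathbb{P}\left(l^{-1/2}|\check B_{i,l}|_\infty\le u\right)-\mathbb{P}\left(l^{-1/2}|\tilde B_{i,l,m}|_\infty\le u\right)\right|\lesssim\Psi(p,l)+\left(\frac{l}{n}\right)^{1/4}\mathrm{polylog}(pn).
\]
That is, you get the corollary under the added hypothesis $l\lesssim n^{(\tilde\beta+2)/(2\tilde\beta+2)}$ up to logarithms; you should either state that hypothesis or carry the extra term.

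A smaller point concerns the exponential terms. What matters there is the linear branch $\delta/\sqrt{Q_2(m)}$ of the Hanson--Wright exponent, which \eqref{eq:choice.eta.m.boot} makes $\asymp\log(pl)$. A lower bound on $\delta^2/Q_2(m)$ alone would not make $p^2e^{-c(\cdot)}$ small.
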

\begin{proof}[Proof of Corollary \ref{cor:B.check.B.tilde}]
    By triangle inequality, for any $\eta>0$,
    \begin{multline}\label{eq:B.check.B.tilde.partition}
            \sup_{u\in\mathbb{R}}\left| \mathbb{P}\left(l^{-1/2}|\check{B}_{i,l}|_\infty \leq u\right) - \mathbb{P}\left(l^{-1/2}|\tilde{B}_{i,l,m}|_\infty \leq u\right) \right|\\
            \leq \mathbb{P}\left(l^{-1/2}|\check{B}_{i,l} - \tilde{B}_{i,l,m}|_\infty >\eta\right) + \sup_{u\in\mathbb{R}}\mathbb{P}\left(\left|l^{-1/2}|\tilde{B}_{i,l,m}|_\infty - u\right|<\eta\right).
    \end{multline}
    The first term on the right hand side of \eqref{eq:B.check.B.tilde.partition} is directly upper bounded by Lemma \ref{lm:B.check.B.tilde}.
    Recall that by Theorem 3 in \cite{chernozhukov2015comparison}, up to a constant $C>0$ related to $C_0$ and $c_0$, the following anti-concentration inequality holds for Gaussian random vector $Z$:
    \begin{equation}\label{eq:anti.conc.Z}
        \sup_{u\in\mathbb{R}}\mathbb{P}\left(\left||Z|_\infty-u\right|<\eta\right) \leq C \eta \sqrt{\log(p/\eta)}.
    \end{equation}
    By \eqref{eq:anti.conc.Z} and Lemma \ref{lm:B.check.GA}, the second term on the right hand side of \eqref{eq:B.check.B.tilde.partition} can be upper bounded by
    \begin{align*}
        &\sup_{u\in\mathbb{R}} \mathbb{P} \left(\left|l^{-1/2}|\tilde{B}_{i,l,m}|_\infty - u\right|<\eta\right)
        \leq C \eta \sqrt{\log(p/\eta)} + C\Psi(p,l).
    \end{align*}
    For any choice of $\eta>0$ such that $\eta^2/Q_1(n)\gtrsim 1$ and $\eta^2/Q_2(m)\gtrsim 1$,
    the upper bound of the Kolmogorov distance on the left hand side of \eqref{eq:cor.B.check.B.tilde} is
    \begin{equation}\label{eq:cor.B.check.B.tilde.target}
            C \eta \sqrt{\log(p/\eta)} + C\Psi(p,l) + C\eta^{-1}\sqrt{\frac{l\log(p)}{n}}\\
            +2p^2\exp\left(-c_1 \eta / Q_1^{1/2}(l)\right) + 2p^2\exp\left(-c_2 \eta / Q_2^{1/2}(m)\right).
    \end{equation}
    With the choice of
    \begin{equation}\label{eq:choice.eta.m.boot}
        \eta = C_\eta \frac{\log^{(3\tilde{\beta}+8) / (4\tilde{\beta}+8)}(pl)}{l^{\tilde{\beta} / (4\tilde{\beta}+8)}},\quad m=C_m (l\log(pl))^{1/(2\tilde{\beta}+4)},
    \end{equation}
    all five terms of \eqref{eq:cor.B.check.B.tilde.target} are dominated by the second term.
    This proves Corollary \ref{cor:B.check.B.tilde}.
\end{proof}

Consider the empirical CDF with respect to $\tilde{B}_{i,l,m}$,
\begin{equation}
    \tilde{F}_{n,l,m}(u) = \frac{1}{n-l+1}\sum_{i=l}^n \mathbf{1}\{l^{-1/2}|\tilde{B}_{i,l,m}|_\infty \leq u\}.
\end{equation}
Despite not being data adaptive, this intermediate quantity is nevertheless useful in the subsequent proof.
Corollary \ref{cor:B.check.B.tilde} implies the proximity of two empirical CDF's, $\hat{F}_{n,l}(u)$ and $\tilde{F}_{n,l,m}(u)$, with high probability.
This result is formalized by the following lemma.
Recall the definition of $\Psi(p,n)$ in \eqref{eq:Theta.pn}.
\begin{lemma}\label{lm:F.check.F.tilde}
    Under Conditions \ref{cond.A} and \ref{cond.G}, for any $0<\epsilon<1$, with probability no less than $1-(\Psi(p,l))^\epsilon$,
    \begin{equation*}
        \sup_{u\in\mathbb{R}}\left|\hat{F}_{n,l}(u) - \tilde{F}_{n,l,m}(u)\right| \leq C(\Psi(p,l))^{1-\epsilon},
    \end{equation*}
    where $C>0$ is a constant related to $C_0$ and $c_0$.
\end{lemma}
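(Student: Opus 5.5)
Our plan is to pass from the per-window distributional closeness in Corollary \ref{cor:B.check.B.tilde} to a uniform statement about the two empirical CDFs, via a Markov-type bound on the expected discrepancy. The difficulty is that Corollary \ref{cor:B.check.B.tilde} compares marginal laws, while $\hat F_{n,l}$ and $\tilde F_{n,l,m}$ are built from the coupled pair $(\check B_{i,l},\tilde B_{i,l,m})$. We will therefore work with the pathwise coupling and the tail bound of Lemma \ref{lm:B.check.B.tilde}, not with the Kolmogorov distance directly.

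\emph{Sandwich inequality.} For any $\eta>0$, let $G_i=\mathbf 1\{l^{-1/2}|\check B_{i,l}-\tilde B_{i,l,m}|_\infty>\eta\}$. Then for all $u$, pathwise,
\[
\tilde F_{n,l,m}(u-\eta)-\frac{1}{n-l+1}\sum_{i=l}^n G_i \le \hat F_{n,l}(u) \le \tilde F_{n,l,m}(u+\eta)+\frac{1}{n-l+1}\sum_{i=l}^n G_i .
\]
Consequently,
\[
\sup_u|\hat F_{n,l}(u)-\tilde F_{n,l,m}(u)| \le \bar G + \sup_u\bigl(\tilde F_{n,l,m}(u+\eta)-\tilde F_{n,l,m}(u-\eta)\bigr),
\]
where $\bar G$ denotes the average of the $G_i$.

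\emph{Bounding $\bar G$.} By stationarity and Lemma \ref{lm:B.check.B.tilde}, $\mathbb E\bar G$ is at most the right-hand side of that lemma with $\delta=\eta$. With the choices \eqref{eq:choice.eta.m.boot}, and the block size $l$ of the theorem making $\eta^{-1}\sqrt{l\log p/n}$ and $\Psi(p,n)$ of lower order than $\Psi(p,l)$, this is $\lesssim\Psi(p,l)$. This is exactly the computation already carried out in the proof of Corollary \ref{cor:B.check.B.tilde}.

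\emph{The oscillation term.} This is the main obstacle, because it requires a uniform-in-$u$ bound on a random empirical quantity. Its expectation at a fixed $u$ is $\mathbb P(|l^{-1/2}|\tilde B_{i,l,m}|_\infty-u|\le\eta)$. By Lemma \ref{lm:B.check.GA} together with the anti-concentration inequality \eqref{eq:anti.conc.Z}, this is $\lesssim\eta\sqrt{\log(p/\eta)}+\Psi(p,l)\lesssim\Psi(p,l)$. To pass the supremum inside the expectation, discretize $u$ on a grid of mesh $\eta$ over the relevant range. Since $\tilde F$ is monotone, each window $[u-\eta,u+\eta]$ is contained in a union of at most three consecutive grid cells of width $\eta$. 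Hence the supremum is bounded by three times the maximal empirical mass of a single grid cell. Summing expectations over grid cells would cost a grid-size factor, so we instead bound the expectation of the maximum of these cell masses by $\Psi(p,l)$ up to constants. If this becomes too lossy, a fallback is to bound $\sup_u$ directly via Lemma \ref{lm:B.check.GA} applied to $\tilde F$ against $\mathbb P(|Z|_\infty\le u)$ plus anti-concentration of $Z$. That route needs concentration of $\tilde F_{n,l,m}$ around its mean, which the $(l+m)$-dependence of the windows supplies at rate $\sqrt{(l+m)/n}$, a quantity that is again of lower order under the chosen $l$.

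\emph{Conclusion.} Combining these, $\mathbb E\sup_u|\hat F_{n,l}-\tilde F_{n,l,m}|\le C\Psi(p,l)$. Markov's inequality then gives
\[
\mathbb P\bigl(\sup_u|\hat F_{n,l}-\tilde F_{n,l,m}|>C\Psi(p,l)^{1-\epsilon}\bigr)\le \Psi(p,l)^{\epsilon},
\]
which is the claimed statement.
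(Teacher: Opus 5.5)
Your route differs from the paper's. The paper argues pointwise in $u$: it applies Markov's inequality to $\hat F_{n,l}(u)-\tilde F_{n,l,m}(u)$, bounds $|\mathbb E[\hat F_{n,l}(u)-\tilde F_{n,l,m}(u)]|$ by Corollary~\ref{cor:B.check.B.tilde}, sets $\delta=\Psi(p,l)^{1-\epsilon}$, and then takes $\sup_u$. Your remark that marginal closeness is not enough is well taken. That difference is signed, so Markov does not apply to it, and a $u$-dependent event does not become a uniform one by taking the supremum afterwards. The coupling sandwich, with Markov applied only to the nonnegative $\bar G$, is a sound way to use the coupling.

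The gap is the oscillation term, which you call the crux but never control. The primary route simply asserts that the expected maximal cell mass is $\lesssim\Psi(p,l)$. Writing $\hat\mu_k,\mu_k$ for the empirical and true masses of cell $k$, one has $\mathbb E\max_k\hat\mu_k\le\max_k\mu_k+\mathbb E\max_k|\hat\mu_k-\mu_k|$. The last term requires a uniform empirical-process bound for the $(l+m)$-dependent windows, which the proposal does not supply. The only such tool in the paper, Lemma~\ref{lm:B.check.CDF}, gives $\sqrt{l\log l/n}$. Your fallback therefore yields only $\mathbb E\sup_u|\hat F_{n,l}-\tilde F_{n,l,m}|\lesssim \mathbb E\bar G+\Psi(p,l)+\sqrt{l\log l/n}$. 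The claim that $\sqrt{(l+m)/n}$ is of lower order than $\Psi(p,l)$ ``under the chosen $l$'' is false. The proof of Theorem~\ref{thm:bootstrap.cov} chooses $\phi,\psi$ precisely so that $\sqrt{l/n}\asymp\Psi(p,l)^{1-\epsilon}$ up to logarithms, which is polynomially larger than $\Psi(p,l)$. Markov at level $C\Psi(p,l)^{1-\epsilon}$ then gives a failure probability of order one (up to logarithms), not $\Psi(p,l)^{\epsilon}$.

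Your bound on $\mathbb E\bar G$ fails at that $l$ for the same reason. The term $\eta^{-1}\sqrt{l\log p/n}$ of Lemma~\ref{lm:B.check.B.tilde} is $O(\Psi(p,l))$ only when $l\lesssim n^{(\tilde\beta+2)/(2\tilde\beta+2)}$ up to logarithms, and $\phi=(2\tilde\beta+4)/((3-\epsilon)\tilde\beta+4)$ exceeds that exponent. This is not merely a lossy estimate. The difference $l^{-1/2}(\check B_{i,l}-B_{i,l})=-\sqrt{l}\,\vectorize(\hat\Sigma_n-\Sigma)$ is a shift common to all windows, of size about $\sqrt{l/n}\,|Z|_\infty$. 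At the theorem's $l$ this is polynomially larger than the $\eta$ of \eqref{eq:choice.eta.m.boot}, so $\bar G$ is close to one rather than small. (The proof of Corollary~\ref{cor:B.check.B.tilde} implicitly needs the same restriction on $l$.)

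What your fallback does prove is the lemma for $l\lesssim n^{(\tilde\beta+2)/(2\tilde\beta+2)}$ up to logarithms. In that range $\mathbb E\bar G$, $\eta\sqrt{\log(p/\eta)}$ and $\sqrt{l\log l/n}$ are all $O(\Psi(p,l))$, and the final Markov step is legitimate. For the block size of Theorem~\ref{thm:bootstrap.cov} you would need three changes:
\begin{enumerate}
\item take $\eta\gtrsim\sqrt{l\log p/n}$, controlling the recentering shift with a Gaussian tail bound rather than Markov;
\item apply Markov only to $\bar G$;
\item keep the empirical-process part as the high-probability event of Lemma~\ref{lm:B.check.CDF} rather than putting it inside an expectation.
\end{enumerate}
That gives $\sup_u|\hat F_{n,l}-\tilde F_{n,l,m}|\lesssim\Psi(p,l)^{1-\epsilon}$ up to logarithmic factors, with probability about $1-\Psi(p,l)^{\epsilon}-2e^{-2\lambda^2}$. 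This is what the theorem actually uses, but it is not the bound $\mathbb E\sup_u|\hat F_{n,l}-\tilde F_{n,l,m}|\le C\Psi(p,l)$ on which your conclusion rests.
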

The proof of Lemma \ref{lm:F.check.F.tilde} relies on Markov inequality and Corollary \ref{cor:B.check.B.tilde}. See Appendix \ref{sec:apdx.F.check.F.tilde}.
Meanwhile, the following finite-sample Glivenko-Cantelli type result holds for empirical CDF $\tilde{F}_{n,l,m}$.
\begin{lemma}\label{lm:B.check.CDF}
    For any $\lambda>0$, with probability no less than $1-2e^{-2\lambda^2}$,
    \begin{equation}\label{eq:B.check.CDF}
        \sup_{u\in\mathbb{R}}\left|\tilde{F}_{n,l,m}(u) - \mathbb{P}\left(l^{-1/2}|\tilde{B}_{i,l,m}|_\infty \leq u\right)\right|
        \leq \sqrt{\frac{(n+m)(l+m-1)}{(n-l+1)^2}\left(\lambda^2 + \frac{1}{2}\log(l+m-1)\right)}.
    \end{equation}
\end{lemma}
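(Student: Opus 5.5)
The plan is to exploit the finite-range dependence of the summands in $\tilde{F}_{n,l,m}$, split the index set into interleaved subsequences of independent terms, apply the Dvoretzky--Kiefer--Wolfowitz inequality with Massart's constant to each subsequence, and recombine by a union bound and Cauchy--Schwarz.

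\textbf{Step 1 (dependence structure).} Let $W_i = l^{-1/2}|\tilde{B}_{i,l,m}|_\infty$ for $l\leq i\leq n$. Since $\tilde{D}_{j,m}$ is measurable with respect to $\mathcal{F}^j_{j-m+1}$, the block sum $\tilde{B}_{i,l,m}=\sum_{j=i-l+1}^i\tilde{D}_{j,m}$ is a measurable function of $\epsilon_{i-l-m+2},\dots,\epsilon_i$, a window of $K:=l+m-1$ consecutive innovations. Because the innovations are i.i.d., $\{W_i\}$ is strictly stationary with common distribution function $F(u)=\mathbb{P}(W_i\leq u)$, which is the quantity subtracted in \eqref{eq:B.check.CDF}. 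Moreover, $W_i$ and $W_{i'}$ are independent whenever $|i-i'|\geq K$.

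\textbf{Step 2 (independent subsequences and DKW).} Partition $\{l,\dots,n\}$ into the $K$ residue classes $\mathcal{J}_r=\{i: i\equiv r \bmod K\}$, with $N_r=|\mathcal{J}_r|$ and $\sum_r N_r = N := n-l+1$. Within each class the $W_i$ are i.i.d., so Massart's DKW inequality gives, for the class empirical CDF $F_r$,
\[
\mathbb{P}\Bigl(\sup_u|F_r(u)-F(u)|> t_r\Bigr)\leq 2e^{-2N_r t_r^2}.
\]
Choose $t_r^2=(\lambda^2+\tfrac12\log K)/N_r$. Each failure probability is then $2K^{-1}e^{-2\lambda^2}$, so by a union bound over the $K$ classes all $K$ bounds hold simultaneously with probability at least $1-2e^{-2\lambda^2}$.

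\textbf{Step 3 (recombination).} Write $\tilde{F}_{n,l,m}=\sum_r (N_r/N)F_r$. On the event from Step 2,
\[
\sup_u|\tilde{F}_{n,l,m}(u)-F(u)|\leq \sum_r \frac{N_r}{N}t_r=\frac{\sqrt{\lambda^2+\frac12\log K}}{N}\sum_r\sqrt{N_r}\leq\sqrt{\frac{K(\lambda^2+\frac12\log K)}{N}},
\]
where the last step is Cauchy--Schwarz together with $\sum_r N_r=N$. Since $(n+m)/(n-l+1)\geq 1$, this is bounded by the right-hand side of \eqref{eq:B.check.CDF}. That inequality may be stated with the factor $(n+m)/(n-l+1)^2$ because of a cruder counting of class sizes, and our bound is at least as sharp.

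\textbf{Main obstacle.} The key point is to verify the dependence range $K=l+m-1$ exactly: the window of innovations must match the definition of $\tilde{D}_{j,m}=\mathbb{E}[D_j\mid\mathcal{F}^j_{j-m+1}]$. One must also check that $F$ does not depend on $i$, which follows from stationarity. A direct union bound using $\min_r N_r$ would incur rounding losses when $N$ is not divisible by $K$; the weighted Cauchy--Schwarz recombination in Step 3 avoids these and needs no divisibility assumption.
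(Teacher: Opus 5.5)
Your proof is correct and follows the paper's argument. Both proofs split $\{l,\dots,n\}$ into the $l+m-1$ residue classes on which the variables $l^{-1/2}|\tilde{B}_{i,l,m}|_\infty$ are i.i.d., apply Massart's DKW inequality within each class, and recombine with a union bound. The only difference is in the recombination: the paper gives each weighted class deviation the common threshold $\delta/(l+m-1)$ and bounds every class size by $(n+m)/(l+m-1)$, whereas your class-dependent thresholds $t_r$ with Cauchy--Schwarz avoid that count and give the slightly sharper bound $\sqrt{(l+m-1)(\lambda^2+\frac12\log(l+m-1))/(n-l+1)}$, which implies the stated one.
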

The proof of Lemma \ref{lm:B.check.CDF} relies on a technique that exploits the $m$-dependent property of $\{\tilde{B}_{i,l,m}\}_{i=l}^n$.
Specifically, they are divided into groups such that all entries within the same groups are mutually independent.
Note that they are also identically distributed due to the stationarity of $\tilde{D}_{j,m}$ by construction.
The proof then relies on Corollary 1 in \cite{massart1990tight}, which is a finite-sample Glivenko-Cantelli-type result on i.i.d.~random variables.
The details are deferred to \ref{sec:apdx.B.check.CDF}.

Finally, we are ready to prove Theorem \ref{thm:bootstrap.cov} that verifies the consistency of block bootstrap.

\begin{proof}[Proof of Theorem \ref{thm:bootstrap.cov}]
    Combining the results of Corollary \ref{cor:B.check.B.tilde}, Lemmas \ref{lm:F.check.F.tilde} and \ref{lm:B.check.CDF}, for any $0<\epsilon<1$ and $\lambda>0$, up to a constant related to $C_0$ and $c_0$,
    \begin{equation}\label{eq:rho.B.ub}
            \rho_B(n,l) \lesssim (\Psi(p,l))^{1-\epsilon} + \sqrt{\frac{(n+m)(l+m-1)}{(n-l+1)^2}\left(\lambda^2 + \frac{1}{2}\log(l+m-1)\right)}
    \end{equation}
    with probability no less than
    \[
    1-(\Psi(p,l))^{\epsilon} - 2\exp(-2\lambda^2).
    \]
    Recall \eqref{eq:choice.eta.m.boot} that our choice of $m = C_m (l\log(pl))^{1/(2\tilde{\beta}+4)}$.
    Next we shall choose the order of $l$ with respect to $n$ and $p$ that minimizes the right hand side of \eqref{eq:rho.B.ub}, such that $m\ll l$ and $l\ll n$.
    Under these conditions, the second term on the right hand side of \eqref{eq:rho.B.ub} has order $O((\lambda^2 + \log(l))^{1/2} l^{1/2} / n^{1/2})$.
    Let $\lambda^2 = \epsilon \log(l) / 2$, and assume for some constants $\phi>0$ and $\psi>0$, $l = O( n^{\phi} \log^{\psi}(p))$.
    Then with probability no less than $1-3(\Psi(p,l))^\epsilon$,
    \begin{equation}\label{eq:rho.B.ub.new}
        \rho_B(n,l) \lesssim \frac{\log^{\frac{5\tilde{\beta} + 12}{4\tilde{\beta}+8}(1-\epsilon) - \frac{\tilde{\beta}}{4\tilde{\beta}+8}(1-\epsilon)\psi}(pn)}{n^{\frac{\tilde{\beta}}{4\tilde{\beta}+8}(1-\epsilon)\phi}} + \frac{\log^{\frac{\psi}{2}}(pn)}{n^{\frac{1}{2} - \frac{\phi}{2}}}.
    \end{equation}
    If we let the two terms have the same order for any $p$ and $n$, then the choice of $\phi>0$ and $\psi>0$ is unique, i.e.,
    \begin{equation*}
        \phi = \frac{2\tilde{\beta}+4}{(3-\epsilon)\tilde{\beta} + 4}, \quad \psi = \frac{5\tilde{\beta} + 12}{(3-\epsilon)\tilde{\beta} + 4}(1-\epsilon).
    \end{equation*}
    By definition of $\Psi_B(p,n,\epsilon)$ in \eqref{eq:Psi.B.pne}, \eqref{eq:rho.B.ub.new} therefore becomes
    \begin{equation*}
        \rho_B(n,l) \lesssim
        \left(\frac{\log^{5\tilde{\beta}+12}(p)}{n^{\tilde{\beta}}}\right)^{\frac{1-\epsilon}{(6-2\epsilon)\tilde{\beta} + 8}} = (\Psi_B(p,n,\epsilon))^{1-\epsilon}
    \end{equation*}
    with probability no less than $1-3(\Psi_B(p,n,\epsilon))^\epsilon$.
    This finishes the proof of Theorem \ref{thm:bootstrap.cov}.
\end{proof}

\section{Gaussian Approximation for Precision Matrix}\label{sec:GA.prec}

In this section, we develop a simultaneous Gaussian approximation result for the precision matrix under low-dimensional settings, that is, $p<n$ and $p/n\rightarrow 0$.
Under such settings, the inference remains free of any structural assumption, and both the
sample covariance matrix $\hat{\Sigma}_n$ and the true underlying covariance matrix $\Sigma$ are invertible in this case.
Recall the extra assumption in the following two sections that all eigenvalues of the true underlying covariance matrix are upper and lower bounded, $0<c_e \leq \lambda_{\min}(\Sigma) \leq \lambda_{\max}(\Sigma) \leq C_e$.
We let $\hat{\Omega}_n = \hat{\Sigma}_n^{-1}$ be the estimator of interest.
Analogous to the definition of $\rho(n)$ in \eqref{EQ_primary_objective}, let $Z^\Omega$ be a $p^2$-dimensional Gaussian random vector with mean zero and the same covariance structure as $n^{1/2}\vectorize(\hat{\Omega}_n - \Omega)$.
The primary objective of this section is to give an upper bound of the Kolmogorov distance
\begin{equation*}
    \rho^\Omega(n) = \sup_{u\in\mathbb{R}} \left|\mathbb{P}\left(n^{1/2}|\hat{\Omega}_n - \Omega|_\infty \leq u\right) - \mathbb{P}(|Z^\Omega|_\infty \leq u)\right|.
\end{equation*}
For brevity, denote $\mathfrak{W} = \hat{\Omega}_n - \Omega$ and $\mathfrak{S} = \hat{\Sigma}_n - \Sigma$.
We find that
\begin{equation}\label{eq:Delta.decomp}
    \mathfrak{W} = -(I+\Omega \mathfrak{S})^{-1} \Omega \mathfrak{S} \Omega.
\end{equation}
We shall first examine Gaussianity for the dominant linear term $\Omega \mathfrak{S} \Omega$.
Define
\begin{equation}\label{eq:X.j.Omega}
    S_n^\Omega = n^{-1/2}\sum_{i=1}^n \mathcal{X}_i^\Omega, \quad \text{where }{\mathcal{X}}_i^\Omega = \vectorize(\Omega(X_i X_i^\top - \Sigma)\Omega).
\end{equation}
Note that $S_n^\Omega = n^{1/2} \vectorize(\Omega \mathfrak{S} \Omega)$.
Similar to the procedure in Section \ref{sec:GA.cov}, we can construct the martingale approximation of $S_n^\Omega$ as
\begin{equation}\label{eq:D.j.Omega}
    T_n^\Omega = \sum_{i=1}^n D_i^\Omega, \quad \text{where } D_i^\Omega = \sum_{t=i}^\infty \mathcal{P}_i(\mathcal{X}_t^\Omega),
\end{equation}
and the $m$-dependent approximation
\begin{equation}\label{eq:D.tilde.j.Omega}
    \tilde{T}_{n,m}^\Omega = \sum_{i=1}^n \tilde{D}_{i,m}, \quad \text{where } \tilde{D}_{i,m}^\Omega = \mathbb{E}[D_i^\Omega \mid \mathcal{F}_{i-m+1}^m].
\end{equation}
Since all these quantities involve a linear transformation of the quantities discussed in Section \ref{sec:GA.cov}, we can construct a Gaussian approximation result for $\Omega \mathfrak{S} \Omega$, formalized by the following lemma.
\begin{lemma}\label{lm:GA.cov.linear}
    Let $Z^\mathfrak{S}$ be a $p^2$-dimensional Gaussian random vector with mean zero and the same covariance structure as $n^{1/2}\vectorize(\Omega \mathfrak{S} \Omega)$.
    Under Conditions \ref{cond.A} and \ref{cond.G}, there exists a constant $C'>0$ related to $C_0$, $c_0$, $C_e$ and $c_e$, such that
    \begin{equation*}
        \sup_{u\in\mathbb{R}} \left|\mathbb{P}\left(n^{1/2}|\Omega \mathfrak{S}\Omega|_\infty \leq u\right) - \mathbb{P}(|Z^\mathfrak{S}|_\infty \leq u)\right| \leq C' |\Omega|_1^2 \Psi(p,n).
    \end{equation*}
\end{lemma}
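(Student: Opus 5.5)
The plan is to rerun the three-step argument from the proof of Theorem \ref{thm:GA.cov} with $\mathcal{X}_i$ replaced by $\mathcal{X}_i^\Omega = (\Omega\otimes\Omega)\mathcal{X}_i$. This works because $\vectorize(\Omega M\Omega) = (\Omega\otimes\Omega)\vectorize(M)$. The point to track is how each ingredient changes under the fixed linear map $L=\Omega\otimes\Omega$. Each row of $L$ has $\ell_1$ norm at most $|\Omega|_1^2$, since $|\Omega|_1$ is the maximal absolute row/column sum. Hence $|Lv|_\infty\le |\Omega|_1^2|v|_\infty$ for every $v\in\mathbb{R}^{p^2}$.

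\textbf{Step 1: transferred tail bounds.} I would first transfer Lemmas \ref{lm:mtg.approx} and \ref{lm:m.dep.approx}. Because $D_i^\Omega = LD_i$ and $\tilde D^\Omega_{i,m}=L\tilde D_{i,m}$,
\[
|S_n^\Omega - n^{-1/2}T_n^\Omega|_\infty \le |\Omega|_1^2\,|S_n-n^{-1/2}T_n|_\infty .
\]
So the same exponential bounds hold with $\delta$ replaced by $\delta/|\Omega|_1^2$; the same applies to the $m$-dependent error. A cleaner alternative is to apply the Hanson--Wright argument from their proofs directly to each entry $e_s^\top\Omega(X_iX_i^\top-\Sigma)\Omega e_t$. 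This entry is a quadratic form in the Gaussian innovations with coefficient vectors $\Omega A_t$ in place of $A_t$, and $|(\Omega A_t)_{j\cdot}|\le |\Omega|_1 C_0(1\vee t)^{-\beta}$. So Condition \ref{cond.A} holds with $C_0$ replaced by $|\Omega|_1C_0$, which inflates $Q_1,Q_2$ by $|\Omega|_1^4$. Either route gives a scale factor of $|\Omega|_1^2$ in the thresholds.

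\textbf{Step 2: conditional, unconditional and comparison steps.} Lemmas \ref{lm:cond.CCK}, \ref{lm:uncond.CCK} and \ref{lm:GA.comparison} use the CCK bounds of \cite{chernozhukov2017central,chernozhukov2019improved}. These depend on the dimension only through $\log(p^2)$, on moment and tail bounds of the summands, and on lower bounds for the coordinate variances. The transformed blocks $LB_j$ have sub-exponential norms scaled by at most $|\Omega|_1^2$. The coordinate variances need a lower bound analogous to Condition \ref{cond.G}. I would obtain it from Bartlett's formula applied to the transformed process $\Omega X_i$, whose autocovariances are $\Omega\Gamma_k\Omega$. Using $c_e\le\lambda(\Sigma)\le C_e$, I would argue that the long-run variance of each entry of $\Omega\mathfrak S\Omega$ stays bounded below by a constant depending on $c_0,c_e,C_e$. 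I expect this to be the main obstacle, since Condition \ref{cond.G} is stated for the untransformed coordinates. My first attempt would be to show that the asymptotic covariance operator of $\vectorize(\hat\Sigma_n)$ dominates $c\,I$ on the symmetric subspace, and then push this through $L$, whose smallest singular value on symmetric matrices is at least $C_e^{-2}$. If that fails, I would simply impose the analogous nondegeneracy for $\Omega X_i$ as part of the constant dependence. The comparison Lemma \ref{lm:GA.comparison} then holds with covariance differences inflated by $|\Omega|_1^4$, i.e. a $\log p/m^{\tilde\beta/2}$ term times $|\Omega|_1^2$ after the square-root-type comparison.

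\textbf{Step 3: assembly.} I would reassemble as in \eqref{eq:rho.n.bound.eta}. Keep $m$ as in \eqref{eq:choice.of.eta.m}, and choose $\eta$ equal to $|\Omega|_1^2$ times the previous choice so that the exponential terms remain negligible. The anti-concentration term then becomes $|\Omega|_1^2\,\eta_0\sqrt{\log(p/\eta)}$. The remaining terms are at most $|\Omega|_1^2$ times their former size, because the CCK constants scale polynomially in the moment bound, which is absorbed after normalization. All terms are therefore of order $|\Omega|_1^2\Psi(p,n)$, with constant depending on $C_0,c_0,C_e,c_e$.
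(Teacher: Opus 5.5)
Your outline is the paper's proof almost step for step. The paper also transfers Lemmas~\ref{lm:mtg.approx} and~\ref{lm:m.dep.approx} through $|\Omega A\Omega|_\infty\le|\Omega|_1^2|A|_\infty$. It reruns the conditional and unconditional CCK steps with Condition~\ref{cond.A} holding for $\Omega A_t$ with constant $C_0|\Omega|_1$, so that $\beta_n^\Omega\asymp m^2|\Omega|_1^4$. It then compares covariances with error $|\Omega|_1^4m^{-\tilde\beta}$ and reassembles with the same $m$.

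The genuine gap is the step you flag yourself. You need a lower bound, uniform in $(s,t)$, on the long-run variance of $(\Omega\mathfrak S\Omega)_{st}$; it is used in (M.1), in the Gaussian comparison, and in anti-concentration. Your plan to derive it from $\Sigma^*\succeq c\,I$ on symmetric matrices cannot be carried out from Condition~\ref{cond.G} together with $c_e\le\lambda(\Sigma)\le C_e$. Write $a=\Omega e_s$, $b=\Omega e_t$ and $f_{ab}=a^\top f_Xb$. The variance in question is $2\pi\int_{-\pi}^{\pi}[f_{aa}f_{bb}+\mathrm{Re}(f_{ab}^2)]\,d\omega$. Cauchy--Schwarz bounds this below only when $s=t$, where it gives $2\Omega_{ss}^2\ge 2C_e^{-2}$. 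Condition~\ref{cond.G}, meanwhile, constrains $\Sigma^*$ only along the coordinate directions.

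Here is a concrete example. Take $p=3$, $w=(-0.3,-0.3,1)^\top$, and $X_t=e_1\xi^{(1)}_t+e_2\xi^{(2)}_t+w\,\xi^{(3)}_t$, where the $\xi^{(j)}$ are independent unit-variance scalar linear processes with spectral densities $g_j$. Then $(\Omega X_t)_1=\xi^{(1)}_t$ and $(\Omega X_t)_2=\xi^{(2)}_t$. Hence the long-run variance of $(\Omega\mathfrak S\Omega)_{12}$ is $\sum_k\gamma^{(1)}(k)\gamma^{(2)}(k)=2\pi\int g_1g_2$. This can be made arbitrarily small by concentrating $g_1$ and $g_2$ on disjoint bands (for example, AR(2) processes with sharpening peaks at different frequencies). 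Throughout, $\Sigma$ stays fixed, and every coordinate pair satisfies Condition~\ref{cond.G} with a fixed $c_0$ through the shared $\xi^{(3)}$. Along this family only the decay constant $C_0$ blows up. Any valid lower bound therefore has to be extracted quantitatively from Condition~\ref{cond.A}, and no argument based only on $\Sigma^*$ and the spectrum of $\Sigma$ can supply it.

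The paper does not close this step either. It asserts $\Sigma^*\succeq C(\Sigma\otimes\Sigma)$ from Bartlett's formula and Cauchy--Schwarz, and then uses $\Sigma^{\Omega*}_{(s,t),(s,t)}\ge\lambda_{\min}(\Sigma^*)\lambda^2_{\min}(\Omega\otimes\Omega)$. However, $\Sigma^*=\sum_k(I_{p^2}+K_{pp})(\Gamma_k\otimes\Gamma_k)$ annihilates antisymmetric vectors, so $\lambda_{\min}(\Sigma^*)=0$ whenever $p\ge2$. Even restricted to symmetric matrices, the claimed inequality fails by the same mechanism: take $f_X=g_1e_1e_1^\top+g_2e_2e_2^\top$ with $\int g_1g_2$ small relative to $\int g_1\int g_2$.

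As it stands, your argument proves the lemma only under your fallback hypothesis: Condition~\ref{cond.G} for the transformed process $\Omega X_i$, i.e.\ a uniform lower bound on the diagonal of $(\Omega\otimes\Omega)\Sigma^*(\Omega\otimes\Omega)$. State that hypothesis explicitly rather than trying to derive it from the present assumptions. With it in place, the rest of your outline goes through exactly as in the paper.
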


The proof of Lemma \ref{lm:GA.cov.linear} is given in Appendix \ref{sec:apdx.GA.cov.linear}.
Next, we shift our attention to building a tail bound for the residual, $\mathfrak{W} + \Omega \mathfrak{S} \Omega$.
The following lemma is a key step.

\begin{lemma}\label{lm:Sigma.op.small}
    Under Condition \ref{cond.A}, the operator norm of $\hat{\Sigma}_n - \Sigma$ satisfies the following tail bound,
    \begin{equation*}
        \mathbb{P}(n^{1/2}|\hat{\Sigma}_n - \Sigma|_{\op} > \delta) \leq 2\exp \left[-c_2 \left(\frac{n^{((2\beta-1) \wedge 1)}\delta^2}{p^2} \wedge \frac{n^{((\beta-1/2) \wedge 1/2)}\delta}{p}\right) + 3p\right],
    \end{equation*}
    where $c_2>0$ is a constant related to $C_0$.
\end{lemma}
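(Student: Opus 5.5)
The plan is to combine an $\varepsilon$-net argument for the operator norm with a Hanson--Wright bound for each fixed direction. First, write
\[
|\hat\Sigma_n-\Sigma|_\op=\sup_{u,v\in S^{p-1}} u^\top(\hat\Sigma_n-\Sigma)v .
\]
By symmetry it suffices to control $\sup_{u\in S^{p-1}}|u^\top(\hat\Sigma_n-\Sigma)u|$. Take a $1/4$-net $\mathcal N$ of $S^{p-1}$ with $|\mathcal N|\le 9^p\le e^{3p}$. The standard net argument gives
\[
|\hat\Sigma_n-\Sigma|_\op\le 2\max_{u\in\mathcal N}|u^\top(\hat\Sigma_n-\Sigma)u| .
\]
A union bound over $\mathcal N$ then produces the additive $3p$ in the exponent.

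For a fixed unit $u$, the quantity $n^{1/2}u^\top(\hat\Sigma_n-\Sigma)u=n^{-1/2}\sum_{i=1}^n\big((u^\top X_i)^2-\mathbb E(u^\top X_i)^2\big)$ is a centered quadratic form in the Gaussian innovations. Concretely, stack the innovations $\{\epsilon_t\}_{t\le n}$ into a vector $\bm\epsilon$; after truncating at a far lag, or working directly in $\ell^2$, we have $u^\top X_i=a_i^\top\bm\epsilon$ with $a_i=(A_{i-t}^\top u)_t$. The statistic then equals $\bm\epsilon^\top M\bm\epsilon-\mathbb E[\cdot]$ with $M=n^{-1/2}\sum_i a_ia_i^\top$. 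Lemma \ref{lm:hanson.wright} (Hanson--Wright) gives
\[
\mathbb P(|\cdot|>\delta)\le 2\exp\{-c(\delta^2/|M|_F^2\wedge\delta/|M|_\op)\},
\]
so the remaining work is to bound $|M|_F$ and $|M|_\op$.

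Both bounds rely on Condition \ref{cond.A}. Each row of $A_t$ has norm at most $C_0(1\vee t)^{-\beta}$, so $|A_t^\top u|\le\sum_j|u_j||(A_t)_{j\cdot}|\le C_0\sqrt p\,(1\vee t)^{-\beta}$. This crude bound is where the $p$-dependence enters. It implies
\[
|a_i^\top a_{i'}|\le C p\sum_t t^{-\beta}(t+|i-i'|)^{-\beta}\lesssim p\,(1\vee|i-i'|)^{1-2\beta}\quad(\beta<1),
\]
with a summable analogue when $\beta>1$.

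For the Frobenius norm, $|M|_F^2=n^{-1}\sum_{i,i'}(a_i^\top a_{i'})^2\lesssim p^2 n^{-1}\sum_{i,i'}(1\vee|i-i'|)^{2-4\beta}$. In the paper's regime this yields $|M|_F^2\lesssim p^2 n^{-((2\beta-1)\wedge1)}$, which gives the first term in the exponent. Depending on $\beta$, it may be cleaner to bound $|M|_F^2\le|M|_\op\,\mathrm{tr}(M)$.

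For the operator norm, $|M|_\op=n^{-1/2}|\sum_ia_ia_i^\top|_\op$ equals $n^{-1/2}$ times the largest eigenvalue of the Gram matrix $(a_i^\top a_{i'})$. Bounding that eigenvalue by its maximal row sum gives $\sum_{k\le n}p\,k^{1-2\beta}\lesssim p\,n^{2-2\beta}$ for $\beta<1$, or $O(p)$ for $\beta>1$. Hence $|M|_\op\lesssim p\,n^{-((\beta-1/2)\wedge1/2)}$ after adjusting exponents, which gives the second term. Finally, plug these into Hanson--Wright, absorb the factor $2$ from the net into $\delta$ and into $c_2$, and apply the union bound with the factor $e^{3p}$.

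The main obstacle is the exponent bookkeeping for the long-memory range $3/4<\beta<1$: matching the stated rates $n^{(2\beta-1)\wedge1}$ and $n^{(\beta-1/2)\wedge1/2}$ exactly. This requires careful row-sum and trace bounds for the Toeplitz-like Gram matrix, including the $\beta=1$ boundary case, where a logarithmic factor has to be absorbed. A secondary technical point is justifying Hanson--Wright for the infinite-dimensional $\bm\epsilon$. I would handle it by truncating the moving average at lag $N\to\infty$ and passing to the limit, since all the constants above are uniform in $N$.
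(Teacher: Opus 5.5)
Your route is the same as the paper's: a $1/4$-net with $9^p\le e^{3p}$ points, a union bound, and Hanson--Wright in each fixed direction. The paper sets $\xi_i=v^\top X_i$ and applies Hanson--Wright to $\eta^\top\Sigma^\xi\eta$ with $\eta\sim N(0,I_n)$, which avoids your truncation step. Your $M=n^{-1/2}\sum_i a_ia_i^\top$ has the same nonzero spectrum as $n^{-1/2}\Sigma^\xi$, because $a_i^\top a_{i'}=\cov(u^\top X_i,u^\top X_{i'})$. So the two computations coincide.

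\textbf{The gap.} The genuine gap is the step you called exponent bookkeeping, and it cannot be repaired. Your own Frobenius sum gives $p^2n^{-1}\sum_{i,i'}(1\vee|i-i'|)^{2-4\beta}\asymp p^2$, since $4\beta-2>1$. It does not give $p^2n^{-((2\beta-1)\wedge1)}$. No sharper bound exists either, because $|M|_{\F}^2\ge n^{-1}\sum_i|a_i|^4=(u^\top\Sigma u)^2$, which does not decay in $n$. Similarly, for $\beta<1$ the row-sum bound gives $|M|_{\op}\lesssim p\,n^{3/2-2\beta}$. This exceeds your target $p\,n^{1/2-\beta}$ by a factor $n^{1-\beta}$. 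The row-sum bound is sharp when $u^\top\Gamma_k u\asymp k^{1-2\beta}$, as testing against $n^{-1/2}\mathbf 1$ shows.

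\textbf{The inequality is false as stated.} Take $p=d=1$, $A_0=1$, and $A_t=0$ for $t\ge1$. Condition \ref{cond.A} then holds with, say, $\beta=2$. We have $n^{1/2}(\hat\Sigma_n-\Sigma)\Rightarrow N(0,2)$, so at $\delta=1$ the left side tends to $\mathbb P(|N(0,2)|>1)>0$. The right side is $2\exp(-c_2n^{1/2}+3)\to0$.

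What your argument actually proves is a tail bound with exponent
\[
-c\Bigl(\frac{\delta^2}{p^2}\wedge\frac{n^{(2\beta-3/2)\wedge 1/2}\,\delta}{p}\Bigr)+3p ,
\]
with a logarithmic loss at $\beta=1$.

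\textbf{The paper's proof has the same defect.} It reaches the stated rate by asserting $|\Sigma^\xi|_{\F}\asymp|\Sigma^\xi|_{\op}\asymp p\,n^{(1-\beta)\vee0}$. Its own Frobenius display, however, is $p^2n\sum_{|k|\le n}(1\vee|k|)^{-(2\beta\wedge(4\beta-2))}\asymp p^2n$, and the factor $n$ is then dropped. Its row-sum bound for $\beta<1$ is $p\,n^{2-2\beta}$, not $p\,n^{1-\beta}$.

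So you have faithfully reproduced the paper's argument, but neither version establishes the lemma as written. The correct, weaker bound would change \eqref{eq:residual.tail.bound} and the $p$-dependent terms of Theorem \ref{thm:GA.prec}.
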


Operator norm involves a supremum over the unit Euclidean sphere, which could be difficult to bound in probability.
Generic chaining method helps tackling this challenge \cite{koltchinskii2017concentration}.
In the proof of Lemma \ref{lm:Sigma.op.small} that is deferred to Appendix \ref{sec:apdx.Sigma.op.small}, the compact unit sphere is discretized by $1/4$-nets \cite{vershynin2010introduction}.
The proof is thus finished by union bound and Hanson-Wright Inequality.

An expansion to \eqref{eq:Delta.decomp} gives
\begin{equation*}
    \mathfrak{W} = -\left(\sum_{k=0}^\infty (\Omega \mathfrak{S})^k\right) \Omega \mathfrak{S} \Omega.
\end{equation*}
By triangle inequality, whenever $|\Omega \mathfrak{S}|_{\op} <1/2$, the infinity norm can be upper bounded by operator norm using
\begin{equation*}
    |\mathfrak{W} + \Omega \mathfrak{S}\Omega|_\infty \leq |\mathfrak{W} + \Omega \mathfrak{S}\Omega|_{\op} \leq \frac{1}{1-|\Omega \mathfrak{S}|_{\op}} |\Omega \mathfrak{S}|_{\op}^2 |\Omega|_{\op} \leq 2 |\Omega|_{\op}^3 |\mathfrak{S}|_{\op}^2.
\end{equation*}
Specifically, the operator norm is bounded by $|\Omega|_{\op} \leq \lambda_{\max}(\Omega) \leq c_e^{-1}$.
Lemma \ref{lm:Sigma.op.small} immediately implies the tail bound for the infinity norm of the residual,
\begin{multline}\label{eq:residual.tail.bound}
        \mathbb{P}\left(n^{1/2}|\mathfrak{W} + \Omega \mathfrak{S} \Omega|_\infty >\delta \right) \leq \mathbb{P}\left(n^{1/2}|\mathfrak{S}|_{\op} > (2 c_e^{-3})^{-1/2}n^{1/4}\delta^{1/2} \right)\\
        \leq 2\exp \left[-c_2' \left(\frac{n^{((2\beta-1/2) \wedge 3/2)}\delta}{p^2} \wedge \frac{n^{((\beta-1/4) \wedge 3/4)}\delta^{1/2}}{p}\right) + 3p\right],
\end{multline}
for a constant $c_2'>0$ related to $C_0$ and $c_e$.
Finally, the following lemma bridges the gap between $Z^\mathfrak{S}$, the Gaussian approximation of $n^{1/2}\Omega \mathfrak{S}\Omega$, and $Z^\Omega$, the Gaussian approximation of $n^{1/2}\mathfrak{W}$, in terms of Kolmogorov distance.
\begin{lemma}\label{lm:GA.comparison.prec}
    Under Conditions \ref{cond.A} and \ref{cond.G}, there exists a constant $C''>0$ that is related to $C_0$, $c_0$ and $c_e$, such that
    \begin{equation*}
        \sup_{u\in \mathbb{R}}\left|\mathbb{P}(|Z^\mathfrak{S}|_\infty \geq u) - \mathbb P (|Z^\Omega|_\infty \geq u)\right| \leq C'' \frac{|\Omega|_1 p^{2} \log(p)}{n^{(\beta - 1/4)\wedge 3/4}}.
    \end{equation*}
\end{lemma}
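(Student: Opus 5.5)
The comparison will go through the Gaussian comparison inequality of \cite{chernozhukov2015comparison}: for centered Gaussian vectors in $\mathbb{R}^{p^2}$ whose coordinate variances are bounded below, the Kolmogorov distance between their sup-norms is bounded by a constant times $\Delta^{1/3}\log^{2/3}(p)$, or, in the cruder version we will actually use, by a constant times $\Delta\cdot\mathrm{poly}\log(p)$ after anti-concentration. Here $\Delta$ is the maximal entrywise difference of the two covariance matrices. So everything reduces to bounding
\[
\Delta=\bigl|\cov\bigl(n^{1/2}\vectorize(\Omega\mathfrak{S}\Omega)\bigr)-\cov\bigl(n^{1/2}\vectorize(\mathfrak{W})\bigr)\bigr|_\infty .
\]
We will also check that $Z^\mathfrak{S}$ has coordinate variances bounded below, so that the comparison and anti-concentration apply. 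Since $\vectorize(\Omega\mathfrak{S}\Omega)=(\Omega\otimes\Omega)\vectorize(\mathfrak{S})$ and $\Omega$ has bounded eigenvalues, this follows from Condition \ref{cond.G} together with the bounds $c_e\le\lambda(\Sigma)\le C_e$.

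To bound $\Delta$, write $n^{1/2}\mathfrak{W}=-n^{1/2}\Omega\mathfrak{S}\Omega+R$, where $R=n^{1/2}(\mathfrak{W}+\Omega\mathfrak{S}\Omega)$. For each pair of entries $a,b$,
\[
\cov(W_a,W_b)-\cov(L_a,L_b)=-\cov(L_a,R_b)-\cov(R_a,L_b)+\cov(R_a,R_b).
\]
By Cauchy--Schwarz, $\Delta\le 2\max_a\|L_a\|_2\max_b\|R_b\|_2+\max_b\|R_b\|_2^2$. For the linear term $L$, each entry is $\sum_{s,t}\Omega_{\cdot s}\mathfrak{S}_{st}\Omega_{t\cdot}$. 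Since $\max_{st}\|n^{1/2}\mathfrak{S}_{st}\|_2=O(1)$ under Condition \ref{cond.A} (the $\beta>3/4$ regime), this gives $\|L_a\|_2\lesssim|\Omega|_1^2$. A sharper bound should come from the Kronecker structure and $|\Omega|_{\op}\le c_e^{-1}$, giving $\|L_a\|_2\lesssim|\Omega|_1$ by using one factor in $\ell_1$ and the other through the operator norm. We will aim for that, because the target bound is linear in $|\Omega|_1$.

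For the remainder $R$, we will integrate the tail bound \eqref{eq:residual.tail.bound}. Equivalently, we use $|R|_\infty\le 2c_e^{-3}n^{1/2}|\mathfrak{S}|_{\op}^2$ on the event $|\Omega\mathfrak{S}|_{\op}<1/2$, together with a second-moment (in fact fourth-moment) bound on $|\mathfrak{S}|_{\op}$ obtained from Lemma \ref{lm:Sigma.op.small}. That lemma gives $n^{1/2}|\mathfrak{S}|_{\op}\lesssim p\,n^{-((\beta-1/2)\wedge1/2)}\cdot n^{1/2}$ up to a $\sqrt{p}$ term coming from the $+3p$, so $\|n^{1/2}|\mathfrak{S}|^2_{\op}\|_2\lesssim p^2 n^{-((\beta-1/4)\wedge 3/4)}$ up to logarithmic factors. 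This matches the exponent in the claim.

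The complementary event $|\Omega\mathfrak{S}|_{\op}\ge 1/2$ has exponentially small probability by Lemma \ref{lm:Sigma.op.small}. We will handle it by truncation, restricting to this good event and absorbing its probability into the Kolmogorov bound, because $\hat\Omega_n$ may have heavy moments there. Combining, $\Delta\lesssim|\Omega|_1p^2 n^{-((\beta-1/4)\wedge3/4)}$. The $\max\|R\|_2^2$ term is of smaller order.

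The main obstacle is converting $\Delta$ into a Kolmogorov bound that is linear in $\Delta$ with only a $\log p$ factor, since the standard comparison bound gives $\Delta^{1/3}$. We will try an interpolation (Slepian--Stein) argument with a smoothed max at smoothing level $\psi$. This gives an error of order $\psi^2\Delta\log p$ plus an anti-concentration term of order $\psi^{-1}\log p\cdot\sqrt{\log p}$. Because the variances are bounded below, we instead bound the resulting derivative terms directly, using the nondegenerate Gaussian anti-concentration of \cite{chernozhukov2017central}, aiming for a rate of $\Delta\log p$. If that fails, we would fall back to a $\Delta^{1/3}$-type bound, which would change the exponent.
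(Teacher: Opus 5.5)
Your argument does not reach the stated rate. The step you yourself flag as the main obstacle, a Gaussian comparison bound linear in $\Delta$ with a single $\log p$, is not proved. The smoothing argument you sketch, balancing $\psi^2\Delta\log p$ against an anti-concentration term in $\psi^{-1}$, only optimizes to a $\Delta^{1/3}$-type rate.

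Two estimates pushed you into needing linearity. First, you place the remainder $R$ at the target order $p^2n^{-((\beta-1/4)\wedge 3/4)}$, which comes from reading Lemma \ref{lm:Sigma.op.small} with a spurious factor $n^{1/2}$. Because of the $+3p$, that lemma gives a nontrivial bound only in its linear branch, once $n^{1/2}|\mathfrak S|_{\op}$ exceeds a multiple of $p^2n^{-((\beta-1/2)\wedge 1/2)}$. Hence $n^{1/2}|\mathfrak S|_{\op}^2=n^{-1/2}(n^{1/2}|\mathfrak S|_{\op})^2$ is of order $p^4n^{-((2\beta-1/2)\wedge 3/2)}$. Equivalently, integrating \eqref{eq:residual.tail.bound} gives $\mathbb{E}|R|_\infty^2\lesssim p^8n^{-((4\beta-1)\wedge 3)}$. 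The remainder is second order, and its $L^2$ size is the \emph{square} of the target rate.

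Second, your sharpened bound $\|L_a\|_2\lesssim|\Omega|_1$ is unjustified. Writing $L_{ab}=\sum_s\Omega_{as}(n^{1/2}\mathfrak S\omega_b)_s$, with $\omega_b$ the $b$-th column of $\Omega$, reduces it to a long-run variance involving the scalar process $\omega_b^\top X_t$. The bound $|\omega_b|_2\le c_e^{-1}$ controls the variance of that process but not the decay of its autocovariances. Under the row-wise Condition \ref{cond.A}, that decay is controlled only through $|\omega_b|_1\le|\Omega|_1$.

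Once the remainder is measured correctly, neither refinement is needed, and this is how the paper proceeds. Use your own decomposition, Cauchy--Schwarz, and the crude bound $\max_a\|L_a\|_2^2\lesssim|\Omega|_1^4$. This gives $\Delta\lesssim p^8n^{-((4\beta-1)\wedge 3)}+|\Omega|_1^2p^4n^{-((2\beta-1/2)\wedge 3/2)}$, and the second term dominates whenever the claimed bound is small. Then apply the comparison inequality recorded as Theorem \ref{thm:comparison} (Proposition 2.1 of \cite{chernozhukov2019improved}). It gives $C\sqrt{\Delta}\log p$ rather than the $\Delta^{1/3}$ rate you quote, and so yields exactly $|\Omega|_1p^2\log(p)\,n^{-((\beta-1/4)\wedge 3/4)}$. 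The linear dependence on $|\Omega|_1$ comes from this square root, not from a sharper bound on $L$.

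One further caution about your truncation. $Z^\Omega$ is defined through the untruncated covariance of $n^{1/2}\vectorize(\mathfrak W)$. The contribution of the event $\{|\Omega\mathfrak S|_{\op}\ge 1/2\}$ to $\Sigma^\Omega$ is therefore governed by second moments on that event, not by its probability. It cannot simply be absorbed into the Kolmogorov bound. The paper's proof integrates \eqref{eq:residual.tail.bound} directly and does not treat this event separately.
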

The proof of Lemma \ref{lm:GA.comparison.prec} is given in Appendix \ref{sec:apdx.GA.comparison.prec}.
We are now ready to provide the proof of Theorem \ref{thm:GA.prec}, the main theorem of this section, by combining the results in Lemma \ref{lm:GA.cov.linear}, Lemma \ref{lm:GA.comparison.prec}, and \eqref{eq:residual.tail.bound}.

\begin{proof}[Proof of Theorem \ref{thm:GA.prec}]
    For any $\eta>0$, by triangle inequality,
    \begin{multline}\label{eq:Delta.berry.esseen.bound}
        \sup_{u\in\mathbb{R}} \left|\mathbb{P} \left(n^{1/2}|\mathfrak{W}|_\infty \leq u\right) - \mathbb{P}\left( n^{1/2}|\Omega \mathfrak{S}\Omega|_\infty \leq u\right)\right|\\
        \leq \mathbb{P}\left(n^{1/2}|\mathfrak{W} + \Omega \mathfrak{S}\Omega|_\infty >\eta\right) + \sup_{u\in\mathbb{R}}\mathbb{P}\left(\left|n^{1/2}|\Omega \mathfrak{S}\Omega|_\infty - u\right|<\eta \right).
    \end{multline}
    The first term on the right hand side of \eqref{eq:Delta.berry.esseen.bound} is the tail probability for the residual, which is bounded by \eqref{eq:residual.tail.bound}.
    For the second term, by Lemma \ref{lm:GA.cov.linear},
    \begin{equation*}
        \sup_{u\in\mathbb{R}}\mathbb{P}\left(\left|n^{1/2}|\Omega \mathfrak{S}\Omega|_\infty - u\right|<\eta \right) \lesssim \sup_{u\in\mathbb{R}}\mathbb{P}\left(\left||Z^\mathfrak{S}|_\infty - u\right| < \eta \right) + C|\Omega|_1^2\Psi(p,n).
    \end{equation*}
    By Theorem 3 of \cite{chernozhukov2015comparison},
    \begin{align*}
            \sup_{u\in\mathbb{R}} \mathbb{P}\left(\left||Z^\mathfrak{S}|_\infty - u\right| < \eta \right) \lesssim \eta\sqrt{1\vee \log(p/\eta)}
    \end{align*}
    up to a constant $C>0$ related to $C_0$ and $c_0$.
    Combining these results, we have the upper bound for the desired Kolmogorov distance,
    \begin{multline*}
        \sup_{u\in\mathbb{R}} \left|\mathbb{P}\left(n^{1/2}|\mathfrak{W}|_\infty \leq u\right) - \mathbb{P}\left( |Z^\Omega|_\infty \leq u\right)\right|\\
        \lesssim \exp \left[-c'_2 \left(\frac{n^{((2\beta-1/2) \wedge 3/2)}\eta}{p^2} \wedge \frac{n^{((\beta-1/4) \wedge 3/4)}\eta^{1/2}}{p}\right) + 3p\right] + |\Omega|_1^2\Psi(p,n) + \eta\sqrt{1\vee \log(p/\eta)} + \frac{|\Omega|_1 p^{2}\log (p)}{n^{(\beta - 1/4)\wedge 3/4}},
    \end{multline*}
    up to a constant related to $C_0$, $c_0$, $C_e$ and $c_e$.
    For a constant $C_\eta \geq 1$, we can choose
    \[
    \eta = C_\eta \frac{p^4\log^2(n)}{n^{(2\beta - 1/2)\wedge 3/2}}.
    \]
    Then the first term in the bound above becomes
    \[
    \exp\left[-c'_2 \left(C_\eta \log^2(n)\cdot p^2 \wedge C_\eta^{1/2} \log(n)\cdot p \right) + 3p\right] = \exp\left[-\left( c'_2 C_\eta^{1/2} \log(n) - 3\right) p\right].
    \]
    For sufficiently large $n$, this term can be upper bounded by $n^{-\kappa p}$ for any constant $\kappa < c'_2 C_\eta^{1/2}$.
    As long as $C_\eta$ is chosen to be large enough, it can be absorbed by other terms that are polynomial in $n^{-1}$, even when $p$ is fixed.
    And the third term becomes of order $p^4\log^{5/2}(n) / n^{(2\beta - 1/2)\wedge 3/2}$.
    Then, for instance, if we choose $C_\eta > \left(\frac{(4\beta - 1)\wedge 3}{2c_2'}\right)^2$, the upper bound of the first term can be absorbed by the third term.
    This finishes the proof of the theorem.
\end{proof}

\section{Block Bootstrap for Precision Matrix}\label{sec:bootstrap.prec}
In Section \ref{sec:GA.prec}, we have given a finite-sample upper bound of the Kolmogorov distance between $n^{1/2}|\hat{\Omega}_n - \Omega|_\infty$ and the infinity norm of its Gaussian approximation, $|Z^\Omega|_\infty$.
The true underlying covariance structure of the Gaussian approximation, $Z^\Omega$, is unattainable.
The block bootstrap technique mentioned in Section \ref{sec:bootstrap.cov} is able to give a data-dependent approximation to $\mathbb{P}(|Z^\Omega|_\infty\leq u)$.

Recall that the size of sampling window, $l=l_n$, satisfies $l\rightarrow\infty$ and $l/n\rightarrow 0$.
We have shown in Section \ref{sec:GA.prec} that $\hat{\Omega}_n - \Omega$ can be approximated by a dominating linear term $\Omega (\hat{\Sigma}_n - \Sigma) \Omega$.
In light of this, define
\begin{equation*}
    \check{\mathcal{X}}_j^\Omega = \vectorize(\hat{\Omega}_n (X_j X_j^\top - \hat{\Sigma}_n) \hat{\Omega}_n), \quad 1 \leq j\leq n,
\end{equation*}
and
\begin{equation*}
    \check{B}_{i,l}^\Omega = \sum_{j=i-l+1}^i \check{\mathcal{X}}_j^\Omega, \quad l\leq i\leq n.
\end{equation*}
Note that $\check{B}_{i,l}^\Omega$ is completely data-driven, and requires no information of the true underlying distribution.
In this section, we will show that the distribution of $n^{1/2}|\hat{\Omega}_n - \Omega|_\infty$ can be approximated by the empirical distribution of $l^{-1/2}|\check{B}_{i,l}^\Omega|_\infty$, characterized by
\begin{equation*}
    \hat{F}_{n,l}^\Omega (u) = \frac{1}{n-l+1}\sum_{i=l}^n \mathbf{1}\{l^{-1/2}|\check{B}_{i,l}^\Omega|_\infty \leq u\}.
\end{equation*}
Specifically, we will prove the validity of block bootstrap by giving a data-dependent upper bound to
\begin{equation*}
    \rho^\Omega_B(n,l) = \sup_{u\in\mathbb{R}}\left|\hat{F}_{n,l}^\Omega (u) - \mathbb{P}\left(n^{1/2}|\hat{\Omega}_n - \Omega|_\infty \leq u\right)\right|.
\end{equation*}

As a first step, we shall approximate $\check{\mathcal{X}}_j^\Omega$ by a quantity that is linear with respect to $X_jX_j^\top$: let
\[
\check{B}_{i,l}^{\Omega,\diamond} = \sum_{j=i-l+1}^i \check{\mathcal{X}}_j^{\Omega,\diamond},
\]
where
\begin{equation*}
    \check{\mathcal{X}}_j^{\Omega,\diamond} = \vectorize(\Omega (X_j X_j^\top - \hat{\Sigma}_n) \Omega).
\end{equation*}
By simple manipulation (see Appendix \ref{app:X.check.diamond}), we find that
\begin{equation}\label{eq:B.check.diamond}
    |\check{B}_{i,l}^{\Omega} - \check{B}_{i,l}^{\Omega,\diamond}|_\infty \leq 2p|\Omega|_1 |\check{B}_{i,l}|_\infty |\hat{\Omega}_n - \Omega|_\infty + p^2 |\check{B}_{i,l}|_\infty |\hat{\Omega}_n - \Omega|_\infty^2.
\end{equation}
Recall that $\check{B}_{i,l}$ was defined at the beginning of Section \ref{sec:bootstrap.cov}.
The tail bound of this quantity is given by the following lemma.
\begin{lemma}\label{lm:B.check.diamond}
    For any $l\leq i\leq n$ and $\delta>0$,
    \begin{equation*}
            \mathbb{P}\left(l^{-1/2}|\check{B}_{i,l}^{\Omega} - \check{B}_{i,l}^{\Omega,\diamond}|_\infty > \delta \right)\leq 4C|\Omega|_1^{1/2}\frac{p^{1/2}\log^{1/2}(p)}{n^{1/4}\delta^{1/2}} + C\Psi(p,l) + C' \Psi^\Omega (p,n).
    \end{equation*}
    Here, $C>0$ is a constant related to $C_0$ and $c_0$, while $C'>0$ is a constant related to $C_0$, $c_0$, $C_e$ and $c_e$.
\end{lemma}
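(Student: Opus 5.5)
Starting from the deterministic bound \eqref{eq:B.check.diamond}, the plan is to split the event into two products, each controlled by one tail bound on $l^{-1/2}|\check{B}_{i,l}|_\infty$ and one on $|\hat{\Omega}_n - \Omega|_\infty$. The two ingredients are:
\begin{itemize}
\item[(a)] stochastic boundedness of $l^{-1/2}|\check{B}_{i,l}|_\infty$ at the level $\sqrt{\log p}$;
\item[(b)] smallness of $|\hat{\Omega}_n-\Omega|_\infty$ at the level $n^{-1/2}$.
\end{itemize}
The factor $\delta^{-1/2}$ and the exponent $n^{-1/4}$ in the target suggest a Markov-type step on a square root. Specifically, $l^{-1/2}|\check{B}_{i,l}^\Omega-\check{B}_{i,l}^{\Omega,\diamond}|_\infty>\delta$ implies that either $2p|\Omega|_1 l^{-1/2}|\check{B}_{i,l}|_\infty|\mathfrak{W}|_\infty>\delta/2$ or $p^2 l^{-1/2}|\check{B}_{i,l}|_\infty|\mathfrak{W}|_\infty^2>\delta/2$.

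For (a), I will use Corollary \ref{cor:B.check.B.tilde} together with Lemma \ref{lm:B.check.GA}, which give a Kolmogorov distance of order $\Psi(p,l)$ between $l^{-1/2}|\check{B}_{i,l}|_\infty$ and $|Z|_\infty$. Since the entries of $Z$ have bounded variance (Condition \ref{cond.A}), Gaussian maximal inequalities give $\mathbb{P}(|Z|_\infty>t)\le 2p^2e^{-ct^2}$. Hence $\mathbb{P}(l^{-1/2}|\check{B}_{i,l}|_\infty > K\sqrt{\log p}) \le C\Psi(p,l)+ p^{-c}$, and the last term is absorbed. This accounts for the $C\Psi(p,l)$ term.

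For (b), Theorem \ref{thm:GA.prec} gives that $n^{1/2}|\mathfrak{W}|_\infty$ is within Kolmogorov distance $C'\Psi^\Omega(p,n)$ of $|Z^\Omega|_\infty$. Instead of an exponential tail, I plan to use a first-moment bound $\mathbb{E}|Z^\Omega|_\infty\lesssim \sqrt{\log p}$ (up to variance factors depending on $c_e$) combined with Markov's inequality:
\[
\mathbb{P}\bigl(n^{1/2}|\mathfrak{W}|_\infty>t\bigr)\le C'\Psi^\Omega(p,n)+ C\sqrt{\log p}/t .
\]
On the event $\{l^{-1/2}|\check{B}_{i,l}|_\infty\le K\sqrt{\log p}\}$, the first product is at most $\delta/2$ as long as $n^{1/2}|\mathfrak{W}|_\infty \le \delta n^{1/2}/(4Kp|\Omega|_1\sqrt{\log p})$. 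Markov then gives a term of order $p|\Omega|_1\log(p)/(n^{1/2}\delta)$, which is the square of the stated leading term. Whenever that quantity is at most one, it is bounded by its square root $p^{1/2}|\Omega|_1^{1/2}\log^{1/2}(p)/(n^{1/4}\delta^{1/2})$; otherwise the claimed bound is trivial. The quadratic product needs $|\mathfrak{W}|_\infty\le (\delta/(2Kp^2\sqrt{\log p}))^{1/2}$, i.e.\ $n^{1/2}|\mathfrak{W}|_\infty\le n^{1/2}\delta^{1/2}/(p\,(\log p)^{1/4})$ up to constants. Markov turns this into a contribution of order $p\log^{3/4}(p)/(n^{1/2}\delta^{1/2})$. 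Using $|\Omega|_1\ge c$ and the same trivial-bound argument, this is dominated by the stated $p^{1/2}|\Omega|_1^{1/2}\log^{1/2}(p)n^{-1/4}\delta^{-1/2}$ term. Summing the two contributions, together with the $\Psi$ terms counted once each, gives the constant $4C$.

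The main obstacle is the moment bound $\mathbb{E}|Z^\Omega|_\infty\lesssim\sqrt{\log p}$, which requires the entrywise variances of $Z^\Omega$ to be bounded. Since $\mathcal{X}^\Omega=\vectorize(\Omega(\cdot)\Omega)$, these variances are naturally bounded by $|\Omega|_1^2$ times those of $Z$, which would introduce unwanted $|\Omega|_1$ factors. I will first try to bound them by operator norms, $|\Omega|_{\op}^4\le c_e^{-4}$, using the quadratic-form representation of $\var(u^\top X X^\top v)$ for a Gaussian linear process. If that fails, I will fall back to the $|\Omega|_1$-dependent bound and absorb the resulting factors into the $|\Omega|_1^{1/2}$ already present in the statement.
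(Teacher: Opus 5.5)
Your decomposition and inputs match the paper's. You use Corollary~\ref{cor:B.check.B.tilde} and Lemma~\ref{lm:B.check.GA} for $\check{B}_{i,l}$, and Theorem~\ref{thm:GA.prec} plus Markov for $\mathfrak{W}$. However, step (a) has a genuine gap.

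Fixing the level at $K\sqrt{\log p}$ leaves the remainder $\mathbb{P}(|Z|_\infty>K\sqrt{\log p})\le 2p^{2-cK^2}$, which you call ``absorbed''. Nothing on the right-hand side absorbs it. When $p$ stays bounded (the low-dimensional regime this lemma serves), it is a fixed positive constant. For fixed $\delta$, by contrast, the leading term, $\Psi(p,l)$ and $\Psi^\Omega(p,n)$ all tend to zero. So as written you only obtain the stated bound plus $2p^{2-cK^2}$, and the level on $l^{-1/2}|\check{B}_{i,l}|_\infty$ must be allowed to grow.

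The paper does this by applying Markov's inequality to \emph{both} factors, using $\mathbb{E}|Z|_\infty\lesssim\sqrt{\log p}$ as well. It splits the threshold for $|\check{B}_{i,l}|_\infty|\hat{\Omega}_n-\Omega|_\infty$ as $\delta_1\delta_2=\delta$, with $\delta_1=n^{-1/4}l^{-1/4}\delta^{1/2}$ and $\delta_2=n^{1/4}l^{1/4}\delta^{1/2}$, so that the two Markov terms coincide. This balancing produces the $\delta^{-1/2}n^{-1/4}$ shape directly, without your $x\le\sqrt{x}$ step, and the resulting $l^{1/4}$ cancels when $\delta$ is replaced by $l^{1/2}\delta$.

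If you prefer to keep the Gaussian tail, let the level depend on $x:=p|\Omega|_1\log(p)/(n^{1/2}\delta)$, say $t^2\asymp\log(p^2/\sqrt{x})$. The tail then costs $\sqrt{x}$, and the Markov term becomes $x\,(1+\log(1/x)/\log p)^{1/2}\lesssim\sqrt{x}$ for $x\le 1$.

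Two further steps need repair.

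\emph{Quadratic product.} The ratio of your term $p\log^{3/4}(p)/(n^{1/2}\delta^{1/2})$ to the leading term is $\asymp p^{1/2}\log^{1/4}(p)/(n^{1/4}|\Omega|_1^{1/2})$. This is independent of $\delta$, so triviality of the leading term cannot help. What you need is the case split behind the paper's side condition $p^2\log p\lesssim n|\Omega|_1^2$. Concretely, take the level for this product of order $\sqrt{\log(pn)}$; its Gaussian remainder is below $n^{-3/2}\le\Psi^\Omega(p,n)$.
\begin{itemize}
\item If $p^2\log(pn)\lesssim n|\Omega|_1^2$, your Markov term is dominated by the leading term.
\item Otherwise, using $p<n$ and $|\Omega|_1\ge\lambda_{\min}(\Omega)\ge C_e^{-1}$, the second term of $\Psi^\Omega(p,n)$ is $\gtrsim n^{1/2}|\Omega|_1^4\log^{1/2}(n)\gtrsim 1$, so the bound is trivial.
\end{itemize}

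\emph{Moment bound for $Z^\Omega$.} The paper simply takes $\mathbb{E}|Z^\Omega|_\infty\lesssim\sqrt{\log p}$, with a constant free of $|\Omega|_1$, as already established, and you may use the same input. Your fallback, however, is not valid. Entrywise variances of order $|\Omega|_1^4$ give $\mathbb{E}|Z^\Omega|_\infty\lesssim|\Omega|_1^2\sqrt{\log p}$. That yields a leading term carrying $|\Omega|_1^{3/2}$, which cannot be absorbed into the stated $|\Omega|_1^{1/2}$.
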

The proof of Lemma \ref{lm:B.check.diamond} is deferred to Appendix \ref{sec:apdx.B.check.diamond}.
The next step is $m$-dependent approximation for $\check{B}_{i,l}^{\Omega,\diamond}$.
Recall \eqref{eq:X.j.Omega} for the definition of $\mathcal{X}_j^\Omega$, which depends only on observations in terms of $X_j$.
From the definition of $D_j^\Omega$ and $\tilde{D}_{j,m}^\Omega$ in \eqref{eq:D.j.Omega} and \eqref{eq:D.tilde.j.Omega}, we can observe that $\tilde{D}_{j,m}^\Omega$ and $\tilde{D}_{k,m}^\Omega$ are independent if $|j-k|\geq m$.
Correspondingly, a good $m$-dependent approximation to $\check{B}_{i,l}^{\Omega,\diamond}$ is
\begin{equation*}
    \tilde{B}_{i,l,m}^\Omega = \sum_{j=i-l+1}^i \tilde{D}_{j,m}^\Omega.
\end{equation*}
The validity of this approximation is quantified by the following lemma analogous to Lemma \ref{lm:B.check.B.tilde}.
\begin{lemma}\label{lm:B.check.B.tilde.Omega}
    Under Conditions \ref{cond.A} and \ref{cond.G}, up to a constant $C>0$ related to $C_0$ and $c_0$, for any $\delta>0$,
    \begin{multline*}
    \mathbb{P}\left(l^{-1/2}|\check{B}_{i,l}^{\Omega,\diamond} - \tilde{B}_{i,l,m}^\Omega|_\infty > \delta\right) \leq C|\Omega|_1^2\delta^{-1}\sqrt{\frac{l\log(p)}{n}} + C\Psi(p,n) \\
            + 2p^2\exp\left(-c_1\left(\frac{\delta^2}{|\Omega|_1^4 Q_1(l)} \wedge \frac{\delta}{|\Omega|_1^2\sqrt{Q_1(l)}}\right)\right)
            + 2p^2\exp\left(-c_2\left(\frac{\delta^2}{|\Omega|_1^4 Q_2(m)} \wedge \frac{\delta}{|\Omega|_1^2\sqrt{Q_2(m)}}\right)\right).
    \end{multline*}
    Here, the constants $c_1>0$ and $c_2>0$ are related to $C_0$.
\end{lemma}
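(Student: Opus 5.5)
The plan is to reduce Lemma \ref{lm:B.check.B.tilde.Omega} to Lemma \ref{lm:B.check.B.tilde} by linearity. The map $M\mapsto \Omega M\Omega$ is linear, and its action on $\vectorize(\cdot)$ is through $\Omega\otimes\Omega$. Each entry of $\Omega M \Omega$ is $\sum_{a,b}\Omega_{sa}M_{ab}\Omega_{bt}$. Writing $|\Omega|_1$ for the maximal absolute row/column sum, we therefore get the entrywise bound
\[
|\Omega M\Omega|_\infty\le |\Omega|_1^2|M|_\infty .
\]
We also have $\check{\mathcal X}_j^{\Omega,\diamond}=\vectorize(\Omega(X_jX_j^\top-\hat\Sigma_n)\Omega)$. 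Since the projections $\mathcal P_i$ and the conditional expectations defining $D_i$ and $\tilde D_{i,m}$ commute with deterministic linear maps, we get $D_i^\Omega=(\Omega\otimes\Omega)D_i$ and $\tilde D^\Omega_{i,m}=(\Omega\otimes\Omega)\tilde D_{i,m}$. It follows that
\[
\check B^{\Omega,\diamond}_{i,l}-\tilde B^\Omega_{i,l,m}=(\Omega\otimes\Omega)\bigl(\check B_{i,l}-\tilde B_{i,l,m}\bigr),
\]
and hence $|\check B^{\Omega,\diamond}_{i,l}-\tilde B^\Omega_{i,l,m}|_\infty\le |\Omega|_1^2\,|\check B_{i,l}-\tilde B_{i,l,m}|_\infty$.

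Consequently the event $\{l^{-1/2}|\check{B}_{i,l}^{\Omega,\diamond}-\tilde B^\Omega_{i,l,m}|_\infty>\delta\}$ is contained in $\{l^{-1/2}|\check B_{i,l}-\tilde B_{i,l,m}|_\infty>\delta/|\Omega|_1^2\}$. I would then apply Lemma \ref{lm:B.check.B.tilde} with $\delta$ replaced by $\delta'=\delta|\Omega|_1^{-2}$.

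This substitution produces the stated terms directly. The term $C\delta'^{-1}\sqrt{l\log(p)/n}$ becomes $C|\Omega|_1^2\delta^{-1}\sqrt{l\log(p)/n}$. In the exponentials, $\delta'^2/Q_1(l)=\delta^2/(|\Omega|_1^4Q_1(l))$ and $\delta'/\sqrt{Q_1(l)}=\delta/(|\Omega|_1^2\sqrt{Q_1(l)})$, and the $Q_2(m)$ terms transform in the same way. The $C\Psi(p,n)$ term does not involve $\delta$ and carries over unchanged.

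The main point requiring care is the identity between the paper's $\check B_{i,l}$ and the linear image of the quantity inside Lemma \ref{lm:B.check.B.tilde}. Specifically, the centering by $\hat\Sigma_n$ must pass through the linear map. This holds because $\Omega(X_jX_j^\top-\hat\Sigma_n)\Omega$ is exactly the image of $X_jX_j^\top-\hat\Sigma_n$, so no extra estimation error for $\Omega$ enters. That error was already handled separately in Lemma \ref{lm:B.check.diamond}.

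One should also check that the definition \eqref{eq:D.tilde.j.Omega} conditions on $\mathcal F^i_{i-m+1}$; the printed superscript $m$ is a typo. With that reading, $\tilde D^\Omega_{i,m}$ is the same linear image, and the argument is complete.
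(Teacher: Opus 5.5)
Your proposal is correct and follows the paper's proof: the paper likewise uses $|\check B^{\Omega,\diamond}_{i,l}-\tilde B^\Omega_{i,l,m}|_\infty\le|\Omega|_1^2\,|\check B_{i,l}-\tilde B_{i,l,m}|_\infty$ and then applies Lemma \ref{lm:B.check.B.tilde} with $\delta$ replaced by $\delta|\Omega|_1^{-2}$. Your checks that the $\hat\Sigma_n$-centering passes through the linear map, and that the conditioning $\sigma$-field in \eqref{eq:D.tilde.j.Omega} should be $\mathcal F^i_{i-m+1}$, make explicit what the paper leaves implicit.
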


\begin{proof}[Proof of Lemma \ref{lm:B.check.B.tilde.Omega}]
    Observe that
    \begin{equation*}
        |\check{B}_{i,l}^{\Omega,\diamond} - \tilde{B}_{i,l,m}^\Omega|_\infty \leq |\Omega|_1^2 |\check{B}_{i,l} - \tilde{B}_{i,l,m}|_\infty.
    \end{equation*}
    Lemma \ref{lm:B.check.B.tilde.Omega} immediately follows Lemma \ref{lm:B.check.B.tilde}, only with extra factors of $|\Omega|_1$.
\end{proof}

Similar to Lemma \ref{lm:B.check.GA}, we have the following Gaussian approximation result for $\tilde{B}_{i,l,m}^\Omega$.
\begin{lemma}\label{lm:B.tilde.GA}
    Under Conditions \ref{cond.A} and \ref{cond.G}, there exists a constant $C''>0$ related to $C_0$, $c_0$ and $c_e$ such that
    \begin{equation*}
        \sup_{u\in\mathbb{R}} \left|\mathbb{P} \left( l^{-1/2} |\Tilde{B}_{i,l,m}^\Omega|_\infty \leq u\right) - \mathbb{P}(|Z^\mathfrak{S}|_\infty \leq u)\right| \leq C''|\Omega|_1^2 \Psi(p,l).
    \end{equation*}
\end{lemma}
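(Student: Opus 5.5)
The proof will rerun the argument behind Lemma \ref{lm:B.check.GA}, i.e.\ the proof of Theorem \ref{thm:GA.cov} with $n$ replaced by $l$, on the linearly transformed summands. Write $\Omega\otimes\Omega$ for the linear map $\vectorize(M)\mapsto\vectorize(\Omega M\Omega)$. Then $\mathcal{X}_j^\Omega=(\Omega\otimes\Omega)\mathcal{X}_j$. The projection operators $\mathcal{P}_i$ and the conditional expectation given $\mathcal{F}^i_{i-m+1}$ commute with this deterministic map, so $D_j^\Omega=(\Omega\otimes\Omega)D_j$ and $\tilde D_{j,m}^\Omega=(\Omega\otimes\Omega)\tilde D_{j,m}$. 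Hence $\tilde B_{i,l,m}^\Omega=(\Omega\otimes\Omega)\tilde B_{i,l,m}$. By stationarity I may take $i=l$, so the window is $\{1,\dots,l\}$.

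On this window, build the triadic blocks of Section \ref{sec:GA.cov} with $n$ replaced by $l$ and $m=C_m(l\log(pl))^{1/(2\tilde\beta+4)}$, as in \eqref{eq:choice.eta.m.boot}. Apply $\Omega\otimes\Omega$ to every block $B_j$, every conditional variance $V_j(\bm\epsilon_{j,0})$, which becomes $(\Omega\otimes\Omega)V_j(\Omega\otimes\Omega)^\top$, and to the Gaussian surrogates. The conditional independence structure is unchanged, so Lemmas \ref{lm:cond.CCK} and \ref{lm:uncond.CCK} transfer. The only inputs to the Chernozhukov et al.\ bounds that change are the moment and Bernstein-type envelopes of the summands and the lower bound on coordinate variances. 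Each coordinate of $(\Omega\otimes\Omega)v$ is $\sum_{a,b}\Omega_{sa}\Omega_{tb}v_{(a,b)}$, so $|(\Omega\otimes\Omega)v|_\infty\le|\Omega|_1^2|v|_\infty$. Thus every sub-exponential or Hanson--Wright tail bound used in Lemmas \ref{lm:mtg.approx}, \ref{lm:m.dep.approx}, \ref{lm:cond.CCK} and \ref{lm:uncond.CCK} holds with scale inflated by $|\Omega|_1^2$. This is exactly how Lemma \ref{lm:GA.cov.linear} produced the factor $|\Omega|_1^2\Psi(p,n)$, and I will invoke its proof verbatim with $n\to l$.

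For the lower variance bound I need a variance lower bound like Condition \ref{cond.G} for the entries of $Z^{\mathfrak S}$. The Gaussian Bartlett formula gives the long-run variance of entry $(s,t)$ of $\Omega\mathfrak S\Omega$ as $\sum_k[(\Omega\Gamma_k\Omega)_{ss}(\Omega\Gamma_k\Omega)_{tt}+(\Omega\Gamma_k\Omega)_{st}(\Omega\Gamma_k\Omega)_{ts}]$. This is the same quantity as in \eqref{eq:cond.G} for the transformed process $\Omega X_i$, whose coefficients $\Omega A_t$ still satisfy Condition \ref{cond.A} up to constants depending on $c_e$. I would argue the lower bound there, which is where $C''$ picks up its dependence on $c_e$, and use it as the paper does in the proof of Lemma \ref{lm:GA.cov.linear}. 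With this, the anti-concentration step \eqref{eq:conc.ineq} applies.

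Finally compare covariances. The analogue of Lemma \ref{lm:GA.comparison} compares $l^{-1/2}\sum_j (\Omega\otimes\Omega)Z_j$ with $Z^{\mathfrak S}$. The entrywise covariance gap is at most $|\Omega|_1^4$ times the gap controlled in Lemma \ref{lm:GA.comparison}, giving $|\Omega|_1^2\log(p)/m^{\tilde\beta/2}$ after the Gaussian comparison inequality, possibly with a square root. This term is absorbed into $|\Omega|_1^2\Psi(p,l)$ by the choice of $m$. Combining through the triangle-inequality chain \eqref{eq:lemma.12}, with $\eta$ chosen as in \eqref{eq:choice.eta.m.boot} and multiplied by $|\Omega|_1^2$ to offset the inflated tails, yields the bound $C''|\Omega|_1^2\Psi(p,l)$.

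I expect the main obstacle to be the bookkeeping that keeps the $|\Omega|_1$ power at exactly $2$ rather than larger. The risk is in the comparison step, where covariances scale as $|\Omega|_1^4$, and in the exponential tails after rescaling $\eta$. I would handle this by normalizing coordinates by their standard deviations first, as done for Lemma \ref{lm:GA.cov.linear}.
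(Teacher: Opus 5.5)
Your outline follows the paper's route. You pass to $\tilde D^\Omega_{t,m}=(\Omega\otimes\Omega)\tilde D_{t,m}$, rebuild the triadic blocks on the window, and rerun Lemmas \ref{lm:cond.CCK} and \ref{lm:uncond.CCK} with $\beta_n^\Omega\asymp m^2|\Omega|_1^4$. That choice alone yields the single factor $|\Omega|_1^2$, so no normalization by standard deviations is needed; such a normalization would in any case presuppose the lower bound discussed below. You then compare covariances to get $|\Omega|_1^2\log(p)/m^{\tilde\beta/2}$ and close with \eqref{eq:choice.eta.m.boot}. The martingale and $m$-dependence terms of \eqref{eq:lemma.12} are unnecessary here, because $\tilde B^\Omega_{i,l,m}$ is already the $m$-dependent sum.

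The genuine gap is the step you postpone: a lower bound on the coordinate variances of the transformed problem. Concretely, you need a lower bound on the diagonal of $\Sigma^{\Omega*}=(\Omega\otimes\Omega)\Sigma^*(\Omega\otimes\Omega)$ and on the block variances of $\tilde D^\Omega_{t,m}$. This bound is required by (M.1) in both applications of Theorem \ref{prop:CCK17}, by the anti-concentration inequality, and by Theorem \ref{thm:comparison}.

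The source you point to cannot supply it. Condition \ref{cond.A} for $\Omega A_t$ holds with constant $C_0|\Omega|_1$, so no $c_e$ enters, and it is only an upper bound. Condition \ref{cond.G} does not pass through $\Omega\otimes\Omega$, because that map mixes coordinates. In fact the bound fails under the stated hypotheses, as the following example shows.
\begin{itemize}
\item Let $\epsilon_i=(e_i,g_i)^\top$ and $Y_i=(e_i-e_{i-1},\,e_i+e_{i-1},\,e_i+g_i)^\top$.
\item Set $\Omega=\cov(Y_i)$, whose eigenvalues are $2$ and $2\pm\sqrt2$, then $\Sigma=\Omega^{-1}$ and $X_i=\Sigma Y_i$.
\item $X_i$ is a finite moving average, so Condition \ref{cond.A} holds.
\item A direct computation gives $X_{1,i}=\frac12(e_i-e_{i-1}-g_i)$, $X_{2,i}=\frac12(e_i+e_{i-1}-g_i)$ and $X_{3,i}=g_i$.
\item The products $X_{s,i}X_{t,i}$ for $(s,t)=(1,1),(2,2),(3,3),(1,2),(1,3),(2,3)$ have long-run variances $\frac{11}{8},\frac{11}{8},2,\frac38,1,1$, so Condition \ref{cond.G} holds.
\item Yet $(\Omega X_i)_1(\Omega X_i)_2=e_i^2-e_{i-1}^2$ telescopes. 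Hence $\Sigma^{\Omega*}_{(1,2),(1,2)}=0$, and the $(1,2)$ coordinate of $\tilde D^\Omega_{t,m}$ is identically zero.
\end{itemize}

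The paper's own justification, in the proof of Lemma \ref{lm:uncond.CCK.prec}, does not close this either. $\Sigma^*$ annihilates $\vectorize(M)$ for every antisymmetric $M$, so $\lambda_{\min}(\Sigma^*)=0$ whenever $p\ge 2$. The example above shows that no argument based only on $c_e$ and $C_e$ can succeed.

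To complete your proof you have two options:
\begin{itemize}
\item add, as a hypothesis, the analogue of Condition \ref{cond.G} for the process $\Omega X_i$, namely $\min_{s,t}\Sigma^{\Omega*}_{(s,t),(s,t)}\ge c_0'>0$, so that $C''$ depends on $c_0'$ rather than on $c_e$; or
\item replace the nondegenerate Gaussian approximation, anti-concentration and comparison results by versions that tolerate degenerate coordinates, at a cost in the rate.
\end{itemize}
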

\begin{proof}[Proof of Lemma \ref{lm:B.tilde.GA}]
    The proof technique is analogous to that of Lemma \ref{lm:B.check.GA} given in Appendix \ref{sec:apdx.B.check.GA}.
    With $\tilde{D}_{t,m}$ replaced by $\tilde{D}^\Omega_{t,m}$, we can similarly divide the index set, partition $\tilde B_{i,l,m}^\Omega$ into triadic blocks, and define an intermediate partial sum $B_{i,l}^{\Omega Y}$ similar to $B_{i,l}^Y$.
    Similar to \eqref{eq:boot.3.3}, for any $\delta>0$ and $0<\epsilon<1$, up to a constant $C>0$ related to $C_0$ and $c_0$ and $c_2>0$ related to $C_0$, with probability no less than $1-4p^2\exp(-c_2\delta l^{1/2}/m^{1/2}) - 2\exp(-c_2 m^{2\epsilon}|\Omega|_1^{4\epsilon})$, we have
    \[
    \sup_{u\in\mathbb{R}}\left|\mathbb{P}\left(l^{-1/2}|\tilde{B}^\Omega_{i,l,m} - \mathbb{E}[\tilde{B}^\Omega_{i,l,m}\mid \bm{\epsilon}_0^i]|_\infty \leq u\mid \bm{\epsilon}_0^i\right) - \mathbb{P}\left(l^{-1/2}|B_{i,l}^{\Omega Y}|_\infty \leq u\mid \bm{\epsilon}_0^i\right)\right|
        \leq C \left(\frac{m^4|\Omega|_1^8\log^5(pl)}{l}\right)^{1/4},
    \]
    up to a constant $C>0$ related to $C_0$ and $c_0$. The proof technique is same as Lemma \ref{lm:cond.CCK.prec}.
    Next, we can define a partial sum $S_l^{\Omega Z}$ analogous to $S_l^Z$, and show a Gaussian approximation result similar to \eqref{eq:boot.3.4} that
    \[
    \sup_{u\in\mathbb{R}}\left|\mathbb{P}(l^{-1/2}|B_{i,l}^{\Omega Y}|_\infty \leq u) - \mathbb{P}(|S_l^{\Omega Z}|_\infty \leq u)\right| \leq C''\left(\frac{m^4|\Omega|^8\log^5(pl)}{l}\right)^{1/4},
    \]
    up to a constant $C''>0$ related to $C_0$, $c_0$ and $c_e$. The proof technique is same as Lemma \ref{lm:uncond.CCK.prec}.
    Finally, similar to \eqref{eq:boot.3.5}, a direct application of \eqref{eq:like.lm.3.5} implies
    \[
    \sup_{u\in \mathbb{R}}\left|\mathbb{P}(|Z^\mathfrak{S}|_\infty \geq u) - \mathbb P(|S_l^{\Omega Z}|_\infty \geq u)\right| \leq C |\Omega|_1^2 \frac{\log(p)}{m^{\tilde\beta/2}}.
    \]
    All upper bounds contain an extra $|\Omega|_1^2$ factor. Using the same proof technique of Theorem \ref{thm:GA.cov}, we finish the proof of Lemma \ref{lm:B.tilde.GA}.
\end{proof}

The following corollary of Lemmas \ref{lm:B.check.diamond}, \ref{lm:B.check.B.tilde.Omega}, and \ref{lm:B.tilde.GA} gives an upper bound to the Kolmogorov distance between $l^{-1/2}|\check{B}_{i,l}^\Omega|_\infty$ and $l^{-1/2}|\tilde{B}_{i,l,m}^\Omega|_\infty$.

\begin{corollary}\label{cor:B.check.B.tilde.Omega}
    Under Conditions \ref{cond.A} and \ref{cond.G}, there exists a constant $C'>0$ related to $C_0$, $c_0$, $C_e$ and $c_e$ such that
    \begin{equation*}
        \sup_{u\in\mathbb{R}}\left|\mathbb{P}\left( l^{-1/2}|\check{B}_{i,l}^\Omega|_\infty \leq u\right) - \mathbb{P}(l^{-1/2}|\Tilde{B}_{i,l,m}^\Omega|_\infty \leq u)\right| \leq C' \Theta^\Omega(n,p,l),
    \end{equation*}
    where
    \begin{equation}
        \Theta^\Omega(n,p,l) = \frac{|\Omega|_1^2\log^{(5\tilde{\beta}+12)/(4\tilde{\beta}+8)}(pl)}{l^{\tilde{\beta}/(4\tilde{\beta}+8)}}
        + \frac{p^4 \log^{5/2}(n)}{n^{(2\beta-1/2)\wedge 3/2}}
        + \frac{|\Omega|_1p^{2}\log(p)}{n^{(\beta-1/4)\wedge 3/4}}
    \end{equation}
\end{corollary}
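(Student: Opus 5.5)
The plan mirrors the proof of Corollary \ref{cor:B.check.B.tilde}, with one extra layer for the nonlinear correction. By the triangle inequality, write $\check{B}_{i,l}^\Omega - \tilde{B}_{i,l,m}^\Omega = (\check{B}_{i,l}^\Omega - \check{B}_{i,l}^{\Omega,\diamond}) + (\check{B}_{i,l}^{\Omega,\diamond} - \tilde{B}_{i,l,m}^\Omega)$. Then, for any $\eta>0$,
\begin{multline*}
\sup_{u\in\mathbb{R}}\left|\mathbb{P}\left( l^{-1/2}|\check{B}_{i,l}^\Omega|_\infty \leq u\right) - \mathbb{P}(l^{-1/2}|\tilde{B}_{i,l,m}^\Omega|_\infty \leq u)\right| \leq \mathbb{P}\left(l^{-1/2}|\check{B}_{i,l}^{\Omega} - \check{B}_{i,l}^{\Omega,\diamond}|_\infty > \eta/2\right)\\
+ \mathbb{P}\left(l^{-1/2}|\check{B}_{i,l}^{\Omega,\diamond} - \tilde{B}_{i,l,m}^\Omega|_\infty > \eta/2\right) + \sup_{u\in\mathbb{R}}\mathbb{P}\left(\left|l^{-1/2}|\tilde{B}_{i,l,m}^\Omega|_\infty - u\right|<\eta\right).
\end{multline*}
The first term is bounded by Lemma \ref{lm:B.check.diamond}, the second by Lemma \ref{lm:B.check.B.tilde.Omega}. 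For the third, Lemma \ref{lm:B.tilde.GA} transfers it to $Z^\mathfrak{S}$ at cost $C''|\Omega|_1^2\Psi(p,l)$. Theorem 3 of \cite{chernozhukov2015comparison} then gives $\eta\sqrt{1\vee\log(p/\eta)}$, since the entries of $Z^\mathfrak{S}$ have variances bounded above and below. The lower bound uses Condition \ref{cond.G} together with $c_e\le\lambda(\Sigma)\le C_e$, as was already used in the proof of Theorem \ref{thm:GA.prec}.

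Next, collect the terms. Lemma \ref{lm:B.check.diamond} contributes $C\Psi(p,l)+C'\Psi^\Omega(p,n)$ plus $|\Omega|_1^{1/2}p^{1/2}\log^{1/2}(p)n^{-1/4}\eta^{-1/2}$. Lemma \ref{lm:B.check.B.tilde.Omega} contributes $C\Psi(p,n)$, $|\Omega|_1^2\eta^{-1}\sqrt{l\log p/n}$, and two exponential tails. I choose $m$ as in \eqref{eq:choice.eta.m.boot}, and $\eta$ as in \eqref{eq:choice.eta.m.boot} inflated by a factor $|\Omega|_1^2$. With this $\eta$ the exponential tails, whose arguments are $\eta/(|\Omega|_1^2Q^{1/2})$, are unchanged from the covariance case and are negligible. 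The anticoncentration term is then $\lesssim |\Omega|_1^2\Psi(p,l)$. Since $l\le n$, both $\Psi(p,n)$ and $\Psi(p,l)$ are bounded by the first term of $\Theta^\Omega$. The parts of $\Psi^\Omega(p,n)$ other than its first term are exactly the second and third terms of $\Theta^\Omega$. Its first term, $|\Omega|_1^2\Psi(p,n)$, is dominated by $|\Omega|_1^2\Psi(p,l)$.

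The main obstacle is the two terms that depend on $n$ with an inverse power of $\eta$. The term $|\Omega|_1^2\eta^{-1}\sqrt{l\log p/n}$ becomes $\eta_0^{-1}\sqrt{l\log p/n}$ with the unscaled $\eta_0$, as in Corollary \ref{cor:B.check.B.tilde}. There it was absorbed into $\Psi(p,l)$ because $l\ll n$ under the block-size regime of Theorem \ref{thm:bootstrap.cov}, and I invoke the same regime here. The square-root term $|\Omega|_1^{1/2}p^{1/2}\log^{1/2}(p)n^{-1/4}\eta^{-1/2}$ is harder: with $\eta\asymp|\Omega|_1^2\eta_0$ it is at most $(p\log p/n^{1/2})^{1/2}\eta_0^{-1/2}$. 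I would bound it by the geometric mean of $|\Omega|_1p^2\log(p)/n^{(\beta-1/4)\wedge 3/4}$ and a power of $\Psi(p,l)$, then use AM-GM to split it into the third term of $\Theta^\Omega$ plus a multiple of the first. If that exponent bookkeeping fails for some $\beta$, the fallback is to choose a separate threshold $\eta'$ for the first probability only and optimize it independently. Either way the total is $\lesssim\Theta^\Omega(n,p,l)$, with constants depending on $C_0,c_0,C_e,c_e$.
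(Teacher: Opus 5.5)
Your skeleton is exactly the paper's: the same triangle inequality, Lemmas \ref{lm:B.check.diamond}, \ref{lm:B.check.B.tilde.Omega} and \ref{lm:B.tilde.GA}, anti-concentration for $Z^\mathfrak{S}$, and $m$ as in \eqref{eq:choice.eta.m.boot}.

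Your rescaling $\eta=|\Omega|_1^2\eta_0$ is in fact an improvement. With the paper's unscaled $\eta_0$ the exponential tails are $p^2\exp(-c\,\eta_0/(|\Omega|_1^2Q^{1/2}))\asymp p^2\exp(-c\log(pl)/|\Omega|_1^2)$, which is not negligible when $|\Omega|_1$ grows.

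The paper finishes by simply asserting that, after this choice, only $|\Omega|_1^2\Psi(p,l)$ and $\Psi^\Omega(p,n)$ survive. You correctly isolate the two terms for which this is not automatic. However, neither of your justifications works, so there is a gap exactly there, and the paper's proof shares it.

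\textbf{The recentering term.} Take $p$ and $|\Omega|_1$ bounded, which the statement allows. Let $\gamma=\tilde\beta/(4\tilde\beta+8)$, so $\eta_0\asymp l^{-\gamma}$ and $\Psi(p,l)\asymp l^{-\gamma}$ up to logarithms. Then
\[
\frac{\eta_0^{-1}\sqrt{l\log(p)/n}}{\Psi(p,l)}\asymp\frac{l^{2\gamma+1/2}}{n^{1/2}}=\frac{l^{(\tilde\beta+1)/(\tilde\beta+2)}}{n^{1/2}}
\]
up to logarithms.
\begin{itemize}
\item Absorption therefore needs $l\lesssim n^{(\tilde\beta+2)/(2\tilde\beta+2)}$; the condition $l\ll n$ is not enough.
\item The block size $l\asymp n^{\phi}$ of Theorem \ref{thm:bootstrap.prec} violates this for every $\tilde\beta>0$, because $\phi(\tilde\beta+1)/(\tilde\beta+2)=2(\tilde\beta+1)/((3-\epsilon)\tilde\beta+4)>1/2$.
\item The second and third terms of $\Theta^\Omega$ cannot absorb it either. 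They are $O(n^{-1/2})$ up to logs, since $(\beta-1/4)\wedge 3/4>1/2$.
\end{itemize}

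\textbf{The square-root term.} Put $A=|\Omega|_1^{1/2}p^{1/2}\log^{1/2}(p)\,n^{-1/4}$. Lemma \ref{lm:B.check.diamond} gives only the polynomial tail $A\delta^{-1/2}$, while anti-concentration charges linearly in the threshold. So any choice of thresholds, including your fallback $\eta'$, costs at least
\[
\min_{\eta'}\bigl(A\eta'^{-1/2}+\eta'\bigr)\asymp A^{2/3}\asymp(|\Omega|_1p\log p)^{1/3}n^{-1/6}.
\]
\begin{itemize}
\item For bounded $p$ and $|\Omega|_1$, this is $\lesssim\Psi(p,l)$ only if $l\lesssim n^{(4\tilde\beta+8)/(6\tilde\beta)}$.
\item That fails, for example, when $\tilde\beta>4$ and $l$ is close to $n$. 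It also fails with the block size of Theorem \ref{thm:bootstrap.prec} whenever $\epsilon\tilde\beta>4$.
\item AM--GM cannot rescue this. A weighted geometric mean of the first and third terms of $\Theta^\Omega$ never exceeds their maximum, and in this regime the square-root term exceeds both.
\end{itemize}

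\textbf{What is missing.} With the lemmas as stated, $\Theta^\Omega$ is not reached for general $l$. Closing the argument requires one of the following.
\begin{itemize}
\item Restrict to $l\lesssim n^{(\tilde\beta+2)/(2\tilde\beta+2)}\wedge n^{(4\tilde\beta+8)/(6\tilde\beta)}$ (up to logs). This clashes with the block size used downstream.
\item Enlarge $\Theta^\Omega$ by these two terms.
\item Control the $\hat\Omega_n-\Omega$ correction and the recentering by $\hat\Sigma_n$ far more sharply than the Markov-type tails do. For the recentering, even a high-probability bound of order $\sqrt{l\log(p)/n}$ is too large for the theorem's $l$, since it needs $l\lesssim n^{(4\tilde\beta+8)/(6\tilde\beta+8)}$. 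One would have to exploit that recentering changes the covariance of the block sum only at order $l/n$.
\end{itemize}
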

The proof of Corollary \ref{cor:B.check.B.tilde.Omega} is given in Appendix \ref{app:cor.B.check.B.tilde.Omega}.
Parallel to the proof in Section \ref{sec:bootstrap.cov}, we can now introduce the empirical CDF with respect to $\tilde{B}_{i,l,m}^\Omega$,
\begin{equation*}
    \tilde{F}_{n,l,m}^\Omega(u) = \frac{1}{n-l+1}\sum_{i=l}^n \mathbf{1}\{l^{-1/2}|\Tilde{B}_{i,l,m}^\Omega|_\infty \leq u\}.
\end{equation*}
The following result parallel to Lemma \ref{lm:F.check.F.tilde} guarantees the proximity between the two empirical CDF's, $\hat{F}_{n,l}^\Omega(u)$ and $\tilde{F}_{n,l,m}^\Omega (u)$.
The former measures the empirical distribution of a data-adaptive sequence, while the latter is focused on an artificial sequence that is easier to control.

\begin{lemma}\label{lm:F.check.F.tilde.Omega}
    Under Conditions \ref{cond.A} and \ref{cond.G}, for any $0<\epsilon<1$, we have
    \begin{equation*}
        \sup_{u\in\mathbb{R}}\left|\hat{F}_{n,l}^\Omega(u) - \Tilde{F}_{n,l,m}^\Omega(u)\right| \leq C' (\Theta^\Omega(n,p,l))^{1-\epsilon},
    \end{equation*}
    with probability no less than $1-(\Theta^\Omega(n,p,l))^\epsilon$.
    Here, $C'>0$ is a constant related to $C_0$, $c_0$, $C_e$ and $c_e$.
\end{lemma}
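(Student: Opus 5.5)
The argument mirrors the one for Lemma \ref{lm:F.check.F.tilde}: Corollary \ref{cor:B.check.B.tilde.Omega} supplies a per-block Kolmogorov bound, and a Markov inequality turns it into a high-probability bound on the sup-distance between the two empirical CDFs. The obstacle is that Corollary \ref{cor:B.check.B.tilde.Omega} controls distribution functions, not pointwise disagreement of indicators. I therefore do not work directly with $\mathbf{1}\{l^{-1/2}|\check{B}_{i,l}^\Omega|_\infty\leq u\}-\mathbf{1}\{l^{-1/2}|\tilde{B}_{i,l,m}^\Omega|_\infty\leq u\}$. Instead I sandwich, introducing a slack $\eta>0$:
\[
\mathbf{1}\{l^{-1/2}|\tilde{B}_{i,l,m}^\Omega|_\infty\leq u-\eta\}-\mathbf{1}\{E_i\}\leq \mathbf{1}\{l^{-1/2}|\check{B}_{i,l}^\Omega|_\infty\leq u\}\leq \mathbf{1}\{l^{-1/2}|\tilde{B}_{i,l,m}^\Omega|_\infty\leq u+\eta\}+\mathbf{1}\{E_i\}.
\]
Here $E_i=\{l^{-1/2}|\check{B}_{i,l}^\Omega-\tilde{B}_{i,l,m}^\Omega|_\infty>\eta\}$.

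Averaging over $i$ gives, uniformly in $u$,
\[
|\hat F^\Omega_{n,l}(u)-\tilde F^\Omega_{n,l,m}(u)|\leq \tilde F^\Omega_{n,l,m}(u+\eta)-\tilde F^\Omega_{n,l,m}(u-\eta)+\frac{1}{n-l+1}\sum_{i=l}^n\mathbf{1}\{E_i\}.
\]
Both terms are nonnegative, so I bound their expectations and apply Markov's inequality with threshold $(\Theta^\Omega)^{1-\epsilon}$. This yields the stated probability $1-(\Theta^\Omega)^\epsilon$ once each expectation is $\lesssim \Theta^\Omega(n,p,l)$.

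\textbf{The error term.} Since $\tilde{B}^\Omega_{i,l,m}$ is stationary in $i$, the expected average equals $\mathbb{P}(E_i)$. I bound $\mathbb{P}(E_i)$ by splitting the difference into $\check{B}^\Omega_{i,l}-\check{B}^{\Omega,\diamond}_{i,l}$ and $\check{B}^{\Omega,\diamond}_{i,l}-\tilde{B}^\Omega_{i,l,m}$, each given threshold $\eta/2$, and then applying Lemma \ref{lm:B.check.diamond} and Lemma \ref{lm:B.check.B.tilde.Omega} respectively. For $\eta$ I take the choice already made in the proof of Corollary \ref{cor:B.check.B.tilde.Omega}, namely \eqref{eq:choice.eta.m.boot} inflated by $|\Omega|_1^2$ plus the residual scale $p^4\log^2(n)/n^{(2\beta-1/2)\wedge 3/2}$. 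With this choice every piece is dominated by $\Theta^\Omega$.

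\textbf{The window term.} The term $\tilde F(u+\eta)-\tilde F(u-\eta)$ carries a supremum over $u$, so Markov cannot be applied to it pointwise. I handle it in two steps:
\begin{itemize}
\item Replace $\tilde F^\Omega_{n,l,m}$ by its mean $\mathbb{P}(l^{-1/2}|\tilde{B}^\Omega_{i,l,m}|_\infty\leq\cdot)$ using the Glivenko–Cantelli bound of Lemma \ref{lm:B.check.CDF}, which applies verbatim to the $m$-dependent stationary sequence $\tilde{B}^\Omega_{i,l,m}$. Its deviation is of order $\sqrt{l\log l/n}$, which is absorbed for the admissible $l$.
\item Bound the mean's increment over a window of width $2\eta$ by anti-concentration. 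Lemma \ref{lm:B.tilde.GA} passes to $Z^{\mathfrak S}$, and Theorem 3 of \cite{chernozhukov2015comparison} gives $\eta\sqrt{\log(p/\eta)}+|\Omega|_1^2\Psi(p,l)\lesssim\Theta^\Omega$.
\end{itemize}
Alternatively, the expectation of the supremum can be bounded by the supremum of expectations over a finite grid plus monotonicity, as I expect was done for Lemma \ref{lm:F.check.F.tilde} in Appendix \ref{sec:apdx.F.check.F.tilde}.

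\textbf{Main obstacle.} The delicate step is controlling this $\sup_u$ inside the expectation while keeping the probability budget at exactly $(\Theta^\Omega)^\epsilon$. The Glivenko–Cantelli route costs an extra $2e^{-2\lambda^2}$ of probability. I would try to fold that cost into the constant by choosing $\lambda^2\asymp\epsilon\log(1/\Theta^\Omega)$, or else by rescaling $\epsilon$ slightly. Everything else is bookkeeping of the $|\Omega|_1$ factors inherited from Lemmas \ref{lm:B.check.B.tilde.Omega} and \ref{lm:B.tilde.GA}.
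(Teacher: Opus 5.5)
Your sandwich with slack $\eta$ is a genuinely different and sounder route than the paper's. The paper reruns the proof of Lemma~\ref{lm:F.check.F.tilde} in three steps:
\begin{itemize}
\item it applies Markov's inequality to the \emph{signed} quantity $\hat F^\Omega_{n,l}(u)-\tilde F^\Omega_{n,l,m}(u)$ at fixed $u$;
\item it bounds the mean of that quantity by the marginal Kolmogorov distance of Corollary~\ref{cor:B.check.B.tilde.Omega};
\item it then takes a supremum over the pointwise-in-$u$ statements.
\end{itemize}
Neither step is valid as written. A small mean of $\hat F^\Omega_{n,l}(u)-\tilde F^\Omega_{n,l,m}(u)$ does not make that quantity small with high probability. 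An event that depends on $u$ does not control $\sup_u$. Your reduction to nonnegative quantities repairs the first defect. A minor point: $\check B^\Omega_{i,l}$ is not stationary in $i$, since it involves $\hat\Sigma_n$ and $\hat\Omega_n$. However, the tail bounds you invoke are uniform in $i$, which is all you need.

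The gap is your claim that every remaining piece is absorbed into $\Theta^\Omega(n,p,l)$. Two of the pieces scale with $\sqrt{l/n}$, not with $\Theta^\Omega$:
\begin{itemize}
\item the Glivenko--Cantelli deviation of Lemma~\ref{lm:B.check.CDF}, of order $\sqrt{l\log l/n}$;
\item the term $C|\Omega|_1^2\delta^{-1}\sqrt{l\log(p)/n}$ of Lemma~\ref{lm:B.check.B.tilde.Omega}, which comes from a Markov bound on the common centering shift $l^{1/2}\vectorize(\hat\Sigma_n-\Sigma)$.
\end{itemize}
$\Theta^\Omega$ has no $l/n$ dependence, and the lemma imposes no relation between $l$ and $n$. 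At the block length of Theorem~\ref{thm:bootstrap.prec}, $l$ is chosen exactly so that $\sqrt{l/n}$ is, up to logarithms, of order $\Psi(p,l)^{1-\epsilon}$. This is much larger than $\Theta^\Omega$ when, say, $|\Omega|_1=O(1)$ and the two $n$-only terms are negligible. In that regime, up to logarithms, two things go wrong:
\begin{itemize}
\item your bound on the expected window term already equals the Markov threshold $(\Theta^\Omega)^{1-\epsilon}$, so Markov gives nothing;
\item with $\eta$ from \eqref{eq:choice.eta.m.boot}, your bound on $\mathbb{P}(E_i)$ is of order $\Psi(p,l)^{-\epsilon}\gg 1$.
\end{itemize}
The paper's proof inherits the same problem through Corollary~\ref{cor:B.check.B.tilde.Omega}, whose proof makes the same domination claim. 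So the real obstruction is an additive term, not the probability budget; the budget can be met by adjusting constants. The term also cannot be argued away. For $l=n$,
\[
\check B^\Omega_{n,n}=\hat\Omega_n\Bigl(\sum_{j=1}^n X_jX_j^\top-n\hat\Sigma_n\Bigr)\hat\Omega_n=0,
\]
so $\hat F^\Omega_{n,n}(u)=\mathbf{1}\{u\geq 0\}$, while $\tilde F^\Omega_{n,n,m}$ jumps at an almost surely positive point. The supremum is therefore $1$ almost surely, although $\Theta^\Omega(n,p,n)\to 0$ for fixed $p$.

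What your argument can actually yield is a bound of order $(\Theta^\Omega)^{1-\epsilon}$ plus a term of order $\sqrt{l/n}$ up to logarithms. It holds on an event of probability at least $1-(\Theta^\Omega)^\epsilon-2e^{-2\lambda^2}$. To obtain it:
\begin{itemize}
\item keep the anti-concentration term deterministic;
\item keep the Glivenko--Cantelli term on its own high-probability event;
\item apply Markov only to $(n-l+1)^{-1}\sum_i\mathbf{1}\{E_i\}$;
\item take $\eta$ at least of order $|\Omega|_1^2\sqrt{l\log(pn)/n}$, and control the centering shift by a Hanson--Wright tail for $n^{1/2}|\hat\Sigma_n-\Sigma|_\infty$ instead of by Markov.
\end{itemize}
You must also check, rather than inherit, that the other pieces of $\mathbb{P}(E_i)$ are $O(\Theta^\Omega)$. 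For example, the $\delta^{-1/2}$ term of Lemma~\ref{lm:B.check.diamond} is not automatically so. This weaker form is what Theorem~\ref{thm:bootstrap.prec} actually needs, because the extra term matches the Glivenko--Cantelli term already present in \eqref{eq:rho.B.Omega.ub}. Finally, your grid fallback does not work, and it is not what the paper does. A union bound over a grid of mesh $\eta$ multiplies the Markov failure probability by a factor of order $\eta^{-1}$.
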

The proof is same as that of Lemma \ref{lm:F.check.F.tilde} in Appendix \ref{sec:apdx.F.check.F.tilde}, with only the upper bound of \eqref{eq:F.check.F.tilde.bound} replaced by $\Theta^\Omega(n,p,l)$.
Meanwhile, we have the following finite-sample Glivenko-Cantelli type result for $\Tilde{F}_{n,l,m}^\Omega$ that is parallel to Lemma \ref{lm:B.check.CDF}.
\begin{lemma}\label{lm:B.tilde.CDF.Omega}
    For any $\lambda>0$, with probability no less than $1-2e^{-2\lambda^2}$,
    \begin{equation*}
        \sup_{u\in\mathbb{R}}\left|\tilde{F}_{n,l,m}^\Omega(u) - \mathbb{P}\left(l^{-1/2}|\tilde{B}_{i,l,m}^\Omega|_\infty \leq u\right)\right|
        \leq \sqrt{\frac{(n+m)(l+m-1)}{(n-l+1)^2}\left(\lambda^2 + \frac{1}{2}\log(l+m-1)\right)}.
    \end{equation*}
\end{lemma}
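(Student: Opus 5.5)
The plan follows the proof of Lemma \ref{lm:B.check.CDF} essentially verbatim, with $\tilde{D}_{j,m}$ replaced by $\tilde{D}^\Omega_{j,m}$. The only structural facts used there are that the summands of $\tilde{B}_{i,l,m}$ form a stationary $m$-dependent sequence and that $\tilde{F}_{n,l,m}$ is an empirical CDF of the windows. Both facts hold here.

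First, I will check the structural properties of $\tilde{D}^\Omega_{j,m}$. By \eqref{eq:D.tilde.j.Omega}, $\tilde{D}^\Omega_{j,m}$ is a fixed measurable function of $(\epsilon_{j-m+1},\dots,\epsilon_j)$. Since $\Omega$ is deterministic, it equals $\vectorize(\Omega\,\cdot\,\Omega)$ applied to the corresponding covariance quantity, and the function does not depend on $j$. Because the $\epsilon_t$ are i.i.d., $\{\tilde{D}^\Omega_{j,m}\}_j$ is strictly stationary. Consequently, for each $i$ the window sum $\tilde{B}^\Omega_{i,l,m}$ is the same function of $(\epsilon_{i-l-m+2},\dots,\epsilon_i)$, a block of $l+m-1$ consecutive innovations. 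So the random variables $W_i = l^{-1/2}|\tilde{B}^\Omega_{i,l,m}|_\infty$, $l\le i\le n$, are identically distributed, and $W_i \indep W_{i'}$ whenever $|i-i'|\ge l+m-1$.

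Second, I will split the index set $\{l,\dots,n\}$ into $k = l+m-1$ residue classes modulo $k$. Each class $G_r$ has size $N_r \le \lceil (n-l+1)/k\rceil$, its elements are mutually independent and identically distributed, and $\sum_r N_r = n-l+1$. For each class, Corollary 1 of \cite{massart1990tight} (the DKW inequality with Massart's constant) gives
\[
\mathbb{P}\Bigl(\sup_u |\hat G_r(u) - F(u)| > t\Bigr) \le 2e^{-2N_r t^2},
\]
where $\hat G_r$ is the empirical CDF of class $G_r$ and $F$ is the common CDF of the $W_i$. The full empirical CDF satisfies $\tilde{F}^\Omega_{n,l,m} = \sum_r \frac{N_r}{n-l+1}\hat G_r$, a convex combination, so
\[
\sup_u|\tilde{F}^\Omega_{n,l,m}-F| \le \max_r \sup_u|\hat G_r - F|.
\]
A union bound over the $k$ classes then yields failure probability at most $2k\,e^{-2 N_{\min} t^2}$. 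I will choose $t^2 = (\lambda^2+\tfrac12\log k)/N_{\min}$, so that $2k\,e^{-2N_{\min}t^2} = 2e^{-2\lambda^2}$. Finally, I will bound $1/N_{\min}$ by $(n+m)k/(n-l+1)^2$. This uses $N_{\min}\ge \lfloor (n-l+1)/k\rfloor$ together with the elementary inequality $(n-l+1)^2 \le (n+m)\lfloor (n-l+1)/k\rfloor\, k$, which holds up to the slack built into the stated constant; a short integer-rounding check is needed here. Substituting gives exactly the displayed bound.

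The main obstacle is this rounding bookkeeping: verifying that $N_{\min}$ yields the precise factor $(n+m)(l+m-1)/(n-l+1)^2$. If the inequality fails in an edge case (for example when $n-l+1<k$), I will observe that the right-hand side of the claimed bound then exceeds $1$, so the statement holds trivially. Everything else is identical to Lemma \ref{lm:B.check.CDF}, since multiplication by $\Omega$ preserves measurability and the dependence range. The argument uses neither Conditions \ref{cond.A}--\ref{cond.G} nor any eigenvalue bounds, which is consistent with the statement being unconditional.
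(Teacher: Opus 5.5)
Your overall strategy is the paper's: it reuses the proof of Lemma~\ref{lm:B.check.CDF}, which splits the window indices into residue classes modulo $k=l+m-1$, applies Massart's DKW bound on each class, and takes a union bound. Your structural checks on $\tilde D^\Omega_{j,m}$ (dependence on $l+m-1$ consecutive innovations, stationarity, independence within a class) are also fine.

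The gap is in the final conversion. After bounding $\sup_u|\tilde F^\Omega_{n,l,m}-F|$ by $\max_r\sup_u|\hat G_r-F|$, you get the threshold $\sqrt{(\lambda^2+\frac12\log k)/N_{\min}}$ with $N_{\min}=\lfloor N/k\rfloor$ and $N=n-l+1$. To reach the stated constant you need $N^2\le (N+k)\,k\,\lfloor N/k\rfloor$; note that $n+m=N+k$. Write $N=qk+r$ with $0\le r<k$. The requirement is then equivalent to $rN\le qk^2$, and this fails in general, not only when $N<k$.

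A concrete counterexample: take $k=100$ and $N=1099$, so $q=10$ and $r=99$. Then $rN=108801>10^5$. With $\lambda=1$ the stated bound is about $0.573<1$ and the probability claim is non-vacuous, yet your argument only yields about $0.575$. Neither the claimed rounding inequality nor your fallback (right-hand side exceeding $1$) covers this case. So you prove a bound of the same order, which is enough for Theorem~\ref{thm:bootstrap.prec}, but not the displayed inequality.

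The missing idea is to keep the convex weights instead of passing to the maximum.
\begin{itemize}
\item If $\sum_r \frac{N_r}{N}\sup_u|\hat G_r-F|>\delta$, then by pigeonhole some class $r$ has $\sup_u|\hat G_r-F|>\frac{N\delta}{N_r k}$.
\item Massart's inequality bounds the probability of that event by $2\exp\{-2N^2\delta^2/(N_r k^2)\}$. A small class has a small weight, so it now helps rather than hurts.
\item You therefore need only an upper bound on class sizes: $N_r\le\lceil N/k\rceil\le (N+k)/k=(n+m)/(l+m-1)$.
\item This gives the exponent $-2N^2\delta^2/\{(n+m)(l+m-1)\}$ exactly. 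The union bound over the $k$ classes with your choice of $\delta$ then finishes the proof with no rounding case analysis.
\end{itemize}
This is precisely how the paper argues in Lemma~\ref{lm:B.check.CDF}.
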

The proof of Lemma \ref{lm:B.tilde.CDF.Omega} is same as that of Lemma \ref{lm:B.check.CDF} and is hereby omitted.
We are now ready to provide the proof of Theorem \ref{thm:bootstrap.prec}.

\begin{proof}[Proof of Theorem \ref{thm:bootstrap.prec}]
    By Corollary \ref{cor:B.check.B.tilde.Omega}, Lemma \ref{lm:F.check.F.tilde.Omega}, and Lemma \ref{lm:B.tilde.CDF.Omega}, we have for any $0<\epsilon<1$ and $\lambda>0$ that
    \begin{align}\label{eq:rho.B.Omega.ub}
            \rho_B^\Omega(n,l) \lesssim (\Theta^\Omega(n,p,l))^{1-\epsilon} + \sqrt{\frac{(n+m)(l+m-1)}{(n-l+1)^2}\left(\lambda^2 + \frac{1}{2}\log(l+m-1)\right)}
    \end{align}
    holds with probability no less than
    \begin{equation*}
        1-(\Theta^\Omega(n,p,l))^\epsilon - 2\exp(-2\lambda^2).
    \end{equation*}
    The choice of $m$ in the proof of Corollary \ref{cor:B.check.B.tilde.Omega} is of order $O\left( (l\log(pl))^{1/(2\tilde{\beta}+4)}\right)$, same as in \eqref{eq:choice.eta.m.boot}.
    We shall choose the order of $l$ with respect to $n$ and $p$ to minimize the right hand side of \eqref{eq:rho.B.Omega.ub}, such that $m\ll l$ and $l\ll n$.
    Under these conditions, the second term on the right hand side of \eqref{eq:rho.B.Omega.ub} has order $O((\lambda^2+\log(l))^{1/2}l^{1/2}/n^{1/2})$. Let $\lambda^2 = \epsilon \log(l)/2$, and assume for some constants $\phi>0$ and $\psi>0$, $l=O(n^\phi \log^\psi(p))$. Then with probability no less than $1-3(\Theta^\Omega(n,p,l))^\epsilon$,
    \begin{equation}\label{eq:rho.B.ub.Omega.new}
        \rho_B^\Omega(n,l) \lesssim \frac{|\Omega|_1^{2(1-\epsilon)}\log^{\frac{5\tilde{\beta} + 12}{4\tilde{\beta}+8}(1-\epsilon) - \frac{\tilde{\beta}}{4\tilde{\beta}+8}(1-\epsilon)\psi}(pn)}{n^{\frac{\tilde{\beta}}{4\tilde{\beta}+8}(1-\epsilon)\phi}}
        + \frac{p^{4(1-\epsilon)}\log^{5(1-\epsilon)/2}(n)}{n^{\frac{(4\beta-1)\wedge 3}{2}(1-\epsilon)}}
        + \frac{|\Omega|_1^{1-\epsilon} p^{2(1-\epsilon)}\log^{1-\epsilon}(p)}{n^{\frac{(4\beta-1)\wedge 3}{4}(1-\epsilon)}}
        + \frac{\log^{\frac{\psi}{2}}(pn)}{n^{\frac{1}{2} - \frac{\phi}{2}}}.
    \end{equation}
    We shall choose the value of $\phi$ and $\psi$ such that the right hand side of \eqref{eq:rho.B.ub.Omega.new} is minimized.
    Note that the second and third terms do not contain either $\phi$ or $\psi$, so it boils down to minimizing the sum of first and last terms, a problem we have already solved in the proof of Theorem \ref{thm:bootstrap.cov}, i.e.,
    \begin{equation*}
        \phi = \frac{2\tilde{\beta}+4}{(3-\epsilon)\tilde{\beta} + 4}, \quad \psi = \frac{5\tilde{\beta} + 12}{(3-\epsilon)\tilde{\beta} + 4}(1-\epsilon).
    \end{equation*}
    \eqref{eq:rho.B.ub.Omega.new} therefore becomes
    \begin{equation*}
        \rho_B^\Omega(n,l) \lesssim \left(\frac{|\Omega|_1^2\log^{\frac{5\tilde{\beta}+12}{(6-2\epsilon)\tilde{\beta}+8}}(pn)}{n^{\frac{\tilde{\beta}}{(6-2\epsilon)\tilde{\beta}+8}}}
        + \frac{p^4\log^{5/2}(n)}{n^{\frac{(4\beta-1)\wedge 3}{2}}}
        + \frac{|\Omega|_1p^{2}\log(p)}{n^{\frac{(4\beta-1)\wedge 3}{4}}}\right)^{1-\epsilon}
        = (\Psi_B^\Omega(p,n,\epsilon))^{1-\epsilon}
    \end{equation*}
    with probability no less than $1-3(\Psi_B^\Omega(p,n,\epsilon))^{\epsilon}$.
    This finishes the proof of Theorem \ref{thm:bootstrap.prec}.
\end{proof}

\section{Simulation}\label{sec:simulation}

A simulation study is carried out to verify the validity of our main theorems.
We shall generate several copies of Gaussian linear process $\bm{X}_n = (X_1,\cdots,X_n)$ as in \eqref{LinearProcess}, and study the empirical distribution of $n^{1/2}|\hat{\Sigma}_n - \Sigma|_\infty$, as well as $n^{1/2}|\hat{\Omega}_n - \Omega|_\infty$ for low-dimensional processes.

Although the simulation itself is straightforward, the data generating process may be extremely time-consuming.
Any single $X_i$ in \eqref{LinearProcess} involves an infinite sum, which requires an approximation by truncation, i.e., $X_i^* = \sum_{t=0}^{N-1} A_t \epsilon_{i-t}$.
Here, $N$ is the length of truncation, usually a large integer.
For a long-memory process, specifically, since the coefficient $A_t$ decays slowly with $t$, a large $N$ is necessary to maintain the accuracy of simulation.
In practice, a typical choice is $N=n^2$.
Moreover, to study the empirical distribution of the sample covariance and precision matrices, we need to generate multiple copies of processes.
These challenges make it infeasible to follow the conventional scheme by generating i.i.d.~standard Gaussian random vectors $\epsilon_t$ and summing them up one by one.
In this paper, we use a Fast-Fourier-Transform-based simulation technique for time series proposed by \cite{wood1994simulation,wu2004simulating}.
This algorithm is able to generate $N/n$ many copies of $\{X_i\}_{i=1}^n$ simultaneously.
We denote $K_{N,n} = N/n$.
The detailed data-generating algorithm is presented in Appendix \ref{sec:apdx.simulation}.

\subsection{Low-dimensional Regime}

We first consider the low-dimensional case when $p<n$.
The coefficients $A_t$ are set to be square Toeplitz matrices.
That is, $d=p$, and each entry of $A_t$ is set as
\[
(A_t)_{jk} = (t+1)^{-\beta}(|j-k|+1)^{-2}
\]
for all $1\leq j,k\leq p$.
We simulate three types of processes, distinguished by their temporal dependence.
The first type is short-memory with $\beta=2$.
The second type is long-memory with $\beta=0.9$.
As a comparison, we also simulate a third type with $\beta=0.55$, where the temporal dependence is so strong that Condition \ref{cond.A} is violated.
In general, we expect the covariance and precision to exhibit Gaussianity for the first two types of processes.
When $\beta=0.55$, the asymptotic distribution of covariance and precision is no longer Gaussian.
In this case, we expect the distribution to be skewed with a heavy right tail.

The simulation is done on a grid of $n\in\{200, 500, 1000, 2000, 4000, 10000\}$ and $p\in \{2,10,30,100\}$.
For each setup, we set $N = n^2$, and use 200 out of all $K_{N,n}$ realizations of $\{X_i\}_{i=1}^n$.
From each copy, we can compute one realization of the sample covariance matrix $\hat\Sigma_n$ and the sample precision matrix $\hat\Omega_n = \hat\Sigma_n^{-1}$.
Therefore, we obtain an empirical distribution of $n^{1/2}|\hat\Sigma_n - \Sigma|_\infty$, as well as $n^{1/2}|\hat\Omega_n - \Omega|_\infty$, consisting of 200 points.

To demonstrate the Gaussian approximation results in Theorems \ref{thm:GA.cov} and \ref{thm:GA.prec}, we shall generate their respective Gaussian approximations.
$Z$ is the Gaussian approximation of $n^{1/2}|\hat\Sigma_n - \Sigma|_\infty$.
By the discussion in Appendix \ref{sec:apdx.GA.comparison}, the covariance matrix $\Sigma_Z$ of $Z$ has a finite-sample closed-form expression \eqref{eq:covariance.of.Z}.
We can therefore generate 200 copies of $Z$ from $\Sigma_Z$, and obtain an empirical distribution of $|Z|_\infty$.
The Gaussian approximation for $n^{1/2}|\hat\Omega_n - \Omega|_\infty$, however, is tricky.
While it is hard to express covariance structure of the Gaussian approximation $Z^\Omega$ in closed form, we can use the intermediate Gaussian approximation, $Z^\mathfrak{S}$.
Following the discussion in Appendix \ref{sec:apdx.GA.cov.linear} that leads to \eqref{eq:like.lm.3.5}, the covariance matrix of $Z^\mathfrak{S}$ has a similar form as that of $Z$ mentioned above, as if the coefficients $A_t$ were replaced by $\Omega A_t$, or alternatively, as if the autocovariances $\Gamma_k$ were replaced by $\Omega \Gamma_k \Omega$.
A careful check of Theorem \ref{thm:GA.prec} and Lemma \ref{lm:GA.comparison.prec} suggests that the Kolmogorov distance between $n^{1/2}|\hat\Omega_n - \Omega|_\infty$ and $|Z^\mathfrak{S}|_\infty$ attains a similar rate as $\Psi^\Omega(p,n)$ with the same first term and a second term that contains a polynomial factor of $p$.
We can then generate 200 copies of $Z^\mathfrak{S}$, and obtain the empirical distribution of $|Z^\mathfrak{S}|_\infty$.
By comparing the empirical distribution of $n^{1/2}|\hat\Sigma_n - \Sigma|_\infty$ versus $|Z|_\infty$, and $n^{1/2}|\hat\Omega_n - \Omega|_\infty$ versus $|Z^\mathfrak{S}|_\infty$, we can assess the proximity of covariance and precision error to their respective Gaussian approximation.

Meanwhile, to demonstrate the validity of the block bootstrap method in Theorems \ref{thm:bootstrap.cov} and \ref{thm:bootstrap.prec}, we would like to draw empirical distributions of $l^{-1/2}|\check{B}_{i,l}|_\infty$ and $l^{-1/2}|\check{B}_{i,l}^\Omega|_\infty$.
To do so, for each copy of data $\{X_i\}_{i=1}^n$, we randomly choose a $l\leq i\leq n$, and compute $\check{B}_{i,l}$ and $\check{B}_{i,l}^\Omega$.
Recall that theoretically optimal orders of $l$ with respect to $n$ and $p$ are given in Theorems \ref{thm:bootstrap.cov} and \ref{thm:bootstrap.prec}.
However, a specific value of $l$ is not directly available from these theoretical results.
In practice, we take the simple nonadaptive choice of $l = n^{2/3}$ through all setups.
This is close to the order specified in Theorems \ref{thm:bootstrap.cov} and \ref{thm:bootstrap.prec}.
By comparing the empirical distribution of 200 instances of $l^{-1/2}|\check{B}_{i,l}|_\infty$ versus $n^{1/2}|\hat\Sigma_n - \Sigma|_\infty$, and $l^{-1/2}|\check{B}_{i,l}^\Omega|_\infty$ versus $n^{1/2}|\hat\Omega_n - \Omega|_\infty$, we can assess how close the bootstrap distribution approaches the distribution of covariance and precision error.

The covariance QQ-plots can be found in Figure \ref{fig:E2.QQ.cov.2} for short-memory processes with $\beta=2$, Figure \ref{fig:E2.QQ.cov.0.9} for long-memory processes with $\beta=0.9$, and \ref{fig:E2.QQ.cov.0.55} for ultra-long-memory processes with $\beta=0.55$.
For brevity, we have only chosen the grid over $n\in\{200, 2000, 10000\}$ and $p\in \{2, 10, 100\}$.
For each pair of $(n,p)$, we draw a QQ-plot for Gaussian approximation and a QQ-plot for block bootstrap.
If the points on a QQ-plot being close to the $y=x$ line, then the two distributions almost match each other.
As a supplemental view to QQ-plots, the empirical CDF plots are given in Appendix \ref{sec:apdx.supplementary.sim.results}.

For a more comprehensive view, we quantify the proximity of Gaussian approximation and block bootstrap for covariance matrix in Table \ref{tab:KS.cov}.
It provides exactly $\rho(n)$ and $\rho_B(n,l)$, the Kolmogorov distances of interest in Theorems \ref{thm:GA.cov} and \ref{thm:bootstrap.cov}.
As a supplemental view, we also show the corresponding Wasserstein-1 distances in Table \ref{tab:W1.cov} in Appendix \ref{sec:apdx.supplementary.sim.results}.
A Kolmogorov distance or Wasserstein-1 distance close to zero represents a valid approximation, while a Kolmogorov distance close to one or a large Wasserstein-1 value represents a mismatch, as expected in cases like $\beta=0.55$, for example.

\begin{figure}[t]
    \centering
    
    \includegraphics[width=0.8\linewidth]{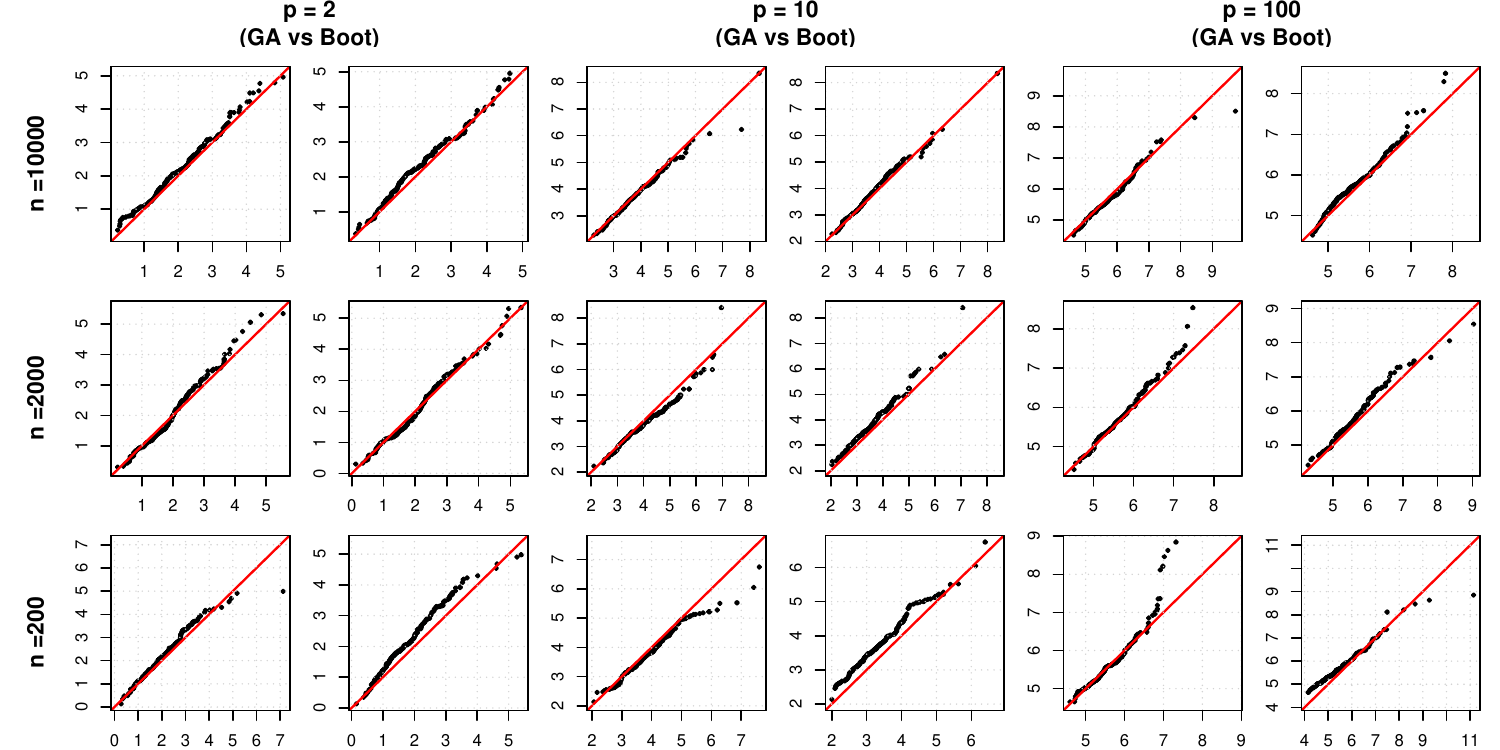}
    \caption{QQ-plots for covariance matrix in the low-dimensional regime, $\beta=2$ (short memory). Red line represents $y=x$. For each pair of $p$ and $n$ values, the plot on the left compares $n^{1/2}|\hat\Sigma_n - \Sigma|_\infty$ on $y$-axis and $|Z|_\infty$ on $x$-axis.
    Being closer to the $y=x$ line represents a smaller Kolmogorov distance $\rho(n)$ for Gaussian approximation.
    The plot on the right compares $n^{1/2}|\hat\Sigma_n - \Sigma|_\infty$ on $y$-axis and $l^{-1/2}|\check{B}_{i,l}|_\infty$ on $x$-axis.
    Being closer to the $y=x$ line represents a smaller Kolmogorov distance $\rho_B(n,l)$ for block bootstrap.}
    \label{fig:E2.QQ.cov.2}
    
    \vspace{1cm} 
    
    \includegraphics[width=0.8\linewidth]{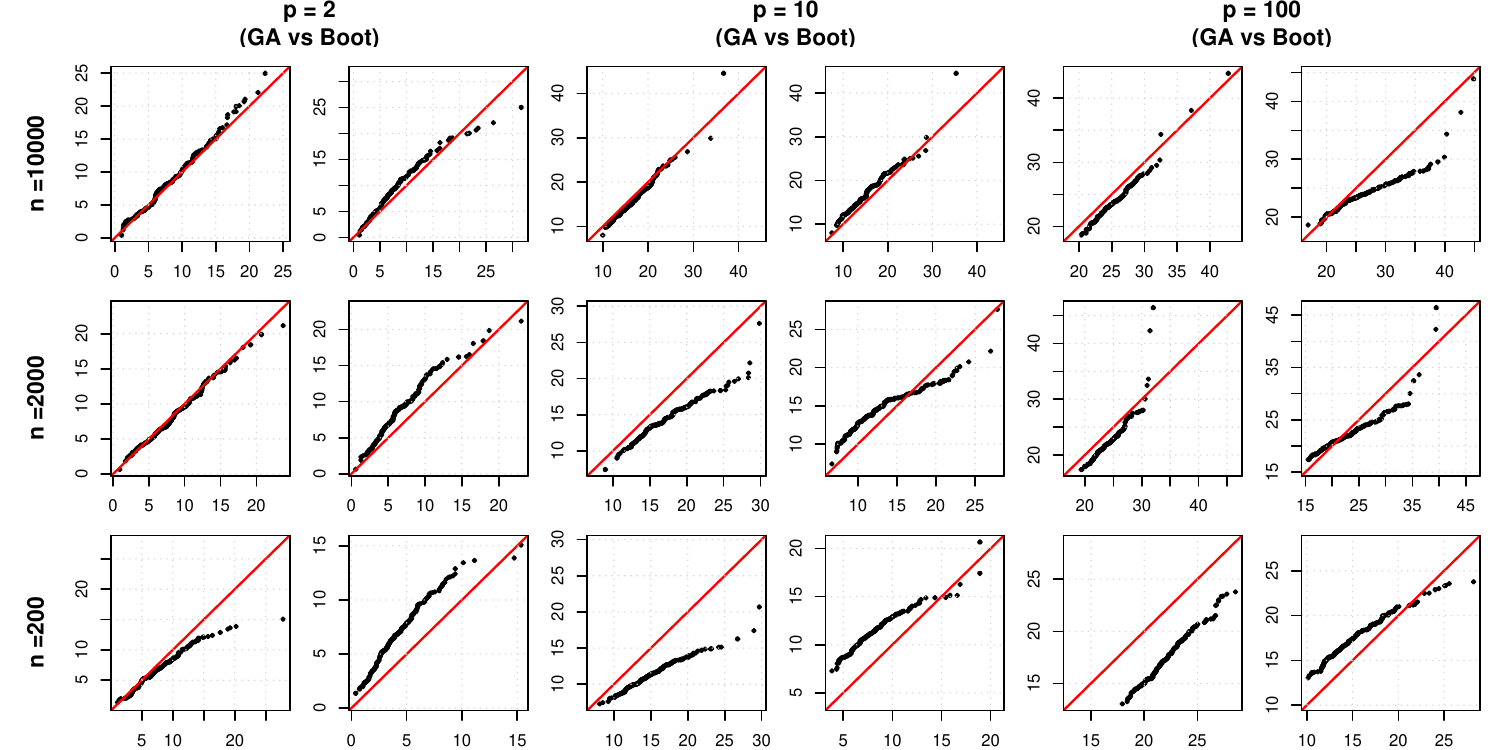}
    \caption{QQ-plots for covariance matrix in the low-dimensional regime, $\beta=0.9$ (long-memory). Red line represents $y=x$. For each pair of $p$ and $n$ values, the plot on the left compares $n^{1/2}|\hat\Sigma_n - \Sigma|_\infty$ on $y$-axis and $|Z|_\infty$ on $x$-axis.
    Being closer to the $y=x$ line represents a smaller Kolmogorov distance $\rho(n)$ for Gaussian approximation.
    The plot on the right compares $n^{1/2}|\hat\Sigma_n - \Sigma|_\infty$ on $y$-axis and $l^{-1/2}|\check{B}_{i,l}|_\infty$ on $x$-axis.
    Being closer to the $y=x$ line represents a smaller Kolmogorov distance $\rho_B(n,l)$ for block bootstrap.}
    \label{fig:E2.QQ.cov.0.9}
    
\end{figure}

\begin{figure}[t]
    \centering
    
    \includegraphics[width=0.8\linewidth]{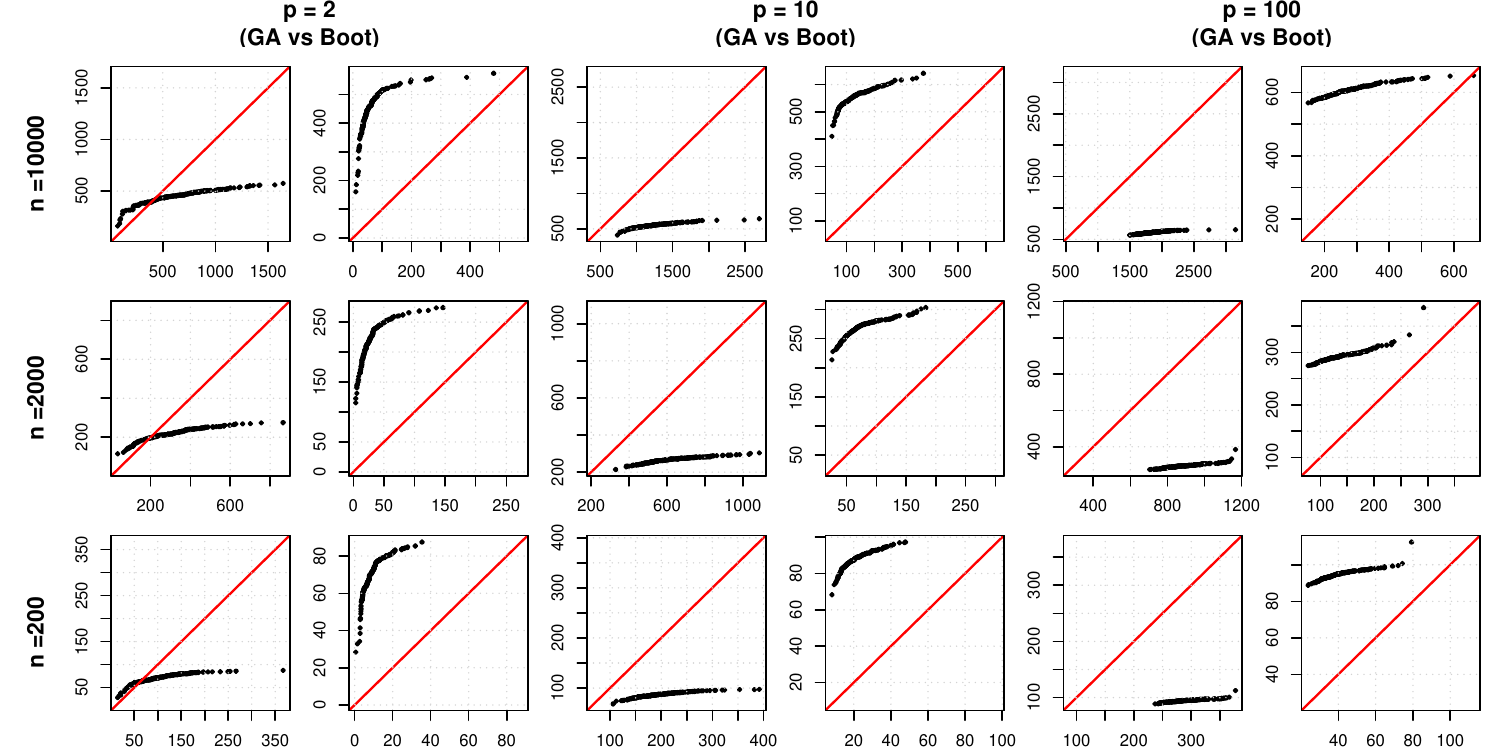}
    \caption{QQ-plots for covariance matrix in the low-dimensional regime, $\beta=0.55$ (ultra-long-memory). Red line represents $y=x$. For each pair of $p$ and $n$ values, the plot on the left compares $n^{1/2}|\hat\Sigma_n - \Sigma|_\infty$ on $y$-axis and $|Z|_\infty$ on $x$-axis.
    Being closer to the $y=x$ line represents a smaller Kolmogorov distance $\rho(n)$ for Gaussian approximation.
    The plot on the right compares $n^{1/2}|\hat\Sigma_n - \Sigma|_\infty$ on $y$-axis and $l^{-1/2}|\check{B}_{i,l}|_\infty$ on $x$-axis.
    Being closer to the $y=x$ line represents a smaller Kolmogorov distance $\rho_B(n,l)$ for block bootstrap.}
    \label{fig:E2.QQ.cov.0.55}
    
    

\end{figure}

\begin{table}[t]
\centering

\resizebox{\textwidth}{!}{
\begin{tabular}{llcccccccccccc}
\toprule
 & & \multicolumn{4}{c}{$\beta=2$} & \multicolumn{4}{c}{$\beta=0.9$} & \multicolumn{4}{c}{$\beta=0.55$} \\ 
 \cmidrule(lr){3-6} \cmidrule(lr){7-10} \cmidrule(lr){11-14}
& $n$ & $p=2$ & $p=10$ & $p=30$ & $p=100$ & $p=2$ & $p=10$ & $p=30$ & $p=100$ & $p=2$ & $p=10$ & $p=30$ & $p=100$ \\
\midrule
\multirow{6}{*}{GA} & 10000 & 0.07 & 0.06 & 0.10 & 0.09 & 0.06 & 0.17 & 0.18 & 0.28 & 0.54 & 1.00 & 1.00 & 1.00 \\
 & 4000 & 0.07 & 0.11 & 0.07 & 0.11 & 0.12 & 0.29 & 0.26 & 0.34 & 0.61 & 1.00 & 1.00 & 1.00 \\
 & 2000 & 0.09 & 0.11 & 0.09 & 0.07 & 0.07 & 0.33 & 0.44 & 0.34 & 0.60 & 1.00 & 1.00 & 1.00 \\
 & 1000 & 0.06 & 0.09 & 0.17 & 0.11 & 0.12 & 0.29 & 0.45 & 0.58 & 0.62 & 1.00 & 1.00 & 1.00 \\
 & 500 & 0.07 & 0.09 & 0.10 & 0.08 & 0.14 & 0.39 & 0.58 & 0.60 & 0.64 & 1.00 & 1.00 & 1.00 \\
 & 200 & 0.09 & 0.09 & 0.09 & 0.07 & 0.12 & 0.57 & 0.69 & 0.72 & 0.59 & 1.00 & 1.00 & 1.00 \\
\midrule
\multirow{6}{*}{Bootstrap} & 10000 & 0.12 & 0.07 & 0.07 & 0.16 & 0.18 & 0.22 & 0.20 & 0.28 & 0.97 & 1.00 & 1.00 & 0.99 \\
 & 4000 & 0.10 & 0.11 & 0.11 & 0.12 & 0.23 & 0.28 & 0.26 & 0.30 & 0.99 & 1.00 & 0.98 & 0.98 \\
 & 2000 & 0.07 & 0.15 & 0.13 & 0.15 & 0.26 & 0.34 & 0.20 & 0.19 & 0.99 & 1.00 & 0.99 & 1.00 \\
 & 1000 & 0.12 & 0.18 & 0.17 & 0.12 & 0.30 & 0.45 & 0.34 & 0.12 & 0.98 & 1.00 & 0.99 & 0.98 \\
 & 500 & 0.13 & 0.20 & 0.22 & 0.29 & 0.37 & 0.53 & 0.34 & 0.24 & 0.98 & 1.00 & 1.00 & 0.99 \\
 & 200 & 0.16 & 0.24 & 0.29 & 0.23 & 0.41 & 0.61 & 0.51 & 0.41 & 0.99 & 1.00 & 1.00 & 1.00 \\
\bottomrule
\end{tabular}
}
\caption{Kolmogorov distances for covariance matrix in the low-dimensional regime. The GA section compares the covariance error $n^{1/2}|\hat\Sigma_n - \Sigma|_\infty$ against its Gaussian approximation $|Z|_\infty$, while the Bootstrap section compares the covariance error against its block bootstrap approximation $l^{-1/2}|\check{B}_{i,l}|_\infty$. A value closer to zero represents proximity of Gaussian approximation or bootstrap.}
\label{tab:KS.cov}



\end{table}

From Figure \ref{fig:E2.QQ.cov.2}, i.e., the short-memory case $\beta=2$, we observe that over all choices of $p$ and $n$, both the distribution of the Gaussian approximation and the block bootstrap distribution match the distribution of the covariance error well.
The match becomes closer as $n$ increases or $p$ decreases, and is relatively robust for a small sample size like $n=200$ or a large dimension $p=100$.
In these cases, some discrepancies in the tail distribution appear, but the overall trend of the QQ-plots is still considerably close to $y=x$.
When the temporal dependence increases, like the long-memory case $\beta=0.9$ in Figure \ref{fig:E2.QQ.cov.0.9}, the proximity of both Gaussian approximation and block bootstrap becomes relatively weaker.
For large $p$ and small $n$, the distribution of Gaussian approximation and the bootstrap distribution start to drift away from the distribution of covariance error.
The bootstrap distribution appears to drift away more severely, especially on the tail.
This is an expected phenomenon, considering that the rates in Theorems \ref{thm:GA.cov} and \ref{thm:bootstrap.cov} increase with $p$ and decrease with $n$, and the rate of Gaussian approximation is faster than the rate of block bootstrap.
Despite the drift in distribution, it is still evident that almost all covariance QQ-plots in Figures \ref{fig:E2.QQ.cov.2} and \ref{fig:E2.QQ.cov.0.9} are reasonably close to $y=x$, and the distribution drift emerges from slow convergence.
We can argue that the limiting distribution of the covariance error is Gaussian.
However, this argument is not true in the ultra-long-memory case of $\beta = 0.55$.
Recall that the limiting distribution is no longer Gaussian as $\beta\leq 3/4$.
In this ultra-long-memory regime, we expect both Gaussian approximation and block bootstrap to fail.
This is confirmed by Figure \ref{fig:E2.QQ.cov.0.55}. All QQ-plots are significantly far from the $y=x$ line, even despite a large $n$ and a small $p$.

Meanwhile, the precision QQ-plots can be found in Figure \ref{fig:E2.QQ.prec.2} for short-memory processes with $\beta=2$, Figure \ref{fig:E2.QQ.prec.0.9} for long-memory processes with $\beta=0.9$, and \ref{fig:E2.QQ.prec.0.55} for ultra-long-memory processes with $\beta=0.55$.
The choice of grid is still $n\in\{200, 2000, 10000\}$ and $p\in \{2, 10, 100\}$.
In this setup, the proximity to the $y=x$ line represents how small the Kolmogorov distances $\rho^\Omega(n)$ and $\rho^\Omega_B(n,l)$ are.
We also compare the empirical CDF of these quantities of interest in Appendix \ref{sec:apdx.supplementary.sim.results}.

For a more comprehensive view, we quantify the proximity of Gaussian approximation and block bootstrap for precision matrix in Table \ref{tab:KS.prec}.
It provides the Kolmogorov distance between $n^{1/2}|\hat\Omega_n - \Omega|_\infty$ and its intermediate Gaussian approximation $|Z^\mathfrak{S}|_\infty$, a quantity that imitates the rate of $\rho^\Omega(n)$ in Theorem \ref{thm:GA.prec}.
The Kolmogorov distance of interest in Theorem \ref{thm:bootstrap.prec}, $\rho_B^\Omega(n,l)$, is also provided.
As a supplemental view, we also show the corresponding Wasserstein-1 distances in Table \ref{tab:W1.prec}, see Appendix \ref{sec:apdx.supplementary.sim.results}.

\begin{figure}[t]
    \centering
    \includegraphics[width=0.8\linewidth]{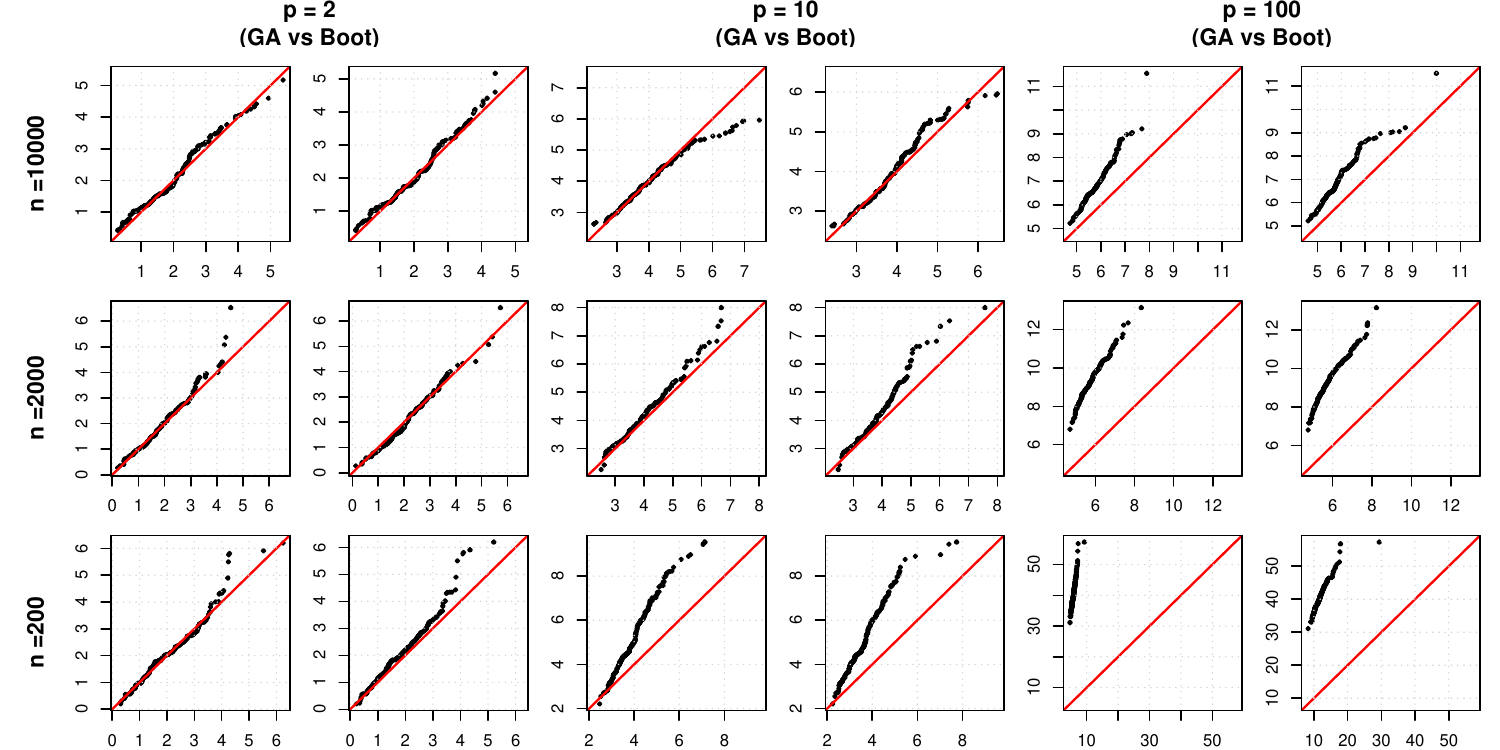}
    \caption{QQ-plots for precision matrix in the low-dimensional regime, $\beta=2$ (short memory). Red line represents $y=x$. For each pair of $p$ and $n$ values, the plot on the left compares $n^{1/2}|\hat\Omega_n - \Omega|_\infty$ on $y$-axis and $|Z^{\mathfrak{S}}|_\infty$ on $x$-axis.
    Being closer to the $y=x$ line represents a smaller Kolmogorov distance $\rho^\Omega(n)$ for Gaussian approximation.
    The plot on the right compares $n^{1/2}|\hat\Omega_n - \Omega|_\infty$ on $y$-axis and $l^{-1/2}|\check{B}^\Omega_{i,l}|_\infty$ on $x$-axis.
    Being closer to the $y=x$ line represents a smaller Kolmogorov distance $\rho_B^\Omega(n,l)$ for block bootstrap.}
    \label{fig:E2.QQ.prec.2}
\end{figure}

\begin{figure}[t]
    \centering
    \includegraphics[width=0.8\linewidth]{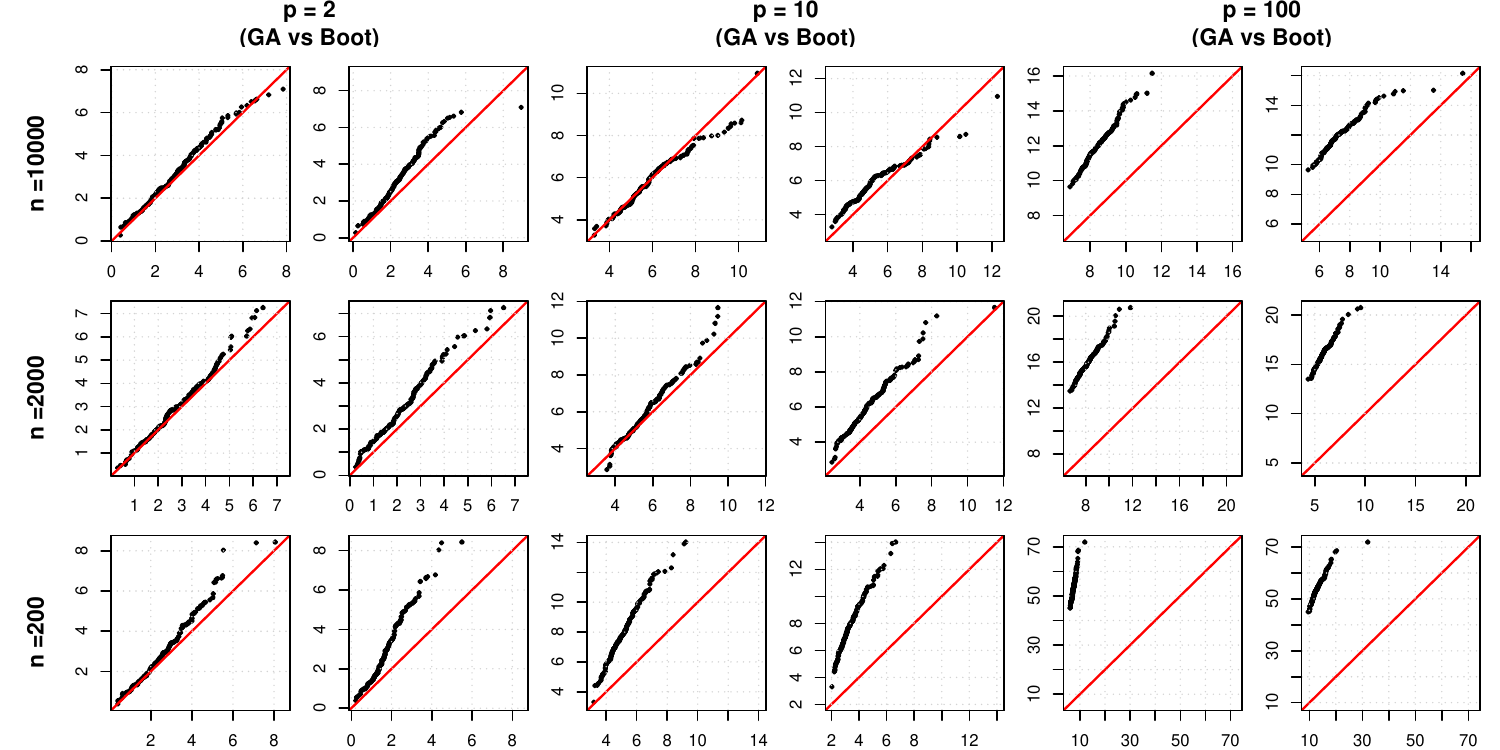}
    \caption{QQ-plots for precision matrix in the low-dimensional regime, $\beta=0.9$ (long-memory). Red line represents $y=x$. For each pair of $p$ and $n$ values, the plot on the left compares $n^{1/2}|\hat\Omega_n - \Omega|_\infty$ on $y$-axis and $|Z^{\mathfrak{S}}|_\infty$ on $x$-axis.
    Being closer to the $y=x$ line represents a smaller Kolmogorov distance $\rho^\Omega(n)$ for Gaussian approximation.
    The plot on the right compares $n^{1/2}|\hat\Omega_n - \Omega|_\infty$ on $y$-axis and $l^{-1/2}|\check{B}^\Omega_{i,l}|_\infty$ on $x$-axis.
    Being closer to the $y=x$ line represents a smaller Kolmogorov distance $\rho_B^\Omega(n,l)$ for block bootstrap.}
    \label{fig:E2.QQ.prec.0.9}
\end{figure}

\begin{figure}[t]
    \centering
    \includegraphics[width=0.8\linewidth]{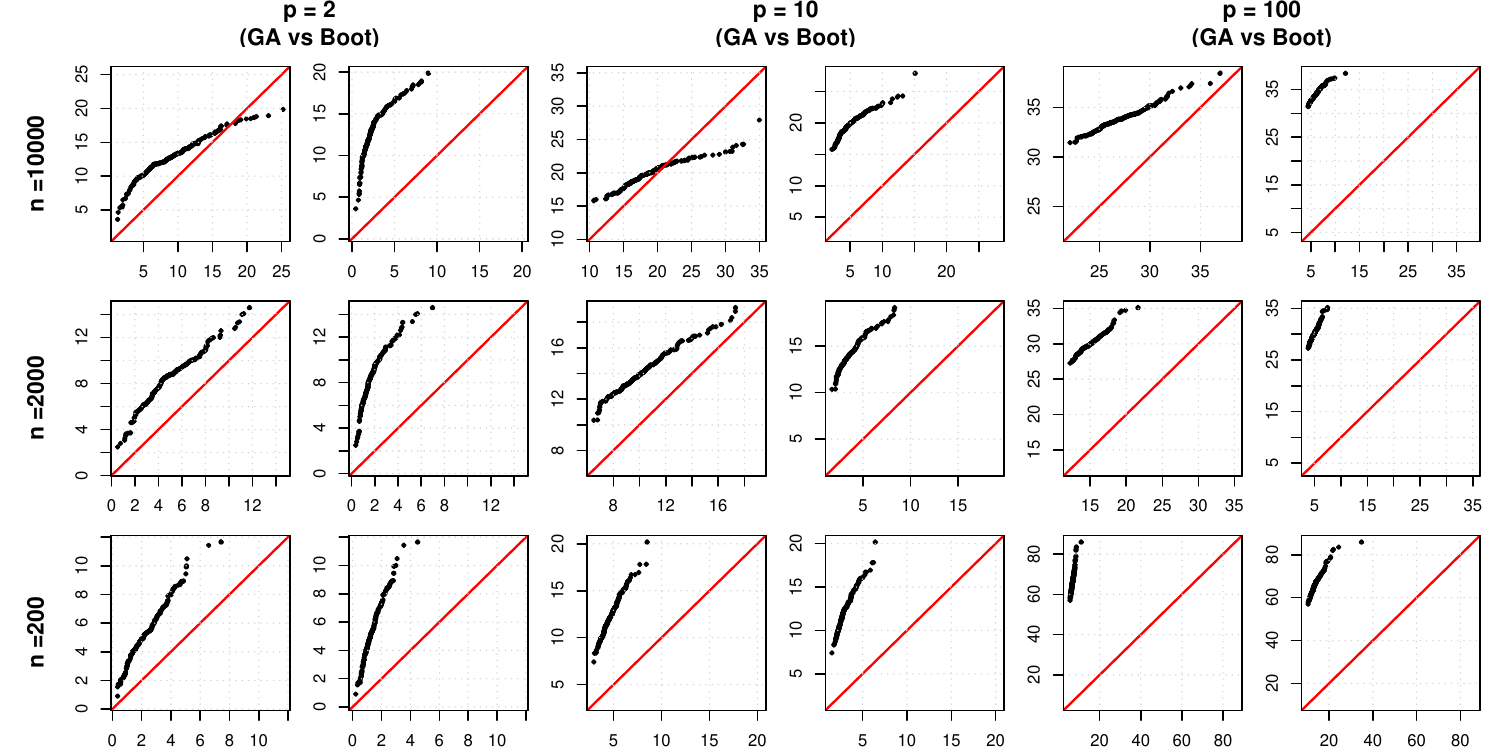}
    \caption{QQ-plots for precision matrix in the low-dimensional regime, $\beta=0.55$ (ultra-long-memory). Red line represents $y=x$. For each pair of $p$ and $n$ values, the plot on the left compares $n^{1/2}|\hat\Omega_n - \Omega|_\infty$ on $y$-axis and $|Z^{\mathfrak{S}}|_\infty$ on $x$-axis.
    Being closer to the $y=x$ line represents a smaller Kolmogorov distance $\rho^\Omega(n)$ for Gaussian approximation.
    The plot on the right compares $n^{1/2}|\hat\Omega_n - \Omega|_\infty$ on $y$-axis and $l^{-1/2}|\check{B}^\Omega_{i,l}|_\infty$ on $x$-axis.
    Being closer to the $y=x$ line represents a smaller Kolmogorov distance $\rho_B^\Omega(n,l)$ for block bootstrap.}
    \label{fig:E2.QQ.prec.0.55}
\end{figure}

\begin{table}[t]
\centering
\resizebox{\textwidth}{!}{
\begin{tabular}{llcccccccccccc}
\toprule
 & & \multicolumn{4}{c}{$\beta=2$} & \multicolumn{4}{c}{$\beta=0.9$} & \multicolumn{4}{c}{$\beta=0.55$} \\
 \cmidrule(lr){3-6} \cmidrule(lr){7-10} \cmidrule(lr){11-14}
& $n$ & $p=2$ & $p=10$ & $p=30$ & $p=100$ & $p=2$ & $p=10$ & $p=30$ & $p=100$ & $p=2$ & $p=10$ & $p=30$ & $p=100$ \\
\midrule
\multirow{6}{*}{GA} & 10000 & 0.09 & 0.05 & 0.08 & 0.57 & 0.09 & 0.09 & 0.17 & 0.96 & 0.47 & 0.35 & 0.42 & 0.95 \\
 & 4000 & 0.09 & 0.07 & 0.16 & 0.85 & 0.12 & 0.10 & 0.46 & 0.98 & 0.47 & 0.58 & 0.84 & 0.99 \\
 & 2000 & 0.06 & 0.12 & 0.33 & 0.97 & 0.09 & 0.12 & 0.67 & 1.00 & 0.56 & 0.73 & 0.96 & 1.00 \\
 & 1000 & 0.10 & 0.17 & 0.55 & 1.00 & 0.10 & 0.40 & 0.86 & 1.00 & 0.60 & 0.88 & 1.00 & 1.00 \\
 & 500 & 0.07 & 0.25 & 0.68 & 1.00 & 0.10 & 0.45 & 0.93 & 1.00 & 0.61 & 0.97 & 1.00 & 1.00 \\
 & 200 & 0.06 & 0.41 & 0.99 & 1.00 & 0.12 & 0.68 & 1.00 & 1.00 & 0.69 & 0.99 & 1.00 & 1.00 \\
\midrule
\multirow{6}{*}{Bootstrap} & 10000 & 0.09 & 0.11 & 0.17 & 0.61 & 0.21 & 0.28 & 0.36 & 0.96 & 0.94 & 1.00 & 1.00 & 1.00 \\
 & 4000 & 0.14 & 0.17 & 0.27 & 0.80 & 0.17 & 0.47 & 0.64 & 1.00 & 0.96 & 1.00 & 1.00 & 1.00 \\
 & 2000 & 0.10 & 0.16 & 0.40 & 0.96 & 0.26 & 0.51 & 0.86 & 1.00 & 0.94 & 1.00 & 1.00 & 1.00 \\
 & 1000 & 0.12 & 0.17 & 0.54 & 0.99 & 0.38 & 0.68 & 0.98 & 1.00 & 0.93 & 1.00 & 1.00 & 1.00 \\
 & 500 & 0.16 & 0.34 & 0.74 & 1.00 & 0.38 & 0.84 & 1.00 & 1.00 & 0.93 & 1.00 & 1.00 & 1.00 \\
 & 200 & 0.11 & 0.52 & 0.91 & 1.00 & 0.36 & 0.89 & 1.00 & 1.00 & 0.90 & 1.00 & 1.00 & 1.00 \\
\bottomrule
\end{tabular}
}
\caption{Kolmogorov distances for precision matrix in the low-dimensional regime. The GA section compares the precision error $n^{1/2}|\hat\Omega_n - \Omega|_\infty$ against its (intermediate) Gaussian approximation $|Z^\mathfrak{S}|_\infty$, while the Bootstrap section compares the precision error against its block bootstrap approximation $l^{-1/2}|\check{B}^\Omega_{i,l}|_\infty$. A value closer to zero represents proximity of Gaussian approximation or bootstrap.}
\label{tab:KS.prec}
\end{table}

The Gaussian approximation and block bootstrap of precision matrix exhibit larger Kolmogorov distances as $p$ increases, compared to those of covariance matrix.
For a large $p$ like $100$, the drift of distributions become more severe, for both short-memory case in Figure \ref{fig:E2.QQ.prec.2} and long-memory case in Figure \ref{fig:E2.QQ.prec.0.9}.
This is because of the extra polynomial factor of $p$ in the bounds of Theorem \ref{thm:GA.prec} (i.e., second term of $\Psi^\Omega(p,n)$) and Theorem \ref{thm:bootstrap.prec} (i.e., second term of $\Psi^\Omega_B(p,n,\epsilon)$), compared to their covariance counterparts.
If the increasing rate of $p$ is not significantly slower than $n$, then the bounds may diverge, even in the short-memory case.
This extra term accounts for the error incurred by approximating $\mathfrak{W} = \hat\Omega_n - \Omega$ with the dominating linear term $-\Omega \mathfrak{S}\Omega$, where $\mathfrak{S} = \hat\Sigma_n - \Sigma$.
Interestingly, since both the intermediate Gaussian approximation and the block bootstrap are based on this dominating linear term, their distributions drift to the same direction, as is shown in Figure \ref{fig:E2.ECDF.prec}.
In the large-$p$ cases, these two distributions tend to remain relatively close, especially when $n$ is large and the temporal dependence is relatively weak, despite both being far from the distribution of precision error.
Meanwhile, the ultra-long-memory case $\beta=0.55$ fails again, which is expected since our theoretical conditions are violated.

\subsection{High-dimensional Regime}

In the high-dimensional regime when $p$ exceeds $n$, we take the same setup on the structure of $A_t$, a square Toeplitz matrix, with each entry set as
\[
(A_t)_{jk} = (t+1)^{-\beta}(|j-k|+1)^{-2}.
\]
We consider the same three types of processes by temporal dependence, $\beta = 2$, $\beta = 0.9$, and $\beta=0.55$.
The simulation is done on a grid of $n\in\{200, 400, 800\}$, and $p\in\{2000, 4000, 8000\}$.
For each setup, we still let $N=n^2$, and use 200 out of all $K_{N,n}$ realizations of $\{X_i\}_{i=1}^n$.
Since we are in the high-dimensional regime, we can only compute the sample covariance matrix $\hat\Sigma_n$ from each copy of the data.
We can obtain the empirical distribution of $n^{1/2} |\hat\Sigma_n - \Sigma|_\infty$, consisting of 200 points.
We can then perform its Gaussian approximation, obtaining 200 instances of $|Z|_\infty$, and the block bootstrap procedure, resulting in 200 instances of $l^{-1/2}|\check{B}_{i,l}|_\infty$.

We can similarly draw QQ-plots for covariance matrix, demonstrating the proximity of Gaussian approximation and block bootstrap.
The QQ-plot for $\beta=2$, $\beta=0.9$, and $\beta=0.55$ are given in Figures \ref{fig:E3.QQ.cov.2}, \ref{fig:E3.QQ.cov.0.9}, and \ref{fig:E3.QQ.cov.0.55}, respectively.
We also draw the empirical CDF for $n^{1/2} |\hat\Sigma_n - \Sigma|_\infty$, $|Z|_\infty$, and $l^{-1/2}|\check{B}_{i,l}|_\infty$ in Figure \ref{fig:E3.ECDF.cov} --- see Appendix \ref{sec:apdx.supplementary.sim.results}.
A table that shows the Kolmogorov distances of interest in Theorems \ref{thm:GA.cov} and \ref{thm:bootstrap.cov}, $\rho(n)$ and $\rho_B(n,l)$, are given in Table \ref{tab:E3.KS.cov}.
As a supplement, Table \ref{tab:E3.W1.cov} in Appendix \ref{sec:apdx.supplementary.sim.results} shows the corresponding Wasserstein-1 distances.

\begin{figure}[t]
    \centering
    
    \includegraphics[width=0.8\linewidth]{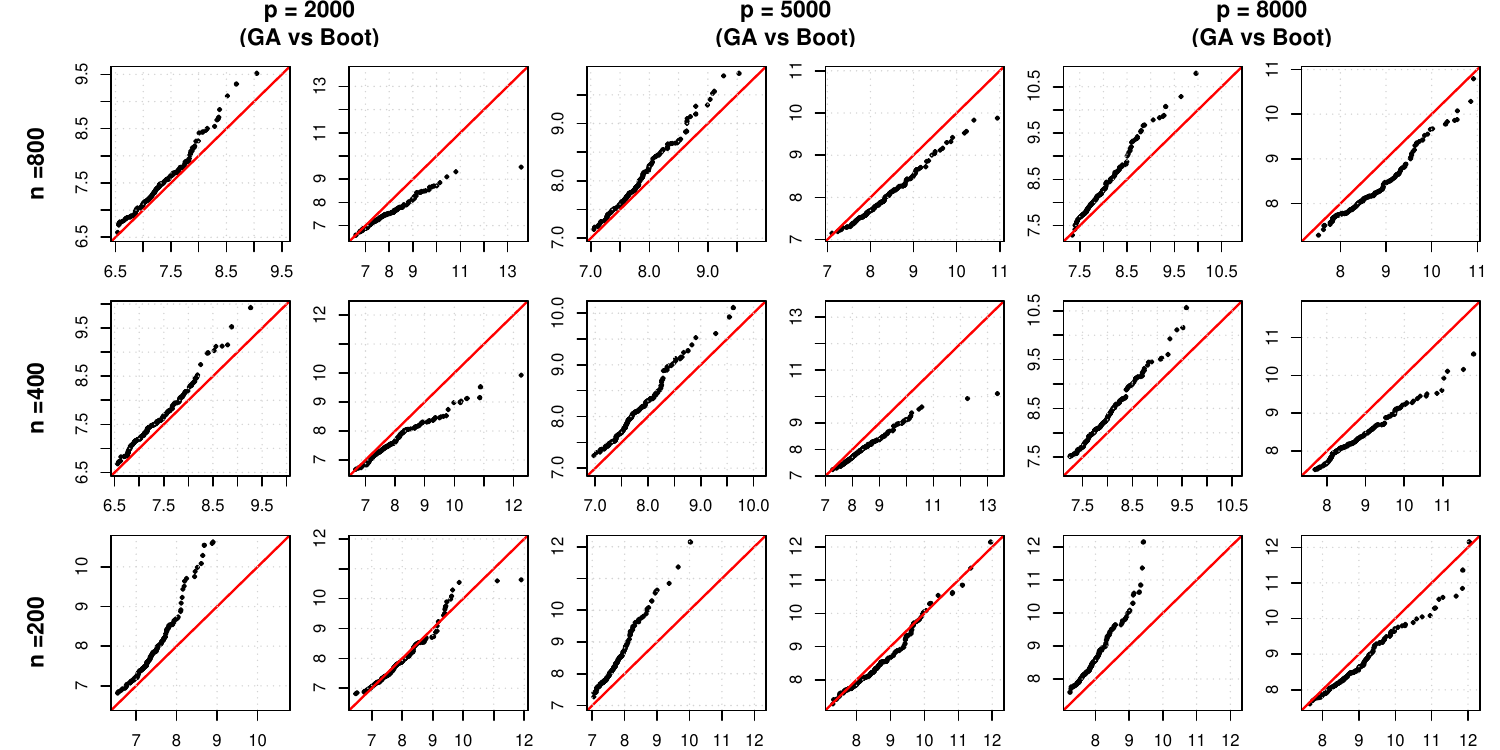}
    \caption{QQ-plots for covariance matrix in the high-dimensional regime, $\beta=2$ (short memory). Red line represents $y=x$. For each pair of $p$ and $n$ values, the plot on the left compares $n^{1/2}|\hat\Sigma_n - \Sigma|_\infty$ on $y$-axis and $|Z|_\infty$ on $x$-axis.
    Being closer to the $y=x$ line represents a smaller Kolmogorov distance $\rho(n)$ for Gaussian approximation.
    The plot on the right compares $n^{1/2}|\hat\Sigma_n - \Sigma|_\infty$ on $y$-axis and $l^{-1/2}|\check{B}_{i,l}|_\infty$ on $x$-axis.
    Being closer to the $y=x$ line represents a smaller Kolmogorov distance $\rho_B(n,l)$ for block bootstrap.}
    \label{fig:E3.QQ.cov.2}
    
    \vspace{1cm} 
    
    \includegraphics[width=0.8\linewidth]{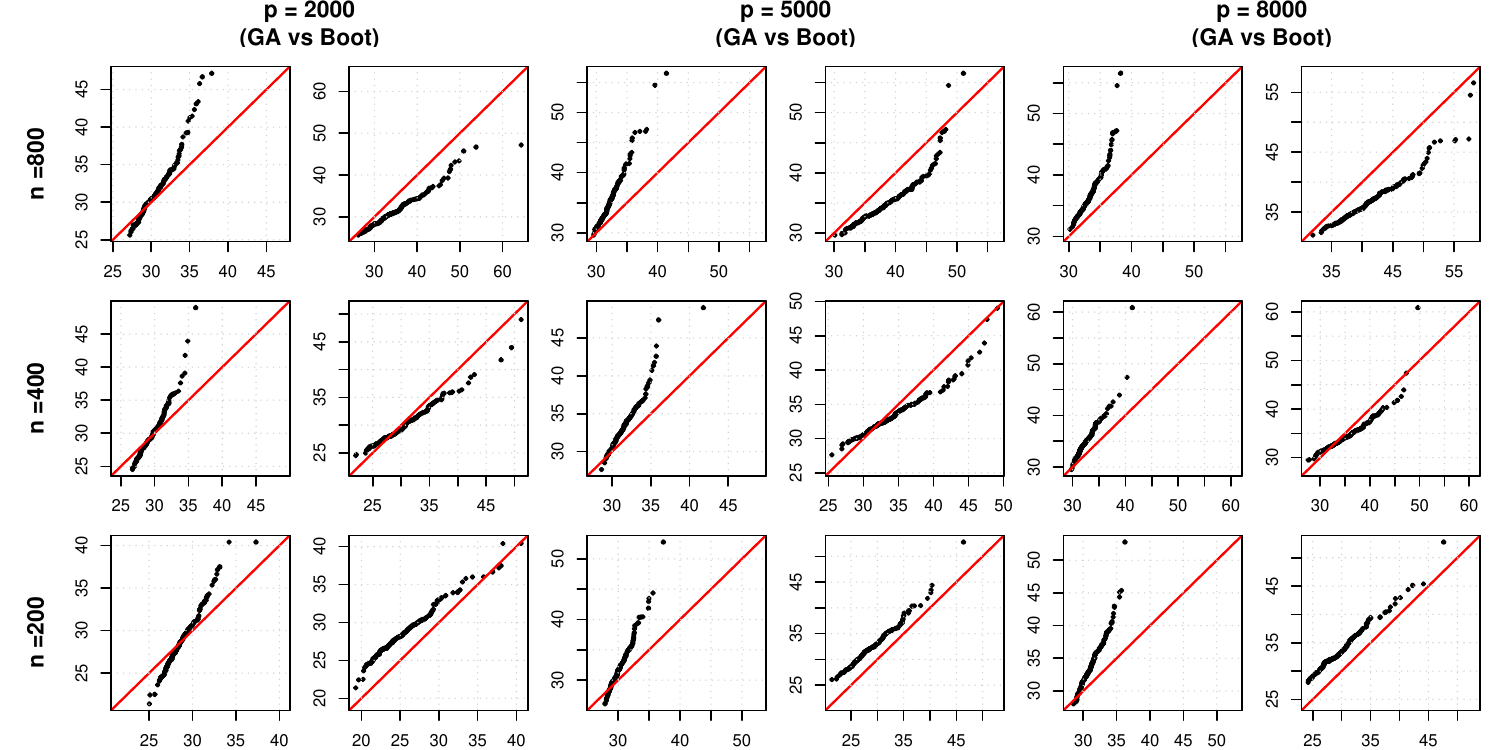}
    \caption{QQ-plots for covariance matrix in the high-dimensional regime, $\beta=0.9$ (long-memory). Red line represents $y=x$. For each pair of $p$ and $n$ values, the plot on the left compares $n^{1/2}|\hat\Sigma_n - \Sigma|_\infty$ on $y$-axis and $|Z|_\infty$ on $x$-axis.
    Being closer to the $y=x$ line represents a smaller Kolmogorov distance $\rho(n)$ for Gaussian approximation.
    The plot on the right compares $n^{1/2}|\hat\Sigma_n - \Sigma|_\infty$ on $y$-axis and $l^{-1/2}|\check{B}_{i,l}|_\infty$ on $x$-axis.
    Being closer to the $y=x$ line represents a smaller Kolmogorov distance $\rho_B(n,l)$ for block bootstrap.}
    \label{fig:E3.QQ.cov.0.9}
    
\end{figure}

\begin{figure}[t]
    \centering
    
    \includegraphics[width=0.8\linewidth]{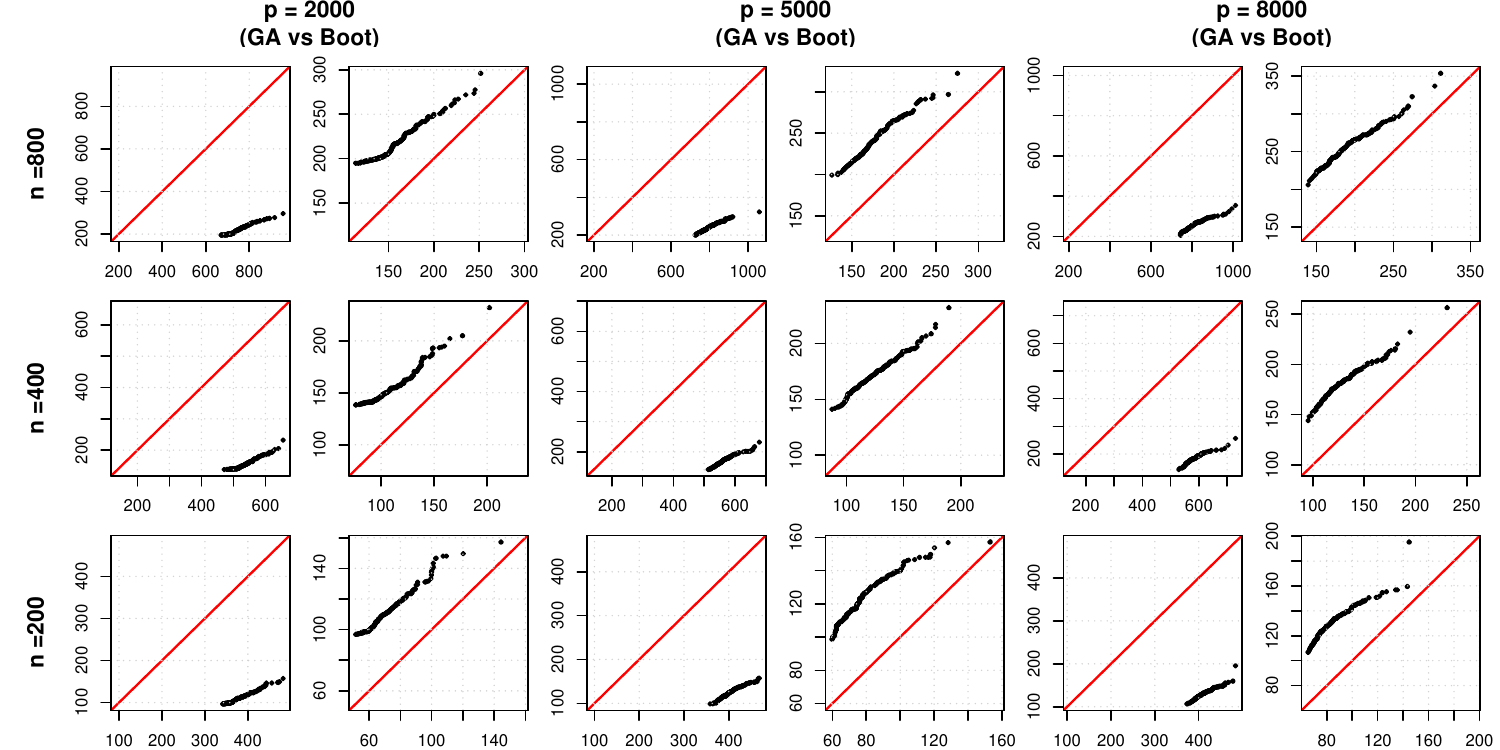}
    \caption{QQ-plots for covariance matrix in the high-dimensional regime, $\beta=0.55$ (ultra-long-memory). Red line represents $y=x$. For each pair of $p$ and $n$ values, the plot on the left compares $n^{1/2}|\hat\Sigma_n - \Sigma|_\infty$ on $y$-axis and $|Z|_\infty$ on $x$-axis.
    Being closer to the $y=x$ line represents a smaller Kolmogorov distance $\rho(n)$ for Gaussian approximation.
    The plot on the right compares $n^{1/2}|\hat\Sigma_n - \Sigma|_\infty$ on $y$-axis and $l^{-1/2}|\check{B}_{i,l}|_\infty$ on $x$-axis.
    Being closer to the $y=x$ line represents a smaller Kolmogorov distance $\rho_B(n,l)$ for block bootstrap.}
    \label{fig:E3.QQ.cov.0.55}
    
    

\end{figure}

\begin{table}[t]
\centering

\resizebox{\textwidth}{!}{
\begin{tabular}{llccccccccc}
\toprule
 & & \multicolumn{3}{c}{$\beta=2$} & \multicolumn{3}{c}{$\beta=0.9$} & \multicolumn{3}{c}{$\beta=0.55$} \\
 \cmidrule(lr){3-5} \cmidrule(lr){6-8} \cmidrule(lr){9-11}
& $n$ & $p=2000$ & $p=5000$ & $p=8000$ & $p=2000$ & $p=5000$ & $p=8000$ & $p=2000$ & $p=5000$ & $p=8000$ \\
\midrule
\multirow{3}{*}{GA} & 800 & 0.14 & 0.17 & 0.28 & 0.13 & 0.48 & 0.54 & 1.00 & 1.00 & 1.00 \\
 & 400 & 0.23 & 0.29 & 0.34 & 0.15 & 0.35 & 0.42 & 1.00 & 1.00 & 1.00 \\
 & 200 & 0.32 & 0.48 & 0.51 & 0.23 & 0.33 & 0.46 & 1.00 & 1.00 & 1.00 \\
\midrule
\multirow{3}{*}{Bootstrap} & 800 & 0.29 & 0.30 & 0.38 & 0.40 & 0.40 & 0.47 & 0.90 & 0.84 & 0.83 \\
 & 400 & 0.25 & 0.31 & 0.29 & 0.12 & 0.17 & 0.19 & 0.92 & 0.83 & 0.86 \\
 & 200 & 0.10 & 0.17 & 0.23 & 0.47 & 0.48 & 0.50 & 0.93 & 0.91 & 0.92 \\
\bottomrule
\end{tabular}
}
\caption{Kolmogorov distances for covariance matrix in the high-dimensional regime. The GA section compares the covariance error $n^{1/2}|\hat\Sigma_n - \Sigma|_\infty$ against its Gaussian approximation $|Z|_\infty$, while the Bootstrap section compares the covariance error against its block bootstrap approximation $l^{-1/2}|\check{B}_{i,l}|_\infty$.}
\label{tab:E3.KS.cov}


\end{table}

From these results, we see that Gaussian approximation and block bootstrap for covariance matrix still work relatively well in the short-memory case of $\beta=2$.
Both retain a reasonable accuracy even when $p$ is as large as 8,000.
However, noticeable distribution shifts do occur in the rather extreme cases when $p$ is very large or $n$ is very small.
On the other hand, the Gaussian approximation error and bootstrap error become relatively more prominent in the long-memory case of $\beta=0.9$.
That being said, the QQ-plots in Figure \ref{fig:E3.QQ.cov.0.9} and the empirical CDF in Figure \ref{fig:E3.ECDF.cov} suggest that block bootstrap still attains reasonable quality in this case, which is desirable in real-world application, especially when the dimension $p$ is much higher than $n$.
The increase of these distances is reasonable, considering the rates of $\Psi(p,n)$ and $\Psi_B(p,n,\epsilon)$ in Theorems \ref{thm:GA.cov} and \ref{thm:bootstrap.cov}.
In the ultra-long-memory case of $\beta=0.55$, both Gaussian approximation and bootstrap fail as expected, as shown in Figure \ref{fig:E3.QQ.cov.0.55}, as well as in Tables \ref{tab:E3.KS.cov} and \ref{tab:E3.W1.cov}.

\section{Real Data Analysis}\label{sec:real.data}
We demonstrate the practical application of our method on the data from the Autism Brain Imaging Data Exchange (ABIDE, \cite{di2014autism}).
Autism Spectrum Disorder (ASD) is a chronic neurodevelopmental disorder associated with challenges in social communication, repetitive behaviors, and atypical sensory processing, often linked to altered brain connectivity patterns \cite{ecker2015neuroimaging}.
The ABIDE dataset contains functional magnetic resonance imaging (fMRI) scans from patients with autism (ASD group) and a control group.
Using the parcellation approach of automatic anatomic labeling (AAL, \cite{tzourio2002automated}), the raw scanning image of the whole brain is segmented into temporal signals over 116 regions of interests (ROIs).
The connectivity between these temporal signals in these ROIs is characterized by the precision matrix of the multivariate time series.
We can spot connectome anomalies in individuals with ASD by comparing the behavior of precision matrices.


We select samples from 71 patients diagnosed with ASD and 95 controls.
For each individual, we have time series of length $n=176$ across $p=116$ many ROIs.
A noteworthy remark is that the majority of these time series are long-memory.
From a neuroscience perspective, this arises for fMRI signals due to the following factors.
First, human brain features the default mode networks that are characterized by intrinsic synchronization across brain regions, supporting persistent and organized patterns in the brain's functional structure, which contributes to long-memory behavior \cite{fransson2008precuneus}.
Second, repeating patterns introduced by neuronal oscillations reinforce long-memory properties of the fMRI signal \cite{becker2011ongoing}.
Third, the fMRI signal sometimes involves hemodynamic response (i.e., blood flow or changing level of hormone) that follows neural activity with a delay, lending a smoothing or long-memory effect to the fMRI signal \cite{friston1998event}.
To confirm long-range dependence in the dataset, we use Hurst exponent $H$ as a metric for temporal dependence.
A time series with $H > 0.5$ is considered to exhibit long-range dependence.
A histogram of Hurst exponents of all time series for all individuals and ROIs is given for each group in Figure \ref{fig:hurst.hist}.
We see that over a half of the time series in the dataset are long-range dependent.
An analysis on the autocorrelation function (ACF) also reveals the long-memory property.
86.2\% of all time series in the ASD group and 85.9\% of all the time series in the control group have at least one ACF of lag from 21 to 100 exceeding the threshold, $1.96 / \sqrt{n}$ (ACF exceeding this threshold is regarded significant at 95\% level).
A selection of ACF plots of the time series are given in Figure \ref{fig:acf.ind}.
These time series feature significant ACF at high lags with oscillations that decay slowly, indicating long-range dependence.

\begin{figure}
\centering
        \includegraphics[width=4cm]{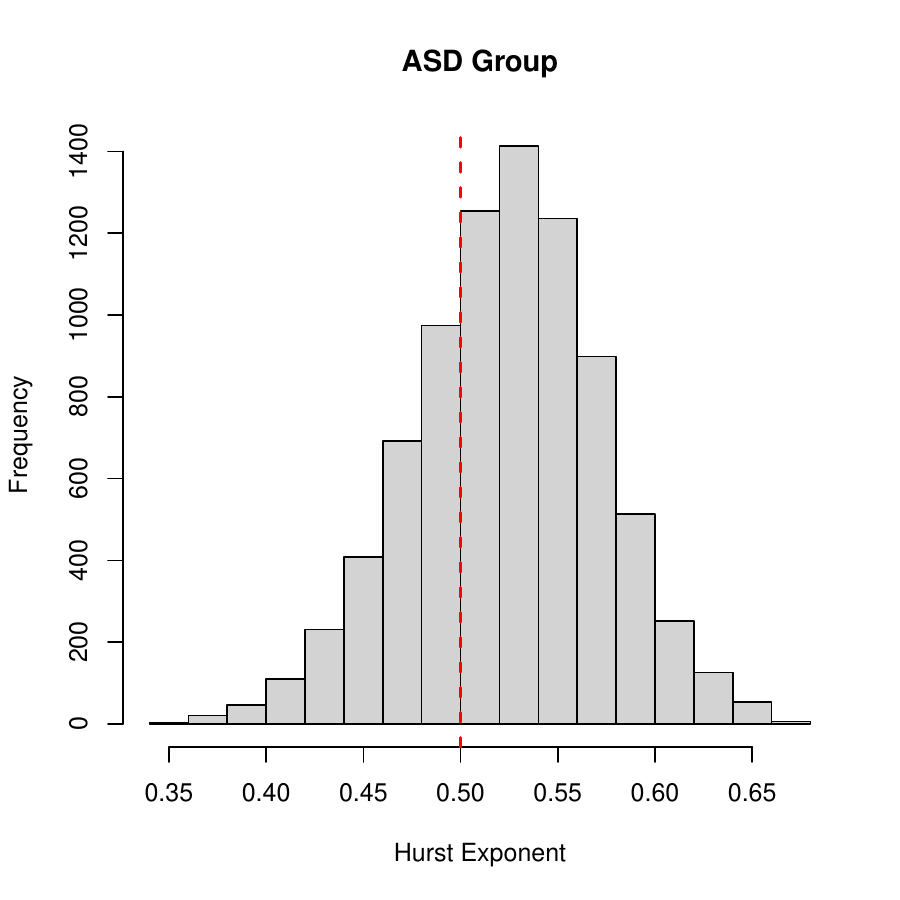}
        \includegraphics[width=4cm]{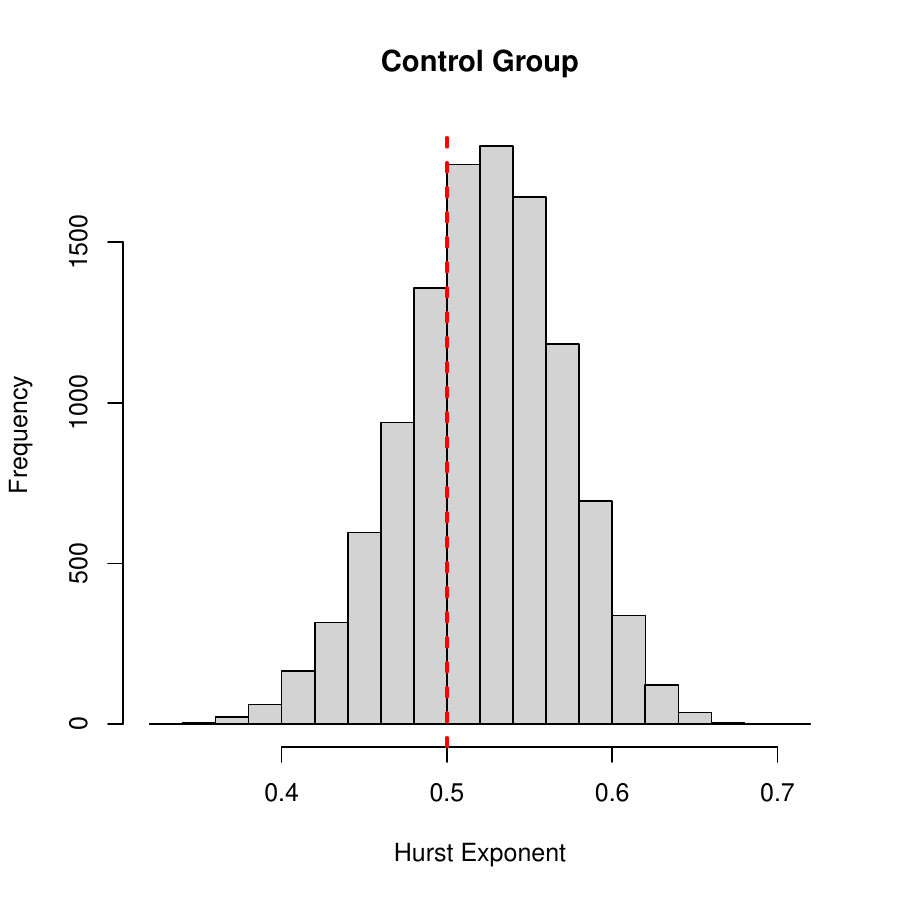}
\caption{Histograms of Hurst exponents of all time series within each group. The red dashed line is $H=0.5$. A time series with $H>0.5$ is considered long-memory.}
\label{fig:hurst.hist}
\end{figure}

\begin{figure}
\centering
        \includegraphics[width=3cm]{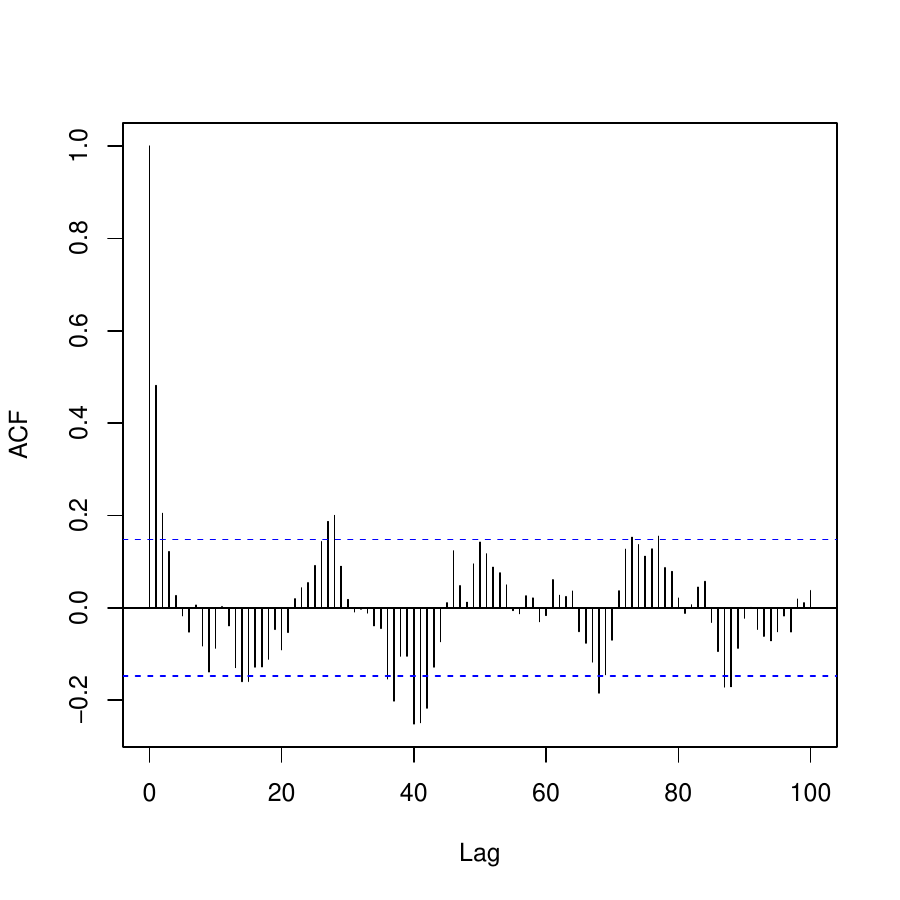}
        \includegraphics[width=3cm]{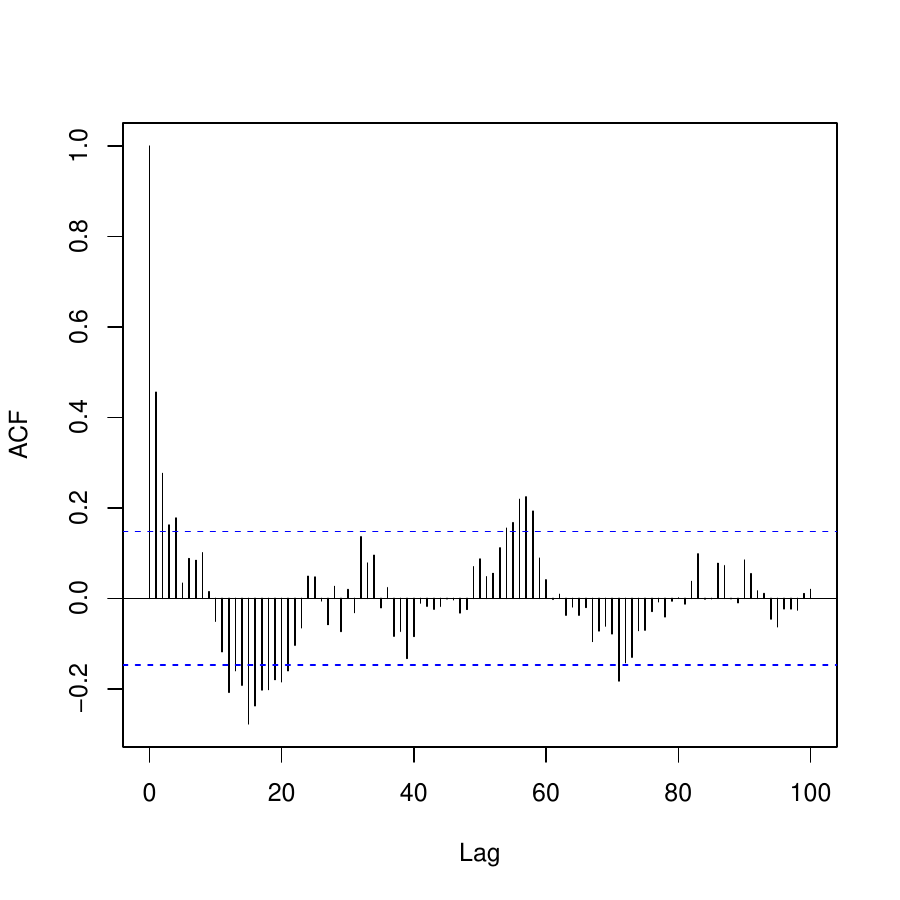}
        \includegraphics[width=3cm]{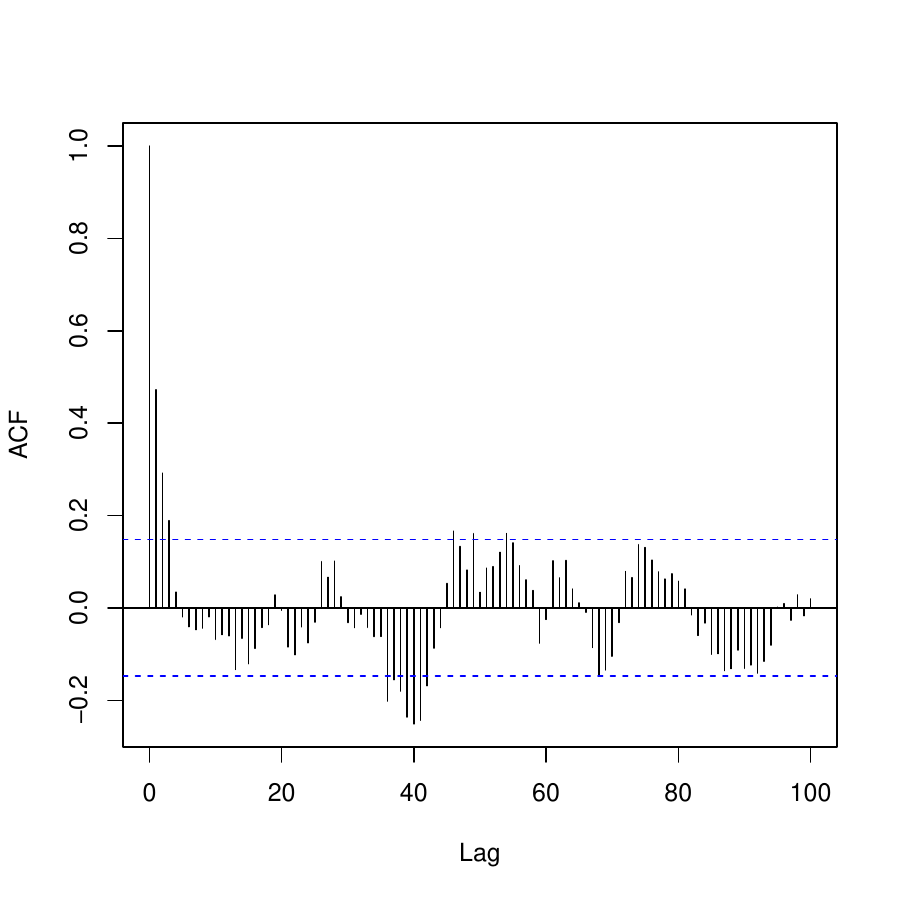}
        \includegraphics[width=3cm]{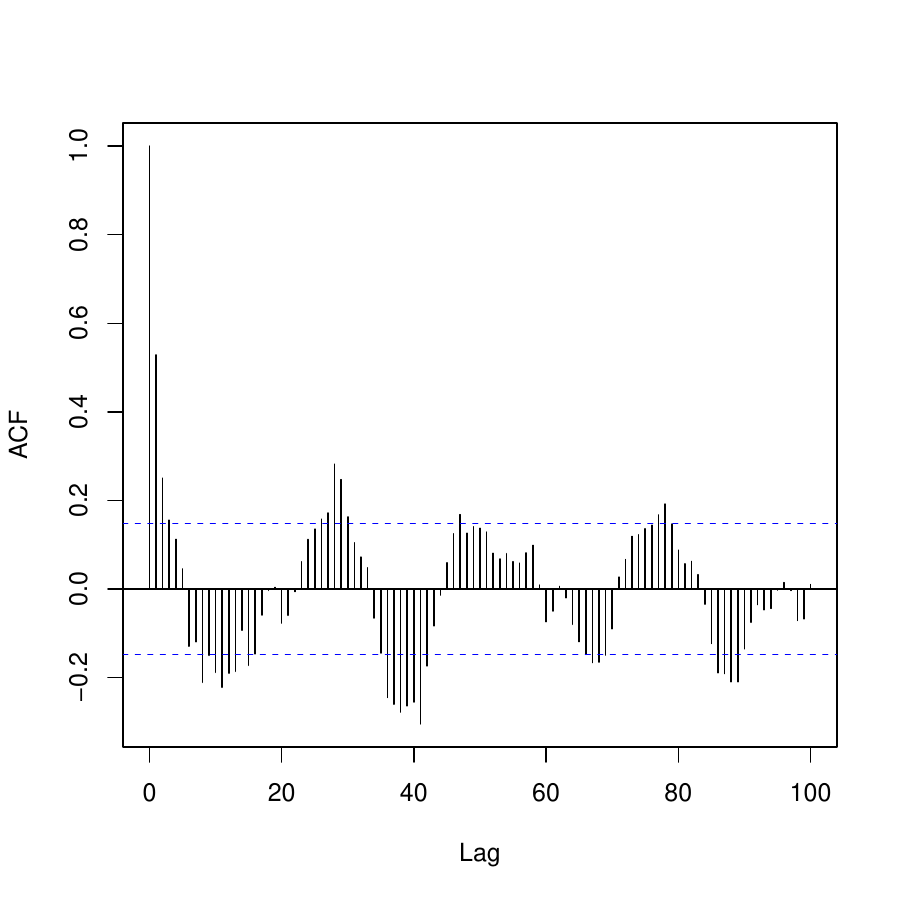}
\caption{ACF plots of select time series. The blue dashed lines are $\pm 1.96/\sqrt{n}$. autocorrelations that exceed these lines are considered significant at 95\% level.}
\label{fig:acf.ind}
\end{figure}

We perform a test on the precision matrix structure on the individual level, by first estimating $\hat \Omega_n = \hat \Sigma_n^{-1}$, and performing block bootstrap with $l=n^{2/3}$ for entrywise confidence intervals.
For any $1\leq j\neq k \leq p$, if the null hypothesis $\Omega_{jk} = 0$ is rejected (i.e., the confidence interval for the entry does not contain zero), then a connection is marked between node $j$ and node $k$ for this individual.
These connections characterize a conditional independence graph (CIG) for each individual.

The interpretation of a network may depend on its sparsity level, which should be treated as a tuning parameter.
On the individual level, it is controlled by the significance level of the hypothesis testing.
To discover common connectome patterns within both ASD and control groups, a graph can be drawn by aggregating the individual graphs in each group.
The sparsity level can be controlled by keeping the edges identified in most individuals.
In our case, we filter out the top 2\% most significant connections.
The aggregated graphs are given in Figure \ref{fig:fmri.connectome}.

\begin{figure}[t]
    \centering
    \subfloat[ASD group\label{fig:asd_group}]{
        \includegraphics[width=0.48\textwidth]{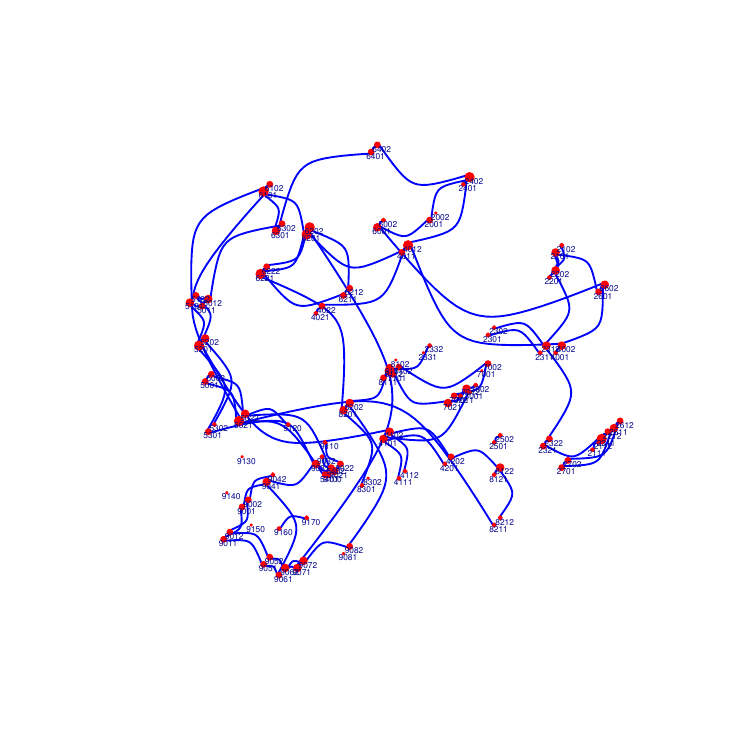}
    }
    \hfill
    \subfloat[Control group\label{fig:control_group}]{
        \includegraphics[width=0.48\textwidth]{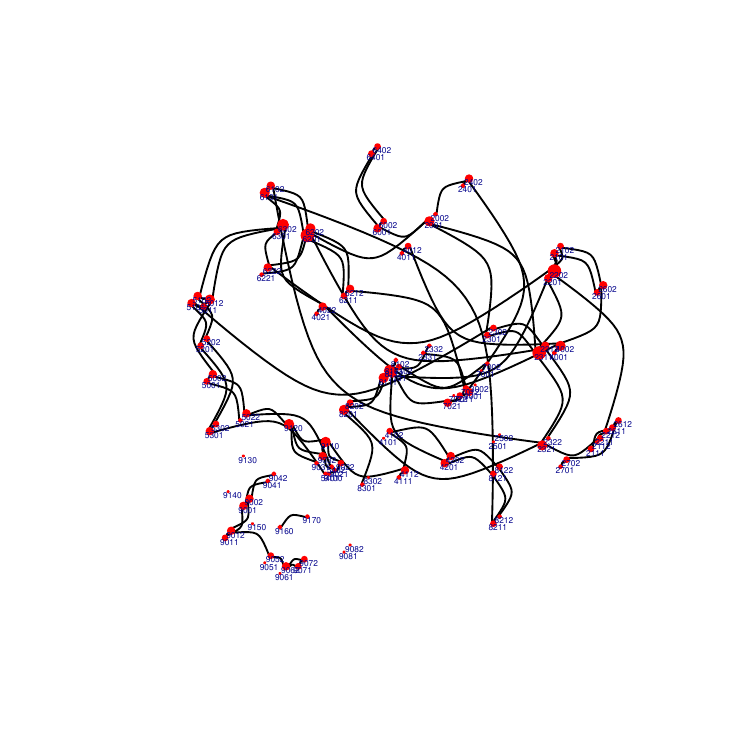}
    }
    
    \caption{Conditional independence graph (CIG) based on the precision matrix of brain fMRI signals over $p=116$ regions of interest, recovered by block bootstrap method. The red dots are ROIs plotted according to their spatial locations in the brain. The right hand side represents the front, while the left hand side represents the back. The size of the dots represents level of ROI activity. A larger dot indicates a larger number of connections out of the ROI. The graph sparsity is set to 2\%.}
    \label{fig:fmri.connectome}
\end{figure}

Our method is able to detect the basic patterns in both groups.
The first example is homotopic connectivity between the corresponding, symmetric ROIs on the left and right hemispheres of the brain.
Such interhemispheric coordination is ubiquitous in human brain, and essential for coordinated bilateral processing and integration \cite{zuo2010growing}.
From Figure \ref{fig:fmri.connectome}, we see that such connections are widely present for both ASD group and control group.
Indeed, out of the 54 left-right pairs of homotopic nodes, 29 of them are interconnected in ASD group, and 28 in control group.
The second example is local connectivity between neighboring ROIs that work together to process similar types of information.
Such connections contribute to the hierarchical structure of the brain, supporting both specialized and integrating functions \cite{sepulcre2010organization}.
Our method confirms that such local connections are prominent in both groups; most of the significant connections are short-ranged.

A comparison of the connectome patterns of the two groups in Figure \ref{fig:fmri.connectome} reveals that compared to the control group, the ASD group exhibits hypoactivity (i.e., fewer connections) and hyperactivity (i.e., more connections) at certain ROIs.
The most deactivated ROI is
{\it inferior frontal gyrus -- triangular part left} (2311), whose connections dropped from 6 in the control group to 1 in the ASD group, followed by
{\it middle frontal gyrus -- right} (2202, connections reduced from 6 to 3),
{\it precuneus -- right} (6302, connections reduced from 5 to 2),
{\it inferior parietal gyrus left} (6201, connections reduced from 6 to 4), and
{\it precentral gyrus left} (2001, connections reduced from 4 to 2).
The {\it inferior frontal gyrus} is critical for language processing, social cognition, and imitation, functions that are often disrupted in ASD \cite{fishman2014atypical}.
The {\it middle frontal gyrus} is central to the executive control network.
Its reduced connectivity explains the executive dysfunction and rigid cognitive style that are frequently observed in ASD \cite{hill2004executive}.
The {\it precuneus} is a central hub for the Default Mode Network, whose deactivation contributes to deficits in integrating individual knowledge with social context \cite{assaf2010abnormal}.
The {\it precentral gyrus} is a part of the primary motor cortex, which is involved in motor planning and execution \cite{nebel2014disruption}.
Disruption in this area has been linked to deficits in motor coordination and the presence of repetitive motor behaviors, both common symptoms of ASD.
The {\it inferior parietal gyrus} plays a role in social recognition, spatial awareness, and attentional control \cite{redcay2008deviant}.
Its reduced activity among ASD patients might be associated with their difficulties in interpreting social cues and coordinating attention.

Meanwhile, our analysis showed that the ROIs that gained the most connections are {\it lingual gyrus left} (5021, connections increased from 1 to 4),
{\it angular gyrus left} (6221, increased from 1 to 4),
{\it hippocampus} (4101, increased from 0 to 3),
and {\it pallidum right} (7022, increased from 0 to 2).
The {\it lingual gyrus} is involved in visual processing, particularly in visual memory and encoding, as well as processing visual aspects of language \cite{itahashi2014altered}.
The increased connectivity in {\it angular gyrus}, which is involved in language processing, number processing, and aspects of social cognition, such as theory of mind, reflects the compensatory mechanisms for language and social processing \cite{redcay2008deviant}.
The {\it hippocampus} is the crucial part of human brain for forming long-term memory. Its hyperactivity explains the distinct cognitive pattern typically seen in ASD patients using rote memory to compensate deficit in social processing skills \cite{ullman2015compensatory}.
The {\it pallidum} of the {\it lenticular nucleus} is part of the {\it basal ganglia}, which is involved in motor control, habit formation, and procedural learning.
Hyperactivity in this region is often linked to repetitive behaviors and routines, a hallmark of ASD \cite{turner2006atypically}.

Overall, most of the top deactivated nodes are located in the front of the brain, while the most deactivated nodes are located in the back or deep inside the brain.
The comparison in Figure \ref{fig:fmri.connectome} also suggests that the ASD group generally features a relatively silent frontal area and a hyperactive posterior area.
Additionally, the front and the back are generally less well-connected in ASD group compare to control.
The anterior network in the front of the brain is generally responsible for intuitive social processing, while the posterior network in the back and the subcortical network on the inside are associated with visual analysis and memory.
Collectively, our findings agree with those in \cite{just2004cortical}.
In order to compensate insufficient intuitive social functionality, the ASD brains use long-term rote memory to mechanically process raw sensory data.

\begin{figure}[t]
    \centering
    \subfloat[ASD group\label{fig:asd_group_FGLasso}]{
        \includegraphics[width=0.48\textwidth]{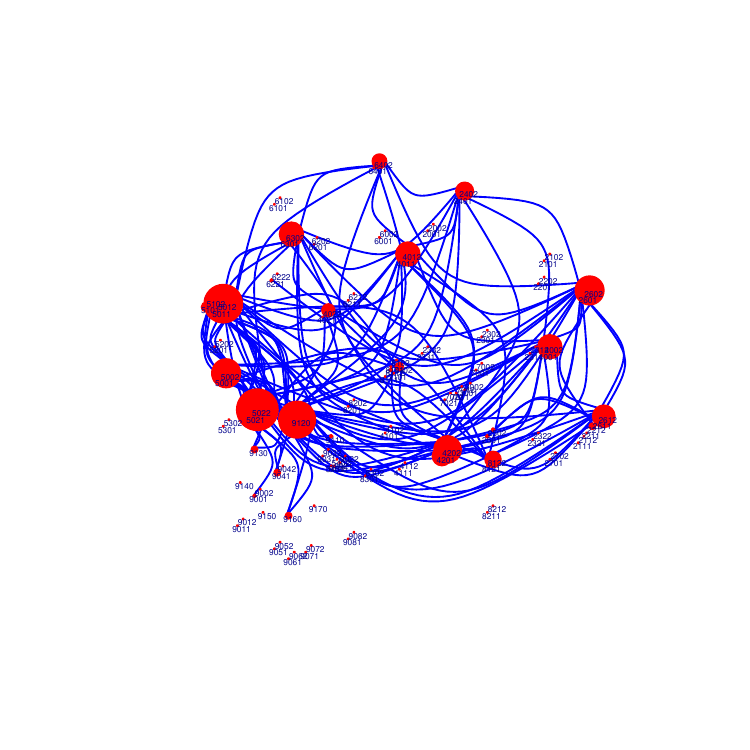}
    }
    \hfill
    \subfloat[Control group\label{fig:control_group_FGLasso}]{
        \includegraphics[width=0.48\textwidth]{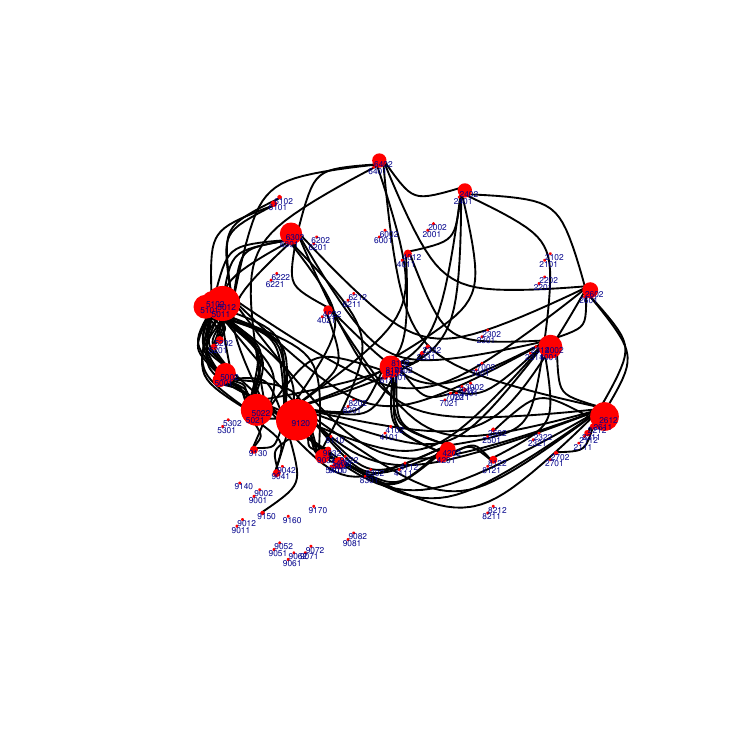}
    }
    
    \caption{Conditional independence graph (CIG) based on the precision matrix of brain fMRI signals over $p=116$ regions of interest, recovered by Functional Graphical Lasso (FGLasso). The red dots are ROIs plotted according to their spatial locations in the brain. The right hand side represents the front, while the left hand side represents the back. The size of the dots represents level of ROI activity. A larger dot indicates a larger number of connections out of the ROI. The graph sparsity is set to 2\%.}
    \label{fig:fmri.connectome.fglasso}
\end{figure}

As a technical comparison, we have also recovered the graph for ASD group and control group using Functional Graphical Lasso (FGLasso) by \cite{qiao2019functional}.
While our method detects edges by hypothesis testing, i.e., comparing the confidence interval obtained by block bootstrap with zero, FGLasso takes a fundamentally different approach of penalized precision matrix estimation.
Given an observation of functional data, it first performs functional principal component analysis (FPCA) for dimension reduction, and then estimate the block precision matrix of FPCA scores by maximizing a penalized log-likelihood (hence the name Lasso).
A more detailed discussion can be found in \cite{zhao2024high}, which also proposed a penalized regression method based on neighborhood selection instead of precision matrix estimation.
The resulting brain connectome graph by applying FGLasso to the ABIDE dataset, with also a 2\% sparsity enforced, is given in Figure \ref{fig:fmri.connectome.fglasso}.
For both ASD group and control group, the connections are concentrated on a few hub nodes, while the majority of the ROIs have no connection detected.
This phenomenon arises due to a known limitation of Lasso-type method in highly correlated data.
Out of a group of correlated signals (e.g., those from spatially proximal ROIs), it tends to select one representative while suppressing others.
Despite the advantages it brings when sparsity is present, it may lead to an overly condensed representation of the complex local networks for brain connectome data.
While FGLasso is a highly desirable method for problems where the true underlying network structure is sparse, our bootstrap inference-based method, on the other hand, tends to have a different emphasis of detecting detailed local network structure.

\bibliographystyle{IEEEtran}
\bibliography{percy-papers}

\makeatletter
\renewenvironment{IEEEbiographynophoto}[1]{%
  \par\medskip
  \normalfont\footnotesize\interlinepenalty500
  \parskip=0pt\par
  \noindent\textbf{#1\ }\ignorespaces
}{%
  \par
}
\makeatother

\begin{IEEEbiographynophoto}{Percy S. Zhai}
is currently a Ph.D.~student in econometrics and statistic at the University of Chicago Booth School of Business.
His research interests include time series analysis, graphical models, uncertainty quantification, statistical theory in generative AI, and Bayesian statistics.
He was selected as a recipient of the 2025 Arnold Zellner Doctoral Prize in recognition of his contribution to the application of Bayesian statistical ideas and methods to problems in time series of relevance to financial applications.
He has also served as a reviewer for the Journal of the American Statistical Association, and the Journal of the Royal Statistical Society, Series B.

\end{IEEEbiographynophoto}
\begin{IEEEbiographynophoto}{Mladen Kolar}
is a professor in the Department of Data Sciences and Operations at the USC Marshall School of Business and a Visiting Professor of Statistics and Data Science at the Mohamed bin Zayed University of Artificial Intelligence.
Mladen earned his PhD in Machine Learning from Carnegie Mellon University in 2013.
His research focuses on high-dimensional statistical methods, probabilistic graphical models, and scalable optimization methods, driven by the need to uncover interesting and scientifically meaningful structures from observational data.
Mladen was selected as a recipient of the 2024 Junior Leo Breiman Award for his outstanding contributions to these areas.
He currently serves as an associate editor for the Journal of Machine Learning Research, the Journal of Computational and Graphical Statistics, the Journal of the American Statistical Association, and IEEE Transactions on Pattern Analysis and Machine Intelligence.
He also served the Annals of Statistics and the New England Journal of Statistics in Data Science.
\end{IEEEbiographynophoto}
\begin{IEEEbiographynophoto}{Wei Biao Wu}
received the B.S. degree from Fudan University, Shanghai, China, in 1997, and the Ph.D. degree in statistics from the University of Michigan, Ann Arbor, MI, USA, in 2001.
He is currently a Professor with the Department of Statistics, The University of
Chicago, Chicago, IL, USA. His research interests include probability theory, time series analysis, high-dimensional statistical inference, and machine learning. His work focuses on the development of asymptotic theory for complex dependence structures, functional dependence measures, and the statistical analysis of stochastic gradient descent and online learning algorithms.
He was the recipient of the National Science Foundation CAREER Award in 2004, the Tjalling C.~Koopmans Econometric Theory Prize in 2009, and
the Humboldt Research Award from the Alexander von Humboldt Foundation in 2019. He has served as an Associate Editor for several journals, including the Annals of Statistics, Bernoulli and Journal of the American Statistical Association.
\end{IEEEbiographynophoto}

{\appendices

\renewcommand\thesubsection{\thesection.\arabic{subsection}}
\renewcommand\thesubsectiondis{\thesection.\arabic{subsection}}

\setcounter{equation}{0}
\renewcommand{\theequation}{A\arabic{equation}}

\setcounter{lemma}{0}
\renewcommand{\thelemma}{A\arabic{lemma}}
\setcounter{theorem}{0}
\renewcommand{\thetheorem}{A\arabic{theorem}}
\setcounter{remark}{0}
\renewcommand{\theremark}{A\arabic{remark}}
\setcounter{proposition}{0}
\renewcommand{\theproposition}{A\arabic{proposition}}
\setcounter{condition}{0}
\renewcommand{\thecondition}{A\arabic{condition}}

\section{Useful Results}\label{app:useful.results}

\subsection{A Corollary of Condition \ref{cond.A}}\label{app:cond.A.cor}

The following result following Condition \ref{cond.A} indicates that if a Gaussian linear process \eqref{LinearProcess} satisfies Condition \ref{cond.A}, then the left hand side of \eqref{eq:cond.G} is also upper bounded.

\begin{lemma}\label{prop:cond.A.cor}
Suppose that Condition \ref{cond.A} holds for a Gaussian linear process \eqref{LinearProcess}. Then for any $1\leq s,t\leq p$,
\begin{equation}\label{eq:cond.A.cor}
    \sum_{k=-\infty}^\infty \left[(\Gamma_k)_{ss}(\Gamma_k)_{tt} + (\Gamma_k)_{st}(\Gamma_k)_{ts}\right] \leq C_0',
\end{equation}
where $C_0'$ is a constant only related to $C_0$.
\end{lemma}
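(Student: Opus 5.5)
The plan is to bound every autocovariance entry by a power of the lag, and then check that the resulting series converges because $\beta>3/4$.

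\emph{Entrywise bound on $\Gamma_k$.} Since $\Gamma_k=\sum_{t\ge0}A_tA_{t+k}^\top$ for $k\ge0$, each entry is $(\Gamma_k)_{st}=\sum_{t\ge0}\langle (A_t)_{s\cdot},(A_{t+k})_{t\cdot}\rangle$. Cauchy--Schwarz on each inner product and Condition \ref{cond.A} give
\[
|(\Gamma_k)_{st}|\le C_0^2\sum_{t=0}^\infty (1\vee t)^{-\beta}(1\vee (t+k))^{-\beta}.
\]
The same bound holds for negative lags, because $\Gamma_{-k}=\Gamma_k^\top$.

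\emph{Decay of the bound in $k$.} I would estimate this convolution sum by splitting the range of $t$. Fix $k\ge1$.
\begin{itemize}
\item On $t\le k$, the second factor is at most $k^{-\beta}$. The sum of $(1\vee t)^{-\beta}$ over this range is $O(k^{1-\beta})$ when $\beta<1$, $O(\log k)$ when $\beta=1$, and $O(1)$ when $\beta>1$.
\item On $t>k$, the summand is at most $t^{-2\beta}$. The tail $\sum_{t>k}t^{-2\beta}$ is $O(k^{1-2\beta})$, which is finite since $2\beta>3/2>1$.
\end{itemize}
Combining the two ranges gives $|(\Gamma_k)_{st}|\le C\,\gamma(k)$, where $\gamma(k)=k^{1-2\beta}$ for $\beta<1$, $\gamma(k)=k^{-1}\log k$ for $\beta=1$, and $\gamma(k)=k^{-\beta}$ for $\beta>1$. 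The lag $k=0$ is handled directly: $|(\Gamma_0)_{st}|\le C_0^2\sum_t(1\vee t)^{-2\beta}<\infty$.

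\emph{Summability.} Each summand in \eqref{eq:cond.A.cor} is at most $2C^2\gamma(|k|)^2$. The series $\sum_k\gamma(|k|)^2$ converges in every case.
\begin{itemize}
\item For $\beta<1$, the terms are $k^{2-4\beta}$, and $4\beta-2>1$ exactly because $\beta>3/4$.
\item For $\beta=1$, the terms are $k^{-2}\log^2 k$, which is summable.
\item For $\beta>1$, the terms are $k^{-2\beta}$, which is summable.
\end{itemize}
The resulting constant $C_0'$ depends only on $C_0$ and $\beta$, and not on $s$, $t$, $p$ or $d$.

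\emph{Main obstacle.} The only delicate step is the long-memory case. The crude bound $|(\Gamma_k)_{st}|\lesssim k^{1-2\beta}$ is not summable in $k$ by itself, but its square is. The threshold $\beta>3/4$ is exactly the condition needed for this, which matches where Gaussianity breaks down.
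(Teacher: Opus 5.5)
Your proof is correct, but it is organized differently from the paper's. Both arguments rest on the same entrywise bound $|(\Gamma_k)_{st}|\lesssim g(|k|)$, where $g(k)=\sum_{l\ge0}(1\vee l)^{-\beta}(l+k)^{-\beta}$, so the quantity to control is $\sum_k g(|k|)^2$.

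\emph{Your route.} You estimate $g(k)$ pointwise by splitting the range at $l=k$, which forces the three cases $\beta<1$, $\beta=1$, $\beta>1$, and then you sum the squares.

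\emph{The paper's route.} The paper never computes the decay of $g$. It expands $g(k)^2$ as a double sum over $(l,l')$, interchanges the order of summation, and applies Cauchy--Schwarz in the lag:
$\sum_k (l+k)^{-\beta}(l'+k)^{-\beta}\le \bigl(\sum_k (l+k)^{-2\beta}\sum_k (l'+k)^{-2\beta}\bigr)^{1/2}\asymp (ll')^{1/2-\beta}$.
This reduces everything to $\bigl(\sum_l l^{1/2-2\beta}\bigr)^2$, which is finite exactly when $\beta>3/4$.

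\emph{Comparison.} The paper's argument is shorter and uniform in $\beta$, with no case split. Yours gives the more informative pointwise rate $|(\Gamma_k)_{st}|\lesssim |k|^{-(\beta\wedge(2\beta-1))}$, with a logarithm at $\beta=1$. This is the same decay the paper later uses when bounding $|\Gamma_k|_{\op}$. It also makes visible that the entries of $\Gamma_k$ need not be summable while their squares are.

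Two cosmetic points. You use $t$ both as the summation index and as the column index in $(A_{t+k})_{t\cdot}$. For $\beta=1$ the bound should read $k^{-1}(1+\log k)$, so that it does not vanish at $k=1$.
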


\begin{proof}[Proof of Lemma \ref{prop:cond.A.cor}]
We first observe a result that will be useful later. For every $j,k\in\mathbb{Z}$ and $1\leq s,t\leq p$, by Cauchy-Schwarz inequality and Condition \ref{cond.A},
\begin{equation}\label{EQ_CS}
    \left(\sum_{u=1}^p(A_{j})_{su}(A_{k})_{tu}\right)^2 \leq \left(\sum_{u=1}^p (A_{j})_{su}^2\right)\left(\sum_{u=1}^p (A_{k})_{tu}^2\right)\lesssim (j\vee 1)^{-2\beta}(k\vee 1)^{-2\beta},
\end{equation}
up to a constant depending only on $C_0$. Recall that $\Gamma_k = \sum_{l=0}^\infty A_l A_l^T$ for Gaussian linear process \eqref{LinearProcess}. By \eqref{EQ_CS} and Cauchy-Schwarz inequality,
\begin{align*}
    &\sum_{k=-\infty}^\infty \left[(\Gamma_k)_{ss}(\Gamma_k)_{tt} + (\Gamma_k)_{st}(\Gamma_k)_{ts}\right]\\
    &\lesssim \sum_{k=0}^\infty\sum_{l=1}^\infty\sum_{l'=1}^\infty l^{-\beta}(l+k)^{-\beta}l'^{-\beta}(l'+k)^{-\beta} + \left(1+\sum_{k=1}^\infty k^{-2\beta}\right)\\
    &\lesssim \sum_{l=1}^\infty l^{-\beta} \sum_{l'=1}^\infty l'^{-\beta} \sqrt{\sum_{k=0}^\infty (l+k)^{-2\beta} \sum_{k'=0}^\infty (l'+k')^{-2\beta}} +1 \\
    &\asymp\left(\sum_{l=1}^\infty l^{-2\beta+1/2}\right)^2+1 = O(1).
\end{align*}
\end{proof}

\subsection{A Convergence Result for m-dependent Approximation}\label{app:m.dep.conv}
The following lemma reveals a convergence result for $m$-dependent approximation. Recall that $m$ diverges to infinity as $n\rightarrow\infty$.
\begin{lemma}\label{lm:m.dep.conv}
$\lim_{m\rightarrow\infty} \mathbb{E}[(\tilde{D}_{i,m})_{st}^2] = \mathbb{E}[(D_i)^2_{st}]$ for any $1\leq s,t\leq p$.
\end{lemma}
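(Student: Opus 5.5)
Since $\tilde D_{i,m}=\mathbb{E}[D_i\mid\mathcal{F}^i_{i-m+1}]$ is a conditional expectation of $D_i$ onto an increasing family of $\sigma$-algebras, the natural route is martingale convergence in $L^2$. First we check that $(D_i)_{st}\in L^2$. Each entry of $\mathcal{X}_t$ is a centred quadratic form in the Gaussian innovations. Hence $\mathcal{P}_i(\mathcal{X}_t)_{st}$ is the part involving $\epsilon_i$ paired with $\epsilon_{i'}$, $i'\le i$, namely
\[
(A_{t-i})_{s\cdot}\,\epsilon_i\epsilon_i^\top (A_{t-i})_{t\cdot}^\top-\mathbb{E}[\cdot]+\sum_{i'<i}\Big[(A_{t-i})_{s\cdot}\epsilon_i\epsilon_{i'}^\top (A_{t-i'})_{t\cdot}^\top+(A_{t-i'})_{s\cdot}\epsilon_{i'}\epsilon_{i}^\top (A_{t-i})_{t\cdot}^\top\Big].
\]
Summing over $t\ge i$, the second moment of $(D_i)_{st}$ is bounded, via Condition \ref{cond.A} and Cauchy--Schwarz exactly as in \eqref{EQ_CS}, by a constant times
\[
\sum_{k\ge1}\Big(\sum_{j\ge0}(1\vee j)^{-\beta}(j+k)^{-\beta}\Big)^2+\Big(\sum_{j}(1\vee j)^{-2\beta}\Big)^2 .
\]
The inner sum is $O(k^{1-2\beta})$ for $\beta<1$ (and $O(k^{-\beta}\log k)$ or $O(k^{-\beta})$ otherwise), so the series converges precisely because $\beta>3/4$. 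This gives $\mathbb{E}[(D_i)_{st}^2]<\infty$; the same computation underlies Lemma \ref{lm:m.dep.approx}.

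Next, $\mathcal{F}^i_{i-m+1}\uparrow\mathcal{F}^i_{-\infty}:=\sigma(\epsilon_i,\epsilon_{i-1},\dots)=\mathcal{F}^i$ as $m\to\infty$, and $D_i$ is $\mathcal{F}^i$-measurable. Lévy's upward theorem (martingale convergence in $L^2$) therefore gives $(\tilde D_{i,m})_{st}\to(D_i)_{st}$ in $L^2$. Consequently $\mathbb{E}[(\tilde D_{i,m})_{st}^2]\to\mathbb{E}[(D_i)_{st}^2]$, and by orthogonality of the projection this convergence is monotone: $\mathbb{E}[(D_i-\tilde D_{i,m})_{st}^2]=\mathbb{E}[(D_i)^2_{st}]-\mathbb{E}[(\tilde D_{i,m})^2_{st}]$.

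Alternatively, and to get a rate, we can compute directly. Conditioning a Gaussian quadratic form on a subset of the innovations simply deletes the terms involving $\epsilon_{i'}$ with $i'\le i-m$. The difference is then explicit, and its second moment is the tail $\sum_{k\ge m}(\cdots)$ of the convergent series above. This tail is $O(m^{(-2\beta+1)\vee(-4\beta+3)})$, consistent with $Q_2(m)$ in Lemma \ref{lm:m.dep.approx}, and in particular tends to $0$.

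The only real obstacle is the square-integrability of $D_i$ in the long-memory regime $3/4<\beta<1$. There the sum over $t$ of projections is not absolutely summable in a naive norm sense, so one must exploit orthogonality across distinct $\epsilon_{i'}$ and collect the coefficient of each $\epsilon_i\epsilon_{i'}^\top$ before taking norms. This is the step where the threshold $\beta>3/4$ enters.
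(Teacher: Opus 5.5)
Your proposal is correct, but your main argument differs from the paper's. The paper works entirely with the explicit expansions \eqref{EQ_Di} and \eqref{EQ_Di_tilde}, in three steps:
\begin{itemize}
\item it uses $\mathbb{E}[(D_i)_{st}^2]-\mathbb{E}[(\tilde D_{i,m})_{st}^2]=\mathbb{E}[(D_i-\tilde D_{i,m})_{st}^2]$;
\item it writes the right-hand side as the tail $\sum_{j\ge m}\zeta_j$ of a series;
\item it gets summability by identifying $\sum_j\zeta_j$ with the long-run variance $\sum_k[(\Gamma_k)_{ss}(\Gamma_k)_{tt}+(\Gamma_k)_{st}(\Gamma_k)_{ts}]$ and invoking Lemma~\ref{prop:cond.A.cor}.
\end{itemize}

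Your primary route instead applies L\'evy's upward theorem along $\mathcal{F}^i_{i-m+1}\uparrow\mathcal{F}^i$. This has three consequences:
\begin{itemize}
\item The only model-specific input is $(D_i)_{st}\in L^2$.
\item You get the stronger conclusion $\tilde D_{i,m}\to D_i$ in $L^2$.
\item The Pythagorean identity and the monotone bound $\mathbb{E}[(\tilde D_{i,m})^2_{st}]\le \mathbb{E}[(D_i)^2_{st}]$ (the content of Remark~\ref{RMK_DD}) follow immediately from the projection property.
\end{itemize}
That projection property is also the right justification for the paper's first step. The paper asserts that $(D_i-\tilde D_{i,m})_{st}$ and $(\tilde D_{i,m})_{st}$ are independent, but both involve $\epsilon_i$. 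What actually holds, and what suffices, is orthogonality.

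What the paper's explicit route buys is a rate, plus a direct link to the long-run variance in Condition~\ref{cond.G}, which is reused later. Your alternative direct computation recovers the same tail rate $m^{(-2\beta+1)\vee(-4\beta+3)}$, consistent with $Q_2(m)$.

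One side remark of yours is inaccurate: for $\beta>3/4$ the sum of projections is absolutely summable in $L^2$. Write the time index as $i+l$. The cross terms of $\mathcal{P}_i(\mathcal{X}_{i+l})_{(s,t)}$ are products of independent mean-zero factors, so
$\|\mathcal{P}_i(\mathcal{X}_{i+l})_{(s,t)}\|_2\lesssim (1\vee l)^{-\beta}\bigl(\sum_{j\ge 1}(l+j)^{-2\beta}\bigr)^{1/2}\lesssim (1\vee l)^{1/2-2\beta}$.
This is summable in $l$ exactly when $\beta>3/4$. So the triangle inequality already gives $D_i\in L^2$, with the threshold entering at the same place. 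Collecting coefficients of $\epsilon_i\epsilon_{i-j}^\top$ and using orthogonality is a valid way to do it, but it is not forced.
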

\begin{proof}[Proof of Lemma \ref{lm:m.dep.conv}]
For the linear process with the form (\ref{LinearProcess}), we have
\begin{align}
    D_i &= \sum_{j=1}^\infty\sum_{l=0}^\infty A_{l+j}\epsilon_{i-j}\epsilon_i^\top A_l^\top + \sum_{j=1}^\infty \sum_{l=0}^\infty A_l\epsilon_i\epsilon_{i-j}^\top A_{l+j}^\top + \sum_{l=0}^\infty A_l\epsilon_i\epsilon_i^\top A_l^\top - \Sigma;\label{EQ_Di}\\
    \Tilde{D}_{i,m} &= \sum_{j=1}^{m-1}\sum_{l=0}^\infty A_{l+j}\epsilon_{i-j}\epsilon_i^\top A_l^\top + \sum_{j=1}^{m-1}\sum_{l=0}^\infty A_l\epsilon_i\epsilon_{i-j}^\top A_{l+j}^\top + \sum_{l=0}^\infty A_l\epsilon_i\epsilon_i^\top A_l^\top - \Sigma.\label{EQ_Di_tilde}
\end{align}
Observe that $(D_i-\Tilde{D}_{i,m})_{st}$ is independent to $(\Tilde{D}_{i,m})_{st}$. This implies that
\begin{equation*}
    \mathbb{E}[(D_i)_{st}^2] - \mathbb{E}[(\Tilde{D}_{i,m})_{st}^2] = \mathbb{E}[((D_i)_{st} - (\Tilde{D}_{i,m})_{st})^2].
\end{equation*}
The right-hand side can be rewritten as $\sum_{j=m}^\infty \zeta_j$, where 
\begin{equation*}
    \zeta_j = \sum_{l=0}^\infty \sum_{l'=0}^\infty \left(A_{l+j}A_{l'+j}^\top\right)_{ss} \left( A_l A_{l'}^\top\right)_{tt} + \sum_{l=0}^\infty \sum_{l'=0}^\infty \left(A_{l+j}A_{l'+j}^\top\right)_{tt} \left( A_l A_{l'}^\top\right)_{ss}
    + 2\sum_{l=0}^\infty \sum_{l'=0}^\infty \left(A_{l+j}A_{l'+j}^\top\right)_{st} \left( A_l A_{l'}^\top\right)_{ts}.
\end{equation*}
Note that
\begin{equation}\label{EQ_zeta}
    \sum_{j=1}^\infty \zeta_j = \sum_{k=-\infty}^\infty [(\Gamma_k)_{ss}(\Gamma_k)_{tt}+(\Gamma_k)_{st}(\Gamma_k)_{ts}].
\end{equation}
By Lemma \ref{prop:cond.A.cor}, under Condition \ref{cond.A}, for all $1\leq i\leq n, 1\leq s,t\leq p$, we have $\sum_{j=1}^\infty \zeta_j \leq C_0' < \infty$.
The proof of lemma is thus finished by the property of converging series, i.e., as $m\rightarrow\infty$, we have $\sum_{j=m}^\infty \zeta_j \rightarrow 0$.
\end{proof}

\begin{remark}\label{RMK_DD}
Observe that $\sum_{j=1}^\infty \zeta_j = \mathbb{E}[(D_i)^2_{st}]$. \eqref{EQ_zeta} indicates that under Condition \ref{cond.A}, we have 
$$\lim_{m\rightarrow\infty} \mathbb{E}[(\tilde{D}_{i,m})_{st}^2]\leq C_0'.$$
This convergence implies boundedness, i.e., for any $m\in\mathbb{N}$, $\mathbb{E}[(\tilde{D}_{i,m})_{st}^2]\leq C_0''$, where $C_0''$ is a constant related to $C_0$.
\end{remark}

\subsection{Hanson-Wright Inequality}\label{app:hanson.wright}

A concentration inequality of quadratic form is given by \cite{hanson1971bound}. The proof is later refined by \cite{rudelson2013hanson}, which also provides a stronger result. We shall refer to the latter. Let $|A|_{\op}=\sup_{|x|_2\leq 1}|Ax|_2$ be the operator norm of a matrix $A$, and $\|X\|_{\psi_2} = \inf\{K\in\mathbb{R}: \mathbb{E}[\exp(X^2/K^2)]\leq 2\}$ be the Orlicz 2-norm (sub-Gaussian norm) of a random variable $X$.

\begin{lemma}[Hanson-Wright Inequality, \cite{rudelson2013hanson}]\label{lm:hanson.wright}
Let $X=(X_1,\cdots,X_n)^T$ be a random vector with independent components $X_i$ which satisfy $\mathbb{E}[X_i]=0$ and $\|X_i\|_{\psi_2}\leq K$. Let $A$ be a $n\times n$ matrix. Then for every $t>0$,
\begin{equation}\label{eq:hanson.wright}
    \mathbb{P}\left(\left|X^\top AX - \mathbb{E}[X^\top AX]\right|>t\right)\leq 2\exp\left[-c\left(\frac{t^2}{K^4|A|_{\F}^2}\wedge\frac{t}{K^2|A|_{\op}}\right)\right].
\end{equation}
Here, $c>0$ is a universal constant.
\end{lemma}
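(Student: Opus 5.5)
The plan is to follow the argument of \cite{rudelson2013hanson}. By homogeneity, replacing $X$ by $X/K$, assume $K=1$. Write
\[
X^\top AX-\mathbb{E}[X^\top AX]=\sum_i a_{ii}(X_i^2-\mathbb{E}X_i^2)+\sum_{i\neq j}a_{ij}X_iX_j=:S_{\mathrm{diag}}+S_{\mathrm{off}}.
\]
It suffices to bound $\mathbb{P}(S_{\mathrm{diag}}>t/2)$ and $\mathbb{P}(S_{\mathrm{off}}>t/2)$ separately by the right-hand side of \eqref{eq:hanson.wright}. The lower tail is handled by applying the same argument to $-A$, which accounts for the factor $2$.

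\textbf{Diagonal part.} Each $X_i^2-\mathbb{E}X_i^2$ is centered and sub-exponential with $\|\cdot\|_{\psi_1}\lesssim\|X_i\|_{\psi_2}^2\le 1$. Bernstein's inequality for independent sub-exponential variables gives
\[
\mathbb{P}(S_{\mathrm{diag}}>t)\le\exp\Big[-c\Big(\frac{t^2}{\sum_i a_{ii}^2}\wedge\frac{t}{\max_i|a_{ii}|}\Big)\Big].
\]
Since $\sum_ia_{ii}^2\le|A|_{\F}^2$ and $\max_i|a_{ii}|\le|A|_{\op}$, this has the required form.

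\textbf{Off-diagonal part, MGF bound.} Via Chernoff, the goal is
\[
\mathbb{E}\exp(\lambda S_{\mathrm{off}})\le\exp(C\lambda^2|A|_{\F}^2)\quad\text{for }|\lambda|\le c/|A|_{\op}.
\]
The steps are as follows.
\begin{enumerate}
\item \emph{Decoupling.} Introduce independent selectors $\delta_i\in\{0,1\}$ with mean $1/2$ and set $\Lambda=\{i:\delta_i=1\}$. Then $S_{\mathrm{off}}=4\mathbb{E}_\delta\sum_{i\in\Lambda,j\notin\Lambda}a_{ij}X_iX_j$. Jensen's inequality gives $\mathbb{E}\exp(\lambda S_{\mathrm{off}})\le\mathbb{E}_{X,\delta}\exp(4\lambda\sum_{i\in\Lambda,j\notin\Lambda}a_{ij}X_iX_j)$. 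For fixed $\delta$, the variables $(X_i)_{i\in\Lambda}$ and $(X_j)_{j\notin\Lambda}$ are independent, so the sum may be replaced by $\sum_{i\in\Lambda,j\notin\Lambda}a_{ij}X_iX'_j$ with $X'$ an independent copy.
\item \emph{Condition on $X'$.} The sum is then a linear form in sub-Gaussian $X_i$, so its MGF is at most $\exp(C\lambda^2\sum_i(\sum_ja_{ij}X'_j)^2)$.
\item \emph{Gaussian comparison.} Replace $X'$ by a standard Gaussian $g$. Use the identity $\exp(s^2\sigma^2/2)=\mathbb{E}_g\exp(s\langle g,\cdot\rangle)$, apply the sub-Gaussian MGF bound to $X'$ given $g$, and repeat the trick in reverse. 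This reduces the problem to $\mathbb{E}\exp(C\lambda^2 g^\top B^\top B g)$ for the restricted submatrix $B$ of $A$.
\item \emph{Rotation.} By rotation invariance of $g$, this equals $\prod_k(1-2C\lambda^2s_k^2)^{-1/2}$, where $s_k$ are the singular values of $B$. This product is at most $\exp(C'\lambda^2\sum_ks_k^2)\le\exp(C'\lambda^2|A|_{\F}^2)$ whenever $\lambda^2\max_ks_k^2\le\lambda^2|A|_{\op}^2$ is small.
\end{enumerate}

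\textbf{Conclusion.} Markov's inequality with $\lambda=\min\{t/(2C|A|_{\F}^2),\,c/|A|_{\op}\}$ gives the mixed tail $\exp[-c(t^2/|A|_{\F}^2\wedge t/|A|_{\op})]$. Combining with the diagonal bound via a union bound, and restoring the factor $K$ by scaling, yields \eqref{eq:hanson.wright}.

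The main obstacle is the off-diagonal part, specifically making the decoupling and Gaussian comparison steps rigorous while keeping the constraint on $\lambda$ in terms of $|A|_{\op}$ rather than $|A|_{\F}$. I would follow the selector argument above, which avoids the general decoupling theorem of de la Pe\~na and Montgomery-Smith.
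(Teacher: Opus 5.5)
Your proposal is correct and follows the Rudelson--Vershynin argument: Bernstein for the diagonal part; selector decoupling, Gaussian replacement and rotation invariance for the off-diagonal part; then Chernoff with $|\lambda|\le c/|A|_{\op}$. The paper gives no proof of its own and simply quotes the lemma from \cite{rudelson2013hanson}, so this is the same approach. One bookkeeping nit: the diagonal/off-diagonal union bound already costs a factor $2$ in each tail, so the two-sided bound is $4e^{-cx}$, which becomes $2e^{-(c/2)x}$ because the bound is trivial unless $e^{-(c/2)x}\le 1/2$.
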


\subsection{Moments of Gaussian Quadratic Form}\label{sec:Lk.norm}

The following lemma shows that any $L^k$ norm of a quadratic form of i.i.d.~Gaussian random vectors can be bounded by $L^2$ or $L^1$ norms under mild conditions.
It is a corollary of Marcinkiewicz-Zygmund Inequality \cite{marcinkiewicz1939quelques}.

\begin{lemma}\label{lm:Lk.norm}
Let $B=(b_{ij})_{1\leq i,j\leq n}$ be a symmetric matrix, $\eta = (\eta_1,\cdots,\eta_n)^T\sim N(0,I_n)$, and 
\begin{equation*}
    D = \sum_{i=1}^n \sum_{j=1}^n b_{ij}\eta_i\eta_j + \sum_{j=1}^n c_j\eta_j + d.
\end{equation*}
Then there exists a universal constant $C_u>0$ such that for any $k\geq 2$,
\begin{equation}\label{EQ_k-norms}
    \|D\|_k \leq C_u k\|D\|_2.
\end{equation}
Furthermore, when $B$ is positive semidefinite, we have
\begin{equation}\label{EQ_k-norms_ext}
    \|D\|_k \leq C_u k\|D\|_1.
\end{equation}
\end{lemma}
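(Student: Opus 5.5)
We prove the statement, Lemma~\ref{lm:Lk.norm}, by splitting $D$ into Wiener chaos components and using hypercontractivity. Equivalently, one can view it as a Marcinkiewicz--Zygmund / Rosenthal-type estimate applied after diagonalizing $B$.

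Since $B$ is symmetric, write $B=U\Lambda U^\top$ with $U$ orthogonal and $\Lambda=\operatorname{diag}(\lambda_1,\dots,\lambda_n)$. Then $\xi=U^\top\eta\sim N(0,I_n)$, and
\[
D=\sum_i \lambda_i\xi_i^2+\sum_i \tilde c_i\xi_i+d,\qquad \tilde c=U^\top c .
\]
This is a sum of independent terms. Decompose it as $D=\mathbb{E}D+D_1+D_2$, where
\[
\mathbb{E}D=d+\sum_i\lambda_i,\qquad D_1=\sum_i\tilde c_i\xi_i\ \text{(first chaos)},\qquad D_2=\sum_i\lambda_i(\xi_i^2-1)\ \text{(second chaos)}.
\]
These three pieces are mutually orthogonal in $L^2$. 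Hence $\|D\|_2^2=(\mathbb{E}D)^2+\|D_1\|_2^2+\|D_2\|_2^2$.

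\textbf{Bound on the $L^k$ norm.} Gaussian hypercontractivity gives $\|D_1\|_k\le\sqrt{k-1}\,\|D_1\|_2$ and $\|D_2\|_k\le (k-1)\|D_2\|_2$. If one prefers to avoid hypercontractivity, the same rate follows elementarily. For $D_1$, which is Gaussian, $\|D_1\|_k\lesssim\sqrt k\,\|D_1\|_2$ directly. For $D_2$, which is a sum of independent centered sub-exponential variables, apply the Marcinkiewicz--Zygmund inequality to get
\[
\|D_2\|_k\lesssim \sqrt k\,\Bigl\|\Bigl(\sum_i\lambda_i^2(\xi_i^2-1)^2\Bigr)^{1/2}\Bigr\|_k .
\]
Then use Minkowski's inequality in $L^{k/2}$ together with $\|(\xi^2-1)^2\|_{k/2}\lesssim k^2$. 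This yields $\|D_2\|_k\lesssim k\,|\lambda|_2\asymp k\|D_2\|_2$.

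Combining the pieces by the triangle inequality,
\[
\|D\|_k\le |\mathbb{E}D|+\|D_1\|_k+\|D_2\|_k\le C_u k\bigl(|\mathbb{E}D|+\|D_1\|_2+\|D_2\|_2\bigr)\le \sqrt3\,C_u k\|D\|_2,
\]
where the last step uses the orthogonal decomposition of $\|D\|_2^2$. This is \eqref{EQ_k-norms}.

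\textbf{The positive semidefinite case \eqref{EQ_k-norms_ext}.} This is the main obstacle: we must show $\|D\|_2\lesssim\|D\|_1$ when $B\succeq0$. The first route is Paley--Zygmund type reverse Hölder. By \eqref{EQ_k-norms} with $k=4$ we have $\|D\|_4\le 4C_u\|D\|_2$. Hölder's inequality (log-convexity of $L^p$ norms) gives $\|D\|_2\le\|D\|_1^{1/3}\|D\|_4^{2/3}$, hence
\[
\|D\|_2\le (4C_u)^2\|D\|_1 .
\]
So the positive semidefinite case follows from the first claim with a larger universal constant. This reasoning actually holds for every symmetric $B$; the positive semidefinite hypothesis is not needed for it. If a sharper or more explicit constant is desired for $B\succeq0$, I would alternatively complete the square. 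For $\lambda_i>0$ write
\[
\lambda_i\xi_i^2+\tilde c_i\xi_i=\lambda_i\bigl(\xi_i+\tilde c_i/(2\lambda_i)\bigr)^2-\tilde c_i^2/(4\lambda_i),
\]
and for $\lambda_i=0$ the term is linear in $\xi_i$. The noncentral chi-square moments of the squared terms are explicit, which bounds the ratio $\mathbb{E}D^2/(\mathbb{E}|D|)^2$ directly. The Hölder interpolation route is the one I would write down, since it is short and needs only the first claim.
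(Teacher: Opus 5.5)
Your proof is correct, and it takes a genuinely different route from the paper.

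\textbf{How the paper argues.} The paper also diagonalizes. It then sets $c'=0$, $d=0$ ``without loss of generality'' and expands $\mathbb{E}D^k=\sum_{j_1,\dots,j_k}\lambda_{j_1}\cdots\lambda_{j_k}\mathbb{E}[\xi_{j_1}^2\cdots\xi_{j_k}^2]$, grouping the indices by their coincidence pattern. It bounds each product of Gaussian moments by $(2k-1)!!$ and each eigenvalue power sum by $\|D\|_2$, and finishes with Stirling's formula. For $B\succeq 0$ it instead bounds the power sums by $(\sum_j\lambda_j)^k=\|D\|_1^k$.

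\textbf{What your route buys.} Your split into constant, first-chaos and second-chaos parts, combined with hypercontractivity, replaces that combinatorial bookkeeping with two standard inequalities. That bookkeeping is delicate, since each coincidence pattern must be weighted by the number of index tuples producing it. Your route also treats $c$ and $d$ directly, and gives an explicit constant ($C_u=\sqrt3$ for the first claim).

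Your interpolation $\|D\|_2\le\|D\|_1^{1/3}\|D\|_4^{2/3}$ is correct, and it buys more than brevity. The paper's semidefinite argument relies on $\|D\|_1=\sum_j\lambda_j$. That identity holds only because $D=\sum_j\lambda_j\xi_j^2\ge 0$ once the linear and constant terms are dropped. For instance, $D=n^{-1}\sum_j(\xi_j^2-1)$ has $\|D\|_1\asymp n^{-1/2}$ while $\sum_j\lambda_j=1$. So the reduction to $c'=d=0$ does not by itself cover the second claim, whereas your argument covers every symmetric $B$ and every $c,d$. In exchange, the paper's route is self-contained, with no appeal to Nelson's hypercontractivity. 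You should state explicitly that you enlarge $C_u$ (to $16C_u^3$, say), so that a single universal constant serves both inequalities as the statement requires.

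\textbf{A slip in the optional elementary route for $D_2$.} As written, it yields $k^{3/2}$, not $k$. The Marcinkiewicz--Zygmund step costs a factor $\sqrt k$, which is unavoidable, as Rademacher sums show. Minkowski in $L^{k/2}$ with $\|(\xi^2-1)^2\|_{k/2}^{1/2}=\|\xi^2-1\|_k\asymp k$ costs a further factor $k$. So you only obtain $\|D_2\|_k\lesssim k^{3/2}|\lambda|_2$; the loss comes from charging the full sub-exponential moment growth to every coefficient.

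To avoid hypercontractivity, integrate instead the tail bound from Lemma~\ref{lm:hanson.wright} applied to $D_2=\xi^\top\Lambda\xi-\mathbb{E}[\xi^\top\Lambda\xi]$, namely $\mathbb{P}(|D_2|>t)\le 2\exp\{-c(t^2/|\lambda|_2^2\wedge t/|\lambda|_\infty)\}$. This gives $\|D_2\|_k\lesssim\sqrt k\,|\lambda|_2+k|\lambda|_\infty\le 2k|\lambda|_2$. Since your main line uses hypercontractivity, this slip does not affect the validity of your proof.
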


\begin{proof}[Proof of Lemma \ref{lm:Lk.norm}]
Since $B$ is symmetric, its eigendecomposition is $B=Q\Lambda Q^\top$, where $Q$ is orthonormal and $\Lambda=\text{diag}(\lambda_1,\cdots,\lambda_n)$ contains the eigenvalues. Let $\xi=Q^\top \eta$. Then, $E\xi_j=0$, $E\xi_j^2=1$, and $\xi_j$ are mutually independent for $j=1,\cdots,n$. Furthermore, let $c' = Q^{-1}c$, then
\begin{equation*}
    D = \xi^\top \Lambda\xi + c'^\top \xi + d = \sum_{j=1}^n \lambda_j \xi_j^2 + \sum_{j=1}^n c'_j\xi_j + d.
\end{equation*}
    
Without loss of generality, we shall assume that $c'=0$ and $d=0$ for simplicity in the following proof, i.e., $D=\sum_{j=1}^n \lambda_j \xi_j^2$. The general proof follows similarly. Thus,
\begin{equation}\label{EQ_D22}
    \|D\|_2^2 = \mathbb{E} D^2 = \left(\sum_{j=1}^n \lambda_j\right)^2 + 2\sum_{j=1}^n \lambda^2_j \geq 2\sum_{j=1}^n \lambda^2_j,
\end{equation}
and, for any $k\geq 2$,
\begin{equation}\label{EQ_Grand}
    \|D\|_k^k = \mathbb{E} D^k = \sum_{j_1=1}^n\sum_{j_2=1}^n\cdots\sum_{j_k=1}^n \lambda_{j_1}\lambda_{j_2}\cdots\lambda_{j_k}\mathbb{E} [\xi_{j_1}^2\xi_{j_2}^2\cdots\xi_{j_k}^2].
\end{equation}
The right-hand side of \eqref{EQ_Grand} can be expressed in the following manner. The indices $j_1, \cdots, j_k$ can assume $s$ distinct values, where $1 \leq s \leq k$. For any given $s$, let there be $\tau_s$ indices among $j_1, \cdots, j_k$ that take the same value, denoted as $j_{(s)}$. Note that $\tau_{1} + \tau_{2} + \cdots + \tau_{s} = k$. Hence, we can rewrite $\|D\|_k^k = \sum_{s=1}^k \Sigma_{(s)}$, where
\begin{equation*}
    \Sigma_{(s)} = \sum_{\tau_{1}+\tau_{2}+\cdots+\tau_{s} = k}  \sum_{j_{(1)}=1}^n\sum_{j_{(2)}=1}^n\cdots\sum_{j_{(s)}=1}^n \mathbb{E} [\xi_{j_{(1)}}^{2\tau_{1}}]\mathbb{E} [\xi_{j_{(2)}}^{2\tau_{2}}]\cdots \mathbb{E} [\xi_{j_{(s)}}^{2\tau_{s}}] \lambda_{j_{(1)}}^{\tau_1}\lambda_{j_{(2)}}^{\tau_2}\cdots\lambda_{j_{(s)}}^{\tau_s}.
\end{equation*}
The value of $\Sigma_{(s)}$ can be upper bounded by noticing the following three facts. First, since for a standard Gaussian random variable $Z$, $EZ^{2k} = (2k-1)!!$, for $\tau_{1}+\tau_{2}+\cdots+\tau_{s} = k$, we have
\begin{equation*}    
\mathbb{E} [\xi_{j_{(1)}}^{2\tau_{1}}]\mathbb{E} [\xi_{j_{(2)}}^{2\tau_{2}}]\cdots \mathbb{E} [\xi_{j_{(s)}}^{2\tau_{s}}] = (2\tau_1-1)!!(2\tau_2-1)!!\cdots(2\tau_s-1)!! \leq (2k-1)!!.
\end{equation*}
Next, we have
\begin{equation}\label{eq:lambda.combinations}
\sum_{j_{(1)}=1}^n\sum_{j_{(2)}=1}^n\cdots\sum_{j_{(s)}=1}^n\lambda_{j_{(1)}}^{\tau_1}\lambda_{j_{(2)}}^{\tau_2}\cdots\lambda_{j_{(s)}}^{\tau_s} = \prod_{i=1}^s \left(\sum_{j_{(i)}=1}^n \lambda_{j_{(i)}}^{\tau_i}\right)\leq (\|D\|_2/\sqrt{2})^k.
\end{equation}
This is because for even $\tau_i$, it is clear that $\sum_{j_{(i)}=1}^n \lambda_{j_{(i)}}^{\tau_i} \leq \left(\sum_{j=1}^n \lambda_j^2\right)^{\tau_i/2}$. For odd $\tau_i$, there exist odd $\tau_i^{(1)}$ and even $\tau_i^{(2)}$ such that $\tau_i = \tau_i^{(1)}+\tau_i^{(2)}$. By the Cauchy-Schwarz Inequality, $\sum_{j_{(i)}=1}^n \lambda_{j_{(i)}}^{\tau_i} \leq \left(\sum_{j_{(i)}=1}^n \lambda_{j_{(i)}}^{2\tau_i^{(1)}}\right)^{1/2}\left(\sum_{j_{(i)}=1}^n \lambda_{j_{(i)}}^{2\tau_i^{(2)}}\right)^{1/2}\leq \left(\sum_{j=1}^n \lambda_j^2\right)^{\tau_i/2}$. The second claim follows from \eqref{EQ_D22}. The number of terms in the summation $\sum_{\tau_{1}+\tau_{2}+\cdots+\tau_{s} = k}$ is ${k-1\choose s-1}$. Thus, 
\begin{equation*}
    \|D\|_k^k \leq \sum_{s=1}^k {k-1\choose s-1} (2k-1)!! (\|D\|_2/\sqrt{2})^k.
\end{equation*}
Using Stirling's formula, $(2k-1)!! = \frac{2k!}{k! 2^k} = \sqrt{2}(2k/e)^k(1+O(k^{-1}))$, up to a universal constant. Therefore, the inequality \eqref{EQ_k-norms} holds.
    
When $B$ is positive semidefinite, $\lambda_j \geq 0$ for all $j$. In this case, \eqref{eq:lambda.combinations} is replaced by
\begin{equation*}
    \prod_{i=1}^s \left(\sum_{j_{(i)}=1}^n \lambda_{j_{(i)}}^{\tau_i}\right)\leq \left(\sum_{j=1}^n \lambda_j\right)^k = \|D\|_1^k.
\end{equation*}
Following a similar proof, we get the inequality \eqref{EQ_k-norms_ext}.
\end{proof}

\subsection{Gaussian Approximation for Independent Random Variables}\label{sec:CCK}
The Gaussian approximation result presented by \cite{chernozhukov2019improved} is instrumental in our proof. Let $\Xi_1,\cdots,\Xi_n$ be independent random vectors in $\mathbb{R}^p$, with $\mathbb{E}[\Xi_{ij}]=0$ and $\mathbb{E}[\Xi_{ij}^2]<\infty$, $i=1,\cdots,n$, $j=1,\cdots,p$. Consider the Gaussian approximation to $S_n^\Xi=n^{-1/2}\sum_{i=1}^n \Xi_i$. For this purpose, let $\Upsilon_1,\cdots,\Upsilon_n$ be independent Gaussian random vectors in $\mathbb{R}^p$ such that $\Upsilon_i\sim N(0,\mathbb{E}[\Xi_i\Xi_i^T])$, and the normalized sum $S_n^\Upsilon=n^{-1/2}\sum_{i=1}^n \Upsilon_i$. Define $\mathcal{A}^{\text{re}}$ as the set of all hyperrectangles.
Assume that the following conditions hold:
\begin{itemize}
    \item (M.1) There exist a constant $b_1>0$ such that $n^{-1}\sum_{i=1}^n \mathbb{E}[\Xi_{ij}^2]\geq b_1^2$ for $j=1,\cdots,p$;
    \item (M.2) There exist a sequence $\beta_n \geq 1$ and a constant $b_2\geq b_1$, such that $n^{-1}\sum_{i=1}^n \mathbb{E}[|\Xi_{ij}|^4]\leq b_2^2\beta_n^2$ for $j=1,\cdots,p$. 
    \item (E.1) $\mathbb{E}[\exp(|\Xi_{ij}|/\beta_n)]\leq 2$ for all $i=1,\cdots,n$ and $j=1,\cdots,p$.
\end{itemize}
Note that in condition (M.2), the sequence $\beta_n \geq 1$ is allowed to diverge with $n$.

\begin{theorem}[Theorem 2.1 of \cite{chernozhukov2019improved}]\label{prop:CCK17}
Suppose that conditions (M.1), (M.2), and (E.1) are satisfied. Then we have
\begin{equation*}
    \sup_{A\in\mathcal{A}^{\text{re}}}\left|\mathbb{P}(S_n^\Xi\in A)-\mathbb{P}(S_n^\Upsilon\in A)\right|\leq C\left(\frac{\beta_n^2\log^5(pn)}{n}\right)^{1/4}.
\end{equation*}
where the constant $C$ depends only on $b_1$ and $b_2$.
\end{theorem}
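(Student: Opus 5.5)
The plan is to follow the smoothing-plus-Lindeberg route of \cite{chernozhukov2017central}, sharpened by the iterative randomized Lindeberg argument of \cite{chernozhukov2019improved}. Since the statement is quoted from that reference, within this paper it suffices to invoke it; the sketch below indicates how I would reprove it.

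First reduce hyperrectangles to maxima. Any $A\in\mathcal{A}^{\text{re}}$ can be written as $\{x: x_j\le a_j,\ -x_j\le b_j\ \forall j\}$. Hence it suffices to control
\[
\sup_{y}\left|\mathbb{P}\left(\max_{j\le 2p}(\tilde S^\Xi_n)_j - y_j\le 0\right)-\mathbb{P}\left(\max_{j\le 2p}(\tilde S^\Upsilon_n)_j-y_j\le 0\right)\right|,
\]
where $\tilde S$ stacks $S$ and $-S$. Replace the indicator of the maximum by a smooth surrogate $g\circ F_\gamma$. Here $F_\gamma(z)=\gamma^{-1}\log\sum_j e^{\gamma z_j}$ is the soft-max, which is within $\gamma^{-1}\log p$ of the true maximum. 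The function $g$ is a $C^3$ ramp of width $\psi^{-1}$, whose derivatives of order $r$ are $O(\psi^r)$. Nazarov's anti-concentration inequality, valid under (M.1), bounds the smoothing cost by $C(\psi^{-1}+\gamma^{-1}\log p)\sqrt{\log p}$.

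Next, compare $\mathbb{E}[g(F_\gamma(S^\Xi_n))]$ with its Gaussian counterpart by a Lindeberg swap, replacing $\Xi_i$ by $\Upsilon_i$ one at a time. First and second moments match, so only third-order Taylor remainders survive. These are controlled by the stability property of soft-max derivatives, $\sum_{jkl}|\partial_{jkl}(g\circ F_\gamma)|\lesssim \psi^3+\psi^2\gamma+\psi\gamma^2$. Together with the moment conditions, this gives a remainder of order $n^{-1/2}(\psi^3+\cdots)\,\mathbb{E}\max_j|\Xi_{ij}|^3$. Condition (E.1) controls the maxima by $\beta_n\log p$. Condition (M.2) is used after truncating $\Xi_{ij}$ at level $\beta_n\log(pn)$, with the truncation error handled by the exponential tail in (E.1).

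Done naively, optimizing $\psi$ yields only the older $n^{-1/6}$ rate. The key improvement, and the main obstacle, is to avoid paying the full third derivative uniformly. I would use the randomized Lindeberg step: swap a uniformly random index and exploit that the leftover partial sum contains a Gaussian component. One then bounds the conditional probability that the partial sum lies in a thin band by the Kolmogorov distance at a smaller sample size plus Gaussian anti-concentration. This yields a recursive inequality of the form
\[
\rho_n\lesssim \left(\frac{\beta_n^2\log^5(pn)}{n}\right)^{1/4}+\frac{\beta_n\log^{3/2}(pn)}{\psi^{-2}\,n^{1/2}}\,\rho_{n'} ,
\]
with $n'$ comparable to $n$.

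Finally, I would iterate this inequality (induction on $n$), choosing $\psi\asymp(n/(\beta_n^2\log^3(pn)))^{1/4}$ and $\gamma\asymp\psi\log p$. This makes the contraction factor smaller than $1/2$, so $\rho_n\le C(\beta_n^2\log^5(pn)/n)^{1/4}$ with $C$ depending only on $b_1$ and $b_2$. The delicate part is making the constants in the recursion uniform, and verifying that truncation preserves (M.1) and (M.2) up to constants.
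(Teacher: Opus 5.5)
Your proposal matches the paper: the statement is imported verbatim from \cite{chernozhukov2019improved}, and the paper gives no proof of its own but simply invokes it, exactly as you say suffices. Your reproof sketch is optional and should not be counted as a proof, because its decisive step is asserted rather than derived: the recursion's contraction factor carries $\psi^2$ instead of the $\psi^3$ your third-order Lindeberg bound produces, and it has only a first power of $\beta_n$ even though bounding $\mathbb{E}\max_j|\Xi_{ij}|^3$ via (E.1) gives $\beta_n^3$.
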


\subsection{A Comparison Inequality for Gaussian Random Vectors}
A Berry-Esseen type bound between Gaussian approximations $Z$ and $S_n^Z$ can be implied by the following comparison inequality for Gaussian random vectors, developed by \cite{chernozhukov2015comparison} and extended by \cite{chernozhukov2019improved}.
\begin{theorem}[Proposition 2.1 of \cite{chernozhukov2019improved}]\label{thm:comparison}
    If $Z_1$ and $Z_2$ are centered Gaussian random vectors in $\mathbb{R}^p$ with covariance matrices $\Sigma^1$ and $\Sigma^2$, respectively, and $\Sigma^2$ is such that $\Sigma_{jj}^2 \geq \underline{\sigma}$ for all $j=1,\cdots,p$ for some constant $\underline{\sigma} > 0$, then
    \[
    \sup_{\mathcal{A}\subseteq \mathbb{R}^p} \left|\mathbb{P}(Z_1 \in \mathcal{A}) - \mathbb{P}(Z_2 \in \mathcal{A})\right| \leq C_\sigma \sqrt{\Delta_\Sigma} \log(p),
    \]
    where $C_\sigma>0$ is a constant depending only on $\underline{\sigma}$, and $\Delta_\Sigma = |\Sigma^1 - \Sigma^2|_\infty$.
\end{theorem}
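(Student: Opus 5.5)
This is Proposition 2.1 of \cite{chernozhukov2019improved}, which the paper imports as a black box. To prove it as stated, I would rerun the Slepian--Stein interpolation argument of \cite{chernozhukov2015comparison}, as refined in \cite{chernozhukov2019improved}. One caveat up front: the smoothing step below handles sets described through coordinatewise maxima. Extending it literally to every $\mathcal{A}\subseteq\mathbb{R}^p$ is the step I am least sure of, and I say what I would try there.

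\textbf{Setup and smoothing.} Take $Z_1,Z_2$ independent and set
\[
W(t)=\sqrt{t}\,Z_1+\sqrt{1-t}\,Z_2,\qquad t\in[0,1].
\]
Fix a target event written through coordinate maxima, such as $\{\max_j(x_j-a_j)\le 0\}$ with reflected coordinates included. Replace its indicator by $g\circ F_\gamma$. Here $F_\gamma(x)=\gamma^{-1}\log\sum_j e^{\gamma x_j}$ is the soft-max, and $g$ is a $C^2$ approximation of $\mathbf 1\{\cdot\le 0\}$ on scale $\phi^{-1}$. The standard soft-max derivative bounds (summable first derivatives, second derivatives controlled by $\gamma$) then apply.

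\textbf{Gaussian integration by parts.} Apply Stein's identity to $\frac{d}{dt}\mathbb{E}[g(F_\gamma(W(t)))]$ to get
\[
\Bigl|\mathbb{E}g(F_\gamma(Z_1))-\mathbb{E}g(F_\gamma(Z_2))\Bigr|\lesssim \Delta_\Sigma\,(\phi^2+\phi\gamma).
\]
The smoothing error is of order $\gamma^{-1}\log p+\phi^{-1}$. To convert the smooth bound into a bound on probabilities, use the anti-concentration inequality (Theorem 3 of \cite{chernozhukov2015comparison}; Nazarov's inequality). This is the only place the lower bound $\Sigma^2_{jj}\ge\underline{\sigma}$ enters, and it costs a factor $\phi^{-1}\sqrt{\log p}$. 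Combining the pieces gives a total error of order
\[
\Delta_\Sigma\,\phi^2\log p+\phi^{-1}\sqrt{\log p}.
\]

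\textbf{Optimizing the parameters.} Choose $\gamma\asymp\phi\log p$. Then balance $\phi$: taking $\phi\asymp(\Delta_\Sigma\log p)^{-1/3}$ gives the older $\Delta^{1/3}\log^{2/3}p$ rate. The sharper $\sqrt{\Delta_\Sigma}\log p$ claimed in the statement comes from the refinement in \cite{chernozhukov2019improved}. There one exploits the second-order (Hessian-type) structure of $\mathbb{E}[\partial_{jk}(g\circ F_\gamma)]$, so that the $\phi^2$ term drops to $\phi$, and then balances $\Delta_\Sigma\phi\log p$ against $\phi^{-1}$. Without that refinement the argument only reaches the $\Delta^{1/3}$ rate.

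\textbf{Main obstacle.} The hardest part is uniformity over $\mathcal{A}$. The soft-max smoothing together with anti-concentration controls every set built from coordinatewise max-type constraints. This covers every set for which the result is used in the paper, namely events $\{|x|_\infty\ge u\}$ in Lemmas \ref{lm:GA.comparison} and \ref{lm:GA.comparison.prec}. For a completely arbitrary $\mathcal{A}$, the natural attempt is a total-variation bound between $N(0,\Sigma^1)$ and $N(0,\Sigma^2)$. I would try that first, by writing the Gaussian densities and expanding in $\Sigma^1-\Sigma^2$. However, such a bound is governed by $(\Sigma^2)^{-1/2}\Sigma^1(\Sigma^2)^{-1/2}-I$ in Frobenius norm, not by the entrywise quantity $\Delta_\Sigma$. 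So I do not expect the all-sets form to follow from the entrywise hypothesis alone, and I would check that step most carefully against the cited source.
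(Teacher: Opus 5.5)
\textbf{Scope of the statement.} Your doubt about arbitrary $\mathcal{A}$ is correct, and you should state it as a conclusion rather than leave it open. Over all subsets the displayed inequality is false, so no argument can prove it.

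\begin{itemize}
\item A degenerate counterexample: take $p=2$, $\Sigma^1=\begin{pmatrix}1&1\\1&1\end{pmatrix}$ and $\Sigma^2=\begin{pmatrix}1&1-\delta\\1-\delta&1\end{pmatrix}$. Then $\Delta_\Sigma=\delta$ and $\Sigma^2_{jj}=1$. Yet for $\mathcal{A}=\{x_1=x_2\}$ we have $\mathbb{P}(Z_1\in\mathcal{A})=1$ and $\mathbb{P}(Z_2\in\mathcal{A})=0$ for every $\delta\in(0,1)$.
\item A nondegenerate counterexample: take $\Sigma^2=I_p$, $\Sigma^1=(1+p^{-1/4})I_p$, and $\mathcal{A}$ the centered ball of squared radius $p+p^{3/4}/2$. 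This is exactly what your Frobenius-norm remark predicts.
\end{itemize}

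The cited result is a statement over hyperrectangles. The factor $\log p$ also needs $p\ge 3$ (or $1\vee\log p$), since for $p=1$ the right-hand side is $0$. The rectangle version is all the paper uses, because $\{|x|_\infty\ge u\}$ is the complement of a cube. So scope your proof to rectangles; your reduction to a maximum of the $2p$ coordinates $x_j-b_j$, $a_j-x_j$ already handles that.

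\textbf{The step that gives the rate.} Even for rectangles, the $\sqrt{\Delta_\Sigma}\log p$ rate is asserted rather than derived. Your bookkeeping for it is also inconsistent: balancing $\Delta_\Sigma\phi\log p$ against $\phi^{-1}$ would give $\sqrt{\Delta_\Sigma\log p}$, which is better than the claim.

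The missing idea is to apply anti-concentration \emph{inside} the interpolation, to the Hessian term. Write $h=g\circ F_\gamma$. Then
\[
\mathbb{E}\,h(Z_1)-\mathbb{E}\,h(Z_2)=\frac12\int_0^1\sum_{j,k}\bigl(\Sigma^1_{jk}-\Sigma^2_{jk}\bigr)\,\mathbb{E}\bigl[\partial_{jk}h(W(t))\bigr]\,dt .
\]
Here $\partial_{jk}h(x)=0$ unless $F_\gamma(x)$ lies in the transition interval of $g$, which has length $\phi^{-1}$. Since $0\le F_\gamma(x)-\max_j x_j\le\gamma^{-1}\log p\asymp\phi^{-1}$, this confines $\max_j x_j$ to an interval $I_\phi$ of length $O(\phi^{-1})$. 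Hence
\[
\Bigl|\sum_{j,k}\bigl(\Sigma^1_{jk}-\Sigma^2_{jk}\bigr)\,\mathbb{E}\,\partial_{jk}h(W(t))\Bigr|\lesssim\Delta_\Sigma\,(\phi^2+\phi\gamma)\,\mathbb{P}\bigl(\max_j W_j(t)\in I_\phi\bigr)\lesssim\Delta_\Sigma\,\phi^2\log p\cdot\phi^{-1}\sqrt{\log p},
\]
where the last step is Nazarov's inequality applied to $W(t)$ itself. The total error is then $\Delta_\Sigma\phi\log^{3/2}p+\phi^{-1}\sqrt{\log p}$, and $\phi=(\Delta_\Sigma\log p)^{-1/2}$ gives $\sqrt{\Delta_\Sigma}\log p$.

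This requires $\min_j\bigl(t\Sigma^1_{jj}+(1-t)\Sigma^2_{jj}\bigr)\ge\underline{\sigma}/2$ uniformly in $t$. That follows from $\Sigma^2_{jj}\ge\underline{\sigma}$ once you reduce to $\Delta_\Sigma\le\underline{\sigma}/2$; the other case is trivial after enlarging $C_\sigma$. So your remark that the variance lower bound enters only in the final smoothing-to-probability step is wrong for the sharp rate. Its use along the whole interpolation path is exactly what upgrades $\Delta^{1/3}$ to $\Delta^{1/2}$.
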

An immediate corollary of Theorem \ref{thm:comparison} is
\[
    \sup_{u\in\mathbb{R}}\left|\mathbb{P}(|Z_1|_\infty \geq u) - \mathbb{P}(|Z_2|_\infty \geq u)\right| \leq C_\sigma \sqrt{\Delta_\Sigma} \log(p),
\]
which improves upon Theorem 2 in \cite{chernozhukov2015comparison}, where the upper bound is of order $\left(\Delta_\Sigma \log^2 p\right)^{1/3}$ under the same conditions.

\section{Proof of Section \ref{sec:GA.cov}}\label{appB}

\subsection{Proof of Lemma \ref{lm:mtg.approx}}\label{sec:pf.mtg.approx}

The approach to the proof involves the representation of $n^{1/2}(S_n)_{(s,t)} - (T_n)_{(s,t)}$ as a quadratic form for each $(s,t)$ pair, using Lemma \ref{lm:hanson.wright} to establish a concentration bound, and employing a union bound to derive Lemma \ref{lm:mtg.approx}.

From \eqref{eq:D}, the martingale approximation residual is
\begin{equation*}
n^{1/2}S_n - T_n = \sum_{i=1}^n \mathbb{E}[\mathcal{X}_i\mid \mathcal{F}^0] - \sum_{i=n+1}^\infty \mathbb{E}[\mathcal{X}_i\mid \mathcal{F}^n] + \sum_{i=n+1}^\infty \mathbb{E}[\mathcal{X}_i\mid \mathcal{F}^0].
\end{equation*}
For Gaussian linear process \eqref{LinearProcess}, for $1\leq s,t\leq p$, we have
\begin{equation*}
\mathbb{E}[(\mathcal{X}_i)_{(s,t)}\mid \mathcal{F}^0]
= \sum_{u=1}^d \sum_{v=1}^d \sum_{j=-\infty}^0 \sum_{k=-\infty}^0 (\epsilon_{j})_u (A_{i-j})_{su}(A_{i-k})_{tv}(\epsilon_k)_v
        - \sum_{u=1}^d\sum_{j=-\infty}^0 (A_{i-j})_{su}(A_{i-j})_{tu}
\end{equation*}
and
\begin{equation*}
\mathbb{E}[(\mathcal{X}_i)_{(s,t)}\mid \mathcal{F}^n]
= \sum_{u=1}^d \sum_{v=1}^d \sum_{j=-\infty}^n \sum_{k=-\infty}^n (\epsilon_{j})_u (A_{i-j})_{su}(A_{i-k})_{tv}(\epsilon_k)_v 
         - \sum_{u=1}^d\sum_{j=-\infty}^n (A_{i-j})_{su}(A_{i-j})_{tu}.  
\end{equation*}
Let $\bm{\epsilon} = [\epsilon_n^\top, \epsilon_{n-1}^\top, \epsilon_{n-2}^\top,\cdots]^\top$, where all entries are independent standard Gaussian variables.
Let $\bm{G}^{(s,t)}$ be a block matrix that consists of $d\times d$ blocks $G^{(s,t)}_{jk}$, $j,k\leq n$, with
\begin{equation*}
    G^{(s,t)}_{jk} = 
    \begin{cases}
        -\sum_{i=n+1}^\infty (A_{i-j})_{s\cdot}(A_{i-k})_{t\cdot}^\top, & j\vee k \geq 1;\\
        \sum_{i=1}^n (A_{i-j})_{s\cdot}(A_{i-k})_{t\cdot}^\top, & j\vee k \leq 0.
    \end{cases}
\end{equation*}
With this notation, we have
\begin{equation*}
    n^{1/2}(S_n)_{(s,t)} - (T_n)_{(s,t)} = \bm{\epsilon}^\top \bm{G}^{(s,t)}\bm{\epsilon} - \mathbb{E}[\bm{\epsilon}^\top \bm{G}^{(s,t)}\bm{\epsilon}].
\end{equation*}
We use the Hanson-Wright inequality (Lemma \ref{lm:hanson.wright}) to bound the quadratic form.
Note that for any standard Gaussian random variable $\epsilon_i$, the Orlicz 2-norm is $\|\epsilon_i\|_{\psi_2} = \sqrt{2}$.
We need to bound the operator and the Frobenius norms of $\bm{G}^{(s,t)}$. Since $|\bm{G}^{(s,t)}|_{\op} \leq |\bm{G}^{(s,t)}|_{\F}$, we focus on the latter. It suffices to compute $\sum_{j\leq n}\sum_{k\leq n} |G^{(s,t)}_{jk}|^2_{\F}$. By Condition \ref{cond.A} and Cauchy-Schwarz inequality, up to a constant related to $C_0$,
\begin{align*}
        \sum_{j=1}^n\sum_{k=1}^n |G_{jk}^{(s,t)}|_{\F}^2 &\lesssim \sum_{j=1}^n\sum_{k=1}^n\left(\sum_{i=n+1}^\infty (i-j)^{-\beta}(i-k)^{-\beta} \right)^2\\
        &\leq \sum_{j=1}^n\sum_{k=1}^n \sum_{i=n+1}^\infty (i-j)^{-2\beta} \sum_{i'=n+1}^\infty (i'-k)^{-2\beta} \\
        & = \left(\sum_{j=0}^{n-1}\sum_{i=1}^\infty (i+j)^{-2\beta}\right)^2\\
    &= \left(\sum_{j=0}^{n-1}\sum_{i=j+1}^\infty i^{-2\beta}\right)^2 \\
    & \asymp \left(\sum_{j=1}^nj^{-2\beta+1}\right)^2 \\
    & = \begin{cases}
    O(1), & \beta>1\\
    O(\log^2 n), & \beta=1\\
    O(n^{-4\beta+4}), & 3/4<\beta<1.
    \end{cases}
\end{align*}
Similarly,
\begin{align*}
        \sum_{j=-\infty}^0\sum_{k=-\infty}^0 |G_{jk}^{(s,t)}|_{\F}^2 
        &\lesssim \sum_{j=-\infty}^0\sum_{k=-\infty}^0\left(\sum_{i=1}^n (i-j)^{-\beta}(i-k)^{-\beta} \right)^2\\
        &\leq \sum_{j=-\infty}^0\sum_{k=-\infty}^0 \sum_{i=1}^n (i-j)^{-2\beta} \sum_{i'=1}^n (i'-k)^{-2\beta}\\
        &= \left(\sum_{j=0}^\infty\sum_{i=1}^n (i+j)^{-2\beta}\right)^2 \\
        & = \left(\sum_{i=1}^n\sum_{j=i}^\infty j^{-2\beta}\right)^2 \\
        & \asymp \left(\sum_{i=1}^n i^{-2\beta+1}\right)^2\\
    &= \begin{cases}
    O(1),& \beta>1\\
    O(\log^2 n), & \beta=1\\
    O(n^{-4\beta+4}), & 3/4<\beta<1
    \end{cases}
\end{align*}
and
\begin{align*}
        \sum_{j=1}^n\sum_{k=-\infty}^0 |G_{jk}^{(s,t)}|_{\F}^2 &\lesssim \sum_{j=1}^n\sum_{k=-\infty}^0\left(\sum_{i=n+1}^
        \infty(i-j)^{-\beta}(i-k)^{-\beta} \right)^2\\
        &= \sum_{j=0}^{n-1} \sum_{i=1}^\infty \sum_{i'=1}^\infty (i+j)^{-\beta}(i'+j)^{-\beta} \sum_{k=n}^\infty(i+k)^{-\beta}(i'+k)^{-\beta}\\
        &\leq \sum_{j=0}^{n-1} \sum_{i=1}^\infty \sum_{i'=1}^\infty (i+j)^{-\beta}(i'+j)^{-\beta} \sum_{k=j}^\infty(i+k)^{-\beta}(i'+k)^{-\beta}\\
        &\leq \sum_{j=0}^{n-1} \sum_{i=1}^\infty \sum_{i'=1}^\infty (i+j)^{-\beta}(i'+j)^{-\beta} \sqrt{\sum_{k=j}^\infty(i+k)^{-2\beta}\sum_{k'=j}^\infty(i'+k')^{-2\beta}}\\
        &\asymp \sum_{j=0}^{n-1} \left(\sum_{i=1}^\infty (i+j)^{-2\beta+1/2}\right)^2 \\
        & \asymp \sum_{j=1}^n j^{-4\beta+3}\\
        &= \begin{cases}
        O(1), & \beta>1\\
        O(\log(n)), & \beta=1\\
        O(n^{-4\beta+4}), & 3/4<\beta<1.
        \end{cases}
\end{align*}
Therefore, we have
\begin{equation*}
    |\bm{G}^{(s,t)}|^2_{\F} \lesssim 
    \begin{cases}
    O(1), & \beta>1\\
    O(\log(n)), & \beta=1\\
    O(n^{-4\beta+4}), & 3/4<\beta<1,
    \end{cases}
\end{equation*}
and $|\bm{G}^{(s,t)}|^2_{\F} \lesssim nQ_1(n)$. By Hanson-Wright inequality \cite{rudelson2013hanson}, we have
\begin{equation*}
    \mathbb{P}\left(|n^{1/2}(S_n)_{(s,t)} - (T_n)_{(s,t)}|>\delta\right) \leq 2\exp\left[-c\left(\frac{\delta^2}{nQ_1(n)} \wedge \frac{\delta}{n^{1/2}Q_1^{1/2}(n)}\right)\right],
\end{equation*}
where $c>0$ is a universal constant.
The proof concludes by substituting $\delta$ with $n^{1/2}\delta$ and using the union bound over all $p^2$ pairs $(s,t)$.

\subsection{Proof of Lemma \ref{lm:m.dep.approx}}\label{sec:pf.m.dep.approx}

Following the approach used in the proof of Lemma \ref{lm:mtg.approx}, the plan is to express $(T_n)_{(s,t)} - (\tilde{T}_{n,m})_{(s,t)}$ as a quadratic form and then utilize the Hanson-Wright inequality (Lemma \ref{lm:hanson.wright}).

Let $\bm{H}^{(s,t)}$ be a block matrix that consists of $d\times d$ blocks $H^{(s,t)}_{ij}$, $i,j \leq n$, with
\begin{equation*}
    H^{(s,t)}_{ij} = 
    \begin{cases}
        \sum_{l=0}^\infty (A_{l+j-i})_{s\cdot}(A_l)_{t\cdot}^\top, & 1\leq j\leq n, i\leq j-m\\
        \sum_{l=0}^\infty (A_l)_{s\cdot}^\top(A_{l+i-j})_{t\cdot}^\top, &1\leq i\leq n, j\leq i-m\\
        0, & \text{otherwise}
    \end{cases}
\end{equation*}
Recall \eqref{EQ_Di} and \eqref{EQ_Di_tilde} for the definitions of $D_i$ and $\Tilde{D}_{i,m}$ in the context of the linear process \eqref{LinearProcess}. 
Then 
\begin{align*}
        (T_n)_{(s,t)} - (\tilde{T}_{n,m})_{(s,t)} &= \sum_{i=1}^n \sum_{j=m}^\infty \sum_{u=1}^d \sum_{v=1}^d (\epsilon_{i-j})_u \left(\sum_{l=0}^\infty (A_{l+j})_{su} (A_l)_{tv}\right) (\epsilon_i)_v\\
        &\hspace{0.4cm} + \sum_{i=1}^n \sum_{j=m}^\infty \sum_{u=1}^d \sum_{v=1}^d (\epsilon_{i})_u \left(\sum_{l=0}^\infty (A_{l})_{su} (A_{l+j})_{tv}\right) (\epsilon_{i-j})_v\\
        &= \sum_{j=1}^n \sum_{i=-\infty}^{j-m}\sum_{u=1}^d\sum_{v=1}^d (\epsilon_i)_u \left(\sum_{l=0}^\infty (A_{l+j-i})_{su} (A_l)_{tv}\right)(\epsilon_j)_v\\
        &\hspace{0.4cm} + \sum_{i=1}^n \sum_{j=-\infty}^{i-m} \sum_{u=1}^d \sum_{v=1}^d (\epsilon_{i})_u \left(\sum_{l=0}^\infty (A_{l})_{su} (A_{l+i-j})_{tv}\right) (\epsilon_{j})_v\\
        &= \bm{\epsilon}^\top \bm{H}^{(s,t)} \bm{\epsilon},
\end{align*}
where $\bm \epsilon$ was defined in Appendix \ref{sec:pf.mtg.approx}.
Note that $(T_n)_{(s,t)} - (\tilde{T}_{n,m})_{(s,t)}$ has zero mean.
To apply Hanson-Wright inequality to bound this quadratic form, it is again necessary to bound the Frobenius norm and operator norm of the matrix $\bm H^{(s,t)}$.
Under Condition \ref{cond.A},
\begin{align*}
        &|\bm{H}^{(s,t)}|^2_{\F} = \sum_{j=1}^n \sum_{i=-\infty}^{j-m} |H^{s,t}_{ij}|^2_{\F} + \sum_{i=1}^n \sum_{j=-\infty}^{i-m}|H^{s,t}_{ij}|^2_{\F}\\
        &\lesssim \sum_{j=1}^n \sum_{i=-\infty}^{j-m} \left(\sum_{l=0}^\infty(l+j-i)^{-\beta}(1\vee l)^{-\beta}\right)^2  + \sum_{i=1}^n \sum_{j=-\infty}^{i-m} \left(\sum_{l=0}^\infty(l+i-j)^{-\beta}(1\vee l)^{-\beta}\right)^2 \\
        &= 2n\sum_{u=m}^\infty \left(\sum_{l=0}^\infty (l+u)^{-\beta} (1\vee l)^{-\beta}\right)^2\\
        &= 2n\sum_{u=m}^\infty \left(\sum_{l=0}^{u-1} (l+u)^{-\beta} (1\vee l)^{-\beta}\right)^2 + 2n\sum_{u=m}^\infty \left(\sum_{l=u}^\infty (l+u)^{-\beta} (l)^{-\beta}\right)^2\\
        &\leq 2n\sum_{u=m}^\infty u^{-2\beta}\left(1+\sum_{l=1}^{u-1} l^{-\beta}\right)^2 + 2n\sum_{u=m}^\infty \left(\sum_{l=u}^\infty l^{-2\beta}\right)^2\\
        &\asymp n\sum_{u=m}^\infty \left(u^{-2\beta}(1\vee u^{-2\beta+2}) + u^{-4\beta+2}\right) \asymp nm^{(-4\beta+3)\vee(-2\beta+1)}.
\end{align*}
Recall the definition of $Q_2(m)$, this is equivalent to $|\bm{H}^{(s,t)}|^2_{\F} \lesssim nQ_2(m)$.
The operator norm $|\bm{H}^{(s,t)}|_{\op}$ also has the same order, since it is upper bounded by Frobenius norm $|\bm{H}^{(s,t)}|_{\F}$.
By Hanson-Wright inequality, i.e., Lemma \ref{lm:hanson.wright}, we conclude that
\begin{equation*}
    \mathbb{P}\left(|(T_n)_{(s,t)} - (\tilde{T}_{n,m})_{(s,t)}|>\delta\right) \leq 2\exp\left[-c\left(\frac{\delta^2}{nQ_2(m)} \wedge \frac{\delta}{n^{1/2}Q_2^{1/2}(m)}\right)\right],
\end{equation*}
where $c>0$ is a universal constant.
The proof is finished by replacing $\delta$ by $n^{1/2}\delta$ and applying union bound over all $p^2$ possible pairs of $(s,t)$.

\subsection{Proof of Identity (\ref{eq:var.identity})}\label{app:var.identity}

The stochasticity of $F_{j,0}$ comes from $\bm{\epsilon}_{j,0}$ and $\bm{\epsilon}_{j,1}$, $F_{j,1}$ from $\bm{\epsilon}_{j,1}$ and $\bm{\epsilon}_{j,2}$, and $F_{j,2}$ from $\bm{\epsilon}_{j,2}$ and $\bm{\epsilon}_{j+1,0}$. When $\bm{\epsilon}_0$ is fixed as $\bm{\epsilon}_0^*$,  $F_{j,0}$ and $F_{j,2}$ can be written as $F_{j,0}(\bm{\epsilon}_{j,0}^*)$ and $F_{j,2}(\bm{\epsilon}_{j+1,0}^*)$. For the Gaussian linear process \eqref{LinearProcess}, each entry is
\begin{align}
    \begin{split}
    (F_{j,0}&(\bm{\epsilon}_{j,0}^*))_{(s,t)} = \sum_{i=3jm+1}^{(3j+1)m} \Bigg[ \Bigg.\sum_{l=0}^\infty (A_l)_{s\cdot}\epsilon_{i}(A_l)_{t\cdot}\epsilon_{i}-\Sigma_{st}\\
    &+\sum_{k=1}^{i-3jm-1}\sum_{l=0}^\infty (A_{l+k})_{s\cdot}\epsilon_{i-k}(A_l)_{t\cdot}\epsilon_{i} + \sum_{k=1}^{i-3jm-1}\sum_{l=0}^\infty (A_l)_{s\cdot}\epsilon_{i}(A_{l+k})_{t\cdot}\epsilon_{i-k}\\
    &+\sum_{k=i-3jm}^{m-1}\sum_{l=0}^\infty (A_{l+k})_{s\cdot}\epsilon^*_{i-k}(A_l)_{t\cdot}\epsilon_{i} +\sum_{k=i-3jm}^{m-1}\sum_{l=0}^\infty (A_l)_{s\cdot}\epsilon_{i}(A_{l+k})_{t\cdot}\epsilon^*_{i-k}\Bigg. \Bigg].\label{eq:F.j.0}
    \end{split}
\end{align}
Note that $F_{j,0}(\bm{\epsilon}_{j,0}^*)$ is still mean-zero given any fixed $\bm{\epsilon}^*_0$. We also have
\begin{align}
    \begin{split}
     (F_{j,1}&)_{(s,t)} = \sum_{i=(3j+1)m+1}^{(3j+2)m} \Bigg[ \Bigg.\sum_{l=0}^\infty (A_l)_{s\cdot}\epsilon_{i}(A_l)_{t\cdot}\epsilon_{i}-\Sigma_{st}\\
     &+\sum_{k=1}^{m-1}\sum_{l=0}^\infty (A_{l+k})_{s\cdot}\epsilon_{i-k}(A_l)_{t\cdot}\epsilon_{i} + \sum_{k=1}^{m-1}\sum_{l=0}^\infty (A_l)_{s\cdot}\epsilon_{i}(A_{l+k})_{t\cdot}\epsilon_{i-k}\Bigg. \Bigg],\label{eq:F.j.1}
    \end{split}
\end{align}
which is mean-zero, and unrelated to $\bm{\epsilon}^*_0$. 
Note that $F_{j,2}(\bm{\epsilon}_{j+1,0}^*)$ is not mean-zero conditioned on $\bm{\epsilon}_0^*$. Denote $\mathbb{E}[F_{j,2}(\bm{\epsilon}_{j+1,0})\mid \bm{\epsilon}_{j+1,0}]$ by $\Lambda_{j,2}(\bm{\epsilon}_{j+1,0})$. Then we have
\begin{align}
    \begin{split}
    &(F_{j,2}(\bm{\epsilon}_{j+1,0}^*))_{(s,t)} - (\Lambda_{j,2}(\bm{\epsilon}^*_{j+1,0}))_{(s,t)}\\ 
    &\quad = \sum_{i=(3j+2)m+1}^{(3j+3)m} \Bigg[ \Bigg. \sum_{k=i-(3j+2)m}^{m-1} \sum_{l=0}^\infty (A_{l+k})_{s\cdot}\epsilon_{i-k}(A_l)_{t\cdot}\epsilon^*_i\\
    &\quad \hspace{50pt}+ \sum_{k=i-(3j+2)m}^{m-1} \sum_{l=0}^\infty (A_{l})_{s\cdot}\epsilon^*_{i}(A_{l+k})_{t\cdot}\epsilon_{i-k} \Bigg. \Bigg].\label{eq:F.j.2}
    \end{split}
\end{align}
Therefore, for any fixed $\bm{\epsilon}_0 = \bm{\epsilon}_0^*$, we can rewrite
\begin{align}
    \begin{split}
        V_0^*(\bm{\epsilon}^*_{1,0}) &= \mathbb{E}[F_{0,1} F_{0,1}^\top] + \mathbb{E}[F_{0,1} (F_{0,2}(\bm{\epsilon}^*_{1,0}) - \Lambda_{0,2}(\bm{\epsilon}^*_{1,0}))^\top]\\
        &+ \mathbb{E}[(F_{0,2}(\bm{\epsilon}^*_{1,0}) - \Lambda_{0,2}(\bm{\epsilon}^*_{1,0})) F_{0,1}^\top]\\
        &+ \mathbb{E}[(F_{0,2}(\bm{\epsilon}^*_{1,0}) - \Lambda_{0,2}(\bm{\epsilon}^*_{1,0}))(F_{0,2}(\bm{\epsilon}^*_{1,0}) - \Lambda_{0,2}(\bm{\epsilon}^*_{1,0}))^\top]. \label{eq:V0.cond}
    \end{split}
\end{align} 
And for any $1\leq j\leq q_n$, note that $F_{j,0}$ and $F_{j,2}$ are independent, we can rewrite
\begin{align}
    \begin{split}
        V_j^*(\bm{\epsilon}_{j,0}, \bm{\epsilon}_{j+1,0}) &= \mathbb{E}[F_{j,0}(\bm{\epsilon}_{j,0}) F_{j,0}(\bm{\epsilon}_{j,0})^\top] + \mathbb{E}[F_{j,0}(\bm{\epsilon}_{j,0}) F_{j,1}^\top] + \mathbb{E}[F_{j,1} F_{j,0}(\bm{\epsilon}_{j,0})^\top]\\
        &+ \mathbb{E}[F_{j,1}F_{j,1}^\top] + \mathbb{E}[F_{j,1}(F_{j,2}(\bm{\epsilon}^*_{j+1,0}) - \Lambda_{j,2}(\bm{\epsilon}^*_{j+1,0}))^\top]\\
        &+ \mathbb{E}[(F_{j,2}(\bm{\epsilon}^*_{j+1,0}) - \Lambda_{j,2}(\bm{\epsilon}^*_{j+1,0})) F_{j,1}^\top]\\
        &+ \mathbb{E}[(F_{j,2}(\bm{\epsilon}^*_{j+1,0}) - \Lambda_{j,2}(\bm{\epsilon}^*_{j+1,0}))(F_{j,2}(\bm{\epsilon}^*_{j+1,0}) - \Lambda_{j,2}(\bm{\epsilon}^*_{j+1,0}))^\top].\label{eq:Vj.cond}
    \end{split}
\end{align}
Thus we have
\begin{align}
    \begin{split}
        V_{q_n+1}&(\bm{\epsilon}^*_{q_n+1,0}) = \mathbb{E}[F_{0,1} F_{0,1}^\top] + \mathbb{E}[F_{0,1} (F_{0,2}(\bm{\epsilon}^*_{q_n+1,0}) - \Lambda_{0,2}(\bm{\epsilon}^*_{q_n+1,0})^\top]\\
        &+ \mathbb{E}[(F_{0,2}(\bm{\epsilon}^*_{q_n+1,0}) - \Lambda_{0,2}(\bm{\epsilon}^*_{q_n+1,0})) F_{0,1})^\top]\\
        &+ \mathbb{E}[(F_{0,2}(\bm{\epsilon}^*_{q_n+1,0}) - \Lambda_{0,2}(\bm{\epsilon}^*_{q_n+1,0}))(F_{0,2}(\bm{\epsilon}^*_{q_n+1,0}) - \Lambda_{0,2}(\bm{\epsilon}^*_{q_n+1,0}))^\top]. \label{eq:V0.uni}
    \end{split}
\end{align} 
and
\begin{align}
    \begin{split}
        V_j(\bm{\epsilon}_{j,0}) &= \mathbb{E}[F_{j,0}(\bm{\epsilon}_{j,0}) F_{j,0}(\bm{\epsilon}_{j,0})^\top] + \mathbb{E}[F_{j,0}(\bm{\epsilon}_{j,0}) F_{j,1}^\top] + \mathbb{E}[F_{j,1} F_{j,0}(\bm{\epsilon}_{j,0})^\top]\\
        &+ \mathbb{E}[F_{j,1}F_{j,1}^\top] + \mathbb{E}[F_{j,1}(F_{j,2}(\bm{\epsilon}^*_{j,0}) - \Lambda_{j,2}(\bm{\epsilon}^*_{j,0}))^\top]\\
        &+ \mathbb{E}[(F_{j,2}(\bm{\epsilon}^*_{j,0}) - \Lambda_{j,2}(\bm{\epsilon}^*_{j,0})) F_{j,1}^\top]\\
        &+ \mathbb{E}[(F_{j,2}(\bm{\epsilon}^*_{j,0}) - \Lambda_{j,2}(\bm{\epsilon}^*_{j,0}))(F_{j,2}(\bm{\epsilon}^*_{j,0}) - \Lambda_{j,2}(\bm{\epsilon}^*_{j,0}))^\top].\label{eq:Vj.uni}
    \end{split}
\end{align}
From \eqref{eq:F.j.0}, \eqref{eq:F.j.1} and \eqref{eq:F.j.2}, we observe that $\mathbb{E}[(F_{j,2}(\cdot) - \Lambda_{j,2}(\cdot))(F_{j,2}(\cdot) - \Lambda_{j,2}(\cdot))^\top]$ and $\mathbb{E}[(F_{j,2}(\cdot) - \Lambda_{j,2}(\cdot))F_{j,1}^\top]$
are invariant with $j$. Therefore by \eqref{eq:V0.cond}, \eqref{eq:Vj.cond}, \eqref{eq:V0.uni} and \eqref{eq:Vj.uni}, we conclude that
\begin{equation*}
V^*_0(\bm{\epsilon}^*_{1,0}) + \sum_{j=1}^{q_n} V^*_j(\bm{\epsilon}^*_{j,0},\bm{\epsilon}^*_{j+1,0}) = \sum_{j=1}^{q_n} V_j(\bm{\epsilon}^*_{j,0}) + V_{q_n+1}(\bm{\epsilon}^*_{q_n+1,0}).
\end{equation*}

\subsection{Proof of Lemma \ref{lm:cond.CCK}}\label{sec:pf.cond.CCK}

Given any fixed value of $\bm{\epsilon}_0$, define
\begin{equation*}
    B_j^{\bm{\epsilon}_0} = B_j - \mathbb{E}[B_j\mid \bm{\epsilon}_0].
\end{equation*}
and
\begin{equation*}
    \tilde{T}_{n,m}^{\bm{\epsilon}_0} = \sum_{j=0}^{q_n}B_j^{\bm{\epsilon}_0} = \tilde{T}_{n,m} - \mathbb{E}[\tilde{T}_{n,m}\mid \bm{\epsilon}_0].
\end{equation*}
Also note that given any $\bm{\epsilon}_0$, the variance of $B_0^{\bm{\epsilon}_0}$ is $V_0^*(\bm{\epsilon}_{1,0})$, and the variance of $B_j^{\bm{\epsilon}_0}$ for $1\leq j\leq q_n$ is $V_j^*(\bm{\epsilon}_{j,0}, \bm{\epsilon}_{j+1,0})$.
If we can show that for all $1\leq s,t\leq p$,
\begin{equation}\label{eq:cond.CCK.M1}
    (q_n+1)^{-1}\sum_{j=0}^{q_n}\mathbb{E}[(B_j^{\bm{\epsilon}_0})_{(s,t)}^2 \mid \bm{\epsilon}_0] \geq b_1^2
\end{equation}
for some constant $b_1>0$ related to $C_0$ and $c_0$,
\begin{equation}\label{eq:cond.CCK.M2}
    (q_n+1)^{-1}\sum_{j=0}^{q_n}\mathbb{E}[(B_j^{\bm{\epsilon}_0})_{(s,t)}^4 \mid \bm{\epsilon}_0] \leq b_2^2\beta_n^2
\end{equation}
for a sequence of constants $\beta_n$ with a constant $b_2>0$ related to $C_0$ and $c_0$ such that $b_2 \geq b_1$, and
\begin{equation}\label{eq:cond.CCK.E1}
    \mathbb{E}[\exptext(|(B_j^{\bm{\epsilon}_0})_{(s,t)}| / \beta_n) \mid \bm{\epsilon}_0] \leq 2
\end{equation}
for any $0\leq j\leq q_n$,
then by Theorem \ref{prop:CCK17}, we have
\begin{equation*}
    \sup_{u\in\mathbb{R}}\left| \mathbb{P}\left({(q_n+1)}^{-1/2}|\tilde{T}_{n,m}^{\bm{\epsilon}_0}|_\infty \geq u \mid \bm{\epsilon}_0\right) - \mathbb{P}\left({(q_n+1)}^{-1/2}|{T}_{n,m}^*|_\infty \geq u \mid \bm{\epsilon}_0\right)\right|
    \leq C\left(\frac{\beta_n^2\log^5(pn)}{n}\right)^{1/4},
\end{equation*}
where $C>0$ is a constant only related to $b_1$ and $b_2$, therefore to $C_0$ and $c_0$.
The proof of Lemma \ref{lm:cond.CCK} is then finished by rescaling $u$ and observing that $T^*_{n,m}$ is identically distributed with $T^Y_{n,m}$ given $\bm{\epsilon}_0$, due to the identity \eqref{eq:var.identity}.
Therefore, it remains to identify the rate $\beta_n$ such that the three conditions \eqref{eq:cond.CCK.M1}, \eqref{eq:cond.CCK.M2}, and \eqref{eq:cond.CCK.E1} hold with high probability.
Define a function $\mathcal{V}(\bm{\epsilon}_0) \in \mathbb{R}^{p^2}$, such that each entry can be written as
\begin{equation*}
    (\mathcal{V}(\bm{\epsilon}_0))_{(s,t)} = (q_n+1)^{-1}\sum_{j=0}^{q_n}\mathbb{E}[(B_j^{\bm{\epsilon}_0})_{(s,t)}^2\mid  \bm{\epsilon}_0].
\end{equation*}
The following lemma provides a concentration bound in the probability space of $\bm{\epsilon}_0$.

\begin{lemma}\label{lm:V.conc}
    Under Condition \ref{cond.A}, for a constant $c_2>0$ related to $C_0$,
    \begin{equation*}
        \mathbb{P}(|\mathcal{V}(\bm{\epsilon}_0) - \mathbb{E}[\mathcal{V}(\bm{\epsilon}_0)]|_\infty > \delta) \leq 4 p^2 \exp\left\{-c_2\left(\frac{n\delta^2}{m^5} \wedge \frac{n^{1/2}\delta}{m^{5/2}}\right)\right\}.
    \end{equation*}
\end{lemma}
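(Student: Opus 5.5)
Since each entry of $\mathcal{V}(\bm{\epsilon}_0)$ is a conditional second moment of a random variable that is linear in the conditioning variables $\bm{\epsilon}_0$ (plus a quadratic form in the free innovations), it is itself a polynomial of degree at most two in the Gaussian vector $\bm{\epsilon}_0$. The plan is to write each entry explicitly as a Gaussian quadratic form $\bm{\epsilon}_0^\top \bm{M}^{(s,t)}\bm{\epsilon}_0 + \bm{b}^{(s,t)\top}\bm{\epsilon}_0 + c$, bound the Frobenius norm of $\bm{M}^{(s,t)}$ and the Euclidean norm of $\bm{b}^{(s,t)}$, and then apply the Hanson-Wright inequality (Lemma \ref{lm:hanson.wright}) to the quadratic part and a Gaussian tail bound to the linear part, followed by a union bound over the $p^2$ pairs $(s,t)$. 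The factor $4$ in the claimed bound should come from splitting into these two pieces, each contributing $2p^2\exp(\cdot)$.

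Concretely, I will use the representations \eqref{eq:F.j.0}--\eqref{eq:F.j.2}. Given $\bm{\epsilon}_0$, the centered block $B_j^{\bm{\epsilon}_0}$ equals $F_{j,0}(\bm{\epsilon}_{j,0}) + F_{j,1} + F_{j,2}(\bm{\epsilon}_{j+1,0}) - \Lambda_{j,2}(\bm{\epsilon}_{j+1,0})$. Its entries are quadratic in the free innovations $\bm{\epsilon}_{j,1},\bm{\epsilon}_{j,2}$, with coefficients that are constant or linear in $\bm{\epsilon}_{j,0},\bm{\epsilon}_{j+1,0}$. Taking the conditional second moment therefore yields constant terms, terms linear in $\bm{\epsilon}_{j,0}$ and in $\bm{\epsilon}_{j+1,0}$ (from cross terms between the fixed-coefficient quadratic parts and the $\bm{\epsilon}_0$-linear parts), and terms quadratic in $\bm{\epsilon}_{j,0}$ and in $\bm{\epsilon}_{j+1,0}$. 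There are no cross terms $\bm{\epsilon}_{j,0}\bm{\epsilon}_{j+1,0}$, since $F_{j,0}$ and $F_{j,2}$ involve disjoint free innovations and are independent. Summing over $j$ and dividing by $q_n+1\asymp n/m$, the matrix $\bm{M}^{(s,t)}$ is block diagonal with $q_n+1$ blocks of size $md\times md$. The vector $\bm{b}^{(s,t)}$ likewise splits into $q_n+1$ pieces, and its Gaussian term has variance $|\bm{b}^{(s,t)}|^2$.

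The key step, and the one I expect to be the main obstacle, is bounding the entries of each block. A block entry, indexed by $\epsilon_{i-k}$ and $\epsilon_{i'-k'}$ inside $\bm{\epsilon}_{j,0}$, is a sum over the $O(m)$ indices $i$ in $F_{j,0}$ (or $F_{j,2}$) of products of coefficients $\sum_l (A_{l+k})_{s\cdot}(A_l)_{t\cdot}^\top$, each bounded by a constant via Condition \ref{cond.A} and Cauchy-Schwarz as in \eqref{EQ_CS}. I will use crude bounds rather than exploit decay, so each entry is $O(m)$ in magnitude and the block Frobenius norm squared is at most $O(m^2)\cdot O(m^2)=O(m^4)$. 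Summing over blocks, $|\bm{M}^{(s,t)}|_{\F}^2 \lesssim (q_n+1)^{-2}(q_n+1)m^4\lesssim m^5/n$. The operator norm is dominated by the Frobenius norm, which gives exactly the rates $n\delta^2/m^5$ and $n^{1/2}\delta/m^{5/2}$ in Hanson-Wright. The linear coefficient $\bm{b}^{(s,t)}$ has the same order, $|\bm{b}^{(s,t)}|^2\lesssim m^5/n$, and its sub-Gaussian tail $\exp(-c\,n\delta^2/m^5)$ is dominated by the same expression. If the crude entrywise bound proves insufficient, the fallback is to bound each block through $\mathbb{E}[(F_{j,0})^2_{(s,t)}]\lesssim m^2$, using Remark \ref{RMK_DD} with $m$ summands. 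The care needed is in tracking that every coefficient is uniformly bounded by a constant depending only on $C_0$, so that $c_2$ depends only on $C_0$.
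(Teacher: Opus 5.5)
Your plan is correct and follows the paper's proof: each entry of $\mathcal{V}(\bm{\epsilon}_0)-\mathbb{E}[\mathcal{V}(\bm{\epsilon}_0)]$ is written as a centered Gaussian quadratic form in $\bm{\epsilon}_0$ with a block-diagonal coefficient matrix whose squared Frobenius norm is $O(q_n m^4)$ before normalization ($O(m^5/n)$ after dividing by $q_n+1$), followed by Hanson--Wright and a union bound over the $p^2$ pairs. One small correction is harmless to your argument: there is in fact no linear term, and so the paper's factor $4$ has a different source than the one you describe. The cross terms you list pair a second-order chaos with a first-order chaos in the free innovations, and these have zero conditional mean, which is the paper's observation $\mathbb{E}[F_{j,0}F_{j,1}^\top]=0$, $\mathbb{E}[F_{j,1}(F_{j,2}-\Lambda_{j,2})^\top]=0$; the factor $4$ comes instead from treating the $F_{j,0}$-part $\Upsilon_0$ and the $F_{j,2}$-part $\Upsilon_2$ separately.
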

\begin{proof}[Proof of Lemma \ref{lm:V.conc}]
    By the identity \eqref{eq:var.identity},
    \begin{equation*}
        (\mathcal{V}(\bm{\epsilon}_0))_{(s,t)} = (q_n+1)^{-1}\sum_{j=1}^{q_n+1} (V_j(\bm{\epsilon}_{j,0}))_{(s,t),(s,t)}.
    \end{equation*}
    From \eqref{eq:F.j.0}, \eqref{eq:F.j.1} and \eqref{eq:F.j.2}, it can be calculated that
    \begin{align*}
        \mathbb{E}[F_{j,0}(\bm{\epsilon}_{j,0})F_{j,1}^\top] = 0,
    \end{align*}
    and
    \begin{align*}
        \mathbb{E}[F_{j,1} (F_{j,2}(\bm{\epsilon}_{j,0}) - \Lambda_{j,2}(\bm{\epsilon}_{j,0}))^\top] = 0.
    \end{align*}
    We can thus rewrite $\mathcal{V}(\bm{\epsilon}_{j,0})$ as
    \begin{equation}
        (q_n+1)(\mathcal{V}(\bm{\epsilon}_{0}))_{(s,t)} = \sum_{j=1}^{q_n}\mathbb{E}[(F_{j,0}(\bm{\epsilon}_{j,0}))_{(s,t)}^2] + \sum_{j=0}^{q_n} \mathbb{E}[(F_{j,1})_{(s,t)}^2]
        + \sum_{j=0}^{q_n} \mathbb{E}[(F_{j,2}(\bm{\epsilon}_{j,0}) - \Lambda_{j,2}(\bm{\epsilon}_{j,0}))_{(s,t)}^2].\label{eq:V.partition}
    \end{equation}
    Note that $\bm{\epsilon}_0$ is only involved in the first and third terms on the right hand side of \eqref{eq:V.partition}, and that the second term is a constant. Recall \eqref{eq:F.j.0} and \eqref{eq:F.j.2}, we can rewrite each entry of $\mathcal{V}(\bm{\epsilon}_0) - \mathbb{E}[\mathcal{V}(\bm{\epsilon}_0)]$ as a centered quadratic form with respect to $\bm{\epsilon}_0$,
    \begin{equation}\label{eq:V.to.Upsilon}
        (q_n+1)(\mathcal{V}(\bm{\epsilon}_0) - \mathbb{E}[\mathcal{V}(\bm{\epsilon}_0)]) = \Upsilon_0(\bm{\epsilon}_0) - \mathbb{E}[\Upsilon_0(\bm{\epsilon}_0)] + \Upsilon_2(\bm{\epsilon}_0) - \mathbb{E}[\Upsilon_2(\bm{\epsilon}_0)].
    \end{equation}
    Here,
    \[
    \Upsilon_0(\bm\epsilon_0) = \sum_{j=1}^{q_n}\sum_{i=3jm+1}^{(3j+1)m}\sum_{k=i-m+1}^{3jm}\sum_{k'=i-m+1}^{3jm}\sum_{l=0}^\infty\sum_{l'=0}^\infty (I_{p^2} + K_{pp}) \bigg[ (D_{u_0} \otimes D_{v_0})\vectorize(L_0) + \text{diag}(L_0) \otimes (u_0 \circ v_0) \bigg]
    \]
    where $K_{pp}$ is the commutation matrix such that $K_{pp} \vectorize(A) = \vectorize(A^\top)$, $u_0 = A_{l+i-k}\epsilon_k$, $v_0 = A_{l'+i-k'}\epsilon_{k'}$, $L_0 = A_l A_{l'}^\top$, $\otimes$ is the Kronecker product, and $\circ$ is the Hadamard product.
    Likewise, we have
    \[
    \Upsilon_2(\bm\epsilon_2) = \sum_{j=0}^{q_n}\sum_{i=(3j+2)m+1}^{(3j+3)m}\sum_{i'=(3j+2)m+1}^{(3j+3)m}\sum_{k=(i\vee i')-m+1}^{(3j+2)m}\sum_{l=0}^\infty\sum_{l'=0}^\infty (I_{p^2} + K_{pp}) \bigg[ (D_{u_2} \otimes D_{v_2})\vectorize(L_2) + \text{diag}(L_2) \otimes (u_2 \circ v_2) \bigg],
    \]
    where $u_2 = A_l \epsilon_i$, $v_2 = A_{l'} \epsilon_{i'}$, and $L_2 = A_{l+i-k} A_{l'+i'-k}^\top$.
    Both quantities can be written as a quadratic form. Let
    \[
    \vec{\bm{\epsilon}}_0 = [\epsilon^\top_{2m+1},\cdots,\epsilon^\top_{3m}, \epsilon^\top_{5m+1},\cdots,\epsilon^\top_{6m},\cdots, \epsilon^\top_{(3q_n+2)m+1}, \cdots, \epsilon^\top_n]^\top,
    \]
    which concatenates $\epsilon_{k}\in\mathbb{R}^d$ for $k\in\mathcal{I}_0$. 
    We then have $(\Upsilon_0(\bm{\epsilon_0}))_{(s,t)} = \vec{\bm{\epsilon}}_0^\top \bm{\Gamma}^{(s,t)}_{0} \vec{\bm{\epsilon}}_0$, where $\bm{\Gamma}^{(s,t)}_{0}$ is a matrix with $|\mathcal{I}_0|\times |\mathcal{I}_0|$ many blocks, each with size $d\times d$. Each block is indexed by $k,k'\in\mathcal{I}_0$. The blocks can be expressed as
    \[
    \bm{\Gamma}_{0,k,k'}^{(s,t)} = \sum_{i=3jm+1}^{((k\wedge k')+m) \wedge n}\sum_{l=0}^\infty \sum_{l'=0}^\infty U_{l+i-k} \left[ J (A_l A_{l'}^\top)_{\{s,t\}} J \right] U_{l'+i-k'}^\top
    \]
    if $k$ and $k'$ belong to the same index subset $\mathcal{I}_{j,0}$ for $1\leq j\leq q_n$. All other blocks are zero.
    Here, $J = \begin{bmatrix} 0 & 1 \\ 1 & 0 \end{bmatrix}$ is the exchange matrix, and $U_k \in \mathbb{R}^{d \times 2}$ is the matrix containing the $s$-th and $t$-th rows of $A_k$ as columns: $U_k = \begin{bmatrix} (A_k)_{s\cdot}^\top & (A_k)_{t\cdot}^\top \end{bmatrix}$.
    $M_{\{s,t\}}$ denotes the $2 \times 2$ submatrix of a matrix $M$ corresponding to indices $\{s,t\}$.
    Similarly, we have $(\Upsilon_2(\bm{\epsilon_0}))_{(s,t)} = \vec{\bm{\epsilon}}_0^\top \bm{\Gamma}^{(s,t)}_{2} \vec{\bm{\epsilon}}_0$, where $\bm{\Gamma}^{(s,t)}_{2}$ is also a matrix with $|\mathcal{I}_0|\times |\mathcal{I}_0|$ many blocks, each with size $d\times d$. Each block is indexed by $i,i'\in\mathcal{I}_0$. The blocks can be expressed as
    \[
    \bm{\Gamma}_{2,i,i'}^{(s,t)} = \sum_{k=(i\vee i')-m+1}^{(3j+2)m \wedge n} \sum_{l=0}^\infty \sum_{l'=0}^\infty U_{l} \left[ J (A_{l+i-k} A_{l'+i'-k}^\top)_{\{s,t\}} J \right] U_{l'}^\top
    \]
    if $i$ and $i'$ belong to the same index subset $\mathcal{I}_{j,0}$ for $1\leq j\leq q_n+1$. All other blocks are zero.
    The Frobenius norms of $\bm{\Gamma}_0^{(s,t)}$ and $\bm{\Gamma}_0^{(s,t)}$ are bounded under Condition \ref{cond.A} by
    \begin{align*}
        |\bm{\Gamma}_0^{(s,t)}|_{\F}^2 &= \sum_{j=1}^{q_n} \sum_{k = (3j-1)m+1}^{3jm} \sum_{k‘ = (3j-1)m+1}^{3jm} |\bm{\Gamma}_{0,k,k'}^{(s,t)}|_{\F}^2\\
        &\lesssim \sum_{j=1}^{q_n}\left(\sum_{k = (3j-1)m+1}^{3jm} \sum_{i=3jm+1}^{((k\wedge k')+m) \wedge n} \left(\sum_{l=0}^{\infty} (l+i-k)^{-\beta}(l\vee 1)^{-\beta}\right)^2\right)^2\\
        &\leq q_n \left(m^2\left(1+\sum_{l=1}^\infty l^{-2\beta}\right)^2\right)^2 = O(q_n m^4)
    \end{align*}
    and
    \begin{align*}
        |\bm{\Gamma}_2^{(s,t)}|_{\F}^2 &= \sum_{j=1}^{q_n+1} \sum_{i = (3j-1)m+1}^{3jm} \sum_{i‘ = (3j-1)m+1}^{3jm} |\bm{\Gamma}_{2,i,i'}^{(s,t)}|_{\F}^2\\
        &\lesssim \sum_{j=1}^{q_n+1}\left(\sum_{i = (3j-1)m+1}^{3jm} \sum_{k=(i\vee i')-m+1}^{(3j-1)m\wedge n} \left(\sum_{l=0}^{\infty} (l+i-k)^{-\beta}(l\vee 1)^{-\beta}\right)^2\right)^2\\
        &\leq (q_n+1) \left(m^2\left(1+\sum_{l=1}^\infty l^{-2\beta}\right)^2\right)^2 = O(q_n m^4).
    \end{align*}
    Therefore, by Lemma \ref{lm:hanson.wright}, for any $1\leq s,t\leq p$,
    \begin{align}
        \mathbb{P}\left(|(\Upsilon_{0}(\bm{\epsilon}_0))_{(s,t)} - \mathbb{E}[(\Upsilon_{0}(\bm{\epsilon}_0))_{(s,t)}]|\geq \delta\right) \leq 2\exp\left(-c_2\left(\frac{\delta^2}{q_n m^4}\wedge \frac{\delta}{q_n^{1/2}m^2}\right)\right),\label{eq:Upsilon.0.conc}\\
        \mathbb{P}\left(|(\Upsilon_{2}(\bm{\epsilon}_0))_{(s,t)} - \mathbb{E}[(\Upsilon_{2}(\bm{\epsilon}_0))_{(s,t)}]|\geq \delta\right) \leq 2\exp\left(-c_2\left(\frac{\delta^2}{q_n m^4}\wedge \frac{\delta}{q_n^{1/2}m^2}\right)\right)\label{eq:Upsilon.2.conc}.
    \end{align}
    Recall \eqref{eq:V.to.Upsilon}, note that $q_n\asymp n/m$, by rescaling $\delta$ to $(q_n+1)\delta$ and applying union bound on \eqref{eq:Upsilon.0.conc} and \eqref{eq:Upsilon.2.conc}, we finish the proof of Lemma \ref{lm:V.conc}.
\end{proof}

For the next step, we provide a bound to
\begin{equation*}
    \mathbb{E}[(\mathcal{V}(\bm{\epsilon}_0))_{(s,t)}] = (q_n+1)^{-1}\sum_{j=0}^{q_n} \mathbb{E}[(B_j)^2_{(s,t)}].
\end{equation*}
Recall that $B_j = F_{j,0} + F_{j,1} + F_{j,2}$. It is trivial to verify that $\mathbb{E}[F_{j,0}F_{j,1}^\top] = 0$, $\mathbb{E}[F_{j,1}F_{j,2}^\top] = 0$, and $\mathbb{E}[F_{j,0}F_{j,2}^\top] = 0$. Thus we have
\begin{equation}
    \mathbb{E}[(B_j)^2_{(s,t)}] = \mathbb{E}[(F_{j,0})^2_{(s,t)}] + \mathbb{E}[(F_{j,1})^2_{(s,t)}] + \mathbb{E}[(F_{j,2})^2_{(s,t)}].\label{eq:Bj.partition}
\end{equation}
Under Condition \ref{cond.A}, we have
\begin{align}
    \begin{split}
    \mathbb{E}[(F_{j,0})^2_{(s,t)}] &= \sum_{i=(3j-1)m+1}^{3jm} \sum_{k=0}^{m-1} \sum_{l=0}^\infty \sum_{l'=0}^\infty \bigg[\bigg. (A_{l+m-k} A_{l'+m-k}^\top)_{ss}(A_l A_{l'}^\top)_{tt}\\
    &\hspace{120pt}+ (A_{l+m-k} A_{l'+m-k}^\top)_{tt}(A_l A_{l'}^\top)_{ss}\\
    &\hspace{120pt}+ (A_{l+m-k} A_{l'+m-k}^\top)_{st}(A_l A_{l'}^\top)_{ts} \\
    &\hspace{120pt}+ (A_{l+m-k} A_{l'+m-k}^\top)_{ts}(A_l A_{l'}^\top)_{st}\bigg.\bigg]\\
    &\lesssim \sum_{i=(3j-1)m+1}^{3jm} \sum_{k=0}^{m-1}\left(\sum_{l=0}^\infty (l\vee 1)^{-2\beta}\right)^2 = O(m^2),\label{eq:E.F0.sq}
    \end{split}
\end{align}
and similarly $\mathbb{E}[(F_{j,1})^2_{(s,t)}] = O(m^2)$, $\mathbb{E}[(F_{j,2})^2_{(s,t)}] = O(m^2)$ up to a constant related to $C_0$.
Thus we have $\mathbb{E}[(\mathcal{V}(\bm{\epsilon}_0))_{(s,t)}] = O(m^2)$.
By Lemma \ref{lm:V.conc}, for any universal constant $\delta>0$, $|\mathcal{V}(\bm{\epsilon}_0) - \mathbb{E}[\mathcal{V}(\bm{\epsilon}_0)]|_\infty \leq \delta m^2$ with probability no less than $1-4p^2\exp(-c_2\delta n^{1/2}/m^{1/2})$, which converges to 1 as $n\rightarrow\infty$.
Therefore, $(\mathcal{V}(\bm{\epsilon}_0))_{(s,t)} \gtrsim m^2$ for any $1\leq s,t\leq p$ with high probability, proving the first condition \eqref{eq:cond.CCK.M1}.
Moreover, we also have $(\mathcal{V}(\bm{\epsilon}_0))_{(s,t)} \lesssim m^2$ for any $1\leq s,t\leq p$ with high probability. By Lemma \ref{lm:Lk.norm}, the second condition \eqref{eq:cond.CCK.M2} holds with $\beta_n \asymp m^2$.

To verify the third condition \eqref{eq:cond.CCK.E1}, by Taylor expansion, Fubini theorem and Lemma \ref{lm:Lk.norm},
\begin{align}
    \begin{split}
    \mathbb{E}[\exptext(|(B_j^{\bm{\epsilon}_0})_{(s,t)}| / m^2)\mid  \bm{\epsilon}_0] = \sum_{k=0}^\infty \frac{\mathbb{E}[|(B_j^{\bm{\epsilon}_0})_{(s,t)}|^k\mid \bm{\epsilon}_0]}{k!m^{2k}}\leq 1+ C_u \sum_{k=1}^\infty \frac{k^k (V_j^*(\bm{\epsilon}_{j,0}, \bm{\epsilon}_{j+1,0}))^{k/2}_{(s,t)}}{k!m^{2k}}.\label{eq:quantity.cond.E1}
    \end{split}
\end{align}
Similar to the proof of Lemma \ref{lm:V.conc}, since
\begin{equation*}
    (V_j^*(\bm{\epsilon}_{j,0}, \bm{\epsilon}_{j+1,0}))_{(s,t)} = \mathbb{E}[(F_{j,0}(\bm{\epsilon}_{j,0}))_{(s,t)}^2] + \mathbb{E}[(F_{j,1})_{(s,t)}^2]
    + \mathbb{E}[(F_{j,2}(\bm{\epsilon}_{j+1,0}) - \Lambda_{j,2}(\bm{\epsilon}_{j+1,0}))_{(s,t)}^2],
\end{equation*}
we can rewrite
\begin{equation*}
        (V_j^*(\bm{\epsilon}_{j,0}, \bm{\epsilon}_{j+1,0}))_{(s,t)} - \mathbb{E}[(V_j^*(\bm{\epsilon}_{j,0}, \bm{\epsilon}_{j+1,0}))_{(s,t)}]
        = \Upsilon_{j,0}(\bm{\epsilon}_{j,0}) - \mathbb{E}[\Upsilon_{j,0}(\bm{\epsilon}_{j,0})] + \Upsilon_{j,2}(\bm{\epsilon}_{j+1,0}) - \mathbb{E}[\Upsilon_{j,2}(\bm{\epsilon}_{j+1,0})],
\end{equation*}
where
\[
\Upsilon_{j,0}(\bm{\epsilon}_{j,0}) = \sum_{i=3jm+1}^{(3j+1)m}\sum_{k=i-m+1}^{3jm}\sum_{k'=i-m+1}^{3jm}\sum_{l=0}^\infty\sum_{l'=0}^\infty (I_{p^2} + K_{pp}) \bigg[ (D_{u_0} \otimes D_{v_0})\vectorize(L_0) + \text{diag}(L_0) \otimes (u_0 \circ v_0) \bigg],
\]
and
\[
\Upsilon_{j,2}(\bm{\epsilon}_{j+1,0}) = \sum_{i=(3j+2)m+1}^{(3j+3)m}\sum_{i'=(3j+2)m+1}^{(3j+3)m}\sum_{k=(i\vee i')-m+1}^{(3j+2)m}\sum_{l=0}^\infty\sum_{l'=0}^\infty (I_{p^2} + K_{pp}) \bigg[ (D_{u_2} \otimes D_{v_2})\vectorize(L_2) + \text{diag}(L_2) \otimes (u_2 \circ v_2) \bigg].
\]
Following a same procedure as in the proof of Lemma \ref{lm:V.conc}, by Lemma \ref{lm:hanson.wright}, we have the following concentration inequality for any $j$ and $s,t$. For any $\eta\geq m^2$,
\begin{equation*}
        \mathbb{P}\left(|(V_j^*(\bm{\epsilon}_{j,0}, \bm{\epsilon}_{j+1,0}))_{(s,t)} - \mathbb{E}[(V_j^*(\bm{\epsilon}_{j,0}, \bm{\epsilon}_{j+1,0}))_{(s,t)}]| > \eta \right)\leq 2\exp\left(-c_2\eta/m^2\right).
\end{equation*}
Note that $\mathbb{E}[(V_j^*(\bm{\epsilon}_{j,0}, \bm{\epsilon}_{j+1,0}))_{(s,t)}] = \mathbb{E}[(B_j)_{(s,t)}^2] = O(m^2)$.
Let us set $\eta = m^{2(1+\epsilon)}$ with any $0<\epsilon<1$.
Then we have $|(V_j^*(\bm{\epsilon}_{j,0}, \bm{\epsilon}_{j+1,0}))_{(s,t)} - \mathbb{E}[(V_j^*(\bm{\epsilon}_{j,0}, \bm{\epsilon}_{j+1,0}))_{(s,t)}]| \lesssim m^{2(1+\epsilon)}$ with probability no less than $1-2\exp(-c_2 m^{2\epsilon})$, which converges to 1 as $n\rightarrow\infty$.
By \eqref{eq:quantity.cond.E1} and Stirling's Formula, it follows that
\begin{equation*}
    \mathbb{E}[\exptext(|(B_j^{\bm{\epsilon}_0})_{(s,t)}| / m^2)\mid  \bm{\epsilon}_0] \leq 1+C_u \sum_{k=1}^\infty \frac{e^k}{\sqrt{2\pi k}m^{k(1-\epsilon)}}
\end{equation*}
with probability no less than $1-2\exp(-c_2 m^{2\epsilon})$.
This right-hand side is further bounded by $1+C_u\frac{1}{(m/e)^{1-\epsilon} - 1}$, which converges to 1 from above as $n\rightarrow\infty$, regardless of the choice of $\epsilon$.
For finite-sample performance, specifically, Condition \eqref{eq:cond.CCK.E1} holds with $\beta_n = m^2$, when $m$ is large enough such that $(m/e)^{1-\epsilon} \geq C_u + 1$.

We conclude that for any choice of $\delta>0$ and $0<\epsilon<1$, with probability no less than $1- 4p^2\exp(-c_2 \delta n^{1/2}/m^{1/2}) -2\exp(-c_2 m^{2\epsilon})$, all three conditions \eqref{eq:cond.CCK.M1}, \eqref{eq:cond.CCK.M2} and \eqref{eq:cond.CCK.E1} are satisfied.
The probability converges to 1 as $n\rightarrow\infty$ regardless of the choice of $\delta$ and $\epsilon$.
This finishes the proof of Lemma \ref{lm:cond.CCK}.

\subsection{Proof of Lemma \ref{lm:uncond.CCK}}\label{sec:pf.uncond.CCK}

To ease the notation, let $U_j = [V_j(\bm{\epsilon}_{j,0})]^{1/2}Y_j$.
It suffices to prove that the following three conditions hold under each $1\leq s,t\leq p$. First,
\begin{equation}\label{eq:cond.CCK2.M1}
    (q_n+1)^{-1}\sum_{j=1}^{q_n+1}\mathbb{E}[(U_j)^2_{(s,t)}] \geq b_1^2
\end{equation}
for a constant $b_1>0$ related to $C_0$ and $c_0$. Second,
\begin{equation}\label{eq:cond.CCK2.M2}
    (q_n+1)^{-1}\sum_{j=1}^{q_n+1}\mathbb{E}[(U_j)_{(s,t)}^4] \leq b_2^2 \beta_n^2
\end{equation}
for a sequence of constants $\beta_n$ with a constant $b_2>0$ related to $C_0$ and $c_0$ such that $b_2 \geq b_1$.
Third, for any $1\leq j\leq q_n+1$,
\begin{equation}\label{eq:cond.CCK2.E1}
    \mathbb{E}[\exp(|(U_j)_{(s,t)}|/\beta_n)] \leq 2.
\end{equation}
Then by Theorem \ref{prop:CCK17}, Lemma \ref{lm:uncond.CCK} immediately follows.
The proof of second and third conditions relies on the following result on the norms of $U_j$, similar to Lemma \ref{lm:Lk.norm}.
\begin{lemma}\label{lm:Lk.norm.U}
    Given any $1\leq j\leq q_n$ and $1\leq s,t\leq p$, there exists some universal constant $C_1>0$, so that for all $k\geq 2$,
    \begin{equation*}
        \|(U_j)_{(s,t)}\|_k \leq C_1 k\|(U_j)_{(s,t)}\|_2.
    \end{equation*}
\end{lemma}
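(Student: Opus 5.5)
Conditioning on the Gaussian factor isolates the key structure. By definition $U_j = [V_j(\bm{\epsilon}_{j,0})]^{1/2} Y_j$, where $Y_j$ is standard Gaussian and independent of $\bm{\epsilon}_{j,0}$. Conditional on $\bm{\epsilon}_{j,0}$, the coordinate $(U_j)_{(s,t)}$ is therefore centered Gaussian with variance $W_j := (V_j(\bm{\epsilon}_{j,0}))_{(s,t),(s,t)}$. Gaussian moments give
\[
\mathbb{E}[|(U_j)_{(s,t)}|^k \mid \bm{\epsilon}_{j,0}] = \mathbb{E}|N(0,1)|^k \, W_j^{k/2} \le (C\sqrt{k})^k W_j^{k/2}.
\]
Taking expectations,
\[
\|(U_j)_{(s,t)}\|_k \le C\sqrt{k}\,\|W_j^{1/2}\|_k = C\sqrt{k}\,\|W_j\|_{k/2}^{1/2},
\]
and also $\|(U_j)_{(s,t)}\|_2 = \|W_j\|_1^{1/2}$. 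The problem therefore reduces to a moment comparison for $W_j$: we need $\|W_j\|_{k/2} \lesssim k\|W_j\|_1$. Combined with the displays above, this yields $\|(U_j)_{(s,t)}\|_k \lesssim \sqrt{k}\sqrt{k}\,\|(U_j)_{(s,t)}\|_2 = k\,\|(U_j)_{(s,t)}\|_2$, which is the claim. For $k<4$ we have $k/2<2$, and the bound is handled by monotonicity of $L^r$ norms together with the $k\ge 4$ case, after adjusting constants.

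Next we express $W_j$ as a quadratic form in $\bm{\epsilon}_{j,0}$. By \eqref{eq:Vj.uni} and the cross-term vanishing noted in the proof of Lemma \ref{lm:V.conc},
\[
W_j = \mathbb{E}[(F_{j,0}(\bm{\epsilon}_{j,0}))_{(s,t)}^2] + \mathbb{E}[(F_{j,1})_{(s,t)}^2] + \mathbb{E}[(F_{j,2}(\bm{\epsilon}_{j,0}) - \Lambda_{j,2}(\bm{\epsilon}_{j,0}))_{(s,t)}^2].
\]
Each term is a conditional second moment, taken over the non-frozen innovations, of a quantity that is affine (first and third terms) or constant (second term) in the frozen $\bm{\epsilon}_{j,0}$; see \eqref{eq:F.j.0} and \eqref{eq:F.j.2}. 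Hence each term is a conditional expectation of a square: $\mathbb{E}[(a + b^\top \bm{\epsilon}_{j,0})^2 \mid \cdot]$ averaged over the free variables. So $W_j = \bm{\epsilon}_{j,0}^\top B\, \bm{\epsilon}_{j,0} + c^\top \bm{\epsilon}_{j,0} + d$, and the function $x \mapsto x^\top B x + c^\top x + d$ is a nonnegative combination of squares of affine functions. In particular $B$ is positive semidefinite, and $W_j$ is exactly the form $D$ of Lemma \ref{lm:Lk.norm} with PSD matrix.

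The moment bound then follows. Inequality \eqref{EQ_k-norms_ext} of Lemma \ref{lm:Lk.norm} gives $\|W_j\|_r \le C_u r \|W_j\|_1$ for $r\ge 2$. Setting $r = k/2$ completes the chain above, with $C_1$ universal since it depends only on $C_u$ and Gaussian moment constants. For $j = q_n+1$ the same argument applies via \eqref{eq:V0.uni}.

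The main obstacle is verifying that the linear term $c$ and the constant $d$ do not spoil the PSD version of Lemma \ref{lm:Lk.norm}. That lemma's proof assumed $c'=0$, $d=0$ "without loss of generality." My plan is to avoid relying on that step by using the sum-of-squares structure directly. Write $W_j = \sum_\alpha (a_\alpha + b_\alpha^\top \bm{\epsilon}_{j,0})^2$, summing over an orthonormal expansion of the free variables. By Minkowski,
\[
\|W_j\|_r \le \sum_\alpha \|(a_\alpha + b_\alpha^\top \bm{\epsilon}_{j,0})^2\|_r = \sum_\alpha \|a_\alpha + b_\alpha^\top \bm{\epsilon}_{j,0}\|_{2r}^2 .
\]
Each $a_\alpha + b_\alpha^\top \bm{\epsilon}_{j,0}$ is Gaussian, so by Gaussian hypercontractivity the last sum is at most $C r \sum_\alpha \|a_\alpha + b_\alpha^\top \bm{\epsilon}_{j,0}\|_2^2 = C r\, \mathbb{E} W_j$. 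This gives the needed bound cleanly, regardless of $c$ and $d$.
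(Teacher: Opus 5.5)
Your proof is correct and follows the same route as the paper. Conditioning on $\bm{\epsilon}_{j,0}$ makes $(U_j)_{(s,t)}$ centered Gaussian with variance $(V_j(\bm{\epsilon}_{j,0}))_{(s,t),(s,t)}$, and the moments of this nonnegative quadratic form are then controlled by its $L^1$ norm via \eqref{EQ_k-norms_ext}. Your use of the exact conditional Gaussian moment for all $k$ (instead of the paper's combinatorial expansion for even $k$) and your sum-of-squares plus Minkowski/hypercontractivity bound are welcome tightenings; the latter also sidesteps the ``without loss of generality $c'=0$, $d=0$'' step in the proof of Lemma~\ref{lm:Lk.norm}.
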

\begin{proof}[Proof of Lemma \ref{lm:Lk.norm.U}]
    Since the Gaussian random variables $\{Y_j\}_{j=1}^{q_n+1}$ are serially independent to $\bm{\epsilon}_0$, by the Tower Rule,
    \begin{equation*}
        \mathbb{E}[(U_j)^2_{(s,t)}] = \mathbb{E}[(V_j(\bm{\bm{\epsilon}}_{j,0}))_{(s,t),(s,t)}] = \|(V_j(\bm{\bm{\epsilon}}_{j,0}))_{(s,t),(s,t)}\|_1.
    \end{equation*}
    Moreover, for any positive integer $k$, the conditioned moments are
    \begin{equation*}
            \mathbb{E}[(U_j)^{2k}_{(s,t)}\mid \bm{\epsilon}_{j,0}] = \sum_{(u_1,v_1)}\cdots\sum_{(u_k,v_k)} (V^{1/2}(\bm{\epsilon}_{j,0}))_{(s,t),(u_1,v_1)}^2 \cdots (V^{1/2}(\bm{\epsilon}_{j,0}))_{(s,t),(u_k,v_k)}^2
            \mathbb{E}[(Y_j)_{(u_1,v_1)}^2 \cdots (Y_j)_{(u_k,v_k)}^2].
    \end{equation*}
    Similar to the proof of Lemma \ref{lm:Lk.norm}, the pairs of indices $(s_1,t_1),\cdots,(s_k, t_k)$ could take $1\leq L\leq k$ different values. Denote them by $(s_{(1)},t_{(1)}), \cdots, (s_{(L)}, t_{(L)})$. For any $1\leq l\leq L$, assume that $\tau_l$ out of $k$ pairs of $(s,t)$ take the value $(s_{(l)}, t_{(l)})$, thus $\tau_1 + \cdots + \tau_L = k$. Therefore we can write
    \begin{align*}
            \mathbb{E}[(U_j)^{2k}_{(s,t)}\mid \bm{\epsilon}_{j,0}] &= \sum_{L=1}^k \sum_{\tau_1+\cdots+\tau_L = k}\sum_{(u_{(1)},v_{(1)})}\cdots \sum_{(u_{(L)},v_{(L)})}\\
            &\quad\quad(V^{1/2}(\bm{\epsilon}_{j,0}))_{(s,t),(u_{(1)},v_{(1)})}^2 \cdots (V^{1/2}(\bm{\epsilon}_{j,0}))_{(s,t),(u_{(L)},v_{(L)})}^2\mathbb{E}[Z^{2\tau_1}]\cdots \mathbb{E}[Z^{2\tau_L}]\\
            &\leq k{k-1 \choose L-1}(2k-1)!!(V(\bm{\epsilon}_{j,0}))_{(s,t),(s,t)}^k.
    \end{align*}
    where $Z$ is standard Gaussian. Note that $(V(\bm{\epsilon}_{j,0}))_{(s,t),(s,t)}$ is a nonnegative quadratic form with respect to $\bm{\epsilon}_{j,0}$.
    By Lemma \ref{lm:Lk.norm}, there exists a universal constant $C^*>0$ such that for any $k\geq 1$, $\|(V(\bm{\epsilon}_{j,0}))_{(s,t),(s,t)}\|_k \leq C^* k \|(V(\bm{\epsilon}_{j,0}))_{(s,t),(s,t)}\|_1$.
    By Tower Rule and Stirling's Formula,
    \begin{align*}
            \|(U_j)_{(s,t)}\|_{2k}^{2k} &= \mathbb{E}[\mathbb{E}[(U_j)^{2k}_{(s,t)}\mid \bm{\epsilon}_{j,0}]] \leq k 2^{k-1} (2k-1)!! \|(V(\bm{\epsilon}_{j,0}))_{(s,t),(s,t)}\|^k_k\\
            &\leq C^* k^{2k+1}(4/e)^k \mathbb{E}[(U_j)^2_{(s,t)}]^{k}\\
            &\leq \left(Ck \|(U_j)_{(s,t)}\|_2\right)^{2k}.
    \end{align*}
    This finishes the proof of Lemma \ref{lm:Lk.norm.U} for even $k$. For odd $k$, the result follows immediately from Cauchy-Schwarz inequality.
\end{proof}

To prove the three conditions, note that
\begin{align}
    \begin{split}
        \mathbb{E}[(U_j)^2_{(s,t)}] &= \mathbb{E}[\mathbb{E}[(U_j)^2_{(s,t)}]\mid \bm{\epsilon}_{j,0}] = \mathbb{E}[(V_j(\bm{\epsilon}_{j,0}))_{(s,t),(s,t)}]\\
        &= \mathbb{E}[\mathbb{E}[(F_{j,0}(\bm{\epsilon}_{j,0}))^2_{(s,t)}\mid \bm{\epsilon}_{j,0}]] + \mathbb{E}[(F_{j,1})_{(s,t)}^2]\\
        &\quad\quad+ \mathbb{E}[\mathbb{E}[(F_{j,2}(\bm{\epsilon}_{j,0}) - \Lambda_{j,2}(\bm{\epsilon}_{j,0}))^2_{(s,t)}\mid \bm{\epsilon}_{j,0}]].\label{eq:EU.partition}
    \end{split}
\end{align}
By Tower Rule, the first term on the right hand side of \eqref{eq:EU.partition} is equal to $\mathbb{E}[(F_{j,0})^2_{(s,t)}]$.
Since $\{\epsilon_i\}_{i\leq n}$ are i.i.d., the third term is equal to
\begin{align*}
    \begin{split}
        \mathbb{E}[\mathbb{E}[(F_{j,2}(\bm{\epsilon}_{j+1,0}) - \Lambda_{j,2}(\bm{\epsilon}_{j+1,0}))^2_{(s,t)}\mid \bm{\epsilon}_{j+1,0}]] = \mathbb{E}[(F_{j,2})^2_{(s,t)}].
    \end{split}
\end{align*}
Therefore by \eqref{eq:Bj.partition},
\begin{equation*}
    \mathbb{E}[(U_j)^2_{(s,t)}] = \mathbb{E}[(F_{j,0})^2_{(s,t)}] + \mathbb{E}[(F_{j,1})^2_{(s,t)}] + \mathbb{E}[(F_{j,2})^2_{(s,t)}] = \mathbb{E}[(B_j)^2_{(s,t)}].
\end{equation*}
Recall \eqref{eq:E.F0.sq}. We have $\mathbb{E}[(F_{j,0})^2_{(s,t)}] = \mathbb{E}[(F_{j,1})^2_{(s,t)}] = \mathbb{E}[(F_{j,2})^2_{(s,t)}] = m\sum_{k=1}^m \zeta_{k,st}$, where
\begin{multline*}
        \zeta_{k,st} = \sum_{l=0}^\infty \sum_{l'=0}^\infty \bigg[\bigg.  (A_{l+k} A_{l'+k}^\top)_{ss}(A_l A_{l'}^\top)_{tt} + (A_{l+k} A_{l'+k}^\top)_{tt}(A_l A_{l'}^\top)_{ss}\\
        + (A_{l+k} A_{l'+k}^\top)_{st}(A_l A_{l'}^\top)_{ts} + (A_{l+k} A_{l'+k}^\top)_{ts}(A_l A_{l'}^\top)_{st}\bigg.\bigg],
\end{multline*}
or alternatively, using Kronecker product $\otimes$, Hadamard product $\circ$, and commutation matrix $K_{pp}$,
\[
\zeta_k = \sum_{l=0}^\infty \sum_{l'=0}^\infty (I_{p^2} + K_{pp}) \bigg[ \text{diag}(L_0) \otimes \text{diag}(L_1) + \vectorize(L_1 \circ L_0^\top) \bigg],
\]
where $L_1 = A_{l+k} A_{l'+k}^\top$, and $L_0 = A_l A_{l'}^\top$.
Under Condition \ref{cond.G}, we have
\begin{equation*}
    \sum_{k=1}^\infty \zeta_{k,st} = \sum_{k=-\infty}^\infty [(\Gamma_k)_{ss}(\Gamma_k)_{tt} + (\Gamma_k)_{st}(\Gamma_k)_{ts}] \geq c_0.
\end{equation*}
Since our choice of $m\rightarrow \infty$, $\sum_{k=1}^m \zeta_{k,st}$ is lower bounded by a constant related to $c_0$. The first condition \eqref{eq:cond.CCK2.M1} holds.

Also by \eqref{eq:E.F0.sq}, all three terms on the right hand side of \eqref{eq:EU.partition} are of $O(m^2)$ order under Condition \ref{cond.A}, up to a constant related to $C_0$.
Therefore $\|(U_j)_{(s,t)}\|_2^2 = O(m^2)$.
By Lemma \ref{lm:Lk.norm.U}, we have $\|(U_j)_{(s,t)}\|_4^4 = O(m^4)$.
The second condition \eqref{eq:cond.CCK2.M2} holds with $\beta_n \asymp m^2$, the same choice as in the proof of Lemma \ref{lm:cond.CCK}.

Finally, for the third condition \eqref{eq:cond.CCK2.E1}, by Taylor expansion, Fubini's Theorem, Lemma \ref{lm:Lk.norm.U} and Stirling's formula, with $\beta_n \asymp m^2$, when $m$ is large,
\begin{equation*}
    \mathbb{E}[\exp(|(U_j)_{(s,t)}|/m^2)] = 1+\sum_{k=1}^\infty \frac{\|(U_j)_{(s,t)}\|_k^k}{k!m^{2k}} \leq 1+C_u\sum_{k=1}^\infty \frac{e^k}{\sqrt{2\pi k}m^{k}} \leq 2.
\end{equation*}
All three conditions \eqref{eq:cond.CCK2.M1}, \eqref{eq:cond.CCK2.M2} and \eqref{eq:cond.CCK2.E1} are satisfied. This finishes the proof of Lemma \ref{lm:uncond.CCK}.

\subsection{Proof of Lemma \ref{lm:GA.comparison}}\label{sec:apdx.GA.comparison}

Recall that both $Z$ and $S_n^Z$ are centered Gaussian random vectors on $\mathbb{R}^{p^2}$.
Denote their respective covariance matrices as $\Sigma_Z$ and $\Sigma_S$.
Note that
\begin{equation*}
    \Sigma_Z = \frac{1}{n}\cov\left(\sum_{i=1}^n \mathcal{X}_i\right).
\end{equation*}
By the discussion in Section \ref{sec:pf.uncond.CCK} and Tower Rule,
\begin{equation}\label{eq:Sigma.S}
        \Sigma_S = \frac{1}{n}\sum_{j=1}^{q_n+1}\mathbb{E}[V_j(\bm{\epsilon}_{j,0})] = \frac{1}{n}\mathbb{E}\left[\sum_{j=0}^{q_n}\var(B_j\mid \bm{\epsilon}_0)\right]
        = \frac{1}{n}\left(\sum_{j=1}^{q_n}\mathbb{E}[F_{j,0}F_{j,0}^\top] + \sum_{j=0}^{q_n}\mathbb{E}[F_{j,1}F_{j,1}^\top] + \sum_{j=0}^{q_n}\mathbb{E}[F_{j,2}F_{j,2}^\top]\right).
\end{equation}
By trivial computation, $\Sigma_Z = \sum_{k=-n+1}^{n-1} \frac{n-|k|}{n} (I_{p^2} + K_{pp}) (\Gamma_k \otimes \Gamma_k)$, or each of their entries can be written as
\begin{equation}\label{eq:covariance.of.Z}
    (\Sigma_Z)_{(s_1,t_1),(s_2,t_2)} = \sum_{k=-n+1}^{n-1}\frac{n-|k|}{n}[(\Gamma_k)_{s_1 s_2}(\Gamma_k)_{t_1 t_2} + (\Gamma_k)_{s_1 t_2}(\Gamma_k)_{t_1 s_2}],
\end{equation}
and $\Sigma_S = \sum_{k=1}^{m-1}\sum_{l=0}^\infty\sum_{l'=0}^\infty \left[ (L_1 \otimes L_0) + (L_0 \otimes L_1) \right] (I_{p^2} + K_{pp})$ (recall that $L_1 = A_{l+k}A^\top_{l'+k}$, and $L_0 = A_l A_{l'}^\top$), or to be written entrywise,
\begin{align*}
        (\Sigma_S)_{(s_1,t_1),(s_2,t_2)} = \sum_{k=1}^{m-1}\sum_{l=0}^\infty\sum_{l'=0}^\infty&\bigg[\bigg. (A_{l+k}A^\top_{l'+k})_{s_1 s_2} (A_l A_{l'}^\top)_{t_1 t_2} \\
        &+ (A_{l+k}A^\top_{l'+k})_{t_1 t_2} (A_l A_{l'}^\top)_{s_1 s_2}\\
        &+ (A_{l+k}A^\top_{l'+k})_{s_1 t_2} (A_l A_{l'}^\top)_{t_1 s_2} \\
        &+ (A_{l+k}A^\top_{l'+k})_{t_1 s_2} (A_l A_{l'}^\top)_{s_1 t_2}\bigg.\bigg].
\end{align*}
Observe that when $n\rightarrow\infty$ and $m\rightarrow\infty$, both $\Sigma_Z$ and $\Sigma_S$ converge to an identical matrix, denoted by $\Sigma^* = \sum_{k=-\infty}^{\infty} (I_{p^2} + K_{pp}) (\Gamma_k \otimes \Gamma_k)$, or to be written entrywise,
\begin{equation*}
    \Sigma^*_{(s_1,t_1), (s_2,t_2)} = \sum_{k=-\infty}^\infty [(\Gamma_k)_{s_1 s_2}(\Gamma_k)_{t_1 t_2} + (\Gamma_k)_{s_1 t_2}(\Gamma_k)_{t_1 s_2}].
\end{equation*}
For each $1\leq s_1,t_1,s_2,t_2\leq p$, we have
\begin{align*}
        &(\Sigma^* - \Sigma_Z )_{(s_1,t_1),(s_2,t_2)}\\
        &= \sum_{k=-\infty}^{-n}\sum_{l=0}^\infty\sum_{l'=0}^\infty[(A_l A_{l+k})_{s_1 s_2}(A_{l'}A_{l'+k})_{t_1 t_2} + (A_l A_{l+k})_{s_1 t_2}(A_{l'}A_{l'+k})_{t_1 s_2}]\\
        &\quad+ \sum_{k=-n+1}^{n-1} \frac{|k|}{n} \sum_{l=0}^\infty\sum_{l'=0}^\infty[(A_l A_{l+k})_{s_1 s_2}(A_{l'}A_{l'+k})_{t_1 t_2} + (A_l A_{l+k})_{s_1 t_2}(A_{l'}A_{l'+k})_{t_1 s_2}]\\
        &\quad+ \sum_{k=n}^\infty \sum_{l=0}^\infty\sum_{l'=0}^\infty[(A_l A_{l+k})_{s_1 s_2}(A_{l'}A_{l'+k})_{t_1 t_2} + (A_l A_{l+k})_{s_1 t_2}(A_{l'}A_{l'+k})_{t_1 s_2}].
\end{align*}
Under Condition \ref{cond.A}, $|\Sigma_Z - \Sigma^*|_\infty$ can be bounded by
\begin{align*}
        |\Sigma^* - \Sigma_Z|_\infty &\lesssim \sum_{k=1}^{n-1}\frac{k}{n}\left(\sum_{l=0}^\infty (l+k)^{-\beta}(l\vee 1)^{-\beta}\right)^2 + \sum_{k=n}^\infty \left(\sum_{l=0}^\infty (l+k)^{-\beta}(l\vee 1)^{-\beta}\right)^2\\
        &\lesssim \sum_{k=1}^{n-1}\frac{k}{n}\left(\sum_{l=0}^{k-1} (l+k)^{-\beta}(l\vee 1)^{-\beta}\right)^2 + \sum_{k=1}^{n-1}\frac{k}{n}\left(\sum_{l=k}^\infty (l+k)^{-\beta}l^{-\beta}\right)^2\\
        &\hspace{10pt}+ \sum_{k=n}^\infty \left(\sum_{l=0}^{k-1} (l+k)^{-\beta}(l\vee 1)^{-\beta}\right)^2 + \sum_{k=n}^\infty\left(\sum_{l=k}^\infty (l+k)^{-\beta}l^{-\beta}\right)^2\\
        &\leq \sum_{k=1}^{n-1}\frac{k}{n} k^{-2\beta}\left(1+\sum_{l=1}^{k-1} l^{-\beta}\right)^2 + \sum_{k=1}^{n-1}\frac{k}{n}\left(\sum_{l=k}^\infty l^{-2\beta}\right)^2\\
        &\hspace{10pt}+ \sum_{k=n}^\infty k^{-2\beta}\left(1+\sum_{l=1}^{k-1} l^{-\beta}\right)^2 + \sum_{k=n}^\infty\left(\sum_{l=k}^\infty l^{-2\beta}\right)^2\\
        &\asymp n^{-1} \sum_{k=1}^{n-1} k^{(-2\beta+1)\vee(-4\beta+3)} + \sum_{k=n}^\infty k^{(-2\beta)\vee (-4\beta+2)}\\
        &\asymp n^{(-2\beta+1)\vee(-4\beta+3)},
\end{align*}
up to a constant related to $C_0$. Meanwhile,
\begin{align*}
        (\Sigma^* - \Sigma_S)_{(s_1,t_1),(s_2,t_2)} = \sum_{k=m}^\infty\sum_{l=0}^\infty\sum_{l'=0}^\infty&\bigg[\bigg. (A_{l+k}A^\top_{l'+k})_{s_1 s_2} (A_l A_{l'}^\top)_{t_1 t_2} \\
        &+ (A_{l+k}A^\top_{l'+k})_{t_1 t_2} (A_l A_{l'}^\top)_{s_1 s_2}\\
        &+ (A_{l+k}A^\top_{l'+k})_{s_1 t_2} (A_l A_{l'}^\top)_{t_1 s_2} \\
        &+ (A_{l+k}A^\top_{l'+k})_{t_1 s_2} (A_l A_{l'}^\top)_{s_1 t_2}\bigg.\bigg].
\end{align*}
Using a similar computation technique as above,
\begin{align}\label{eq:Sigma.S.close}
        |\Sigma^* - \Sigma_S|_\infty &\lesssim \sum_{k=m}^\infty\left(\sum_{l=0}^\infty (l+k)^{-\beta} (1\vee l)^{-\beta}\right)^2 \lesssim m^{(-2\beta+1)\vee(-4\beta+3)},
\end{align}
up to a constant related to $C_0$.
Since $m\leq n$, $|\Sigma_Z - \Sigma_S|_\infty = O\left(m^{(-2\beta+1)\vee(-4\beta+3)}\right)$.
Under Condition \ref{cond.A} and \ref{cond.G}, for all $1\leq s,t\leq p$, the diagonal entries of $\Sigma^*$ are all upper bounded by a constant related to $C_0$ (by Lemma \ref{prop:cond.A.cor}), and lower bounded by $c_0$.
For adequately large $n$, all diagonal entries of $\Sigma_Z$ and $\Sigma_S$ are therefore positive.
Moreover, the minimum of these diagonal entries, $\min_{(s,t)}(\Sigma_Z)_{(s,t),(s,t)}$ and $\min_{(s,t)}(\Sigma_S)_{(s,t),(s,t)}$, are only related to $c_0$.
The maximum of these diagonal entries, $\max_{(s,t)}(\Sigma_Z)_{(s,t),(s,t)}$ and $\max_{(s,t)}(\Sigma_S)_{(s,t),(s,t)}$, are only related to $C_0$.
By Theorem \ref{thm:comparison}, Lemma \ref{lm:GA.comparison} follows.

\subsection{Choice of parameters in (\ref{eq:choice.of.eta.m})} \label{sec:choice.of.eta.m}
There are five terms on the right hand side of \eqref{eq:rho.n.bound.eta}.
The first and second terms decrease with $\eta$, while the third increases with $\eta$.
The second and fifth terms decrease with $m$, while the fourth term increases with $m$.
Note that all terms are related to $p$ and $n$ in terms of polynomials of $\log(pn)$ and $n$.
We shall choose the order of $\eta$ and $m$ with respect to $\log(pn)$ and $n$, such that the upper bound of $\rho(n)$ in \eqref{eq:rho.n.bound.eta} is approximately minimized.
Specifically, assume that up to a constant related to $C_0$ and $c_0$,
\begin{equation*}
    \rho(n) = O\left(\frac{\log^\alpha(pn)}{n^\gamma}\right).
\end{equation*}

\[
p^2\exp\left(-c_1 \eta / Q_1^{1/2}(n)\right) + p^2\exp\left(-c_2 \eta / Q_2^{1/2}(m)\right) + 
    \eta\sqrt{1\vee\log(p/\eta)} +\left(\frac{m^4\log^5(pn)}{n}\right)^{1/4} + \frac{\log(p)}{m^{\tilde\beta/2}}
\]

Next, we find the value of $\alpha>0$ and $\gamma>0$.
For the first term on the right hand side of \eqref{eq:rho.n.bound.eta} to be smaller than this bound, it is required that
\begin{equation}\label{eq:eta.req.1}
    \eta \gtrsim \frac{1}{n^{(1\wedge \tilde{\beta})/2}} \log\left(\frac{p^2n^\gamma}{\log^\alpha(pn)}\right).
\end{equation}
A similar lower bound of $\eta$ is required by the second term,
\begin{equation}\label{eq:eta.req.2}
    \eta \gtrsim \frac{1}{m^{\tilde{\beta}/2}} \log\left(\frac{p^2n^\gamma}{\log^\alpha(pn)}\right).
\end{equation}
Meanwhile, for the third term to be smaller than the same bound, we need
\begin{equation*}
    \eta \lesssim \frac{\log^{\alpha-1/2}(pn)}{n^{\gamma}}.
\end{equation*}
As long as $m^{\tilde{\beta}} \lesssim n$ holds, the lower bound in \eqref{eq:eta.req.2} is tighter than that in \eqref{eq:eta.req.1}, and the lower bound of $\eta$ is specified by \eqref{eq:eta.req.2}.
The lower bound of $\eta$ is required to be smaller than the upper bound, i.e.,
\begin{equation*}
    \frac{1}{n^{(1\wedge \tilde{\beta})/2}} \log\left(\frac{p^2n^\gamma}{\log^\alpha(pn)}\right) \lesssim \frac{\log^{\alpha-1/2}(pn)}{n^{\gamma}},
\end{equation*}
which is equivalent to a lower bound for the choice of $m$,
\begin{equation}\label{eq:m.req.23}
    m \gtrsim \left(\frac{n^\gamma}{\log^{\alpha-3/2}(pn)}\right)^{2/\tilde{\beta}}.
\end{equation}
Similarly, for the fifth term to be smaller than the desired order, we need
\[
    m \gtrsim \left(\frac{n^\gamma}{\log^{\alpha-1}(pn)}\right)^{2/\tilde{\beta}},
\]
which holds as long as \eqref{eq:m.req.23} is satisfied.
Meanwhile, for the fourth term on the right hand side of \eqref{eq:rho.n.bound.eta} to be smaller than $\log^\alpha(pn) / n^\gamma$, an upper bound of $m$ is required,
\begin{equation}\label{eq:m.req.4}
m\lesssim n^{1/4 - \gamma}\log^{\alpha - 5/4}(pn).
\end{equation}
If we let the right hand side of \eqref{eq:m.req.23} to have the same order as \eqref{eq:m.req.4}, then
\begin{equation*}
n^{2\gamma/\tilde{\beta} + \gamma - 1/4} \asymp \log^{\alpha - 5/4 + 2(\alpha-3/2)/\tilde{\beta}}(pn)
\end{equation*}
for any $n$ and $\log(pn)$.
The choice of $\alpha>0$ and $\gamma>0$ is then unique, i.e.,
\begin{equation*}
\alpha = \frac{5\tilde{\beta}+12}{4\tilde{\beta}+8}, \quad\quad \gamma = \frac{\tilde{\beta}}{4\tilde{\beta}+8}.
\end{equation*}
And the order of $\eta$ and $m$ with respect to $\log(pn)$ and $n$ is also unique,
\begin{equation*}
\eta = {O}\left(\frac{\log^{(3\tilde{\beta}+8) / (4\tilde{\beta}+8)}(pn)}{n^{\tilde{\beta} / (4\tilde{\beta}+8)}}\right),\quad m={O}\left((n\log(pn))^{1/(2\tilde{\beta}+4)}\right),
\end{equation*}
up to a constant related to $C_0$ and $c_0$.
We can indeed verify that for any $\beta>3/4$, i.e., $\tilde{\beta}>0$, we have $m\rightarrow\infty$ as $n\rightarrow\infty$, but $m/n\rightarrow 0$.
Moreover, $m^{\tilde{\beta}} \lesssim n^{1/2} \log^{1/2}(pn) \lesssim n$ as long as $\log(p) \lesssim n^2$, which means that the lower bound in \eqref{eq:eta.req.2} is always greater than the lower bound in \eqref{eq:eta.req.1}.

\section{Proof of Section \ref{sec:bootstrap.cov}}

\subsection{Proof of Lemma \ref{lm:B.check.B.tilde}}\label{sec:apdx.B.check.B.tilde}
As an intermediate quantity between $\check{B}_{i,l}$ and $\tilde{B}_{i,l}$, recall that $\mathcal{X}_j = \vectorize(X_j X_j^\top - \Sigma)$, and let $B_{i,l} = \sum_{j=i-l+1}^{i} \mathcal{X}_j$.
Denote $\hat{\Sigma}_{i,l} = l^{-1}\sum_{j=i-l+1}^i X_j X_j^\top$.
Then
\begin{equation*}
    l^{-1/2} (\check{B}_{i,l} - B_{i,l}) = l^{-1/2}\vectorize(\hat{\Sigma}_{i,l} - \Sigma).
\end{equation*}
Another application of Theorem \ref{thm:GA.cov} directly implies that
\begin{equation*}
    \sup_{u\in\mathbb{R}} \left|\mathbb{P}\left(l^{-1/2}|\check{B}_{i,l} - B_{i,l}|_\infty\geq u\right) - \mathbb{P}\left(\sqrt{\frac{l}{n}}|Z|_\infty\geq u\right)\right| \leq C \Psi(p,n).
\end{equation*}
Note that $Z$ is a Gaussian random vector with mean zero. Each entry of $Z$ has a variance upper bounded and lower bounded from 0 under Conditions \ref{cond.A} and \ref{cond.G}; see the discussion in Appendix \ref{sec:apdx.GA.comparison}.
Therefore we have $\mathbb{E}[|Z|_\infty] \lesssim \sqrt{\log(p)}$, up to a constant related to $C_0$ and $c_0$.
By Markov inequality,
\begin{equation*}
    \mathbb{P}\left(|Z|_\infty > \sqrt{\frac{n}{l}}\delta \right) \leq \sqrt{\frac{l}{n}}\cdot\frac{1}{\delta}\mathbb{E}[|Z|_\infty]\leq C\delta^{-1}\sqrt{\frac{l\log(p)}{n}}.
\end{equation*}
This implies
\begin{equation}\label{eq:B.check.B}
    \mathbb{P}\left(l^{-1/2}|\check{B}_{i,l} - B_{i,l}|_\infty > \delta\right) \leq C \Psi(p,n) + C\delta^{-1}\sqrt{\frac{l\log(p)}{n}}.
\end{equation}
Moreover, $\tilde{B}_{i,l,m}$ is a valid $m$-dependent approximation to $B_{i,l}$.
By Lemmas \ref{lm:mtg.approx} and \ref{lm:m.dep.approx},
\begin{equation}\label{eq:B.B.tilde}
        \mathbb{P}\left(l^{-1/2}|B_{i,l} - \tilde{B}_{i,l,m}|_\infty > \delta\right) \leq 2p^2\exp\left(-c_1\left(\frac{\delta^2}{Q_1(l)} \wedge \frac{\delta}{\sqrt{Q_1(l)}}\right)\right)
        + 2p^2\exp\left(-c_2\left(\frac{\delta^2}{Q_2(m)} \wedge \frac{\delta}{\sqrt{Q_2(m)}}\right)\right).
\end{equation}
The proof of Lemma \ref{lm:B.check.B.tilde} is finished by combining \eqref{eq:B.check.B} and \eqref{eq:B.B.tilde}.

\subsection{Proof of Lemma \ref{lm:B.check.GA}}\label{sec:apdx.B.check.GA}
We start by dividing the index set $\{i-l+1,\cdots,i\}$, which is of length $l$, into triadic blocks, each with a maximal length of $3m$.
Let $r_l = \lfloor l/3m \rfloor$.
Define
\begin{equation*}
    \mathcal{J}_{j,k}^{m,i} = \{i-l+(3j+k-1)m+1, \cdots, i-l+(3j+k)m \wedge i\}.
\end{equation*}
The full index set is divided into
\begin{equation*}
    \{i-l+1,\cdots,i\} = \mathcal{J}_{0,1}^{m,i} \cup \mathcal{J}_{0,2}^{m,i} \cup \mathcal{J}_{1,0}^{m,i} \cup \mathcal{J}_{1,1}^{m,i} \cup \mathcal{J}_{1,2}^{m,i} \cup \cdots \cup \mathcal{J}_{r_l,1}^{m,i} \cup \mathcal{J}_{r_l,2}^{m,i} \cup \mathcal{J}_{r_l+1,0}^{m,i}.
\end{equation*}
For each $0\leq j\leq r_l$, let
\begin{equation*}
    \Xi_j^i = \Phi_{j,1}^i + \Phi_{j,2}^i + \Phi_{j+1,0}^i, \quad \text{ where } \Phi_{j,k}^i = \sum_{t\in\mathcal{J}_{j,k}^{m,i}} \tilde{D}_{t,m}.
\end{equation*}
This gives a partition of $\tilde{B}_{i,l,m}$ into $(r_l+1)$ many triadic blocks,
\begin{equation*}
    \tilde{B}_{i,l,m} = \sum_{j=0}^{r_l} \Xi_j^i.
\end{equation*}
Denote the set $\{\epsilon_t: t\in\mathcal{J}_{j,k}^{m,i}\}$ by $\bm{\epsilon}_{j,k}^i$, and $\bigcup_{j=1}^{r_l+1} \bm{\epsilon}_{j,0}^{i}$ by $\bm{\epsilon}_0^i$. For each $j\geq 1$, we have the following conditional independence for neighboring blocks:
\begin{equation*}
    \Xi_{j-1}^i \indep \Xi_j^i\mid \bm{\epsilon}_{j,0}^i.
\end{equation*}
Furthermore, conditioned on $\bm{\epsilon}_0^i$, all blocks $\{\Xi_j^i\}_{j=0}^{r_l}$ are mutually independent. Define
\begin{equation*}
    V_0^{B*}(\bm{\epsilon}_{1,0}^i) = \var(\Xi_0^i\mid \bm{\epsilon}_0^i),
\end{equation*}
and for each $1\leq j\leq r_l$,
\begin{equation*}
    V_j^{B*}(\bm{\epsilon}_{j,0}^i, \bm{\epsilon}_{j+1,0}^i) = \var(\Xi_j^i\mid \bm{\epsilon}_0^i).
\end{equation*}
Consider a set of i.i.d.~standard Gaussian random vectors $Y_0^{B*},Y_1^{B*},\cdots, Y_{r_l}^{B*}\in\mathbb{R}^{p^2}$ that are mutually independent to $\bm{\epsilon}_0^i$.
Conditional on $\bm{\epsilon}_0^i$, the partial sum
\begin{equation*}
    B_{i,l}^* = [V_0^{B*}(\bm{\epsilon}_{1,0}^i)]^{1/2}Y_0^{B*} + \sum_{j=1}^{r_l}[V_j^{B*}(\bm{\epsilon}_{j,0}^i, \bm{\epsilon}_{j+1,0}^i)]^{1/2}Y_j^{B*}
\end{equation*}
is Gaussian with mean 0 and variance
\begin{equation*}
    V_0^{B*}(\bm{\epsilon}_{1,0}^i) + \sum_{j=1}^{r_l} V_j^{B*}(\bm{\epsilon}_{j,0}^i, \bm{\epsilon}_{j+1,0}^i),
\end{equation*}
thus a Gaussian approximation to $\tilde{B}_{i,l,m} - \mathbb{E}[\tilde{B}_{i,l,m}\mid \bm{\epsilon}_0^i]$ conditional on $\bm{\epsilon}_0^i$.
We have the following identity analogous to \eqref{eq:var.identity} with the same proof technique:
\begin{equation*}
    V_0^{B*}(\bm{\epsilon}_{1,0}^i) + \sum_{j=1}^{r_l} V_j^{B*}(\bm{\epsilon}_{j,0}^i, \bm{\epsilon}_{j+1,0}^i) = \sum_{j=0}^{r_l+1} V_j^B(\bm{\epsilon}_{j,0}^i),
\end{equation*}
where for each $1\leq j\leq r_l$,
\begin{equation*}
    V_j^B(\bm{\epsilon}_{j,0}^i) = V_j^{B*}(\bm{\epsilon}_{j,0}^i, \bm{\epsilon}_{j,0}^i),
\end{equation*}
and
\begin{equation*}
    V_{r_l+1}^B(\bm{\epsilon}_{r_l+1,0}^i) = V_0^{B*}(\bm{\epsilon}_{r_l+1,0}^i).
\end{equation*}
For another set of standard Gaussian random vectors $Y_1^B,\cdots,Y_{r_l}^B, Y_{r_l+1}^B\in\mathbb{R}^{p^2}$ that are also independent to $\bm{\epsilon}_{0}^i$, the partial sum
\begin{equation*}
    B_{i,l}^Y = \sum_{j=1}^{r_l+1}[V_j^B(\bm{\epsilon}_{j,0}^i)]^{1/2}Y_j^B
\end{equation*}
is identically distributed to $B_{i,l}^*$ conditional on $\bm{\epsilon}_{0}^i$.
Therefore, $B_{i,l}^Y$ is also a Gaussian approximation to $\tilde{B}_{i,l,m} - \mathbb{E}[\tilde{B}_{i,l,m}\mid \bm{\epsilon}_0^i]$ conditional on $\bm{\epsilon}_0^i$.
Analogous to Lemma \ref{lm:cond.CCK}, for any $\delta>0$ and $0<\epsilon<1$,
\begin{equation}\label{eq:boot.3.3}
        \sup_{u\in\mathbb{R}}\left|\mathbb{P}\left(l^{-1/2}|\tilde{B}_{i,l,m} - \mathbb{E}[\tilde{B}_{i,l,m}\mid \bm{\epsilon}_0^i]|_\infty \leq u\mid \bm{\epsilon}_0^i\right) - \mathbb{P}\left(l^{-1/2}|B_{i,l}^Y|_\infty \leq u\mid \bm{\epsilon}_0^i\right)\right|
        \leq C \left(\frac{m^4\log^5(pl)}{l}\right)^{1/4}
\end{equation}
with probability no less than $1-4p^2\exp(-c_2\delta l^{1/2}/m^{1/2}) - 2\exp(-c_2 m^{2\epsilon})$. Here, $C>0$ is a constant related to $C_0$ and $c_0$, and $c_2$ is a constant related to $C_0$ only.
Note that $\{\bm{\epsilon}_{j,0}^i\}_{j=1}^{r_l+1}$ are mutually independent.
Then the summands in $B_{i,l}^Y$ are also mutually independent.
For each $j\in[r_l+1]$, define $Z_j^B\in\mathbb{R}^{p^2}$, a Gaussian random vector with mean zero and variance $\var([V_j^B(\bm{\epsilon}_{0,j}^i)]^{1/2}Y_j^B)$.
The normalized partial sum $S_l^Z = l^{-1/2}\sum_{j=1}^{r_l+1} Z_j^B$ is the Gaussian approximation to $l^{-1/2}B_{i,l}^Y$.
Analogous to Lemma \ref{lm:uncond.CCK}, we have
\begin{equation}\label{eq:boot.3.4}
    \sup_{u\in\mathbb{R}}\left|\mathbb{P}(l^{-1/2}|B_{i,l}^Y|_\infty \leq u) - \mathbb{P}(|S_l^Z|_\infty \leq u)\right| \leq C\left(\frac{m^4\log^5(pl)}{l}\right)^{1/4}.
\end{equation}
If we take a closer look at the variance of $S_l^Z$, analogous to \eqref{eq:Sigma.S},
\begin{align*}
        \var(S_l^Z) &= \frac{1}{l}\sum_{j=1}^{r_l+1} \mathbb{E}[V_j^B(\bm{\epsilon}_{j,0}^i)] = \frac{1}{l}\mathbb{E}\left[\sum_{j=0}^{r_l}\var(\Xi_j^i\mid \bm{\epsilon}_0^i)\right]\\
        &= \frac{1}{l}\left(\sum_{j=0}^{r_l}\mathbb{E}[\Phi_{j,1}^i \Phi_{j,1}^{i,\top}] + \sum_{j=0}^{r_l}\mathbb{E}[\Phi_{j,2}^i \Phi_{j,2}^{i,\top}] + \sum_{j=0}^{r_l}\mathbb{E}[\Phi_{j+1,0}^i \Phi_{j+1,0}^{i,\top}]\right).
\end{align*}
Since $\tilde{D}_{t,m}$ is stationary, $\var(S_l^Z)$ is identical to $\Sigma_S$.
Lemma \ref{lm:GA.comparison} immediately implies that
\begin{equation}\label{eq:boot.3.5}
    \sup_{u\in\mathbb{R}}\left|\mathbb{P}(|Z|_\infty \leq u) - \mathbb{P}(|S_l^Z|_\infty \leq u)\right| \leq C\frac{\log(p)}{m^{\tilde\beta/2}}.
\end{equation}
Recall the order of $m$ given in \eqref{eq:choice.of.eta.m}.
The rate in \eqref{eq:boot.3.5} is negligible since $l/n\rightarrow 0$.
Following the same procedure as in the proof of Theorem \ref{thm:GA.cov} only with $n$ replaced by $l$, by combining \eqref{eq:boot.3.3} \eqref{eq:boot.3.4} and \eqref{eq:boot.3.5}, we arrive at
\begin{equation*}
    \sup_{u\in\mathbb{R}}\left|\mathbb{P}\left(l^{-1/2}|\tilde{B}_{i,l,m}|_\infty \leq u\right) - \mathbb{P}(|Z|_\infty \leq u)\right| \leq C\frac{\log^{(5\Tilde{\beta}+12)/(4\Tilde{\beta}+8)}(pl)}{l^{\Tilde{\beta}/(4\Tilde{\beta}+8)}}.
\end{equation*}

\subsection{Proof of Lemma \ref{lm:F.check.F.tilde}}\label{sec:apdx.F.check.F.tilde}
    By Markov inequality, for any $u\in\mathbb{R}$ and $\delta>0$,
    \begin{align*}
        \mathbb{P}\left(\hat{F}_{n,l}(u) - \tilde{F}_{n,l,m}(u) > \delta\right) &\leq \delta^{-1}\mathbb{E}[\hat{F}_{n,l}(u) - \tilde{F}_{n,l,m}(u)];\\
        \mathbb{P}\left(\tilde{F}_{n,l,m}(u) - \hat{F}_{n,l}(u) > \delta\right) &\leq \delta^{-1}\mathbb{E}[\tilde{F}_{n,l,m}(u) - \hat{F}_{n,l}(u)].
    \end{align*}
    Combining these inequalities lead to
    \begin{align}
        \begin{split}\label{eq:F.check.F.tilde.bound}
            &\mathbb{P}\left(|\hat{F}_{n,l}(u) - \tilde{F}_{n,l,m}(u)| > \delta\right)\\ &\leq 2\delta^{-1}\left|\mathbb{E}[\hat{F}_{n,l}(u) - \tilde{F}_{n,l,m}(u)]\right|\\
            &= 2\delta^{-1} \left|\mathbb{P}(l^{-1/2}|\check{B}_{i,l}|_\infty \leq u) - \mathbb{P}(l^{-1/2}|\tilde{B}_{i,l,m}|_\infty \leq u)\right|\\
            &\leq 2C\delta^{-1} \Psi(p,l),
        \end{split}
    \end{align}
    where the last inequality follows Corollary \ref{cor:B.check.B.tilde}.
    Note that $C$ is a constant that is not related to $u$.
    Taking $\delta = (\Psi(p,l))^{1-\epsilon}$, we have that for any $u\in\mathbb{R}$, with probability no less than $1-(\Psi(p,l))^\epsilon$,
    \begin{equation*}
        \left|\hat{F}_{n,l}(u) - \tilde{F}_{n,l,m}(u)\right| \leq C (\Psi(p,l))^{1-\epsilon}.
    \end{equation*}
    The proof is finished by taking supremum on both sides of the inequality.

\subsection{Proof of Lemma \ref{lm:B.check.CDF}}\label{sec:apdx.B.check.CDF}
    By the $m$-dependent property of $\tilde{D}_{j,m}$, for any $|i-i'|\geq l+m-1$, $\tilde{B}_{i,l,m}$ is independent to $\tilde{B}_{i',l,m}$.
    We can therefore divide $\{\tilde{B}_{i,l,m}\}_{i=l}^{n}$ into $(l+m-1)$ many groups, such that all entries within the same group are mutually independent.
    Specifically, for each $0\leq k\leq l+m-2$, define the index set
    \begin{equation*}
        \mathcal{G}_k^{l,m} = \left\{l+j(l+m-1)+k; j=0,1,2,\cdots, \left\lceil\frac{n-l-k}{l+m-1}\right\rceil\right\}.
    \end{equation*}
    Then for each $0\leq k\leq m+l-2$, $\{\tilde{B}_{i,l,m}\}_{i\in\mathcal{G}_k^{l,m}}$ are mutually independent.
    Define a within-group empirical CDF,
    \begin{equation*}
        \tilde{F}^{(k)}_{n,l,m}(u) = |\mathcal{G}_k^{l,m}|^{-1}\sum_{i\in\mathcal{G}_k^{l,m}} \mathbf{1}\{l^{-1/2}|\tilde{B}_{i,l,m}|_\infty \leq u\},
    \end{equation*}
    and the empirical CDF $\tilde{F}_{n,l,m}(u)$ can be written as
    \begin{equation*}
        \tilde{F}_{n,l,m}(u) = (n-l+1)^{-1}\sum_{k=0}^{l+m-2} |\mathcal{G}_k^{l,m}|\tilde{F}_{n,l,m}^{(k)}(u).
    \end{equation*}
    By Corollary 1 in \cite{massart1990tight}, for any $\delta>0$,
    \begin{equation*}
        \mathbb{P}\left(\sup_{u\in\mathbb{R}}\left|\tilde{F}^{(k)}_{n,l,m}(u) - \mathbb{P}(l^{-1/2}|\tilde{B}_{i,l,m}|_\infty \leq u)\right| > \delta\right) \leq 2\exp(-2|\mathcal{G}_k^{l,m}|\delta^2).
    \end{equation*}
    Note that for any $k$,
    \begin{equation*}
        |\mathcal{G}_k^{l,m}| \leq \left\lceil \frac{n-l+1}{l+m-1}\right\rceil \leq \frac{n+m}{l+m-1}.
    \end{equation*}
    By triangle inequality, the left hand side of \eqref{eq:B.check.CDF} can be upper bounded by
    \begin{align*}
            &\mathbb{P}\left(\sup_{u\in\mathbb{R}}\left|\tilde{F}_{n,l,m}(u) - \mathbb{P}\left(l^{-1/2}|\tilde{B}_{i,l,m}|_\infty \leq u\right)\right| > \delta \right)\\
            \leq & \sum_{k=0}^{l+m-2} \mathbb{P}\left(\frac{|\mathcal{G}_k^{l,m}|}{n-l+1} \cdot \sup_{u\in\mathbb{R}}\left|\tilde{F}^{(k)}_{n,l,m}(u) - \mathbb{P}(l^{-1/2}|\tilde{B}_{i,l,m}|_\infty \leq u)\right| > \frac{\delta}{l+m-1}\right)\\
            \leq & 2\sum_{k=0}^{l+m-2} \exp\left(-2|\mathcal{G}_k^{l,m}|\left(\frac{n-l+1}{|\mathcal{G}_k^{l,m}|(l+m-1)}\delta\right)^2\right)\\
            \leq & 2(l+m-1)\exp\left(-2\frac{(n-l+1)^2}{(n+m)(l+m-1)}\delta^2\right).
    \end{align*}
    By taking
    \begin{equation*}
        \delta^2 = \frac{(n+m)(l+m-1)}{(n-l+1)^2}\left(\lambda^2 + \frac{1}{2}\log(l+m-1)\right),
    \end{equation*}
    we finish the proof of Lemma \ref{lm:B.check.CDF}.

\section{Proof of Section \ref{sec:GA.prec}}
\subsection{Proof of Lemma \ref{lm:GA.cov.linear}}\label{sec:apdx.GA.cov.linear}

Using the fact that $|\Omega A \Omega|_\infty \leq |\Omega|_1^2 |A|_\infty$ for any matrix $A$, Lemmas \ref{lm:mtg.approx} and \ref{lm:m.dep.approx} immediately implies that
\begin{equation}\label{eq:like.lm.3.1}
    \mathbb{P}\left(|S_n^\Omega-n^{-1/2}T_n^\Omega|_\infty\geq\delta\right)\leq 2p^2\exp\left\{-c_1\left(\frac{\delta^2}{|\Omega|_1^4 Q_1(n)}\wedge\frac{\delta}{|\Omega|_1^2\sqrt{Q_1(n)}}\right)\right\},
\end{equation}
and
\begin{equation}\label{eq:like.lm.3.2}
    \mathbb{P}\left(n^{-1/2}|T_n^\Omega-\tilde{T}_{n,m}^\Omega|_\infty\geq\delta\right)\leq 2p^2\exp\left\{-c_2\left(\frac{\delta^2}{|\Omega|_1^4 Q_2(m)}\wedge\frac{\delta}{|\Omega|_1^2\sqrt{Q_2(m)}}\right)\right\}
\end{equation}
where $c_1>0$ and $c_2>0$ are constants related to $C_0$ and $c_0$.
Next, we can build triadic blocks by the following decomposition of $\tilde{T}_{n,m}^\Omega$ similar to \eqref{eq:T.tilde.decomp},
\begin{equation*}
    \tilde{T}_{n,m}^\Omega = \sum_{j=0}^{q_n} B_j^\Omega, \quad\quad \text{where } B_j^\Omega = \sum_{i=3jm+1}^{3(j+1)m \wedge n} \tilde{D}_{i,m}^\Omega,\quad 0\leq j \leq q_n.
\end{equation*}
These blocks have the same conditional independence structure as discussed in Section \ref{sec:GA.cov}.
Therefore we can define
\begin{align*}
    V^{\Omega *}_0(\bm{\epsilon}_{1,0}) &= \var(B_0^\Omega \mid \bm{\epsilon}_0),\\
    V^{\Omega *}_j(\bm{\epsilon}_{j,0},\bm{\epsilon}_{j+1,0}) &= \var(B_j^\Omega \mid \bm{\epsilon}_0), \quad 1 \leq j \leq q_n.
\end{align*}
Consider i.i.d.~standard Gaussian random vectors $Y_j^* \in \mathbb{R}^{p^2}$ that are independent of $\bm\epsilon_0$. The partial sum
\[
T_n^{\Omega *} = [V^{\Omega *}_0(\bm{\epsilon}_{1,0})]^{1/2}Y^*_0 + \sum_{j=1}^{q_n} [V^{\Omega *}_j(\bm{\epsilon}_{j,0},\bm{\epsilon}_{j+1,0})]^{1/2}Y^*_j
\]
follows a Gaussian distribution with mean 0 and variance
\begin{equation*}
V^{\Omega *}_0(\bm{\epsilon}_{1,0}) + \sum_{j=1}^{q_n} V^{\Omega *}_j(\bm{\epsilon}_{j,0},\bm{\epsilon}_{j+1,0}) = \sum_{j=1}^{q_n+1} V^\Omega_j(\bm{\epsilon}_{j,0}),
\end{equation*}
where $V^\Omega_{q_n+1}(\bm{\epsilon}_{q_n+1,0}) = V^{\Omega *}_0(\bm{\epsilon}_{q_n+1,0})$ and $V^\Omega_j(\bm{\epsilon}_{j,0}) = V^{\Omega *}_j(\bm{\epsilon}_{j,0},\bm{\epsilon}_{j,0})$ for $1 \leq j \leq q_n$.
Similar to \eqref{eq:TnY.def}, for a set of independent standard Gaussian random vectors $Y_j \in \mathbb{R}^{p^2}$, the partial sum
\begin{equation*}
T_n^{\Omega Y} = \sum_{j=1}^{q_n+1} [V_j(\bm{\epsilon}_{j,0})]^{1/2}Y_j
\end{equation*}
is identically distributed to $T_n^{\Omega *}$ conditional on $\bm{\epsilon}_0$.

Therefore, a result similar to Lemma \ref{lm:cond.CCK} holds, with the constants related to the true precision matrix $\Omega$ in addition.
To see this, a careful look at the proof of Lemma \ref{lm:cond.CCK} in Appendix \ref{sec:pf.cond.CCK} reveals that the result can also be extended to the precision case as long as all the bounds still hold with coefficient matrices $A_t$ replaced by $\Omega A_t$.
This boils down to ensuring the following alternative version of Condition \ref{cond.A}:
\begin{condition}\label{cond.A.alt}
    For some $\beta>3/4$ and a constant $C'_0>0$, the coefficient matrices satisfy
    \begin{equation*}
        \max_{1\leq j\leq p}|(\Omega A_{t})_{j\cdot}| = \max_{1\leq j\leq p} \left(\sum_{l=1}^d (\Omega A_{t})^2_{jl}\right)^{1/2} \leq C'_0(1\vee t)^{-\beta}.
    \end{equation*}
\end{condition}
We shall see that Condition \ref{cond.A.alt} is implied by Condition \ref{cond.A}, with $C'_0 = C_0 |\Omega|_1$.
By triangle inequality,
\[
|(\Omega A_t)_{j\cdot}| = \left|\sum_{l=1}^p \Omega_{jl} (A_t)_{l\cdot}\right|_2 \leq \sum_{l=1}^p |\Omega_{jl} (A_t)_{l\cdot}|.
\]
Using H\"older's inequality, the right hand side can be further upper bounded by
\[
\sum_{l=1}^p |\Omega_{jl}| \max_{1\leq k\leq p} |(A_t)_{k\cdot}|.
\]
Note that $\max_{1\leq k\leq p}|(A_t)_{k\cdot}| \leq C_0 (1\vee t)^{-\beta}$, and $\max_{1\leq j\leq p} \sum_{l=1}^p |\Omega_{jl}| = |\Omega|_1$, thus Condition \ref{cond.A.alt} holds with $C'_0 = C_0 |\Omega|_1$.
Therefore, we arrive at the following lemma similar to Lemma \ref{lm:cond.CCK}.
\begin{lemma}\label{lm:cond.CCK.prec}
    For a constant $C>0$ related to $C_0$ and $c_0$,
    \begin{equation}\label{eq:like.lm.3.3}
        \sup_{u\in\mathbb{R}} \left| \mathbb{P}\left(n^{-1/2}|\tilde{T}^\Omega_{n,m} - \mathbb{E}[\tilde{T}^\Omega_{n,m}\mid \bm{\epsilon}_0]|_\infty \geq u \mid \bm{\epsilon_0}\right) - \mathbb{P}\left(n^{-1/2}|T^{\Omega Y}_n|_\infty\geq u\mid\bm{\epsilon}_0\right)\right|
        \leq C\left(\frac{m^4|\Omega|_1^8 \log^5(pn)}{n}\right)^{1/4}
    \end{equation}
    with probability no less than $1-4p^2\exp(-c_2\delta n^{1/2}/m^{1/2}) - 2\exp(-c_2 m^{2\epsilon} |\Omega|_1^{4\epsilon})$, where $\delta > 0$ and $0<\epsilon < 1$ are arbitrary constants, and $c_2>0$ is a constant related to $C_0$.
\end{lemma}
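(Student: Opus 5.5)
The plan is to rerun the proof of Lemma \ref{lm:cond.CCK} in Appendix \ref{sec:pf.cond.CCK} verbatim, with $A_t$ replaced by $\Omega A_t$ throughout. Condition \ref{cond.A.alt} holds with $C_0' = C_0|\Omega|_1$, so every bound that used $C_0$ now carries explicit powers of $|\Omega|_1$. Given $\bm{\epsilon}_0$, the blocks $B_j^\Omega - \mathbb{E}[B_j^\Omega\mid\bm{\epsilon}_0]$, $0\le j\le q_n$, are independent. This is because $\tilde{D}^\Omega_{i,m} = (\Omega\otimes\Omega)\tilde{D}_{i,m}$ is a fixed linear image of $\tilde{D}_{i,m}$ and so keeps the same $m$-dependence and triadic structure. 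The variance identity \eqref{eq:var.identity} transfers with the same proof, so $T_n^{\Omega *}$ and $T_n^{\Omega Y}$ are equal in law given $\bm{\epsilon}_0$. It then suffices to verify conditions (M.1), (M.2) and (E.1) of Theorem \ref{prop:CCK17} for these conditionally independent blocks on an event of high $\bm{\epsilon}_0$-probability.

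\emph{Step 1: (M.1).} The lower bound on the averaged conditional variances comes from Condition \ref{cond.G} applied to $\Omega\Gamma_k\Omega$, combined with the concentration of Lemma \ref{lm:V.conc}. For the precision version I will recompute the Frobenius norms of $\bm{\Gamma}_0^{(s,t)}$ and $\bm{\Gamma}_2^{(s,t)}$ under Condition \ref{cond.A.alt}. They gain a factor $|\Omega|_1^{8}$, since each block is a product of four coefficient rows. The expected conditional variance is of order $m^2|\Omega|_1^4$. Taking the deviation threshold proportional to that scale gives the concentration probability $1-4p^2\exp(-c_2\delta n^{1/2}/m^{1/2})$ with unchanged form: the $|\Omega|_1$ powers cancel after normalization, as in the covariance case. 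The lower bound constant $b_1$ should really be taken from the variance of the $\Omega$-transformed long-run covariance. I will treat it as bounded below in terms of $c_0$ and $c_e$, as the statement implicitly assumes, and absorb the normalization.

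\emph{Step 2: (M.2) and (E.1).} The fourth moments follow from Lemma \ref{lm:Lk.norm}, since conditionally Gaussian quadratic forms have $\|\cdot\|_4\lesssim\|\cdot\|_2$. This gives (M.2) with $\beta_n\asymp m^2|\Omega|_1^4$. For (E.1) I will repeat the Taylor series argument \eqref{eq:quantity.cond.E1} with normalization $\beta_n = m^2|\Omega|_1^4$. The Hanson--Wright step now controls $(V_j^{\Omega*})_{(s,t)}$ within $\eta = (m^2|\Omega|_1^4)^{1+\epsilon}$ of its mean with probability $1-2\exp(-c_2 m^{2\epsilon}|\Omega|_1^{4\epsilon})$, which produces the second failure term in the statement. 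Plugging $\beta_n^2 = m^4|\Omega|_1^8$ into Theorem \ref{prop:CCK17} and rescaling $u$ by $\sqrt{(q_n+1)/n}$ then yields \eqref{eq:like.lm.3.3}.

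The main obstacle is Step 2's exponential-moment bound. I need to check that the series $\sum_k k^k (V^{\Omega*}_j)^{k/2}/(k!\,\beta_n^k)$ still converges to at most $1$ plus a small term once $V^{\Omega*}_j\lesssim (m^2|\Omega|_1^4)^{1+\epsilon}$. This amounts to requiring $(m|\Omega|_1^2/e)^{1-\epsilon}\ge C_u+1$, which holds for large $m$ because $|\Omega|_1\ge c_e$ stays bounded below. Once that is in place, the rest is bookkeeping of $|\Omega|_1$ powers.
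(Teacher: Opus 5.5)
Your proposal is essentially the paper's proof. It replaces $A_t$ by $\Omega A_t$ via Condition~\ref{cond.A.alt} with $C_0'=C_0|\Omega|_1$ and reruns Lemma~\ref{lm:V.conc} to get deviations at scale $\delta m^2|\Omega|_1^4$. It then takes $\beta_n^\Omega\asymp m^2|\Omega|_1^4$ for (M.1)--(M.2), and redoes the Hanson--Wright and Taylor-series step at scale $(m^2|\Omega|_1^4)^{1+\epsilon}$ to obtain (E.1) once $(m|\Omega|_1^2/e)^{1-\epsilon}\ge C_u+1$.

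Two small corrections. First, the bound you want is $|\Omega|_1\ge\max_j\Omega_{jj}\ge\lambda_{\min}(\Omega)\ge C_e^{-1}$, not $|\Omega|_1\ge c_e$; the latter can fail, e.g.\ $\Sigma=4I_p$ with $c_e=C_e=4$ gives $|\Omega|_1=1/4$. Second, Condition~\ref{cond.G} is not assumed for $\Omega\Gamma_k\Omega$, so the diagonal lower bound you take for granted in (M.1) has to come from elsewhere. The paper supplies it in the proof of Lemma~\ref{lm:uncond.CCK.prec}, via $\Sigma^{\Omega*}=(\Omega\otimes\Omega)\Sigma^*(\Omega\otimes\Omega)$ and eigenvalue bounds, with constants depending on $c_e$ and $C_e$ rather than on $c_0$.
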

\begin{proof}[Proof of Lemma \ref{lm:cond.CCK.prec}]
    Define $B_j^{\Omega,\bm\epsilon_0} = B_j^\Omega - \mathbb{E}[B_j^\Omega \mid \bm\epsilon_0]$ for brevity.
    Similar to the proof of Lemma \ref{lm:cond.CCK} in Appendix \ref{sec:pf.cond.CCK}, we need to verify the following three conditions for any $1\leq s,t\leq p$:
    \begin{align*}
        & (q_n+1)^{-1}\sum_{j=0}^{q_n} \mathbb{E}[(B_j^{\Omega,\bm\epsilon_0})_{(s,t)}^2 \mid \bm\epsilon_0] \geq b_1^2,\\
        & (q_n+1)^{-1}\sum_{j=0}^{q_n} \mathbb{E}[(B_j^{\Omega,\bm\epsilon_0})_{(s,t)}^4 \mid \bm\epsilon_0] \leq b_2^2(\beta_n^\Omega)^2,\\
        & \mathbb{E}\left[ \exp\left(|(B_j^{\Omega,\bm\epsilon_0})_{(s,t)}| / \beta_n^\Omega\right) \mid \bm\epsilon_0 \right] \leq 2 \quad \text{for all } 0\leq j\leq q_n,
    \end{align*}
    for some constants $0< b_1\leq b_2$ related to $C_0$ and $c_0$, and a sequence of constants $\beta_n^\Omega$.
    It remains to find the rate of $\beta_n^\Omega$.
    Recall that we have found $\beta_n \asymp m^2$ in the proof of Lemma \ref{lm:cond.CCK}, $\beta_n^\Omega$ should also be of the same order with respect to $m$.
    The more important question is how it evolves with $|\Omega|_1$.
    Define a function $\mathcal{V}^\Omega(\bm{\epsilon}_0) \in \mathbb{R}^{p^2}$, such that each entry can be written as
    \begin{equation*}
        (\mathcal{V}^\Omega(\bm{\epsilon}_0))_{(s,t)} = (q_n+1)^{-1}\sum_{j=0}^{q_n}\mathbb{E}[(B_j^{\Omega, \bm{\epsilon}_0})_{(s,t)}^2\mid  \bm{\epsilon}_0].
    \end{equation*}
    Following the same proof as Lemma \ref{lm:V.conc}, with all coefficient matrices $A_j$ replaced by $\Omega A_j$, we arrive at the following result using Condition \ref{cond.A.alt} with $C_0' = C_0|\Omega|_1$.
    For a constant $c_2>0$ related to $C_0$,
    \begin{equation*}
        \mathbb{P}(|\mathcal{V}^\Omega(\bm{\epsilon}_0) - \mathbb{E}[\mathcal{V}^\Omega(\bm{\epsilon}_0)]|_\infty > \delta) \leq 4 p^2 \exp\left\{-c_2\left(\frac{n\delta^2}{|\Omega|_1^8 m^5} \wedge \frac{n^{1/2}\delta}{|\Omega|_1^4 m^{5/2}}\right)\right\}.
    \end{equation*}
    Equivalently, for any choice of universal constant $\delta>0$, with probability no less than $1-4p^2\exp(-c_2\delta n^{1/2}/m^{1/2})$, we have
    \[
    |\mathcal{V}^\Omega(\bm{\epsilon}_0) - \mathbb{E}[\mathcal{V}^\Omega(\bm{\epsilon}_0)]|_\infty \leq \delta m^2 |\Omega|_1^4.
    \]
    Meanwhile, we have for any $1\leq s,t\leq p$
    \[
    \mathbb{E}[(\mathcal{V}^\Omega(\bm{\epsilon}_0))_{(s,t)}] = (q_n+1)^{-1}\sum_{j=0}^{q_n}\mathbb{E}[(B_j^\Omega)_{(s,t)}^2] = O(m^2|\Omega|_1^4).
    \]
    Therefore the first two of the aforementioned three conditions hold with $\beta_n^\Omega \asymp m^2|\Omega|_1^4$.
    For the third condition, by Taylor expansion, Fubini Theorem and Lemma \ref{lm:Lk.norm},
    \[
    \mathbb{E}[\exptext(|(B_j^{\bm{\epsilon}_0})_{(s,t)}| / m^2|\Omega_1|^4)\mid  \bm{\epsilon}_0] = \sum_{k=0}^\infty \frac{\mathbb{E}[|(B_j^{\Omega,\bm{\epsilon}_0})_{(s,t)}|^k\mid \bm{\epsilon}_0]}{k!m^{2k}|\Omega|_1^{4k}} \leq 1 + C_u \sum_{k=1}^\infty \frac{k^k (V_j^{\Omega*}(\bm{\epsilon}_{j,0}, \bm{\epsilon}_{j+1,0}))^{k/2}_{(s,t)}}{k!m^{2k}|\Omega|_1^{4k}}.
    \]
    Similar to the manipulation for verifying \eqref{eq:cond.CCK.E1} in Appendix \ref{sec:pf.cond.CCK}, by replacing $A_j$ with $\Omega A_j$, we can show that for any choice of $0<\epsilon<1$, $|(V_j^{\Omega*}(\bm{\epsilon}_{j,0}, \bm{\epsilon}_{j+1,0}))_{(s,t)} - \mathbb{E}[(V_j^{\Omega*}(\bm{\epsilon}_{j,0}, \bm{\epsilon}_{j+1,0}))_{(s,t)}]| \lesssim (m^2|\Omega|_1^4)^{1+\epsilon}$ with probability no less than $1-2\exp(-c_2 (m^2|\Omega|_1^4)^{\epsilon})$.
    By Stirling's Formula, we have
    \[
    \mathbb{E}[\exptext(|(B_j^{\bm{\epsilon}_0})_{(s,t)}| / m^2|\Omega_1|^4)\mid  \bm{\epsilon}_0] \leq 1+C_u\sum_{k=1}^\infty \frac{e^k}{\sqrt{2\pi k}(m|\Omega|_1^2)^{k(1-\epsilon)}}
    \]
    with probability no less than $1-2\exp(-c_2 (m^2|\Omega|_1^4)^{\epsilon})$.
    This bound converges to one with high probability as $n\rightarrow\infty$.
    For finite-sample performance, the third condition holds with $\beta_n^\Omega = m^2|\Omega|_1^4$ when $m$ is large enough such that $(m|\Omega|_1^2/e)^{1-\epsilon} \geq C_u+1$.
    
    Therefore, with probability no less than $1-4p^2\exp(-c_2\delta n^{1/2} / m^{1/2}) - 2\exp(-c_2 (m^2|\Omega|_1^4)^{\epsilon})$, all three conditions hold simultaneously, with $\beta_n^\Omega \asymp m^2|\Omega|_1^4$.
    This finishes the proof of Lemma \ref{lm:cond.CCK.prec}.
\end{proof}

Condition \ref{cond.A.alt} also ensures a similar result as Lemma \ref{lm:uncond.CCK}.
For each $j\in [q_n+1]$, let $Z_j^\Omega \in \mathbb{R}^{p^2}$ be a Gaussian random vector with zero mean and variance $\var([V_j^\Omega(\bm{\epsilon}_{j,0})]^{1/2}Y_j)$.
Then the normalized partial sum $S_n^{\Omega Z} = n^{-1/2}\sum_{j=1}^{q_n+1} Z_j^\Omega$ can be considered as a Gaussian approximation of $T_n^{\Omega Y}$.
The following result holds.
\begin{lemma}\label{lm:uncond.CCK.prec}
    For a constant $C'>0$ related to $C_0$, $c_0$, $C_e$ and $c_e$,
    \begin{equation}\label{eq:like.lm.3.4}
\sup_{u\in\mathbb{R}}\left| \mathbb{P}(n^{-1/2}|T^{\Omega Y}_n|_\infty \geq u) - \mathbb{P}(|S_n^{\Omega Z}|_\infty \geq u)\right| \leq C'\left(\frac{m^4|\Omega|_1^8 \log^5(pn)}{n}\right)^{1/4}.
\end{equation}
\end{lemma}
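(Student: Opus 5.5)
The plan is to mirror the proof of Lemma \ref{lm:uncond.CCK} in Appendix \ref{sec:pf.uncond.CCK}, with $A_t$ replaced by $\Omega A_t$ throughout. We then apply Theorem \ref{prop:CCK17} to the independent summands $U_j^\Omega = [V_j^\Omega(\bm{\epsilon}_{j,0})]^{1/2}Y_j$, $1\leq j\leq q_n+1$. Independence across $j$ holds for the same reason as before: the blocks $\bm{\epsilon}_{j,0}$ are disjoint sets of i.i.d.\ innovations and the $Y_j$ are independent of $\bm{\epsilon}_0$. It then suffices to verify (M.1), (M.2) and (E.1) for $\{U_j^\Omega\}$ with $b_1,b_2$ depending only on $C_0,c_0,C_e,c_e$ and with $\beta_n^\Omega \asymp m^2|\Omega|_1^4$. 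Plugging these into Theorem \ref{prop:CCK17} and rescaling by $(q_n+1)/n\asymp 1/m$ gives exactly $(m^4|\Omega|_1^8\log^5(pn)/n)^{1/4}$.

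The first step is the moment comparison. Lemma \ref{lm:Lk.norm.U} carries over verbatim, since $(V_j^\Omega(\bm{\epsilon}_{j,0}))_{(s,t),(s,t)}$ is still a nonnegative Gaussian quadratic form in $\bm{\epsilon}_{j,0}$. Hence $\|(U_j^\Omega)_{(s,t)}\|_k\leq C_1k\|(U_j^\Omega)_{(s,t)}\|_2$. By the tower rule, as in \eqref{eq:EU.partition}, $\mathbb{E}[(U_j^\Omega)^2_{(s,t)}]=\mathbb{E}[(B_j^\Omega)^2_{(s,t)}]$. By \eqref{eq:E.F0.sq} with Condition \ref{cond.A.alt} ($C_0'=C_0|\Omega|_1$), this is $O(m^2|\Omega|_1^4)$. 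That gives (M.2) through the fourth-moment bound, and (E.1) through the Taylor expansion and Stirling argument with $\beta_n^\Omega = Cm^2|\Omega|_1^4$, exactly as in the covariance case.

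The main obstacle is (M.1), the lower bound on the normalized variance $(q_n+1)^{-1}\sum_j\mathbb{E}[(U_j^\Omega)^2_{(s,t)}]$, which equals $m\sum_{k=1}^{m-1}\zeta^\Omega_{k,st}$. In the covariance case this followed from Condition \ref{cond.G}, but that condition is stated for $\Gamma_k$, not for $\Omega\Gamma_k\Omega$. My plan is to use $\sum_{k=1}^\infty\zeta^\Omega_{k,st}=\sum_k[(\Omega\Gamma_k\Omega)_{ss}(\Omega\Gamma_k\Omega)_{tt}+(\Omega\Gamma_k\Omega)_{st}(\Omega\Gamma_k\Omega)_{ts}]$, which is the variance of the $(s,t)$ entry of $\Omega\,\cdot\,\Omega$ applied to the limiting $Z$.

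To bound this from below:
\begin{itemize}
\item Write the entry as $(e_s^\top\Omega\otimes e_t^\top\Omega)$ acting on $\vectorize(\mathfrak{S})$. Its variance is at least the smallest eigenvalue of $\Sigma^*$ restricted to the relevant direction, times $|\Omega e_s|^2|\Omega e_t|^2\geq C_e^{-4}$.
\item Alternatively, use the $k=0$ Gaussian term $(\Omega\Sigma\Omega)_{ss}(\Omega\Sigma\Omega)_{tt}=\Omega_{ss}\Omega_{tt}\geq C_e^{-2}$, together with the fact that for Gaussian linear processes each $\zeta^\Omega_{k,st}\geq0$ on the diagonal. This nonnegativity holds because $\zeta_k$ is the second moment of a projection term, as in the proof of Lemma \ref{lm:m.dep.conv}.
\end{itemize}
This lower bound is where $C_e,c_e$ enter the constant $b_1$. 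Truncating at $m$ costs $O(|\Omega|_1^4m^{(-2\beta+1)\vee(-4\beta+3)})$, which is negligible for the chosen $m$, so the truncated sum stays bounded below for large $n$. With all three conditions verified, Theorem \ref{prop:CCK17} yields \eqref{eq:like.lm.3.4}.
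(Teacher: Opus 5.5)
\paragraph{The gap: (M.1).} Apart from (M.1), your plan is the paper's proof. The independence of the $U_j^\Omega$, the transfer of Lemma~\ref{lm:Lk.norm.U}, and (M.2) and (E.1) with $\beta_n^\Omega\asymp m^2|\Omega|_1^4$ are what the paper does. Neither of your two routes to (M.1) works.

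\emph{First bullet.} It is circular. Condition~\ref{cond.G} controls only the diagonal of $\Sigma^*$, i.e.\ $w^\top\Sigma^*w$ at coordinate vectors $w=e_{(s,t)}$. You need it at $w=(\Omega\otimes\Omega)e_{(s,t)}$, and ``the smallest eigenvalue of $\Sigma^*$ restricted to the relevant direction'' is exactly that unknown. The paper supplies it with the global bound $\lambda_{\min}(\Sigma^*)\ge C\lambda_{\min}^2(\Sigma)$, from $\Sigma^*\succeq C\,\Sigma\otimes\Sigma$ via Bartlett's formula. That bound cannot hold. $\Sigma^*$ is the long-run covariance of $\vectorize(X_tX_t^\top-\Sigma)$, which lies in the subspace of vectorized symmetric matrices. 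Hence $\Sigma^*\vectorize(W)=0$ for every antisymmetric $W$, and $\lambda_{\min}(\Sigma^*)=0$ whenever $p\ge2$.

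\emph{Second bullet.} It conflates two expansions. In the lag expansion $\sum_k[(\Omega\Gamma_k\Omega)_{ss}(\Omega\Gamma_k\Omega)_{tt}+(\Omega\Gamma_k\Omega)_{st}(\Omega\Gamma_k\Omega)_{ts}]$, the $k=0$ term is $\Omega_{ss}\Omega_{tt}+\Omega_{st}^2$, but the terms with $k\neq0$ can be negative. The $\zeta_j$ of Lemma~\ref{lm:m.dep.conv} are nonnegative, but they are second moments of projection terms indexed by innovation lag, not these lag products. None of them is bounded below by $\Omega_{ss}\Omega_{tt}$.

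\paragraph{The needed bound can fail under the stated hypotheses.} Take $d=2$, $\epsilon_t=(e_t,h_t)^\top$, and the MA(1) process
$X_t=\frac12(e_t+e_{t-1}-h_t,\;e_t-e_{t-1}-h_t,\;2h_t)^\top$, which satisfies Condition~\ref{cond.A}.
\begin{itemize}
\item $\Omega=\Sigma^{-1}$ has rows $(2,0,1)$, $(0,2,1)$, $(1,1,2)$ and eigenvalues $2$ and $2\pm\sqrt2$, so the eigenvalue bounds hold.
\item A direct computation gives long-run variances of $X_{t,s}X_{t,t}$ of at least $3/8$ for all $s,t$, so Condition~\ref{cond.G} holds.
\item Yet $\Omega X_t=(e_t+e_{t-1},\,e_t-e_{t-1},\,e_t+h_t)^\top$, so $(\Omega X_t)_1(\Omega X_t)_2=e_t^2-e_{t-1}^2$ telescopes.
\item Hence $(D_i^\Omega)_{(1,2)}=0$ and $(U_j^\Omega)_{(1,2)}\equiv0$, so (M.1) fails for $(s,t)=(1,2)$ although $\Omega_{11}\Omega_{22}=4$.
\end{itemize}
An extra ingredient is therefore required. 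The most natural is Condition~\ref{cond.G} imposed on $\Omega\Gamma_k\Omega$, i.e.\ on the process $\Omega X_t$. Alternatively, coordinates with degenerate long-run variance must be handled separately.

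\paragraph{A smaller point on the rate.} With $\beta_n^\Omega\asymp m^2|\Omega|_1^4$ and only $q_n+1\asymp n/m$ independent summands, Theorem~\ref{prop:CCK17} yields $m^5$, not $m^4$, inside the fourth root. Rescaling $u$ does not change this. To reach the stated rate, use that the $\tilde D^\Omega_{i,m}$ are martingale differences. Then $\mathbb{E}[(B_j^\Omega)^2_{(s,t)}]=O(m|\Omega|_1^4)$, so $\beta_n^\Omega\asymp m|\Omega|_1^4$ suffices and gives a bound of order $m^3$ inside the fourth root, within the stated rate.
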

\begin{proof}[Proof of Lemma \ref{lm:uncond.CCK.prec}]
    Let $U_j^\Omega = [V_j^\Omega(\bm\epsilon_{j,0})]^{1/2} Y_j$.
    Similar to the proof of Lemma \ref{lm:uncond.CCK} in Appendix \ref{sec:pf.uncond.CCK}, we need to verify the following three conditions for any $1\leq s,t\leq p$:
    \begin{align*}
        & (q_n+1)^{-1}\sum_{j=0}^{q_n} \mathbb{E}[(U_j^\Omega)_{(s,t)}^2 ] \geq b_1^2,\\
        & (q_n+1)^{-1}\sum_{j=0}^{q_n} \mathbb{E}[(U_j^\Omega)_{(s,t)}^4 ] \leq b_2^2(\beta_n^\Omega)^2,\\
        & \mathbb{E}\left[ \exp\left(|(U_j^\Omega)_{(s,t)}| / \beta_n^\Omega\right) \right] \leq 2 \quad \text{for all } 1\leq j\leq q_n+1,
    \end{align*}
    for some constants $0< b_1\leq b_2$ related to $C_0$ and $c_0$, and a sequence of constants $\beta_n^\Omega$.
    It remains to find the rate of $\beta_n^\Omega$.

    For the first condition, recall the proof of Lemma \ref{lm:uncond.CCK} that $\mathbb{E}[(U_j)_{(s,t)}^2] = 3m\sum_{k=1}^m \zeta_{k,st}$.
    The summation on the right hand side converges to the diagonal entries of the long-run covariance matrix of the process $\mathcal X_t$ as $m\rightarrow\infty$, which was lower bounded by Condition \ref{cond.G}.
    Recall that we have already denoted this long-run covariance matrix as $\Sigma^*$.
    Similarly, we have $\mathbb{E}[(U_j^\Omega)_{(s,t)}^2] = 3m\sum_{k=1}^m \zeta_{k,st}^\Omega$, where
    \begin{multline*}
        \zeta_{k,st}^\Omega = \sum_{l=0}^\infty \sum_{l'=0}^\infty \bigg[\bigg.  (\Omega A_{l+k} A_{l'+k}^\top \Omega )_{ss}(\Omega A_l A_{l'}^\top \Omega )_{tt} + (\Omega A_{l+k} A_{l'+k}^\top \Omega )_{tt}(\Omega A_l A_{l'}^\top \Omega )_{ss}\\
        + (\Omega A_{l+k} A_{l'+k}^\top \Omega )_{st}(\Omega A_l A_{l'}^\top \Omega )_{ts} + (\Omega A_{l+k} A_{l'+k}^\top \Omega )_{ts}(\Omega A_l A_{l'}^\top\Omega )_{st}\bigg.\bigg],
    \end{multline*}
    or alternatively, using Kronecker product $\otimes$, Hadamard product $\circ$, and commutation matrix $K_{pp}$,
    \[
    \zeta_k^\Omega = \sum_{l=0}^\infty \sum_{l'=0}^\infty (I_{p^2} + K_{pp}) \bigg[ \text{diag}(L_0^\Omega) \otimes \text{diag}(L_1^\Omega) + \vectorize(L_1^\Omega \circ (L_0^\Omega)^\top) \bigg],
    \]
    where $L_1^\Omega = \Omega A_{l+k} A_{l'+k}^\top \Omega$, and $L_0^\Omega = \Omega A_l A_{l'}^\top \Omega$.
    When $m\rightarrow\infty$, the sum $\sum_{k=1}^m \zeta_{k,st}^\Omega$ converges to the diagonal entries of the long-run covariance matrix of the process $\mathcal X_t^\Omega$.
    By Roth's Column Lemma (\cite{roth34on}, also called the \emph{vec trick}), we have $\mathcal X_t^\Omega = (\Omega\otimes\Omega)\mathcal X_t$.
    Therefore, the long-run covariance matrix of $\mathcal X_t^\Omega$ can be written as $\Sigma^{\Omega*} = (\Omega\otimes\Omega)\Sigma^*(\Omega\otimes\Omega)$.
    Note that for any positive semi-definite matrix, the diagonal entries are lower bounded by its smallest eigenvalue.
    Thus
    \[
    \Sigma^{\Omega*}_{(s,t),(s,t)} \geq \lambda_{\min}(\Sigma^{\Omega*}) \geq \lambda_{\min}(\Sigma^*) \cdot \lambda_{\min}^2(\Omega\otimes\Omega).
    \]
    The long-run covariance matrix $\Sigma^*$ of $\mathcal X_t$ can be expressed by Bartlett's Formula as $\Sigma^* = 4\pi \int_{-\pi}^{\pi} f_X(\omega) \otimes f_X(-\omega) d\omega$.
    Recall that $\Sigma = \int_{-\pi}^\pi f_X(\omega) d\omega$.
    Cauchy-Schwarz Inequality implies $\Sigma^* \succeq C \cdot (\Sigma \otimes \Sigma)$ for some universal constant $C>0$, thus $\lambda_{\min}(\Sigma^*) \geq C \cdot \lambda_{\min}^2(\Sigma)$.
    Thus we have $\Sigma^{\Omega*}_{(s,t),(s,t)} \geq C c_e^2 C_e^{-4}$, which is a constant related to $c_e$ and $C_e$, the lower- and upper-bounds of the true underlying covariance matrix.
    Therefore $\mathbb{E}[(U_j^\Omega)_{(s,t)}^2]$ is lower-bounded by the order of $O(m)$, up to a constant related to $C_e$ and $c_e$. This proves the first condition.

    For the second condition, note that
    \[
    \mathbb{E}[(U_j^\Omega)_{(s,t)}^2] = \mathbb{E}[ \mathbb{E}[ (U_j^\Omega)_{(s,t)}^2 \mid \bm\epsilon_{j,0}]] = \mathbb{E}[(V_j^\Omega(\bm\epsilon_{j,0}))_{(s,t),(s,t)}] = O(m^2|\Omega|_1^4).
    \]
    A similar result like Lemma \ref{lm:Lk.norm.U} holds for $U_j^\Omega$.
    Following the same proof, we can show that there exists some universal constant $C_1>0$ such that for all $k\geq 2$, $\|(U_j^\Omega)_{(s,t)}\|_k \leq C_1 k\|(U_j^\Omega)_{(s,t)}\|_2$.
    Therefore, we arrive at
    \[
    (q_n+1)^{-1}\sum_{j=0}^{q_n} \mathbb{E}[(U_j^\Omega)_{(s,t)}^4 ] \lesssim m^4|\Omega|_1^8.
    \]
    The second condition holds with $\beta_n^\Omega = m^2|\Omega|_1^4$.

    For the third condition, by Taylor Expansion, Funibi's Theorem, the above result on the norms of $U_j^\Omega$, and Stirling's Formula, we have
    \[
    \mathbb{E}[\exp(|(U_j^\Omega)_{(s,t)}|/m^2|\Omega|_1^4)] = 1+\sum_{k=1}^\infty \frac{\|(U_j)_{(s,t)}\|_k^k}{k!(m^2|\Omega|_1^4)^{k}} \leq 1+C_u\sum_{k=1}^\infty \frac{e^k}{\sqrt{2\pi k}(m|\Omega|_1^2)^{k}} \leq 2.
    \]
    Since all three conditions hold simultaneously with $\beta_n^\Omega \asymp m^2|\Omega|_1^4$, the proof of Lemma \ref{lm:uncond.CCK.prec} is finished by Theorem \ref{prop:CCK17}.

\end{proof}

Finally, we provide a Gaussian comparison result between $Z^\mathfrak{S}$ and $S_n^{\Omega Z}$ similar to Lemma \ref{lm:GA.comparison}.
Denote their respective covariance matrices by $\Sigma^\mathfrak{S}$ and $\Sigma_S^\Omega$.
Following the same proof as in Lemma \ref{lm:GA.comparison}, but replacing $A_t$ with $\Omega A_t$, Condition \ref{cond.A.alt} ensures that $|\Sigma^{\Omega*} - \Sigma^\mathfrak{S}|_\infty \lesssim |\Omega|_1^4 n^{-\tilde{\beta}}$ and $|\Sigma^{\Omega*} - \Sigma_S^\Omega|_\infty \lesssim |\Omega|_1^4 m^{-\tilde{\beta}}$.
Therefore $|\Sigma^\mathfrak{S} - \Sigma_S^\Omega|_\infty \lesssim |\Omega|_1^4 m^{-\tilde{\beta}}$.
We have already discussed that all diagonal entries of $\Sigma^{\Omega*}$ are upper-bounded, and lower-bounded by a constant.
By Theorem \ref{thm:comparison}, for a constant $C>0$ related to $C_0$, $c_0$,
\begin{equation}\label{eq:like.lm.3.5}
    \sup_{u\in \mathbb{R}}\left|\mathbb{P}(|Z^\mathfrak{S}|_\infty \geq u) - \mathbb P(|S_n^{\Omega Z}|_\infty \geq u)\right| \leq C |\Omega|_1^2 \frac{\log(p)}{m^{\tilde\beta/2}}.
\end{equation}

The proof of Lemma \ref{lm:GA.cov.linear} is finished by combining \eqref{eq:like.lm.3.1}, \eqref{eq:like.lm.3.2}, \eqref{eq:like.lm.3.3}, \eqref{eq:like.lm.3.4}, \eqref{eq:like.lm.3.5}, and following the same proof of Theorem \ref{thm:GA.cov} shown in Section \ref{sec:GA.cov}.

\subsection{Proof of Lemma \ref{lm:Sigma.op.small}}\label{sec:apdx.Sigma.op.small}
    Note that $\hat{\Sigma}_n - \Sigma$ is symmetric and real-valued, we can write
    \begin{equation*}
        |\hat{\Sigma}_n - \Sigma|_{\op} = \sup_{v\in\mathbb{R}^p, |v|=1} |v^\top (\hat{\Sigma}_n - \Sigma) v|,
    \end{equation*}
    the right-hand side being a supremum over the $p$-dimensional unit Euclidean sphere, denoted by $\mathbb{S}^{p-1}$.
    We can discretize this compact sphere by nets; see Section 5 of \cite{vershynin2010introduction}.
    Let $\mathcal{N}_{1/4}$ be a $1/4$-net of $\mathbb{S}^{p-1}$.
    The cardinality of this net $|\mathcal{N}_{1/4}| \leq 9^p$, and
    \begin{equation*}
        \sup_{v\in\mathbb{R}^p, |v|=1} |v^\top (\hat{\Sigma}_n - \Sigma) v| \leq \frac{4}{3}\max_{v\in\mathcal{N}_{1/4}} |v^\top (\hat{\Sigma}_n - \Sigma) v|.
    \end{equation*}
    Therefore by union bound,
    \begin{align*}
            \mathbb{P}\left(n^{1/2}|\hat{\Sigma}_n - \Sigma|_{\op} > \delta\right) &\leq \mathbb{P}\left(n^{1/2}\max_{v\in\mathcal{N}_{1/4}} |v^\top (\hat{\Sigma}_n - \Sigma)v| > 3\delta/4\right)\\
            &\leq 9^p \sup_{|v|=1}\mathbb{P}\left(n^{1/2} |v^\top (\hat{\Sigma}_n - \Sigma)v| > 3\delta/4\right).
    \end{align*}
    For any $|v| = 1$, let $\xi_i = v^\top X_i\in\mathbb{R}$.
    Note that $X_i\in\mathbb{R}^p$ is jointly Gaussian, therefore $\xi_i$ is a Gaussian process.
    Let $\xi = [\xi_1,\cdots,\xi_n]^\top$. This is also a centered Gaussian random vector in $\mathbb{R}^n$.
    Denote the autocovariance of any $\xi_i$ as
    \begin{equation*}
        \gamma_k^{\xi} = \cov(\xi_i, \xi_{i+k}) = v^\top \Gamma_k v,
    \end{equation*}
    recall that $\Gamma_k$ is the $k$-lag autocovariance matrix of the process $X_i$.
    Thus the covariance matrix of $\xi$ can be written as
    \begin{equation*}
        \Sigma^\xi =
        \begin{bmatrix}
            \gamma^\xi_0 & \gamma^\xi_1 & \gamma^\xi_2 &\cdots& \gamma^\xi_{n-1}\\
            \gamma^\xi_{-1} & \gamma^\xi_0 & \gamma^\xi_1 &\cdots& \gamma^\xi_{n-2}\\
            \gamma^\xi_{-2} & \gamma^\xi_{-1} & \gamma^\xi_0 &\cdots& \gamma^\xi_{n-3}\\
            \vdots & \vdots & \vdots & \ddots & \vdots\\
            \gamma^\xi_{n-1} & \gamma^\xi_{n-2} & \gamma^\xi_{n-3} &\cdots& \gamma^\xi_0
        \end{bmatrix}
    \end{equation*}
    Let $\eta$ be a standard Gaussian random vector on $\mathbb{R}^n$. By Lemma \ref{lm:hanson.wright}, we have
    \begin{align*}
            \mathbb{P}\left(n^{1/2} |v^\top (\hat{\Sigma}_n - \Sigma)v| > 3\delta/4\right) &= \mathbb{P}\left(\left|\sum_{i=1}^n (\xi_i^2 - \mathbb{E}[\xi_i^2])\right| > \frac{3n^{1/2}\delta}{4}\right)\\
            &= \mathbb{P}\left(|\eta^\top \Sigma^\xi \eta - \mathbb{E}[\eta^\top \Sigma^\xi \eta]| > \frac{3n^{1/2}\delta}{4}\right).
    \end{align*}
    To apply Hanson-Wright inequality on the right hand side, it is required to bound the Frobenius norm and operator norm of $\Sigma^\xi$.
    We claim that the order is
    \begin{equation}\label{eq:Sigma.xi.norm}
        |\Sigma^\xi|_{\op} \asymp |\Sigma^\xi|_{\F} \asymp p n^{(1-\beta) \vee 0},
    \end{equation}
    up to a constant related to $C_0$.
    The proof of \eqref{eq:Sigma.xi.norm} is delayed to Appendix \ref{app:Sigma.xi.norm}.
    Lemma \ref{lm:Sigma.op.small} is proved by applying Lemma \ref{lm:hanson.wright}.

\subsection{Proof of Order in \eqref{eq:Sigma.xi.norm}}\label{app:Sigma.xi.norm}

Note that for Gaussian linear process \eqref{LinearProcess},
\begin{align*}
        |\Gamma_k|_{\op}^2 &= \sup_{|v|=1} \left|\Gamma_k v\right|^2 = \sup_{|v|=1} \left|\sum_{t=0}^\infty A_t A_{t+k} v\right|^2\\
        &= \sup_{|v|=1} \sum_{j=1}^p \left[\left(\sum_{t=0}^\infty A_t A_{t+k}\right)_{j\cdot} v\right]^2 \\
        &= \sup_{|v|=1} \sum_{j=1}^p \left[\sum_{l=1}^p\sum_{t=0}^\infty\sum_{u=1}^d (A_t)_{ju}(A_{t+k})_{lu} v_l\right]^2\\
        &\leq  \sup_{|v|=1} \sum_{j=1}^p \left[\sum_{l=1}^p v_l \sum_{t=0}^\infty \left(\sum_{u=1}^d (A_t)_{ju}^2\right)^{1/2} \left(\sum_{u=1}^d (A_{t+k})_{lu}^2\right)^{1/2}\right]^2\\
        &\leq C_0^2 \sup_{|v|=1} \sum_{j=1}^p \left[\sum_{l=1}^p v_l \sum_{t=0}^\infty (1\vee t)^{-\beta} (1\vee (t+|k|))^{-\beta}\right]^2\\
        &\leq C_0^2 p^2 |k|^{-(2\beta \wedge (4\beta-2))}.
\end{align*}
Here, the first inequality follows Cauchy-Schwarz inequality, the second inequality is implied by Condition \ref{cond.A}, and the last follows Cauchy-Schwarz inequality. Thus we have
\begin{equation*}
    |\gamma_k^{\xi}| \lesssim p (1\wedge |k|)^{-(\beta\wedge (2\beta-1))}.
\end{equation*}
This implies the order of both operator norm and Frobenius norm of $\Sigma^\xi$ by
\begin{align*}
        |\Sigma^\xi|_{\op} \leq \max_{1\leq i\leq p} \sum_{j=1}^p |\gamma_{i-j}| \leq 2\sum_{k=-n+1}^{n-1} |\gamma_k^\xi| \lesssim p n^{(1-\beta)\vee 0},
\end{align*}
and
\begin{align*}
        |\Sigma^\xi|_{\F}^2 \leq \sum_{k=-n}^n (n-k)|\gamma^\xi_k|^2 \lesssim p^2 n\sum_{k=-n}^n (1\wedge |k|)^{-(2\beta\wedge (4\beta-2))} \asymp p^2 n^{(2-2\beta)\vee 0}.
\end{align*}

\subsection{Proof of Lemma \ref{lm:GA.comparison.prec}}\label{sec:apdx.GA.comparison.prec}
Denote the covariance matrix of $Z^\mathfrak{S}$ by $\Sigma^\mathfrak{S}$, and the covariance matrix of $Z^\Omega$ by $\Sigma^\Omega$.
For convenience, for the remainder of the proof, denote $\varpi_1 = \vectorize(n^{1/2}\mathfrak{W})$, and $\varpi_2 = -\vectorize(n^{1/2} \Omega \mathfrak{S}\Omega)$.
Then \eqref{eq:residual.tail.bound} can be written as
\begin{equation}\label{eq:residual.tail.bound.redux}
        \mathbb{P}\left(|\varpi_1 - \varpi_2|_\infty >\delta \right) \leq 2\exp \left[-c_2' \left(\frac{n^{((2\beta-1/2) \wedge 3/2)}\delta}{p^2} \wedge \frac{n^{((\beta-1/4) \wedge 3/4)}\delta^{1/2}}{p}\right) + 3p\right]
\end{equation}
for some constant $c'_2>0$ related to $C_0$ and $c_e$.
Using $\mathbb{E}[Y^2] = \int_0^\infty 2t\,\mathbb{P}(Y>t)dt$, we can compute by \eqref{eq:residual.tail.bound.redux} that
\[
\mathbb{E}[|\varpi_1 - \varpi_2|_\infty^2] \lesssim \frac{p^{8}}{n^{(4\beta - 1)\wedge 3}},
\]
up to a constant related to $C_0$ and $c_e$.
Meanwhile,
\[
    \Sigma^\Omega - \Sigma^\mathfrak{S} = \mathbb{E}[\varpi_1 \varpi_1^\top] - \mathbb{E}[\varpi_2 \varpi_2^\top]
    = \mathbb{E}[(\varpi_1-\varpi_2) (\varpi_1-\varpi_2)^\top] + \mathbb{E}[(\varpi_1-\varpi_2) \varpi_2^\top] + \mathbb{E}[\varpi_2 (\varpi_1-\varpi_2)^\top].
\]
By triangle inequality and Cauchy-Schwarz Inqeuality,
\[
    |\Sigma^\Omega - \Sigma^\mathfrak{S}|_\infty \leq \mathbb{E}[|\varpi_1-\varpi_2|_\infty^2] + 2\sqrt{\mathbb{E}[|\varpi_1-\varpi_2|_\infty^2] \max_{1\leq s,t\leq p}\mathbb{E}[(\varpi_2)_{(s,t)}]}.
\]
Note that $\max_{1\leq s,t\leq p}\mathbb{E}[(\varpi_2)^2_{(s,t)}]$ is the maximum diagonal entry of the covariance matrix $\Sigma^\mathfrak{S}$.
When $n$ is large enough, it is upper bounded by $|\Omega|_1^4$ times a constant related only to $C_0$, following a similar computation as \eqref{eq:Sigma.S.close}, with $A_j$ replaced by $\Omega A_j$ and applying Condition \ref{cond.A.alt} instead of Condition \ref{cond.A}.
Therefore,
\[
    |\Sigma^\Omega - \Sigma^\mathfrak{S}|_\infty \lesssim \frac{p^{8}}{n^{(4\beta - 1)\wedge 3}} + \frac{|\Omega|_1^2 p^{4}}{n^{(2\beta - 1/2)\wedge 3/2}}
\]
As long as we want the right hand side to be asymptotically small, the second term dominates.
The assertion in Lemma \ref{lm:GA.comparison.prec} is then proved by an application of Theorem \ref{thm:comparison}.

\section{Proof of Section \ref{sec:bootstrap.prec}}
\subsection{Proof of Inequality \eqref{eq:B.check.diamond}}\label{app:X.check.diamond}
Note that
\begin{align*}
        &\quad \hat{\Omega}_n (X_j X_j^\top -\hat{\Sigma}_n) \hat{\Omega}_n - \Omega (X_j X_j^\top -\hat{\Sigma}_n) \Omega\\
        &= \hat{\Omega}_n (X_j X_j^\top -\hat{\Sigma}_n) \hat{\Omega}_n - \Omega (X_j X_j^\top -\hat{\Sigma}_n) \hat{\Omega}_n  + \Omega (X_j X_j^\top -\hat{\Sigma}_n) \hat{\Omega}_n - \Omega (X_j X_j^\top -\hat{\Sigma}_n)\Omega\\
        &= (\hat{\Omega}_n - \Omega)(X_j X_j^\top -\hat{\Sigma}_n)\hat{\Omega}_n + \Omega (X_j X_j^\top -\hat{\Sigma}_n) (\hat{\Omega}_n - \Omega)\\
        &= (\hat{\Omega}_n - \Omega)(X_j X_j^\top -\hat{\Sigma}_n)\Omega + \Omega (X_j X_j^\top -\hat{\Sigma}_n) (\hat{\Omega}_n - \Omega)  + (\hat{\Omega}_n - \Omega)(X_j X_j^\top -\hat{\Sigma}_n) (\hat{\Omega}_n - \Omega).
\end{align*}
For notational convenience, let
$\check M_{i,l} = \sum_{j=i-l+1}^i (X_j X_j^\top - \hat\Sigma_n)$, $\check M_{i,l}^\Omega = \sum_{j=i-l+1}^i \hat\Omega_n (X_j X_j^\top - \hat\Sigma_n) \hat\Omega_n$, and $\check M_{i,l}^{\Omega,\diamond} = \sum_{j=i-l+1}^i \Omega(X_j X_j^\top - \hat\Sigma_n)\Omega$, such that $\check B_{i,l} = \vectorize(\check M_{i,l})$, $\check{B}^\Omega_{i,l} = \vectorize(\check{M}^\Omega_{i,l})$, and $\check{B}_{i,l}^{\Omega,\diamond} = \vectorize(\check{M}_{i,l}^{\Omega,\diamond})$.
Sum both sides of the first equation over $j$ from $i-l+1$ to $i$, and we get
\[
\check M_{i,l}^\Omega - \check M_{i,l}^{\Omega,\diamond} = (\hat \Omega_n - \Omega)\check M_{i,l}\Omega + \Omega\check M_{i,l}(\hat\Omega_n - \Omega) + (\hat \Omega_n - \Omega)\check M_{i,l} (\hat\Omega_n - \Omega).
\]
Using the triangle inequality and the fact that $|AB|_\infty \leq |A|_1 |B|_\infty$, we have
\begin{align*}
    \begin{split}
        & |\check{B}_{i,l}^{\Omega} - \check{B}_{i,l}^{\Omega,\diamond}|_\infty \leq 2|\Omega|_1 |\check B_{i,l}|_\infty |\hat{\Omega}_n - \Omega|_1 +  |\check B_{i,l}|_\infty |\hat{\Omega}_n - \Omega|_1^2.
    \end{split}
\end{align*}
The proof is finished by noting that $|\hat\Omega_n - \Omega|_1 \leq p|\hat\Omega_n - \Omega|_\infty$.

\subsection{Proof of Lemma \ref{lm:B.check.diamond}}\label{sec:apdx.B.check.diamond}
    We shall bound the two terms on the right hand side of \eqref{eq:B.check.diamond} separately.
    Define $\mathcal B_1 = 2p|\Omega|_1 |\check{B}_{i,l}|_\infty |\hat{\Omega}_n - \Omega|_\infty$, and $\mathcal B_2 = p^2 |\check{B}_{i,l}|_\infty |\hat{\Omega}_n - \Omega|_\infty^2$.
    To provide a tail bound for $\mathcal B_1$ and $\mathcal B_2$, it suffices to use the following two tail bounds.
    First, by Theorem \ref{thm:GA.prec}, we have for any $\delta_1>0$, up to a constant $C'>0$ related to $C_0$, $c_0$, $C_e$, and $c_e$,
    \begin{equation*}
            \mathbb{P}\left(|\hat{\Omega}_n - \Omega|_\infty > \delta_1\right) \leq \mathbb{P}\left(|Z^\Omega|_\infty > n^{1/2}\delta_1\right)
            + C' \Psi^\Omega (p,n).
    \end{equation*}
    Second, by Lemma \ref{lm:B.check.GA} and Corollary \ref{cor:B.check.B.tilde}, up to a constant $C>0$ related to $C_0$ and $c_0$,
    \begin{align*}
            \sup_{u\in\mathbb{R}} \left|\mathbb{P}\left(l^{-1/2}|\check{B}_{i,l}|_\infty \leq u\right) - \mathbb{P}\left(|Z|_\infty \leq u\right)\right| \leq  C\Psi(p,l).
    \end{align*}
    Therefore for any $\delta_2>0$,
    \begin{equation*}
        \mathbb{P}\left(|\check{B}_{i,l}|_\infty > \delta_2 \right) \leq \mathbb{P}\left(|Z|_\infty > l^{-1/2} \delta_2 \right) + C\Psi(p,l).
    \end{equation*}
    We have already discussed that the diagonal entries of covariance matrices of Gaussian random vectors $Z$ and $Z^\Omega$ are upper and lower bounded.
    Therefore $\mathbb{E}[|Z^\Omega|_\infty] \lesssim \sqrt{\log(p)}$
    and $\mathbb{E}[|Z|_\infty] \lesssim \sqrt{\log(p)}$ up to a constant $C>0$ related to $C_0$ and $c_0$. By Markov inequality,
    \begin{align*}
        \mathbb{P}\left(|Z^\Omega|_\infty > n^{1/2}\delta_1\right) &\leq C n^{-1/2} \delta_1^{-1} \log^{1/2}(p),\\
        \mathbb{P}\left(|Z|_\infty > l^{-1/2} \delta_2 \right) &\leq C l^{1/2}\delta_2^{-1} \log^{1/2}(p).
    \end{align*}
    For the bound of $\mathcal B_1$, for any choice of $\delta_1>0$ and $\delta_2>0$ such that $\delta_1\delta_2 = \delta$,
    \begin{align*}
        \mathbb{P}(\mathcal B_1 > 2p|\Omega|_1\delta) &\leq \mathbb{P}(|\hat\Omega_n - \Omega|_\infty >\delta_1) + \mathbb{P}(|\check B_{i,l}|_\infty > \delta_2)\\
        &\leq C n^{-1/2} \delta_1^{-1} \log^{1/2}(p) + C l^{1/2}\delta_2^{-1} \log^{1/2}(p) + C\Psi(p,l) + C' \Psi^\Omega (p,n).
    \end{align*}
    The first two terms are on the same order under the choice $\delta_1 = n^{-1/4}l^{-1/4}\delta^{1/2}$, and $\delta_2 = n^{1/4}l^{1/4}\delta^{1/2}$.
    Therefore, we arrive at
    \[
    \mathbb{P}(\mathcal B_1 > \delta) \leq 2\sqrt{2}C|\Omega|_1^{1/2}\frac{p^{1/2}l^{1/4}\log^{1/2}(p)}{n^{1/4}\delta^{1/2}} + C\Psi(p,l) + C' \Psi^\Omega (p,n).
    \]
    We can also say that $\mathcal B_1$ is of order
    \[
    \mathcal B_1 \lesssim_p \frac{pl^{1/2}}{n^{1/2}}|\Omega|_1\log(p).
    \]
    Likewise, for the bound of $\mathcal B_2$, for any choice of $\delta_3>0$ and $\delta_4>0$ such that $\delta_3^2\delta_4 = \delta$,
    \begin{align*}
        \mathbb{P}(\mathcal B_2 > p^2\delta) &\leq \mathbb{P}(|\hat\Omega_n - \Omega|_\infty >\delta_3) + \mathbb{P}(|\check B_{i,l}|_\infty > \delta_4)\\
        &\leq C n^{-1/2} \delta_3^{-1} \log^{1/2}(p) + C l^{1/2}\delta_4^{-1} \log^{1/2}(p) + C\Psi(p,l) + C' \Psi^\Omega (p,n).
    \end{align*}
    With the choice $\delta_3 = n^{-1/6}l^{-1/6}\delta^{1/3}$ and $\delta_4 = n^{1/6}l^{1/6}\delta^{1/3}$, the first two terms are of the same order. Therefore
    \[
    \mathbb{P}(\mathcal B_2 > \delta) \leq 2C\frac{p^{2/3}l^{1/6}\log^{1/2}(p)}{n^{1/3}\delta^{1/3}} + C\Psi(p,l) + C' \Psi^\Omega (p,n).
    \]
    We can also say that $\mathcal B_2$ is of order
    \[
    \mathcal B_2 \lesssim_p \frac{p^2l^{1/2}}{n}\log^{3/2}(p).
    \]
    As long as $p^2\log p \lesssim n|\Omega|_1^2$ (which holds as long as we want $\Psi^\Omega(p,n)\rightarrow 0$), the desired $|\check{B}_{i,l}^{\Omega} - \check{B}_{i,l}^{\Omega,\diamond}|_\infty$ is dominated by $\mathcal B_1$.
    The proof of Lemma \ref{lm:B.check.diamond} is finished using $l^{-1/2}|\check{B}_{i,l}^{\Omega} - \check{B}_{i,l}^{\Omega,\diamond}|_\infty < 2l^{-1/2}\mathcal B_1$ and taking the tail bound of $\mathcal B_1$.


\subsection{Proof of Corollary \ref{cor:B.check.B.tilde.Omega}}\label{app:cor.B.check.B.tilde.Omega}
The proof is similar to that of Corollary \ref{cor:B.check.B.tilde} in Section \ref{sec:bootstrap.cov}.
By triangle inequality, for any $\eta>0$ we have
\begin{multline*}
    \sup_{u\in\mathbb{R}}\left|\mathbb{P}\left(l^{-1/2}|\check{B}^\Omega_{i,l}|_\infty \leq u\right) - \mathbb{P}\left(l^{-1/2}|\tilde{B}^\Omega_{i,l,m}|_\infty \leq u\right)\right|\\
    \leq \mathbb{P}\left(l^{-1/2}|\check{B}^\Omega_{i,l} - \tilde{B}^\Omega_{i,l,m}|_\infty > \eta\right) + \sup_{u\in\mathbb{R}}\mathbb{P}\left(\left|l^{-1/2}|\tilde{B}^\Omega_{i,l,m}|_\infty - u\right| < \eta\right).
\end{multline*}
The first term of the right hand side is upper bounded by Lemma \ref{lm:B.check.diamond} and \ref{lm:B.check.B.tilde.Omega}.
Meanwhile, the second term is upper bounded by Lemma \ref{lm:B.tilde.GA} and the anti-concentration inequality for Gaussian random vectors given in Theorem 3 of \cite{chernozhukov2015comparison}.
Therefore, the desired Kolmogorov distance is upper bounded by
\begin{multline*}
    4C|\Omega|_1^{1/2}\frac{p^{1/2}\log^{1/2}(p)}{n^{1/4}\eta^{1/2}} + C|\Omega|_1^2\eta^{-1}\sqrt{\frac{l\log(p)}{n}} + C\eta\sqrt{\log(p/\eta)} + (C+C''|\Omega|_1^2) \Psi(p,l) + C\Psi(p,n) + C' \Psi^\Omega (p,n) \\
            + 2p^2\exp\left(-c_1\left(\frac{\eta^2}{|\Omega|_1^4 Q_1(l)} \wedge \frac{\eta}{|\Omega|_1^2\sqrt{Q_1(l)}}\right)\right)
            + 2p^2\exp\left(-c_2\left(\frac{\eta^2}{|\Omega|_1^4 Q_2(m)} \wedge \frac{\eta}{|\Omega|_1^2\sqrt{Q_2(m)}}\right)\right)
\end{multline*}
With the same choice of $\eta$ and $m$ as in \eqref{eq:choice.eta.m.boot}, we are left with the $|\Omega|_1^2 \Psi(p,l)$ and $C'\Psi^\Omega(p,n)$ terms dominating, which finishes the proof of Corollary \ref{cor:B.check.B.tilde.Omega}.

\section{Data-generating Mechanism in Simulation}\label{sec:apdx.simulation}

The simulation of the Gaussian linear process \eqref{LinearProcess} entails an infinite summation. Given that $A_t$ typically decreases as $t$ increases, a practical approach is to simulate the truncated form of $X_i$ as follows: 
\begin{equation*}
X_i^* = \sum_{t=0}^{N-1} A_t \epsilon_{i-t}.
\end{equation*}
\begin{algorithm}[t]
\caption{Fast simulation of Gaussian linear process --- unidimensional}
\label{alg:uni.dim}
\begin{algorithmic}
\REQUIRE{$N$, $n$, $p=1$, and $\bm{a}_N = [a_1, a_2,\cdots, a_N]^\top$}\;
\STATE{Compute FFT of $\bm{a}_N$, $\bm{b}_N = FFT(\bm{a}_N) = [b_1, b_2, \cdots, b_N]^\top$}\;
\STATE{Generate $\bm{e}_N = [\epsilon_n, \epsilon_{n-1},\cdots, \epsilon_{n-N+1}]^\top$ with $N$ many i.i.d.~standard Gaussian random variables}\;
\STATE{Compute FFT of $\bm{e}_N$, $\bm{f}_N = FFT(\bm{e}_N) = [f_1, f_2, \cdots, f_N]^\top$}\;
\STATE{Let $\bm{p}_N$ be the entry-wise product of $\bm{f}_N$ and $\bm{b}_N$, i.e., $\bm{p}_N = [b_1 f_1, b_2 f_2, \cdots, b_N f_N]^\top$}\;
\STATE{Compute the inverse FFT of $\bm{p}_N$, $\bm{W}_N = FFT^{-1}(\bm{p}_N) = [W_1, W_2, \cdots, W_N]^\top$}\;
\ENSURE{$\bm{W}_N$, which can be treated as $K_{N,n}$ many copies of realization of $\bm{X}_n^*$.}
\end{algorithmic}
\end{algorithm}
A large value of $N$ is necessary, particularly for maintaining accuracy in the long-memory regime. Typically, $N$ is chosen to be $n^2$. While it may be appealing to generate i.i.d.~standard Gaussian random vectors $\epsilon_t$ for $-N+1 \leq t \leq n$, this approach incurs a very high computational cost. Instead, we will employ a fast simulation technique using the circulant embedding method and fast Fourier transform (FFT) \cite{wood1994simulation, wu2004simulating}. Consider initially the one-dimensional scenario, $X_i^* = \sum_{t=0}^{N-1} a_t \epsilon_{i-t}$, where $X_i^* \in \mathbb{R}$, $a_t \in \mathbb{R}$, and $\epsilon_t \in \mathbb{R}$. An approximation of $X_i^*$ is provided as
\begin{equation}\label{eq:W}
	\begin{bmatrix}
		W_{1}\\
		W_{2}\\
		\vdots\\
		W_{N}
	\end{bmatrix}
	=
	\begin{bmatrix}
		a_1 & a_2 & \cdots & a_{N-2} & a_{N-1} & a_0\\
		a_2 & a_3 & \cdots & a_{N-1} & a_0 & a_1 \\
		\vdots & \vdots &  & \vdots & \vdots & \vdots &\\
		a_0 & a_1 & \cdots & a_{N-3} & a_{N-2} & a_{N-1}
	\end{bmatrix}
	\begin{bmatrix}
		\epsilon_{n}\\
		\epsilon_{n-1}\\
		\vdots\\
		\epsilon_{n-N+1}\\
	\end{bmatrix}.
\end{equation}
Since \eqref{eq:W} has a circulant structure, it can be simulated with the following method. Let $\bm{a}_N = [a_1, a_2, \cdots, a_{N-1}, a_0]^\top$. For any vector $\bm{x}\in\mathbb{R}^n$, define an operator that generates a circulant matrix from $\bm{x}$, $\circulant: \mathbb{R}^n \rightarrow \mathbb{R}^{n\times n}$, such that
\[
\circulant(\bm{x}) = \begin{bmatrix}
		x_1 & x_2 & \cdots & x_{n-1} & x_{n}\\
		x_2 & x_3 & \cdots & x_n & x_1 \\
		\vdots & \vdots &  & \vdots & \vdots \\
		x_n & x_1 & \cdots & x_{n-2} & x_{n-1}
	\end{bmatrix}.
\]
Thus \eqref{eq:W} can be rewritten as
\begin{equation*}
    \bm{W}_N = \circulant(\bm{a}_N) \bm{e}_N,
\end{equation*}
where $\bm{W}_N = [W_{1}, W_{2}, \cdots, W_{N}]^\top$, and $\bm{e}_N = [\epsilon_{n}, \epsilon_{n-1}, \cdots, \epsilon_{n-N+1}]^\top$.
Note that the FFT of $\bm{e}_N$ can be written as $\bm{f}_N = [f_1, f_2, \cdots, f_N]^\top$.
Meanwhile, the FFT of $\bm{a}_N$ is $\bm{b}_N = [b_1, b_2, \cdots, b_N]^\top$.
If we apply inverse FFT on the entry-wise product of $\bm{b}_N \otimes \bm{f}_N = [b_1 f_1, b_2 f_2, \cdots, b_N f_N]^\top$, the result is precisely $\bm{W}_N$.
We can divide $\bm{W}_N$ into multiple segments of length $n$. For any $1 \leq k \leq K_{N,n}$ where $K_{N,n} = \lfloor N/n \rfloor$, define $\bm{W}^{(k)} = [W_{(k-1)n+1}, W_{(k-1)n+2}, \cdots, W_{kn}]^\top$. According to Theorem 2 in \cite{wu2004simulating}, each $\bm{W}^{(k)}$ is sufficiently close to $\bm{X}^*_n = [X_1^*, X_2^*, \cdots, X_n^*]^\top$. Note that for any $k_1 \neq k_2$, $\bm{W}^{(k_1)}$ and $\bm{W}^{(k_2)}$ are typically not independent. Nonetheless, the dependence is weak enough that we can consider $\bm{W}^{(1)}, \cdots, \bm{W}^{(K_{N,n})}$ as $K_{N,n}$ approximate copies of $\bm{X}^*_n$. Refer to Algorithm \ref{alg:uni.dim}.

\begin{algorithm}[t]
\caption{Fast simulation of Gaussian linear process --- multi-dimensional}
\label{alg:multi.dim}
\begin{algorithmic}
\REQUIRE{$N$, $n$, $p$, and $\bm{a}_{N,j}$ for all $1\leq j\leq p$}\;
\STATE{Generate $\bm{e}_N = [\epsilon_n^\top, \epsilon_{n-1}^\top, \cdots, \epsilon_{n-N+1}^\top]^\top$ with $Nd$ many i.i.d.~standard Gaussian random variables}\;
\STATE{Compute FFT of $\bm{e}_N$, $\bm{f}_N = FFT(\bm{e}_N)$}\;
\FOR{$j=1,2,\cdots,p$}
    \STATE{Compute FFT of $\bm{a}_{N,j}$, $\bm{b}_{N,j} = FFT(\bm{a}_{N,j})$}\;
    \STATE{Let $\bm{p}_{N,j}$ be the entry-wise product of $\bm{f}_N$ and $\bm{b}_{N,j}$, i.e., $\bm{p}_{N,j} = \bm{b}_{N,j} \otimes \bm{f}_N$}\;
    \STATE{Compute the inverse FFT of $\bm{p}_N$, $\bm{Y}^j = FFT^{-1}(\bm{p}_{N,j}) \in \mathbb{R}^{Nd}$}\;
    \STATE{Fill in the $j$-th column of $\bm{W}^{(k)}$ for all $1\leq k\leq K_{N,n}$ as in \eqref{eq:W.k.matrix}, by $W_{i,j} = \bm{Y}^j_{(i-1)d+1}$}\;
    \STATE{Release unused memory space}\;
\ENDFOR
\ENSURE{$\bm{W}^{(1)}, \bm{W}^{(2)}, \cdots, \bm{W}^{(K_{N,n})}$, which can be treated as $K_{N,n}$ many copies of realization of $\bm{X}_n^*$.}
\end{algorithmic}
\end{algorithm}

For the multidimensional case $p>1$, we have
\[
X_i^* = \sum_{t=0}^{N-1} A_t \epsilon_{i-t},
\]
where $X_i^*\in\mathbb{R}^p$, $A_t\in\mathbb{R}^{p\times d}$, and $\epsilon_t\in\mathbb{R}^d$. The proxy of $X_i^*$ is given by $W_i\in\mathbb{R}^p$, where
\begin{equation}\label{eq:W.vec}
	\begin{bmatrix}
		W_{1}\\
		W_{2}\\
		\vdots\\
		W_{N}
	\end{bmatrix}
	=
	\begin{bmatrix}
		A_1 & A_2 & \cdots & A_{N-2} & A_{N-1} & A_0\\
		A_2 & A_3 & \cdots & A_{N-1} & A_0 & A_1 \\
		\vdots & \vdots &  & \vdots & \vdots & \vdots &\\
		A_0 & A_1 & \cdots & A_{N-3} & A_{N-2} & A_{N-1}
	\end{bmatrix}
	\begin{bmatrix}
		\epsilon_{n}\\
		\epsilon_{n-1}\\
		\vdots\\
		\epsilon_{n-N+1}\\
	\end{bmatrix}.
\end{equation}
Since each $A_t$ is a $p \times d$ block matrix, the matrix in \eqref{eq:W.vec} is not necessarily circulant. Thus, Algorithm~\ref{alg:uni.dim} cannot be directly applied. Instead, we simulate $X_i$ feature-by-feature. For each $1 \leq j \leq p$, generate $W_{1j}, W_{2j}, \ldots, W_{Nj}$.
Let $\bm{a}_{N,j} = [(A_1)_{j\cdot}, (A_2)_{j\cdot}, \ldots, (A_{N-1})_{j\cdot}, (A_0)_{j\cdot}]^\top$  be a vector in $\mathbb{R}^{Nd}$.
Construct a $Nd \times Nd$ circulant matrix $\bm{A}^j = \circulant(\bm{a}_{N,j})$. Since $\bm{A}^j$ is circulant, similar to Algorithm \ref{alg:uni.dim}, simulate $\bm{Y}^j = \bm{A}^j \bm{e}_N$, where $\bm{Y}^j \in \mathbb{R}^{Nd}$ and $\bm{e}_N = [\epsilon_n^\top, \epsilon_{n-1}^\top, \ldots, \epsilon_{n-N+1}^\top]^\top \in \mathbb{R}^{Nd}$. For each $1 \leq i \leq N$, $W_{ij} = \bm{Y}^j_{(i-1)d+1}$. Repeat for each $j$ and $1 \leq k \leq K_{N,n}$, letting
\begin{equation}\label{eq:W.k.matrix}
\bm{W}^{(k)} = 
\begin{bmatrix}
    W_{(k-1)n+1, 1} & W_{(k-1)n+1, 2} & \cdots & W_{(k-1)n+1, p}\\
    W_{(k-1)n+2, 1} & W_{(k-1)n+2, 2} & \cdots & W_{(k-1)n+2, p}\\
    \vdots & \vdots && \vdots\\
    W_{kn, 1} & W_{kn, 2} & \cdots & W_{kn, p}\\
\end{bmatrix}.
\end{equation}
We treat $\bm{W}^{(1)}, \bm{W}^{(2)}, \cdots, \bm{W}^{(K_{N,n})}$ as $K_{N,n}$ realizations of $\bm{X}_n^* = [X_1^*, X_2^*, \cdots, X_n^*]^\top \in\mathbb{R}^{n\times p}$. See Algorithm~\ref{alg:multi.dim} for details.

\section{Supplementary Results in Simulation}\label{sec:apdx.supplementary.sim.results}
For a clearer demonstration, we also provide the empirical distributions of all three quantities, the covariance error $n^{1/2}|\hat\Sigma_n - \Sigma|_\infty$, its Gaussian approximation $|Z|_\infty$, and its block bootstrap approximation $l^{-1/2}|\check{B}_{i,l}|_\infty$, in the empirical CDF plot in Figures \ref{fig:E2.ECDF.cov} for the covariance matrix.
Similarly, we provide the empirical distributions of all three quantities, the precision error $n^{1/2}|\hat\Omega_n - \Omega|_\infty$, its (intermediate) Gaussian approximation $|Z^\mathfrak{S}|_\infty$, and its block bootstrap approximation $l^{-1/2}|\check{B}^\Omega_{i,l}|_\infty$, in the empirical CDF plot in Figure \ref{fig:E2.ECDF.prec} for the precision matrix.
We only inspect $\beta=2$ and $\beta=0.9$.
The ultra-long-memory case $\beta=0.55$ is omitted since it violates the theoretical conditions and is expected to fail.

\begin{figure}[t]
    \centering
    \includegraphics[width=0.8\linewidth]{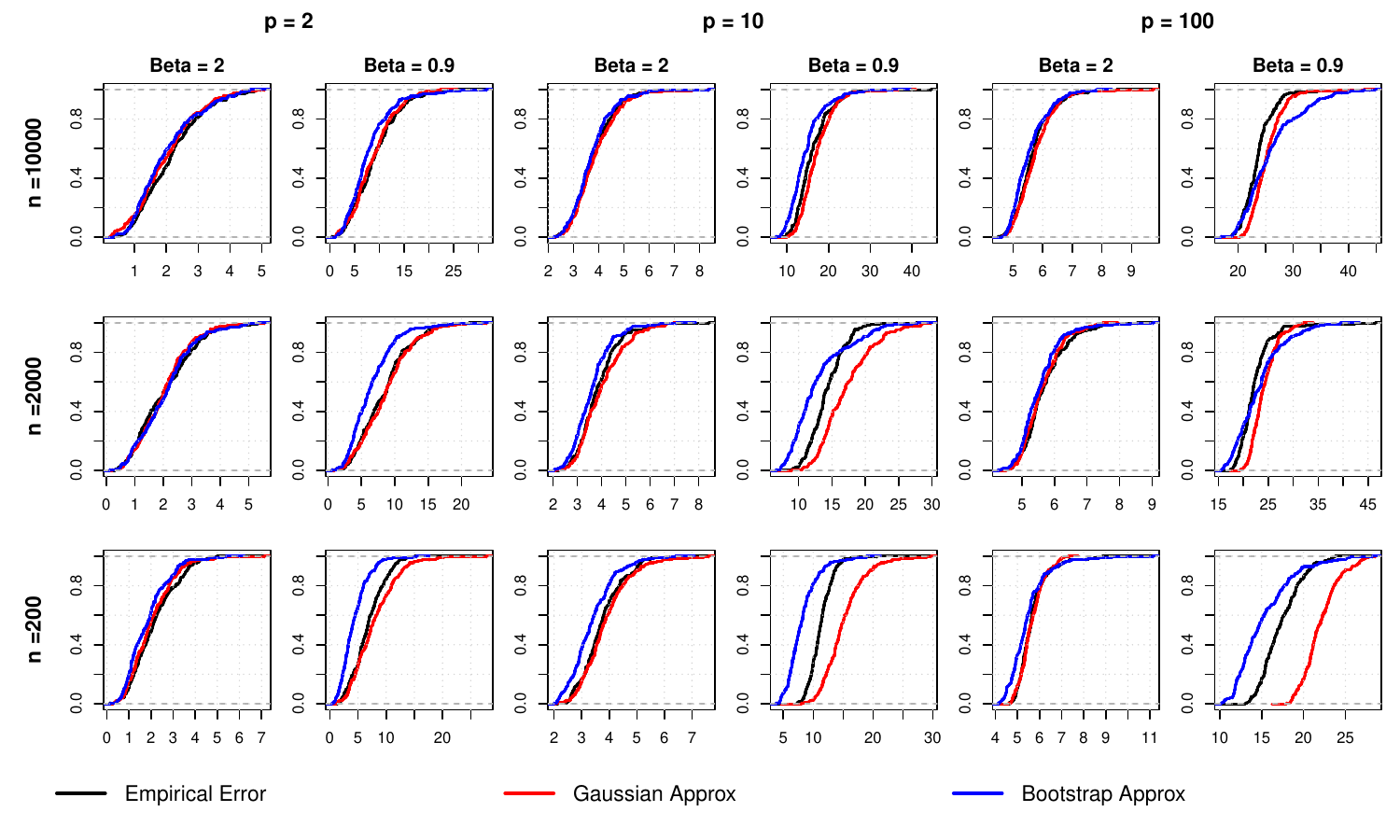}
    \caption{Empirical CDF in the low-dimensional regime, which contains the covariance error $n^{1/2}|\hat\Sigma_n - \Sigma|_\infty$, its Gaussian approximation $|Z|_\infty$, and its block bootstrap approximation $l^{-1/2}|\check{B}_{i,l}|_\infty$, for the short-memory case $\beta=2$ and the long-memory case $\beta=0.9$.}
    \label{fig:E2.ECDF.cov}
\end{figure}

\begin{figure}[t]
    \centering
    \includegraphics[width=0.8\linewidth]{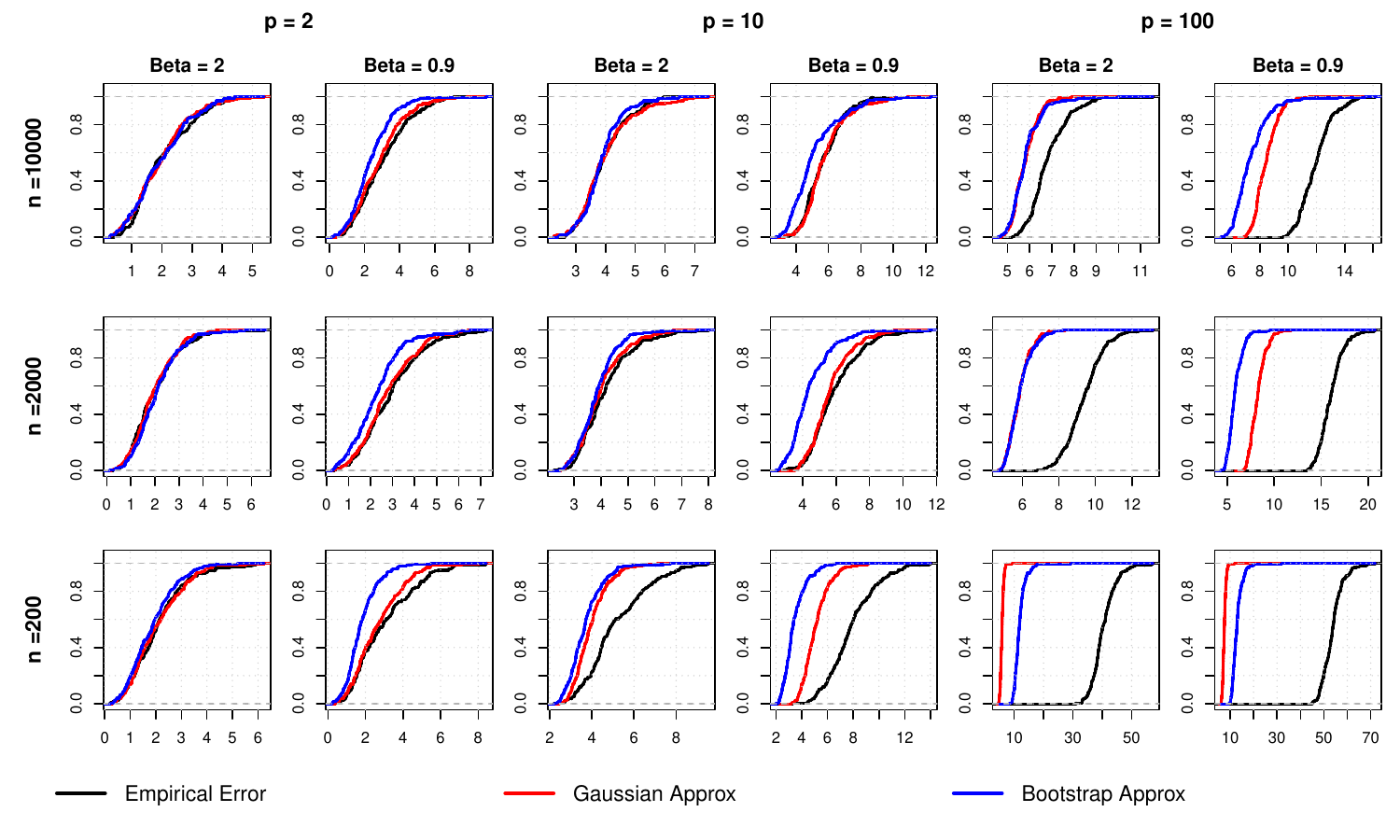}
    \caption{Empirical CDF in the low-dimensional regime, which contains the precision error $n^{1/2}|\hat\Omega_n - \Omega|_\infty$, its (intermediate) Gaussian approximation $|Z^\mathfrak{S}|_\infty$, and its block bootstrap approximation $l^{-1/2}|\check{B}^\Omega_{i,l}|_\infty$, for the short-memory case $\beta=2$ and the long-memory case $\beta=0.9$.}
    \label{fig:E2.ECDF.prec}
\end{figure}

For the empirical CDF plots in the high-dimensional regime, same as the QQ-plots, we still consider $n\in\{200, 400, 800\}$ and $p\in \{2000, 5000, 8000\}$ --- see Figure \ref{fig:E3.ECDF.cov} for the covariance matrix.

\begin{figure}[t]
    \centering
    \includegraphics[width=0.8\linewidth]{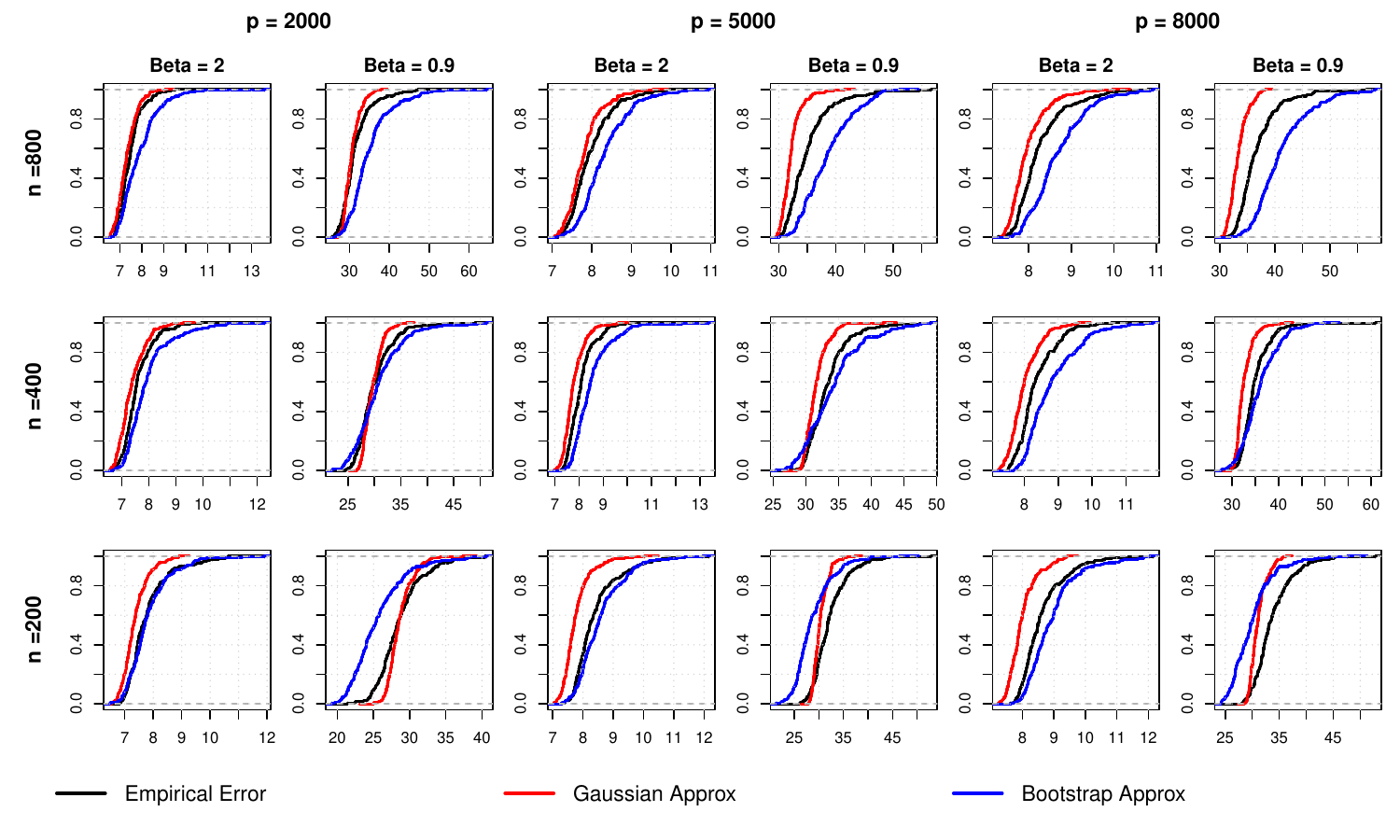}
    \caption{Empirical CDF in the high-dimensional regime, containing the covariance error $n^{1/2}|\hat\Sigma_n - \Sigma|_\infty$, its Gaussian approximation $|Z|_\infty$, and its block bootstrap approximation $l^{-1/2}|\check{B}_{i,l}|_\infty$, for the short-memory case $\beta=2$ and the long-memory case $\beta=0.9$.}
    \label{fig:E3.ECDF.cov}
\end{figure}

Moreover, as a supplement to the Kolmogorov distance tables in the simulation, we provide the corresponding Wasserstein-1 tables here, under the same combination of $n$, $p$, and $\beta$.
For the low-dimensional case, we provide the Wasserstein-1 distance between $n^{1/2}|\hat\Sigma_n - \Sigma|_\infty$ and its Gaussian approximation $|Z|_\infty$ in the GA section of Table \ref{tab:W1.cov}, and the Wasserstein-1 distance between $n^{1/2}|\hat\Sigma_n - \Sigma|_\infty$ and its block bootstrap approximation $l^{-1/2}|\check B_{i,l}|_\infty$ in the Bootstrap section of Table \ref{tab:W1.cov}.
We also provide the Wasserstein-1 distance between $n^{1/2}|\hat\Omega_n - \Omega|_\infty$ and its intermediate Gaussian approximation $|Z^\mathfrak{S}|_\infty$ in the GA section of Table \ref{tab:W1.prec}, and the Wasserstein-1 distance between $n^{1/2}|\hat\Omega_n - \Omega|_\infty$ and its block bootstrap approximation $l^{-1/2}|\check B_{i,l}^\Omega|_\infty$ in the Bootstrap section of Table \ref{tab:W1.prec}.

\begin{table}[t]
    \centering
    \resizebox{\textwidth}{!}{
    \begin{tabular}{llcccccccccccc}
    \toprule
     & & \multicolumn{4}{c}{$\beta=2$} & \multicolumn{4}{c}{$\beta=0.9$} & \multicolumn{4}{c}{$\beta=0.55$} \\
     \cmidrule(lr){3-6} \cmidrule(lr){7-10} \cmidrule(lr){11-14}
    & $n$ & $p=2$ & $p=10$ & $p=30$ & $p=100$ & $p=2$ & $p=10$ & $p=30$ & $p=100$ & $p=2$ & $p=10$ & $p=30$ & $p=100$ \\
    \midrule
    \multirow{6}{*}{GA} & 10000 & 0.14 & 0.07 & 0.06 & 0.08 & 0.41 & 1.02 & 1.28 & 1.60 & 236.82 & 707.31 & 974.09 & 1247.17 \\
     & 4000 & 0.10 & 0.16 & 0.06 & 0.12 & 0.77 & 2.10 & 1.52 & 1.56 & 171.56 & 471.59 & 626.72 & 800.75 \\
     & 2000 & 0.13 & 0.17 & 0.07 & 0.08 & 0.32 & 2.88 & 2.84 & 2.10 & 113.27 & 358.84 & 477.50 & 584.48 \\
     & 1000 & 0.07 & 0.09 & 0.12 & 0.09 & 0.85 & 2.07 & 2.64 & 3.00 & 93.37 & 237.11 & 334.68 & 433.84 \\
     & 500 & 0.11 & 0.10 & 0.10 & 0.09 & 1.14 & 2.71 & 3.67 & 3.23 & 66.83 & 177.92 & 250.05 & 312.28 \\
     & 200 & 0.16 & 0.14 & 0.10 & 0.09 & 1.10 & 3.83 & 4.55 & 4.71 & 38.60 & 112.41 & 155.63 & 196.08 \\
    \midrule
    \multirow{6}{*}{Bootstrap} & 10000 & 0.16 & 0.09 & 0.09 & 0.12 & 1.32 & 1.67 & 1.92 & 2.35 & 375.37 & 418.57 & 378.74 & 316.47 \\
     & 4000 & 0.16 & 0.12 & 0.14 & 0.13 & 1.66 & 1.69 & 2.03 & 2.59 & 248.37 & 269.91 & 247.07 & 206.22 \\
     & 2000 & 0.10 & 0.26 & 0.16 & 0.17 & 1.93 & 2.11 & 2.31 & 1.62 & 185.01 & 192.55 & 173.83 & 151.35 \\
     & 1000 & 0.15 & 0.28 & 0.20 & 0.11 & 2.14 & 2.69 & 1.95 & 1.12 & 132.10 & 143.42 & 130.87 & 108.36 \\
     & 500 & 0.30 & 0.33 & 0.29 & 0.30 & 2.40 & 3.34 & 1.74 & 1.28 & 95.01 & 102.74 & 92.36 & 80.30 \\
     & 200 & 0.33 & 0.37 & 0.32 & 0.26 & 2.46 & 3.14 & 2.47 & 2.24 & 59.35 & 66.06 & 61.18 & 54.03 \\
    \bottomrule
    \end{tabular}
    }
    \caption{Wasserstein-1 distances for covariance matrix in the low-dimensional regime. The GA section compares the covariance error $n^{1/2}|\hat\Sigma_n - \Sigma|_\infty$ against its Gaussian approximation $|Z|_\infty$, while the Bootstrap section compares the covariance error against its block bootstrap approximation $l^{-1/2}|\check{B}_{i,l}|_\infty$. A value closer to zero represents proximity of Gaussian approximation or bootstrap.}
    \label{tab:W1.cov}
\end{table}

\begin{table}[t]
    \centering
    \resizebox{\textwidth}{!}{
    \begin{tabular}{llcccccccccccc}
    \toprule
     & & \multicolumn{4}{c}{$\beta=2$} & \multicolumn{4}{c}{$\beta=0.9$} & \multicolumn{4}{c}{$\beta=0.55$} \\
     \cmidrule(lr){3-6} \cmidrule(lr){7-10} \cmidrule(lr){11-14}
    & $n$ & $p=2$ & $p=10$ & $p=30$ & $p=100$ & $p=2$ & $p=10$ & $p=30$ & $p=100$ & $p=2$ & $p=10$ & $p=30$ & $p=100$ \\
    \midrule
    \multirow{6}{*}{GA} & 10000 & 0.12 & 0.10 & 0.15 & 1.04 & 0.21 & 0.16 & 0.44 & 3.53 & 3.69 & 2.24 & 2.14 & 6.68 \\
     & 4000 & 0.15 & 0.11 & 0.26 & 2.15 & 0.20 & 0.20 & 1.10 & 5.57 & 2.89 & 2.83 & 4.66 & 11.34 \\
     & 2000 & 0.09 & 0.18 & 0.76 & 3.53 & 0.17 & 0.31 & 2.07 & 7.78 & 3.37 & 3.78 & 6.56 & 14.97 \\
     & 1000 & 0.11 & 0.41 & 1.33 & 6.14 & 0.25 & 1.05 & 3.06 & 11.13 & 3.11 & 5.14 & 8.39 & 19.45 \\
     & 500 & 0.10 & 0.53 & 2.06 & 10.66 & 0.20 & 1.49 & 4.35 & 16.98 & 2.87 & 5.90 & 10.34 & 26.51 \\
     & 200 & 0.10 & 1.17 & 5.23 & 34.65 & 0.34 & 2.74 & 8.85 & 46.24 & 2.98 & 7.48 & 15.67 & 59.96 \\
    \midrule
    \multirow{6}{*}{Bootstrap} & 10000 & 0.11 & 0.12 & 0.21 & 1.01 & 0.69 & 0.67 & 0.94 & 4.58 & 10.00 & 14.43 & 18.51 & 27.89 \\
     & 4000 & 0.33 & 0.26 & 0.40 & 2.06 & 0.49 & 1.16 & 2.06 & 7.38 & 7.70 & 11.70 & 15.97 & 25.48 \\
     & 2000 & 0.11 & 0.33 & 0.84 & 3.50 & 0.68 & 1.50 & 3.54 & 10.18 & 6.53 & 10.34 & 14.76 & 25.14 \\
     & 1000 & 0.23 & 0.39 & 1.34 & 5.63 & 1.05 & 2.23 & 5.02 & 13.39 & 5.49 & 9.49 & 14.30 & 25.98 \\
     & 500 & 0.23 & 0.72 & 2.11 & 9.47 & 1.02 & 3.14 & 6.49 & 18.29 & 4.73 & 9.14 & 14.34 & 29.47 \\
     & 200 & 0.24 & 1.44 & 4.64 & 28.57 & 1.20 & 4.41 & 10.17 & 40.71 & 4.00 & 9.14 & 16.80 & 52.64 \\
    \bottomrule
    \end{tabular}
    }
    \caption{Wasserstein-1 distances for precision matrix in the low-dimensional regime. The GA section compares the precision error $n^{1/2}|\hat\Omega_n - \Omega|_\infty$ against its (intermediate) Gaussian approximation $|Z^\mathfrak{S}|_\infty$, while the Bootstrap section compares the precision error against its block bootstrap approximation $l^{-1/2}|\check{B}^\Omega_{i,l}|_\infty$. A value closer to zero represents proximity of Gaussian approximation or bootstrap.}
    \label{tab:W1.prec}
\end{table}

For the high-dimensional case, we provide the Wasserstein-1 distance between $n^{1/2}|\hat\Sigma_n - \Sigma|_\infty$ and its Gaussian approximation $|Z|_\infty$ in the GA section of Table \ref{tab:E3.W1.cov}, and the Wasserstein-1 distance between $n^{1/2}|\hat\Sigma_n - \Sigma|_\infty$ and its block bootstrap approximation $l^{-1/2}|\check B_{i,l}|_\infty$ in the Bootstrap section of Table \ref{tab:E3.W1.cov}.

\begin{table}
    \centering
    \resizebox{\textwidth}{!}{
    \begin{tabular}{llccccccccc}
    \toprule
     & & \multicolumn{3}{c}{$\beta=2$} & \multicolumn{3}{c}{$\beta=0.9$} & \multicolumn{3}{c}{$\beta=0.55$} \\
     \cmidrule(lr){3-5} \cmidrule(lr){6-8} \cmidrule(lr){9-11}
    & $n$ & $p=2000$ & $p=5000$ & $p=8000$ & $p=2000$ & $p=5000$ & $p=8000$ & $p=2000$ & $p=5000$ & $p=8000$ \\
    \midrule
    \multirow{3}{*}{GA} & 800 & 0.15 & 0.17 & 0.29 & 1.15 & 2.95 & 3.46 & 527.45 & 558.36 & 554.90 \\
     & 400 & 0.22 & 0.29 & 0.35 & 1.06 & 1.77 & 2.35 & 379.68 & 393.70 & 399.48 \\
     & 200 & 0.43 & 0.65 & 0.64 & 1.04 & 2.05 & 2.68 & 270.34 & 278.29 & 281.23 \\
    \midrule
    \multirow{3}{*}{Bootstrap} & 800 & 0.43 & 0.35 & 0.42 & 3.53 & 3.43 & 4.33 & 58.76 & 63.53 & 64.38 \\
     & 400 & 0.35 & 0.43 & 0.47 & 1.10 & 1.27 & 1.33 & 45.59 & 47.73 & 51.87 \\
     & 200 & 0.09 & 0.18 & 0.30 & 3.06 & 3.47 & 3.76 & 39.55 & 43.90 & 44.85 \\
    \bottomrule
    \end{tabular}
    }
    \caption{Wasserstein-1 distances for covariance matrix in the high-dimensional regime. The GA section compares the covariance error $n^{1/2}|\hat\Sigma_n - \Sigma|_\infty$ against its Gaussian approximation $|Z|_\infty$, while the Bootstrap section compares the covariance error against its block bootstrap approximation $l^{-1/2}|\check{B}_{i,l}|_\infty$.}
    \label{tab:E3.W1.cov}
\end{table}

\section{Labels of ROIs in the AAL Atlas}
The labels and corresponding names of each ROI in the AAL 116 atlas \cite{tzourio2002automated} is given in Table \ref{table:aal.roi}.

\begin{table}[ht]
\centering
\caption{Labels and Names of Each ROI in the AAL 116 Atlas}
\begin{tabular}{|c|l|c|l|c|l|}
\hline
\textbf{Label} & \textbf{ROI Name} & \textbf{Label} & \textbf{ROI Name} & \textbf{Label} & \textbf{ROI Name} \\
\hline
2001 & Precentral\_L           & 4112 & ParaHippocampal\_R       & 8101 & Heschl\_L \\
2002 & Precentral\_R           & 4201 & Amygdala\_L              & 8102 & Heschl\_R \\
2101 & Frontal\_Sup\_L         & 4202 & Amygdala\_R              & 8111 & Temporal\_Sup\_L \\
2102 & Frontal\_Sup\_R         & 5001 & Calcarine\_L             & 8112 & Temporal\_Sup\_R \\
2111 & Frontal\_Sup\_Orb\_L    & 5002 & Calcarine\_R             & 8121 & Temporal\_Pole\_Sup\_L \\
2112 & Frontal\_Sup\_Orb\_R    & 5011 & Cuneus\_L                & 8122 & Temporal\_Pole\_Sup\_R \\
2201 & Frontal\_Mid\_L         & 5012 & Cuneus\_R                & 8201 & Temporal\_Mid\_L \\
2202 & Frontal\_Mid\_R         & 5021 & Lingual\_L               & 8202 & Temporal\_Mid\_R \\
2211 & Frontal\_Mid\_Orb\_L    & 5022 & Lingual\_R               & 8211 & Temporal\_Pole\_Mid\_L \\
2212 & Frontal\_Mid\_Orb\_R    & 5101 & Occipital\_Sup\_L        & 8212 & Temporal\_Pole\_Mid\_R \\
2301 & Frontal\_Inf\_Oper\_L   & 5102 & Occipital\_Sup\_R        & 8301 & Temporal\_Inf\_L \\
2302 & Frontal\_Inf\_Oper\_R   & 5201 & Occipital\_Mid\_L        & 8302 & Temporal\_Inf\_R \\
2311 & Frontal\_Inf\_Tri\_L    & 5202 & Occipital\_Mid\_R        & 9001 & Cerebelum\_Crus1\_L \\
2312 & Frontal\_Inf\_Tri\_R    & 5301 & Occipital\_Inf\_L        & 9002 & Cerebelum\_Crus1\_R \\
2321 & Frontal\_Inf\_Orb\_L    & 5302 & Occipital\_Inf\_R        & 9011 & Cerebelum\_Crus2\_L \\
2322 & Frontal\_Inf\_Orb\_R    & 5401 & Fusiform\_L              & 9012 & Cerebelum\_Crus2\_R \\
2331 & Rolandic\_Oper\_L       & 5402 & Fusiform\_R              & 9021 & Cerebelum\_3\_L \\
2332 & Rolandic\_Oper\_R       & 6001 & Postcentral\_L           & 9022 & Cerebelum\_3\_R \\
2401 & Supp\_Motor\_Area\_L    & 6002 & Postcentral\_R           & 9031 & Cerebelum\_4\_5\_L \\
2402 & Supp\_Motor\_Area\_R    & 6101 & Parietal\_Sup\_L         & 9032 & Cerebelum\_4\_5\_R \\
2501 & Olfactory\_L            & 6102 & Parietal\_Sup\_R         & 9041 & Cerebelum\_6\_L \\
2502 & Olfactory\_R            & 6201 & Parietal\_Inf\_L         & 9042 & Cerebelum\_6\_R \\
2601 & Frontal\_Sup\_Medial\_L & 6202 & Parietal\_Inf\_R         & 9051 & Cerebelum\_7b\_L \\
2602 & Frontal\_Sup\_Medial\_R & 6211 & SupraMarginal\_L         & 9052 & Cerebelum\_7b\_R \\
2611 & Frontal\_Med\_Orb\_L    & 6212 & SupraMarginal\_R         & 9061 & Cerebelum\_8\_L \\
2612 & Frontal\_Med\_Orb\_R    & 6221 & Angular\_L               & 9062 & Cerebelum\_8\_R \\
2701 & Rectus\_L               & 6222 & Angular\_R               & 9071 & Cerebelum\_9\_L \\
2702 & Rectus\_R               & 6301 & Precuneus\_L             & 9072 & Cerebelum\_9\_R \\
3001 & Insula\_L               & 6302 & Precuneus\_R             & 9081 & Cerebelum\_10\_L \\
3002 & Insula\_R               & 6401 & Paracentral\_Lobule\_L   & 9082 & Cerebelum\_10\_R \\
4001 & Cingulum\_Ant\_L        & 6402 & Paracentral\_Lobule\_R   & 9100 & Vermis\_1\_2 \\
4002 & Cingulum\_Ant\_R        & 7001 & Caudate\_L               & 9110 & Vermis\_3 \\
4011 & Cingulum\_Mid\_L        & 7002 & Caudate\_R               & 9120 & Vermis\_4\_5 \\
4012 & Cingulum\_Mid\_R        & 7011 & Putamen\_L               & 9130 & Vermis\_6 \\
4021 & Cingulum\_Post\_L       & 7012 & Putamen\_R               & 9140 & Vermis\_7 \\
4022 & Cingulum\_Post\_R       & 7021 & Pallidum\_L              & 9150 & Vermis\_8 \\
4101 & Hippocampus\_L          & 7022 & Pallidum\_R              & 9160 & Vermis\_9 \\
4102 & Hippocampus\_R          & 7101 & Thalamus\_L              & 9170 & Vermis\_10 \\
4111 & ParaHippocampal\_L      & 7102 & Thalamus\_R              & & \\
\hline

\end{tabular}\label{table:aal.roi}
\end{table}

\section{Additional Simulation under an Alternative Setup}

This section provides the simulation result under the same setup as in Section \ref{sec:simulation}, but with a different structure on coefficients $A_t$.
We still let $d=p$, but instead of Toeplitz $A_t$, we let $A_t$ be banded.
That is,
\[
(A_t)_{jk} = (t+1)^{-\beta} (|j-k|+1)^{-2} \mathbf{1}\{|j-k|\leq b\}.
\]
In our experiments, the bandwidth is chosen as $b=3$.
Therefore, the true covariance matrix $\Sigma$ will also be a banded matrix with a bandwidth of 6, and the precision matrix $\Omega$ will exhibit a fast decay away from the diagonal.

For the low-dimensional regime, the covariance QQ-plots are given in Figures \ref{fig:E2B.QQ.cov.2}, \ref{fig:E2B.QQ.cov.0.9}, and \ref{fig:E2B.QQ.cov.0.55}.
The covariance CDF plot is given in Figure \ref{fig:E2B.ECDF.cov}.
The precision QQ-plots are given in Figures \ref{fig:E2B.QQ.prec.2}, \ref{fig:E2B.QQ.prec.0.9}, and \ref{fig:E2B.QQ.prec.0.55}.
The precision CDF plot is given in Figure \ref{fig:E2B.ECDF.prec}.
The Kolmogorov distance and W1 distance of Gaussian approximation and bootstrap are given in Tables \ref{tab:E2B.KS.cov} and \ref{tab:E2B.W1.cov} for covariance matrix, and Tables \ref{tab:E2B.KS.prec} and \ref{tab:E2B.W1.prec} for precision matrix.
For the high-dimensional regime, the covariance QQ-plots are given in Figures \ref{fig:E3B.QQ.cov.2}, \ref{fig:E3B.QQ.cov.0.9}, and \ref{fig:E3B.QQ.cov.0.55}.
The covariance CDF plot is given in Figure \ref{fig:E3B.ECDF.cov}.
The Kolmogorov distance and W1 distance of Gaussian approximation and bootstrap for the covariance matrix are given in Tables \ref{tab:E3B.KS.cov} and \ref{tab:E3B.W1.cov}.
The results for $p=2$ stays the same, since it is less than the bandwidth.
The rest of the results remain similar to those in Section \ref{sec:simulation}.

\begin{figure}[p]
    \centering
    
    \includegraphics[width=0.8\linewidth]{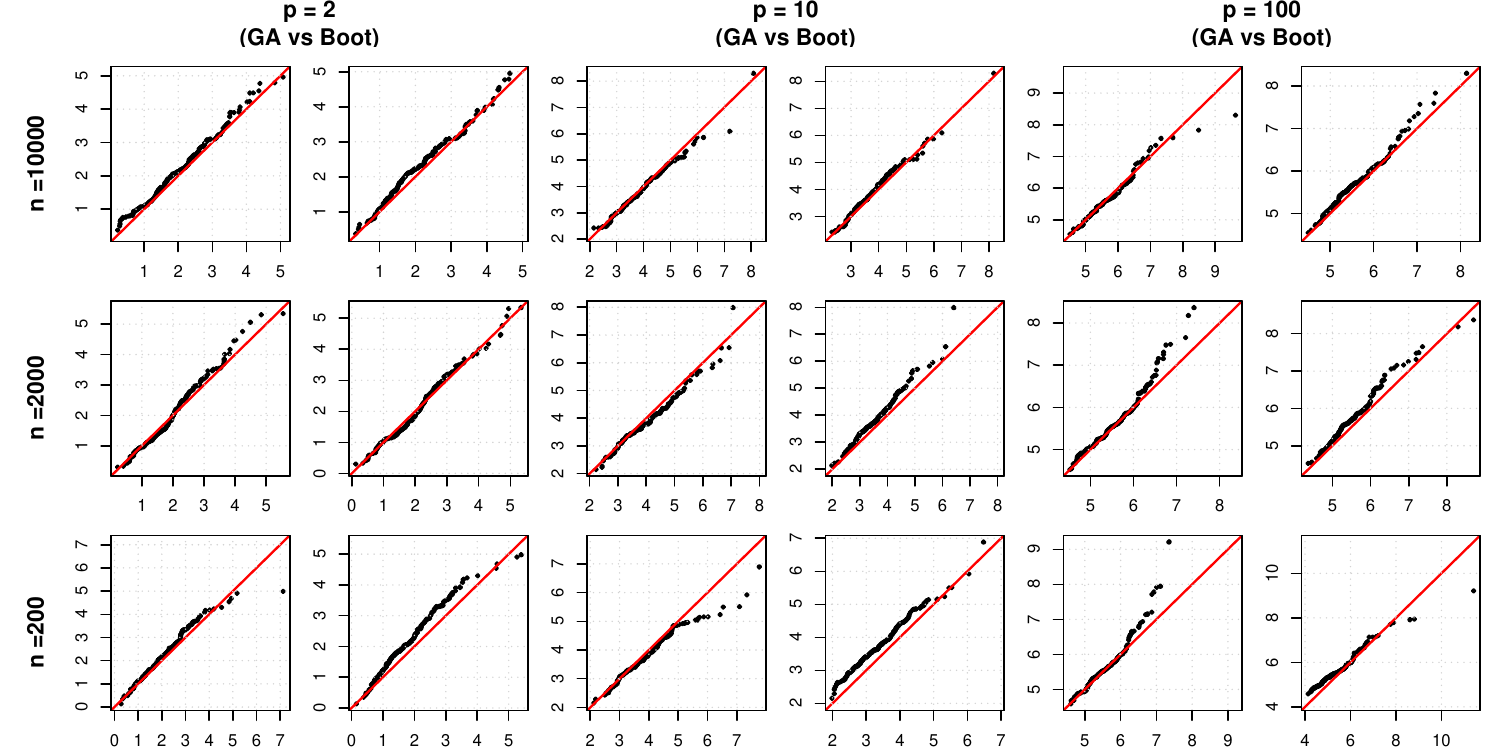}
    \caption{QQ-plots for covariance matrix in the low-dimensional regime, $\beta=2$ (short memory), under the banded setup. Red line represents $y=x$. For each pair of $p$ and $n$ values, the plot on the left compares $n^{1/2}|\hat\Sigma_n - \Sigma|_\infty$ on $y$-axis and $|Z|_\infty$ on $x$-axis.
    Being closer to the $y=x$ line represents a smaller Kolmogorov distance $\rho(n)$ for Gaussian approximation.
    The plot on the right compares $n^{1/2}|\hat\Sigma_n - \Sigma|_\infty$ on $y$-axis and $l^{-1/2}|\check{B}_{i,l}|_\infty$ on $x$-axis.
    Being closer to the $y=x$ line represents a smaller Kolmogorov distance $\rho_B(n,l)$ for block bootstrap.}
    \label{fig:E2B.QQ.cov.2}
    
    \vspace{1cm} 
    
    \includegraphics[width=0.8\linewidth]{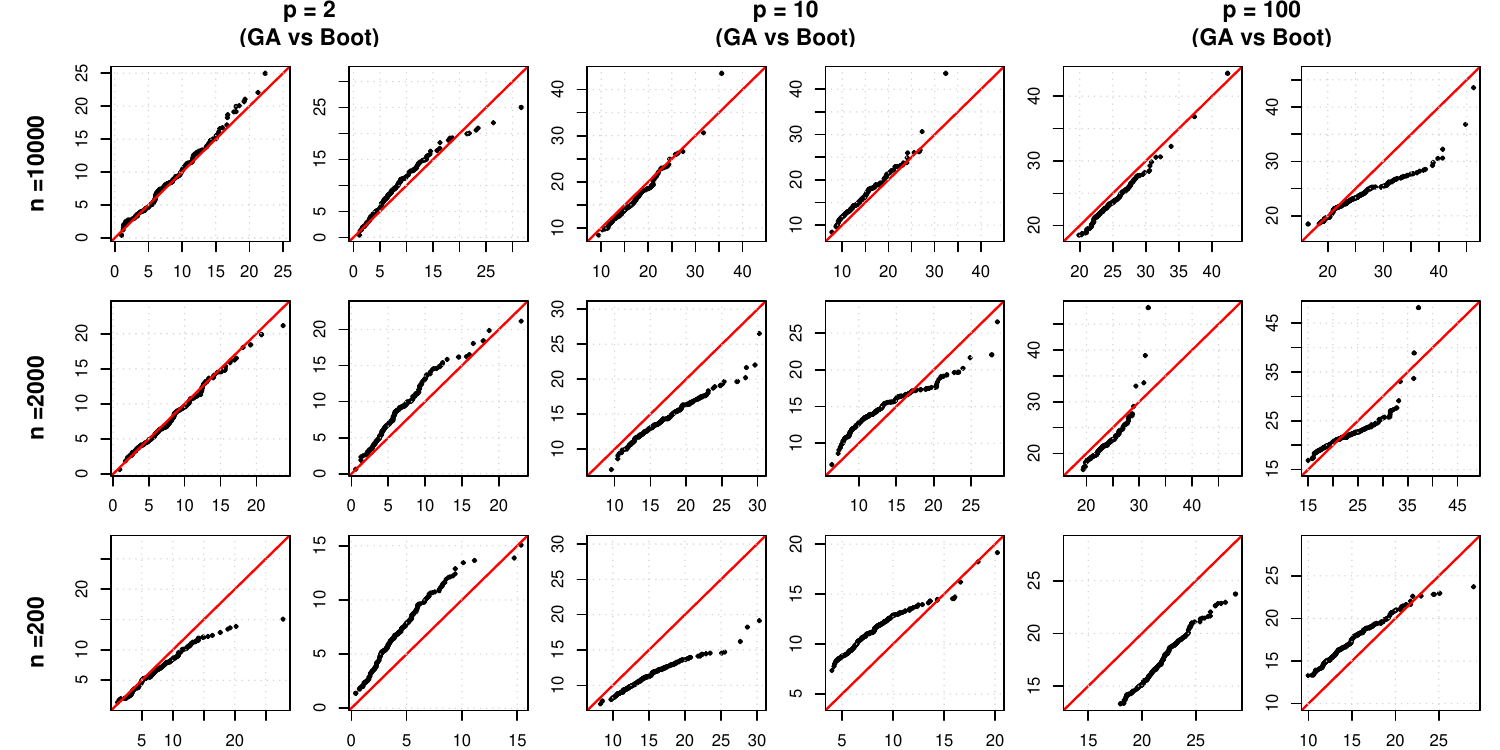}
    \caption{QQ-plots for covariance matrix in the low-dimensional regime, $\beta=0.9$ (long-memory), under the banded setup. Red line represents $y=x$. For each pair of $p$ and $n$ values, the plot on the left compares $n^{1/2}|\hat\Sigma_n - \Sigma|_\infty$ on $y$-axis and $|Z|_\infty$ on $x$-axis.
    Being closer to the $y=x$ line represents a smaller Kolmogorov distance $\rho(n)$ for Gaussian approximation.
    The plot on the right compares $n^{1/2}|\hat\Sigma_n - \Sigma|_\infty$ on $y$-axis and $l^{-1/2}|\check{B}_{i,l}|_\infty$ on $x$-axis.
    Being closer to the $y=x$ line represents a smaller Kolmogorov distance $\rho_B(n,l)$ for block bootstrap.}
    \label{fig:E2B.QQ.cov.0.9}
    
\end{figure}

\begin{figure}[p]
    \centering
    
    \includegraphics[width=0.8\linewidth]{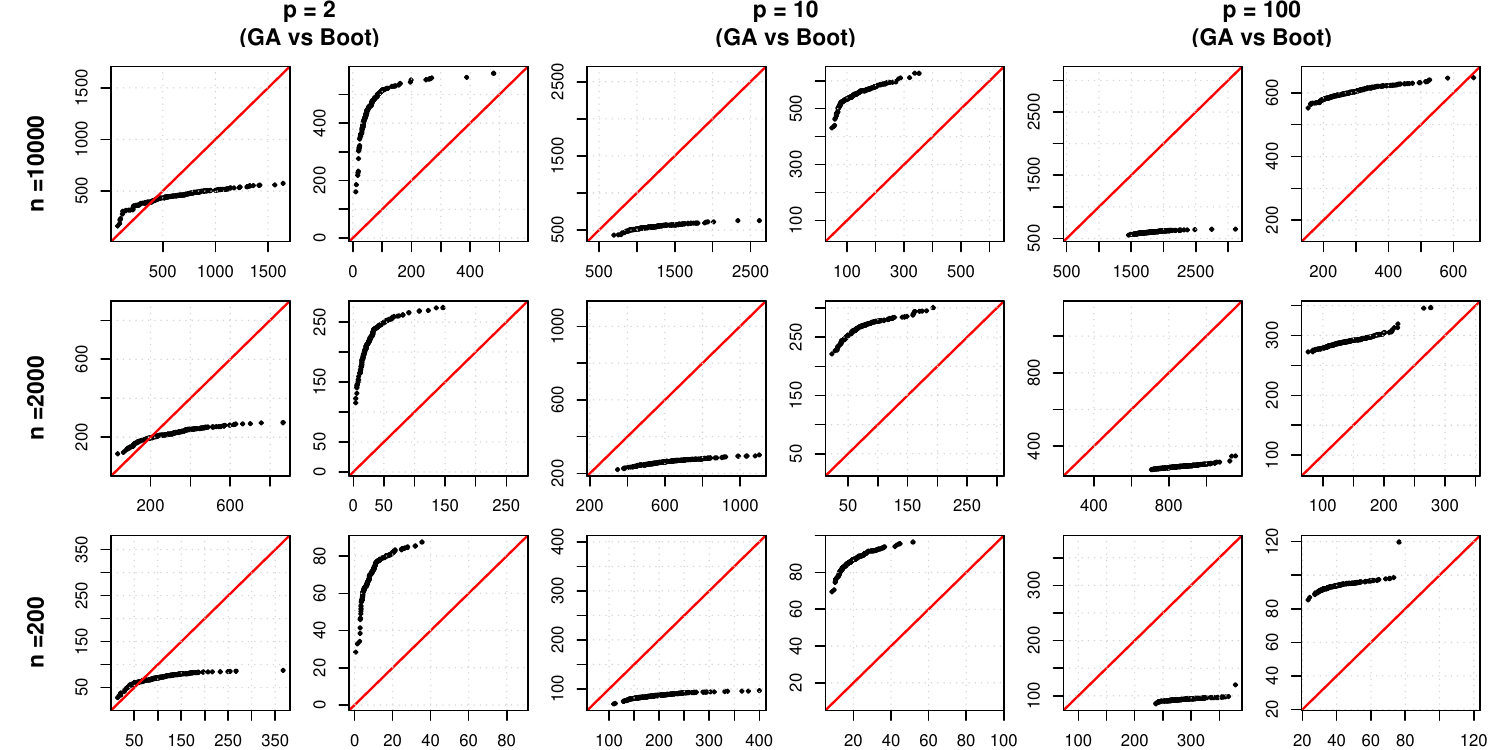}
    \caption{QQ-plots for covariance matrix in the low-dimensional regime, $\beta=0.55$ (ultra-long-memory), under the banded setup. Red line represents $y=x$. For each pair of $p$ and $n$ values, the plot on the left compares $n^{1/2}|\hat\Sigma_n - \Sigma|_\infty$ on $y$-axis and $|Z|_\infty$ on $x$-axis.
    Being closer to the $y=x$ line represents a smaller Kolmogorov distance $\rho(n)$ for Gaussian approximation.
    The plot on the right compares $n^{1/2}|\hat\Sigma_n - \Sigma|_\infty$ on $y$-axis and $l^{-1/2}|\check{B}_{i,l}|_\infty$ on $x$-axis.
    Being closer to the $y=x$ line represents a smaller Kolmogorov distance $\rho_B(n,l)$ for block bootstrap.}
    \label{fig:E2B.QQ.cov.0.55}
    
    \vspace{1cm} 
    
    \includegraphics[width=0.8\linewidth]{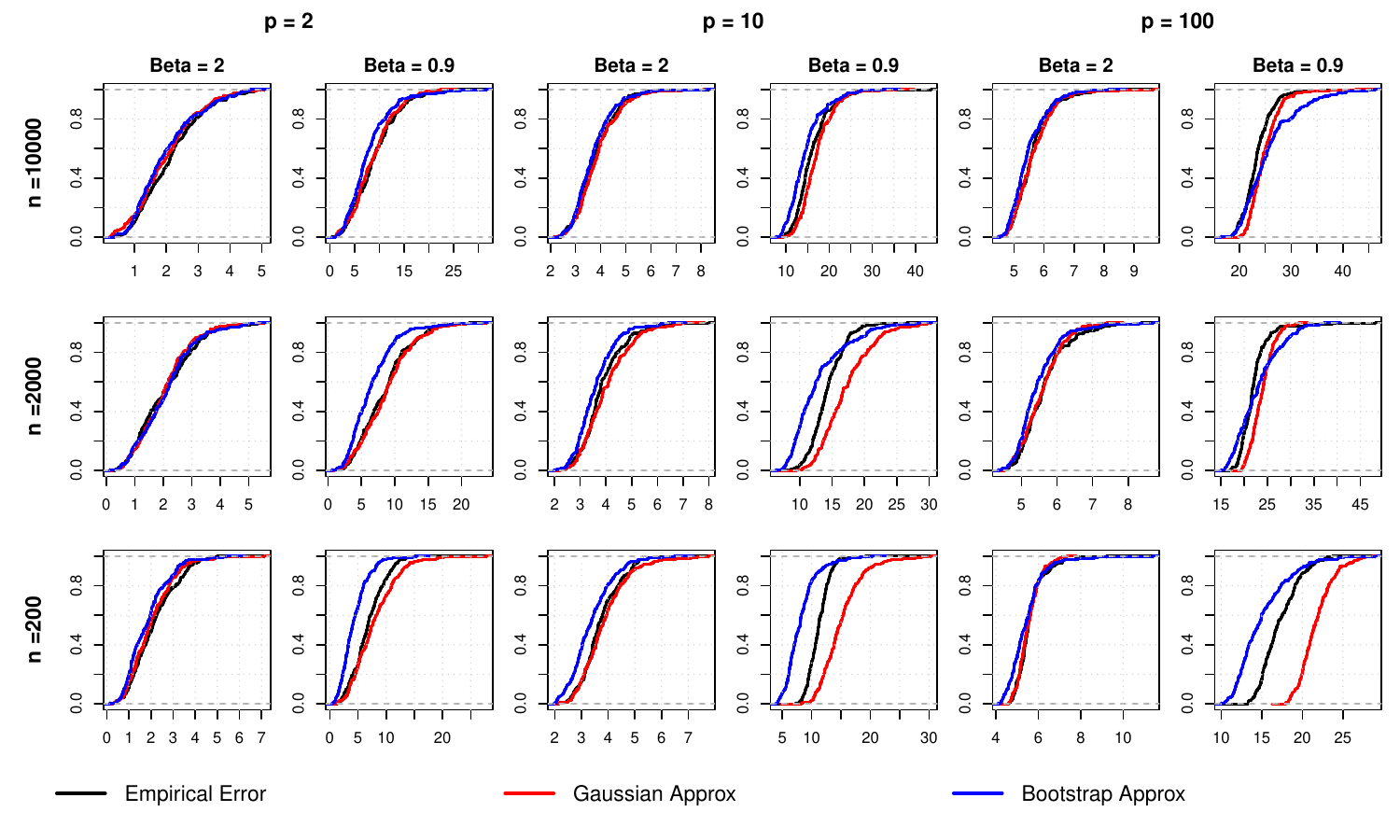}
    \caption{Empirical CDF in the low-dimensional regime under the banded setup, which contains the covariance error $n^{1/2}|\hat\Sigma_n - \Sigma|_\infty$, its Gaussian approximation $|Z|_\infty$, and its block bootstrap approximation $l^{-1/2}|\check{B}_{i,l}|_\infty$, for the short-memory case $\beta=2$ and the long-memory case $\beta=0.9$.}
    \label{fig:E2B.ECDF.cov}
    
\end{figure}

\begin{figure}[p]
    \centering
    
    \includegraphics[width=0.8\linewidth]{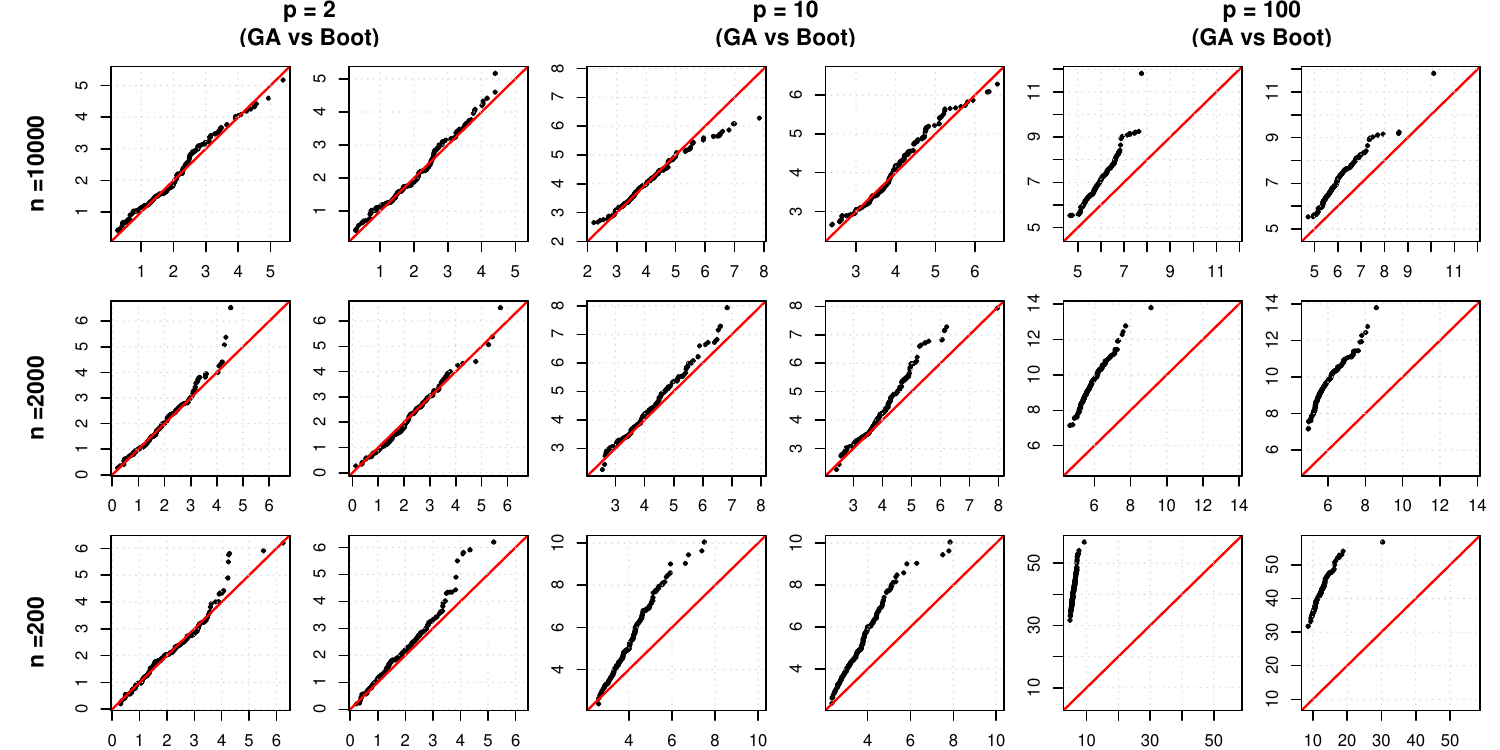}
    \caption{QQ-plots for precision matrix in the low-dimensional regime, $\beta=2$ (short memory), under the banded setup. Red line represents $y=x$. For each pair of $p$ and $n$ values, the plot on the left compares $n^{1/2}|\hat\Omega_n - \Omega|_\infty$ on $y$-axis and $|Z^{\mathfrak{S}}|_\infty$ on $x$-axis.
    Being closer to the $y=x$ line represents a smaller Kolmogorov distance $\rho^\Omega(n)$ for Gaussian approximation.
    The plot on the right compares $n^{1/2}|\hat\Omega_n - \Omega|_\infty$ on $y$-axis and $l^{-1/2}|\check{B}^\Omega_{i,l}|_\infty$ on $x$-axis.
    Being closer to the $y=x$ line represents a smaller Kolmogorov distance $\rho_B^\Omega(n,l)$ for block bootstrap.}
    \label{fig:E2B.QQ.prec.2}
    
    \vspace{1cm} 
    
    \includegraphics[width=0.8\linewidth]{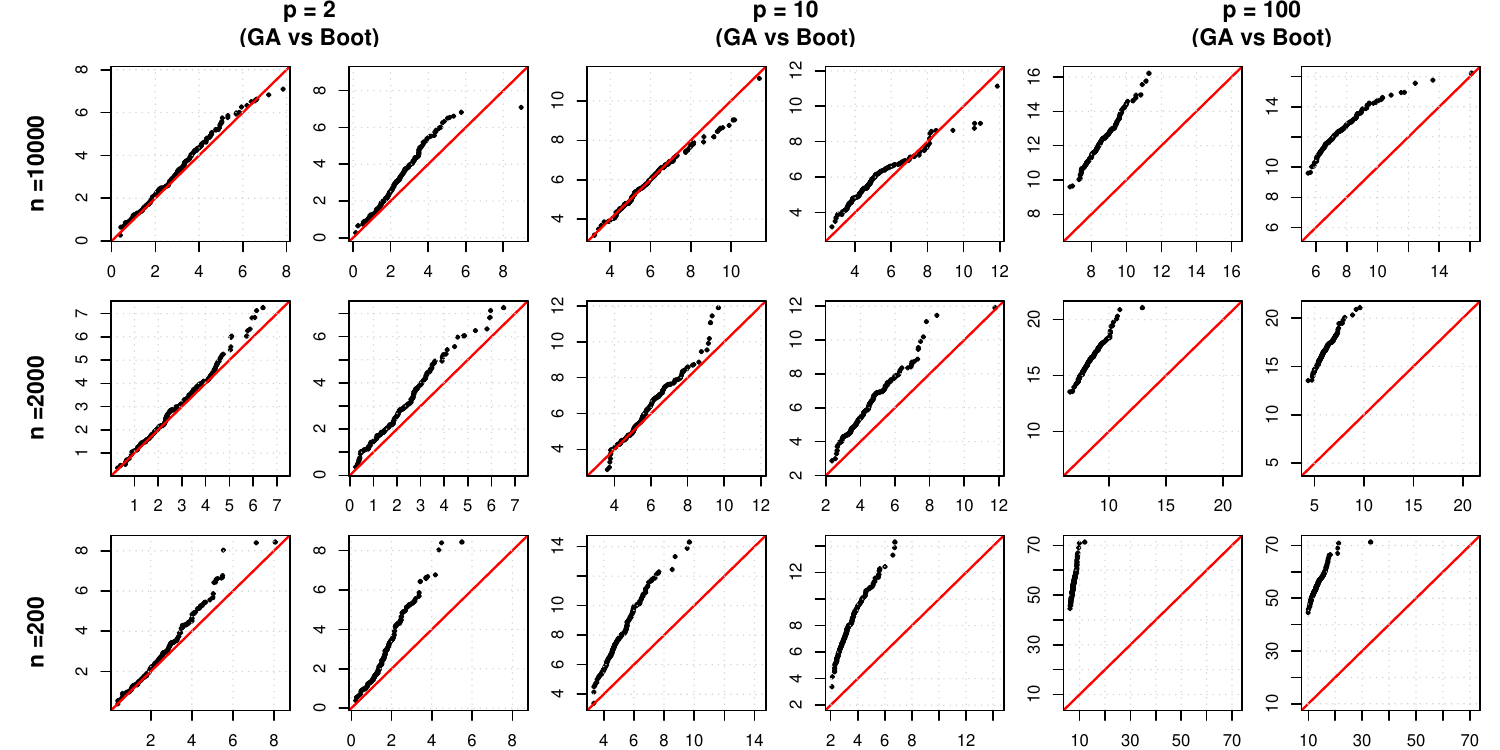}
    \caption{QQ-plots for precision matrix in the low-dimensional regime, $\beta=0.9$ (long-memory), under the banded setup. Red line represents $y=x$. For each pair of $p$ and $n$ values, the plot on the left compares $n^{1/2}|\hat\Omega_n - \Omega|_\infty$ on $y$-axis and $|Z^{\mathfrak{S}}|_\infty$ on $x$-axis.
    Being closer to the $y=x$ line represents a smaller Kolmogorov distance $\rho^\Omega(n)$ for Gaussian approximation.
    The plot on the right compares $n^{1/2}|\hat\Omega_n - \Omega|_\infty$ on $y$-axis and $l^{-1/2}|\check{B}^\Omega_{i,l}|_\infty$ on $x$-axis.
    Being closer to the $y=x$ line represents a smaller Kolmogorov distance $\rho_B^\Omega(n,l)$ for block bootstrap.}
    \label{fig:E2B.QQ.prec.0.9}
    
\end{figure}

\begin{figure}[p]
    \centering
    
    \includegraphics[width=0.8\linewidth]{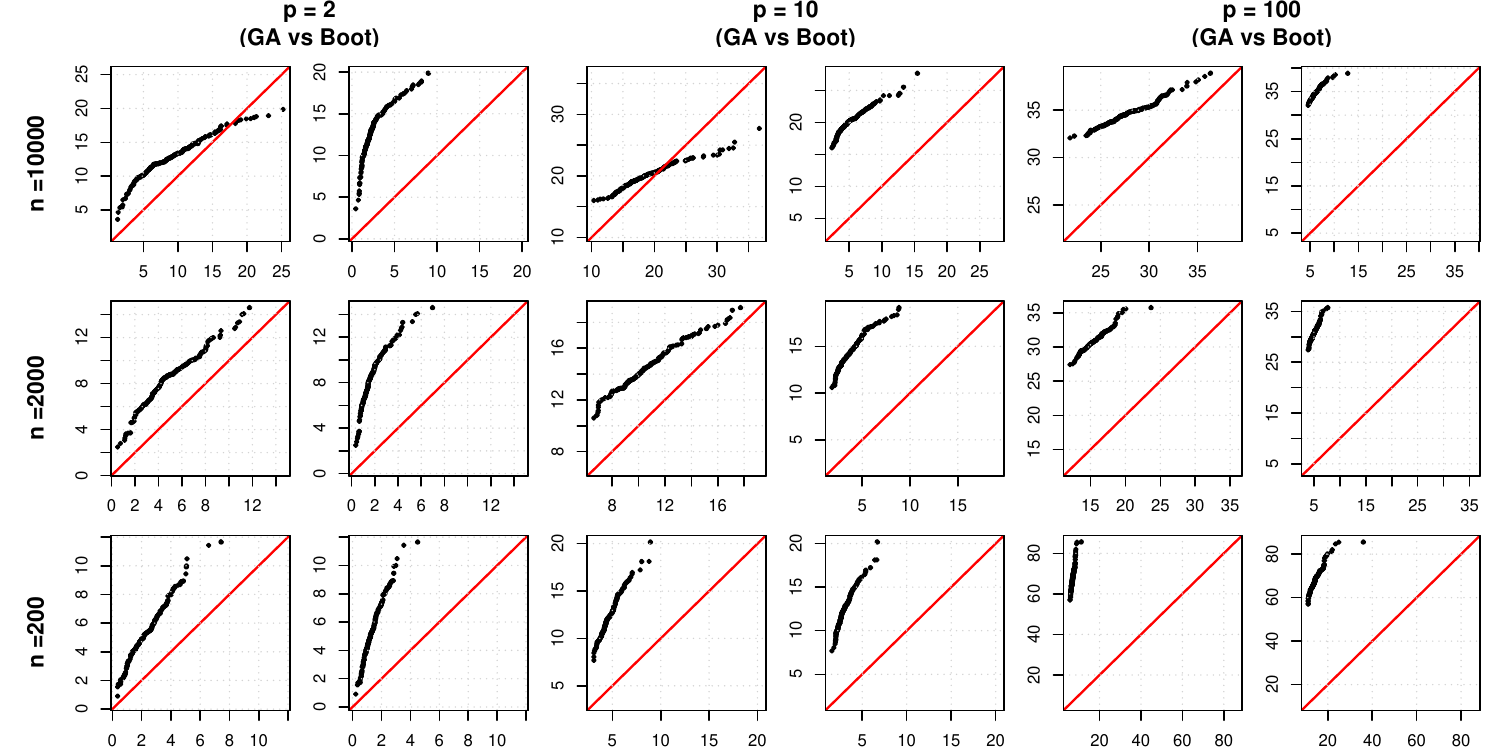}
    \caption{QQ-plots for precision matrix in the low-dimensional regime, $\beta=0.55$ (ultra-long-memory), under the banded setup. Red line represents $y=x$. For each pair of $p$ and $n$ values, the plot on the left compares $n^{1/2}|\hat\Omega_n - \Omega|_\infty$ on $y$-axis and $|Z^{\mathfrak{S}}|_\infty$ on $x$-axis.
    Being closer to the $y=x$ line represents a smaller Kolmogorov distance $\rho^\Omega(n)$ for Gaussian approximation.
    The plot on the right compares $n^{1/2}|\hat\Omega_n - \Omega|_\infty$ on $y$-axis and $l^{-1/2}|\check{B}^\Omega_{i,l}|_\infty$ on $x$-axis.
    Being closer to the $y=x$ line represents a smaller Kolmogorov distance $\rho_B^\Omega(n,l)$ for block bootstrap.}
    \label{fig:E2B.QQ.prec.0.55}
    
    \vspace{1cm} 
    
    \includegraphics[width=0.8\linewidth]{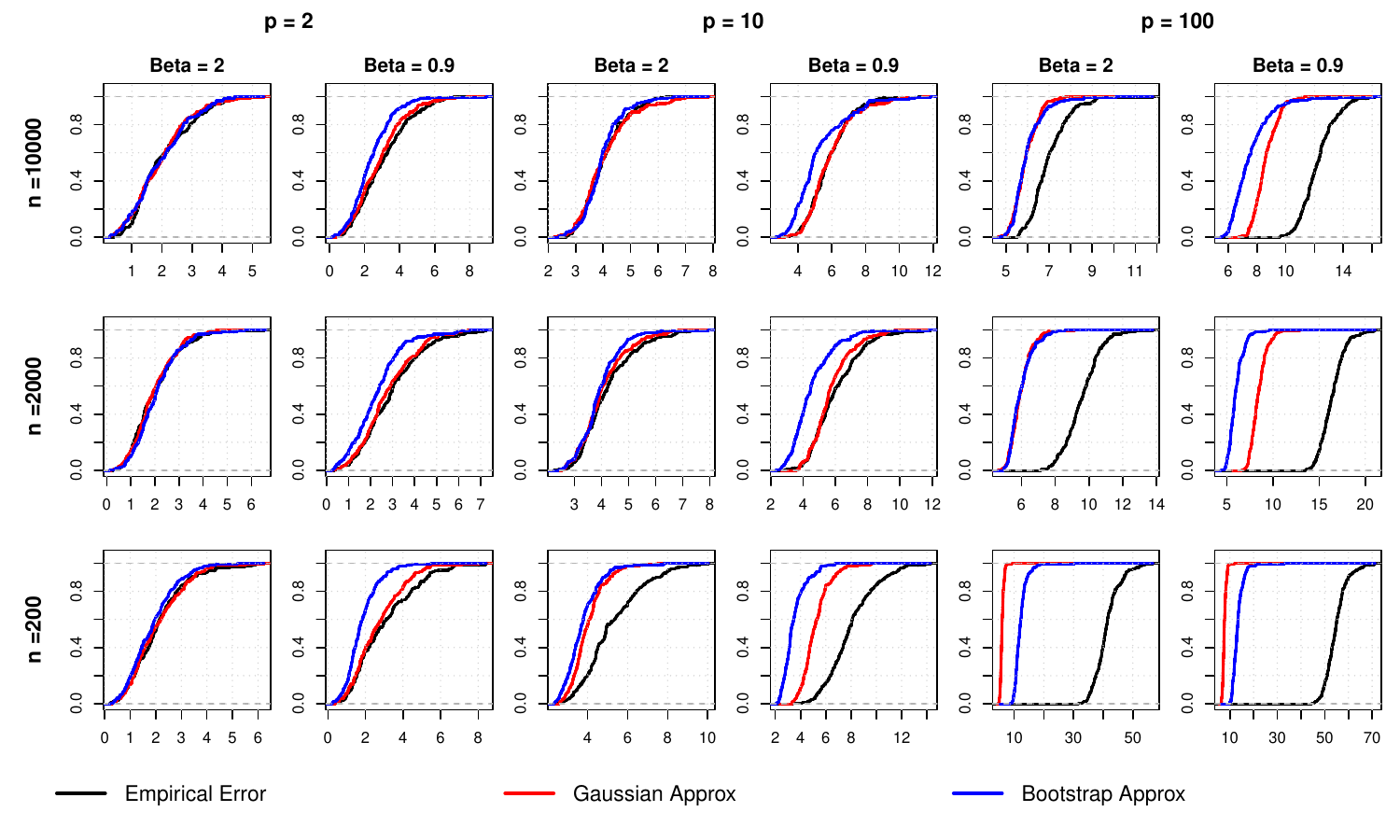}
    \caption{Empirical CDF in the low-dimensional regime under the banded setup, which contains the precision error $n^{1/2}|\hat\Omega_n - \Omega|_\infty$, its (intermediate) Gaussian approximation $|Z^\mathfrak{S}|_\infty$, and its block bootstrap approximation $l^{-1/2}|\check{B}^\Omega_{i,l}|_\infty$, for the short-memory case $\beta=2$ and the long-memory case $\beta=0.9$.}
    \label{fig:E2B.ECDF.prec}
    
\end{figure}

\begin{table}[p]
\centering

\resizebox{\textwidth}{!}{
\begin{tabular}{llcccccccccccc}
\toprule
 & & \multicolumn{4}{c}{$\beta=2$} & \multicolumn{4}{c}{$\beta=0.9$} & \multicolumn{4}{c}{$\beta=0.55$} \\
 \cmidrule(lr){3-6} \cmidrule(lr){7-10} \cmidrule(lr){11-14}
& $n$ & $p=2$ & $p=10$ & $p=30$ & $p=100$ & $p=2$ & $p=10$ & $p=30$ & $p=100$ & $p=2$ & $p=10$ & $p=30$ & $p=100$ \\
\midrule
\multirow{6}{*}{GA} & 10000 & 0.07 & 0.06 & 0.06 & 0.08 & 0.06 & 0.17 & 0.18 & 0.23 & 0.54 & 1.00 & 1.00 & 1.00 \\
 & 4000 & 0.07 & 0.14 & 0.08 & 0.07 & 0.12 & 0.27 & 0.24 & 0.36 & 0.61 & 1.00 & 1.00 & 1.00 \\
 & 2000 & 0.07 & 0.09 & 0.14 & 0.18 & 0.10 & 0.36 & 0.40 & 0.51 & 0.65 & 1.00 & 1.00 & 1.00 \\
 & 1000 & 0.09 & 0.17 & 0.18 & 0.27 & 0.11 & 0.46 & 0.42 & 0.70 & 0.74 & 1.00 & 1.00 & 1.00 \\
 & 500 & 0.07 & 0.23 & 0.32 & 0.45 & 0.12 & 0.57 & 0.55 & 0.90 & 0.81 & 1.00 & 1.00 & 1.00 \\
 & 200 & 0.08 & 0.46 & 0.49 & 0.53 & 0.16 & 0.73 & 0.83 & 0.97 & 0.83 & 1.00 & 1.00 & 1.00 \\
\midrule
\multirow{6}{*}{Bootstrap} & 10000 & 0.12 & 0.08 & 0.08 & 0.16 & 0.18 & 0.22 & 0.22 & 0.24 & 0.96 & 1.00 & 0.99 & 0.99 \\
 & 4000 & 0.10 & 0.10 & 0.11 & 0.17 & 0.22 & 0.34 & 0.28 & 0.32 & 0.98 & 1.00 & 0.98 & 0.98 \\
 & 2000 & 0.10 & 0.11 & 0.16 & 0.21 & 0.28 & 0.42 & 0.40 & 0.47 & 0.99 & 1.00 & 0.97 & 0.97 \\
 & 1000 & 0.12 & 0.23 & 0.21 & 0.26 & 0.36 & 0.58 & 0.43 & 0.66 & 1.00 & 1.00 & 0.96 & 0.96 \\
 & 500 & 0.15 & 0.25 & 0.30 & 0.38 & 0.37 & 0.78 & 0.53 & 0.82 & 1.00 & 1.00 & 0.95 & 0.94 \\
 & 200 & 0.13 & 0.55 & 0.52 & 0.50 & 0.36 & 0.95 & 0.78 & 0.89 & 1.00 & 1.00 & 0.91 & 0.91 \\
\bottomrule
\end{tabular}
}
\caption{Kolmogorov distances for covariance matrix in the low-dimensional regime under the banded setup. The GA section compares the covariance error $n^{1/2}|\hat\Sigma_n - \Sigma|_\infty$ against its Gaussian approximation, while the Bootstrap section compares it against its block bootstrap approximation.}
\label{tab:E2B.KS.cov}

\vspace{0.5cm}

\resizebox{\textwidth}{!}{
\begin{tabular}{llcccccccccccc}
\toprule
 & & \multicolumn{4}{c}{$\beta=2$} & \multicolumn{4}{c}{$\beta=0.9$} & \multicolumn{4}{c}{$\beta=0.55$} \\
 \cmidrule(lr){3-6} \cmidrule(lr){7-10} \cmidrule(lr){11-14}
& $n$ & $p=2$ & $p=10$ & $p=30$ & $p=100$ & $p=2$ & $p=10$ & $p=30$ & $p=100$ & $p=2$ & $p=10$ & $p=30$ & $p=100$ \\
\midrule
\multirow{6}{*}{GA} & 10000 & 0.14 & 0.08 & 0.07 & 0.06 & 0.41 & 1.06 & 1.10 & 1.36 & 236.82 & 708.92 & 956.34 & 1226.22 \\
 & 4000 & 0.10 & 0.16 & 0.07 & 0.08 & 0.77 & 2.03 & 1.48 & 1.65 & 171.56 & 468.28 & 619.69 & 794.77 \\
 & 2000 & 0.15 & 0.10 & 0.12 & 0.18 & 0.52 & 2.16 & 2.45 & 2.76 & 118.82 & 325.26 & 425.23 & 557.48 \\
 & 1000 & 0.14 & 0.17 & 0.16 & 0.28 & 0.69 & 3.01 & 2.50 & 4.14 & 86.87 & 230.12 & 297.80 & 386.41 \\
 & 500 & 0.15 & 0.23 & 0.28 & 0.45 & 0.79 & 3.58 & 3.19 & 5.25 & 60.18 & 163.63 & 210.88 & 270.92 \\
 & 200 & 0.10 & 0.45 & 0.45 & 0.55 & 1.15 & 4.88 & 5.17 & 5.86 & 39.55 & 108.63 & 140.73 & 178.67 \\
\midrule
\multirow{6}{*}{Bootstrap} & 10000 & 0.16 & 0.11 & 0.08 & 0.12 & 1.32 & 1.56 & 1.97 & 2.22 & 375.37 & 414.31 & 370.29 & 311.84 \\
 & 4000 & 0.16 & 0.12 & 0.15 & 0.12 & 1.66 & 1.77 & 1.92 & 2.56 & 248.37 & 267.29 & 242.38 & 202.69 \\
 & 2000 & 0.11 & 0.12 & 0.23 & 0.17 & 1.40 & 1.77 & 1.83 & 2.47 & 161.99 & 178.36 & 157.06 & 131.76 \\
 & 1000 & 0.18 & 0.21 & 0.21 & 0.23 & 2.05 & 2.37 & 2.53 & 2.65 & 115.54 & 120.30 & 104.99 & 88.02 \\
 & 500 & 0.16 & 0.27 & 0.27 & 0.35 & 1.97 & 2.76 & 2.50 & 2.80 & 75.36 & 83.13 & 73.10 & 60.31 \\
 & 200 & 0.09 & 0.46 & 0.36 & 0.44 & 2.81 & 3.49 & 3.25 & 3.14 & 48.74 & 53.64 & 48.37 & 39.23 \\
\bottomrule
\end{tabular}
}
\caption{Wasserstein-1 distances for covariance matrix in the low-dimensional regime under the banded setup. The GA section compares the covariance error $n^{1/2}|\hat\Sigma_n - \Sigma|_\infty$ against its Gaussian approximation, while the Bootstrap section compares it against its block bootstrap approximation.}
\label{tab:E2B.W1.cov}
\end{table}

\begin{table}[p]
\centering

\resizebox{\textwidth}{!}{
\begin{tabular}{llcccccccccccc}
\toprule
 & & \multicolumn{4}{c}{$\beta=2$} & \multicolumn{4}{c}{$\beta=0.9$} & \multicolumn{4}{c}{$\beta=0.55$} \\
 \cmidrule(lr){3-6} \cmidrule(lr){7-10} \cmidrule(lr){11-14}
& $n$ & $p=2$ & $p=10$ & $p=30$ & $p=100$ & $p=2$ & $p=10$ & $p=30$ & $p=100$ & $p=2$ & $p=10$ & $p=30$ & $p=100$ \\
\midrule
\multirow{6}{*}{GA} & 10000 & 0.09 & 0.05 & 0.10 & 0.57 & 0.09 & 0.06 & 0.18 & 0.95 & 0.47 & 0.35 & 0.44 & 0.96 \\
 & 4000 & 0.09 & 0.06 & 0.16 & 0.83 & 0.12 & 0.09 & 0.45 & 0.98 & 0.47 & 0.57 & 0.84 & 0.99 \\
 & 2000 & 0.06 & 0.12 & 0.33 & 0.96 & 0.10 & 0.12 & 0.67 & 1.00 & 0.56 & 0.73 & 0.96 & 1.00 \\
 & 1000 & 0.10 & 0.17 & 0.55 & 1.00 & 0.10 & 0.40 & 0.86 & 1.00 & 0.60 & 0.88 & 1.00 & 1.00 \\
 & 500 & 0.07 & 0.25 & 0.68 & 1.00 & 0.10 & 0.45 & 0.93 & 1.00 & 0.61 & 0.97 & 1.00 & 1.00 \\
 & 200 & 0.06 & 0.41 & 0.99 & 1.00 & 0.12 & 0.68 & 1.00 & 1.00 & 0.69 & 0.99 & 1.00 & 1.00 \\
\midrule
\multirow{6}{*}{Bootstrap} & 10000 & 0.09 & 0.10 & 0.16 & 0.60 & 0.21 & 0.32 & 0.37 & 0.94 & 0.94 & 1.00 & 1.00 & 1.00 \\
 & 4000 & 0.15 & 0.14 & 0.25 & 0.81 & 0.17 & 0.45 & 0.65 & 0.99 & 0.96 & 1.00 & 1.00 & 1.00 \\
 & 2000 & 0.10 & 0.16 & 0.40 & 0.96 & 0.26 & 0.51 & 0.86 & 1.00 & 0.94 & 1.00 & 1.00 & 1.00 \\
 & 1000 & 0.12 & 0.17 & 0.54 & 0.99 & 0.38 & 0.68 & 0.98 & 1.00 & 0.93 & 1.00 & 1.00 & 1.00 \\
 & 500 & 0.16 & 0.34 & 0.74 & 1.00 & 0.38 & 0.84 & 1.00 & 1.00 & 0.93 & 1.00 & 1.00 & 1.00 \\
 & 200 & 0.11 & 0.52 & 0.91 & 1.00 & 0.36 & 0.89 & 1.00 & 1.00 & 0.90 & 1.00 & 1.00 & 1.00 \\
\bottomrule
\end{tabular}
}
\caption{Kolmogorov distances for precision matrix in the low-dimensional regime under the banded setup. The GA section compares the precision error $n^{1/2}|\hat\Omega_n - \Omega|_\infty$ against its (intermediate) Gaussian approximation, while the Bootstrap section compares it against its block bootstrap approximation.}
\label{tab:E2B.KS.prec}

\vspace{0.5cm}

\resizebox{\textwidth}{!}{
\begin{tabular}{llcccccccccccc}
\toprule
 & & \multicolumn{4}{c}{$\beta=2$} & \multicolumn{4}{c}{$\beta=0.9$} & \multicolumn{4}{c}{$\beta=0.55$} \\
 \cmidrule(lr){3-6} \cmidrule(lr){7-10} \cmidrule(lr){11-14}
& $n$ & $p=2$ & $p=10$ & $p=30$ & $p=100$ & $p=2$ & $p=10$ & $p=30$ & $p=100$ & $p=2$ & $p=10$ & $p=30$ & $p=100$ \\
\midrule
\multirow{6}{*}{GA} & 10000 & 0.12 & 0.10 & 0.15 & 1.08 & 0.21 & 0.11 & 0.45 & 3.62 & 3.69 & 2.23 & 2.19 & 6.87 \\
 & 4000 & 0.15 & 0.11 & 0.26 & 2.21 & 0.20 & 0.21 & 1.14 & 5.67 & 2.89 & 2.90 & 4.78 & 11.57 \\
 & 2000 & 0.09 & 0.18 & 0.77 & 3.55 & 0.18 & 0.31 & 2.13 & 8.02 & 3.37 & 3.84 & 6.64 & 15.11 \\
 & 1000 & 0.11 & 0.41 & 1.34 & 6.17 & 0.25 & 1.07 & 3.12 & 11.23 & 3.11 & 5.16 & 8.41 & 19.50 \\
 & 500 & 0.10 & 0.53 & 2.06 & 10.72 & 0.20 & 1.50 & 4.38 & 17.06 & 2.87 & 5.92 & 10.37 & 26.57 \\
 & 200 & 0.10 & 1.18 & 5.25 & 34.69 & 0.35 & 2.75 & 8.87 & 46.29 & 2.98 & 7.49 & 15.68 & 59.98 \\
\midrule
\multirow{6}{*}{Bootstrap} & 10000 & 0.11 & 0.13 & 0.21 & 1.02 & 0.69 & 0.70 & 1.03 & 4.68 & 10.00 & 14.68 & 18.90 & 28.46 \\
 & 4000 & 0.33 & 0.24 & 0.39 & 2.16 & 0.49 & 1.17 & 2.11 & 7.55 & 7.70 & 11.86 & 16.28 & 26.08 \\
 & 2000 & 0.11 & 0.33 & 0.85 & 3.51 & 0.69 & 1.51 & 3.57 & 10.35 & 6.53 & 10.45 & 14.93 & 25.59 \\
 & 1000 & 0.23 & 0.39 & 1.35 & 5.67 & 1.06 & 2.24 & 5.06 & 13.56 & 5.49 & 9.53 & 14.33 & 26.11 \\
 & 500 & 0.23 & 0.72 & 2.13 & 9.54 & 1.02 & 3.15 & 6.51 & 18.42 & 4.73 & 9.17 & 14.39 & 29.56 \\
 & 200 & 0.24 & 1.44 & 4.65 & 28.60 & 1.20 & 4.41 & 10.19 & 40.75 & 4.00 & 9.15 & 16.82 & 52.66 \\
\bottomrule
\end{tabular}
}
\caption{Wasserstein-1 distances for precision matrix in the low-dimensional regime under the banded setup. The GA section compares the precision error $n^{1/2}|\hat\Omega_n - \Omega|_\infty$ against its (intermediate) Gaussian approximation, while the Bootstrap section compares it against its block bootstrap approximation.}
\label{tab:E2B.W1.prec}
\end{table}

\begin{figure}[p]
    \centering
    
    \includegraphics[width=0.8\linewidth]{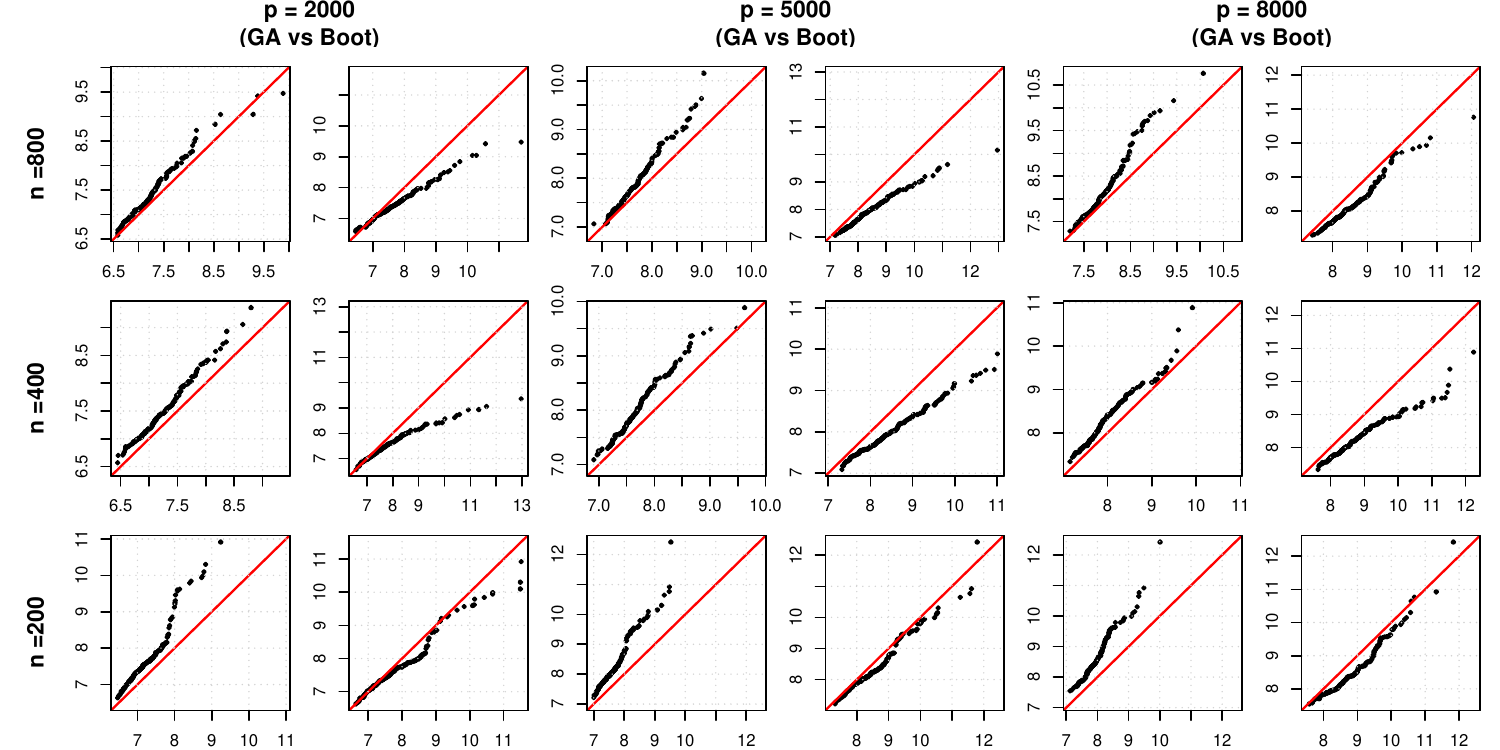}
    \caption{QQ-plots for covariance matrix in the high-dimensional regime, $\beta=2$ (short memory), under the banded setup. Red line represents $y=x$. For each pair of $p$ and $n$ values, the plot on the left compares $n^{1/2}|\hat\Sigma_n - \Sigma|_\infty$ on $y$-axis and $|Z|_\infty$ on $x$-axis.
    Being closer to the $y=x$ line represents a smaller Kolmogorov distance $\rho(n)$ for Gaussian approximation.
    The plot on the right compares $n^{1/2}|\hat\Sigma_n - \Sigma|_\infty$ on $y$-axis and $l^{-1/2}|\check{B}_{i,l}|_\infty$ on $x$-axis.
    Being closer to the $y=x$ line represents a smaller Kolmogorov distance $\rho_B(n,l)$ for block bootstrap.}
    \label{fig:E3B.QQ.cov.2}
    
    \vspace{1cm} 
    
    \includegraphics[width=0.8\linewidth]{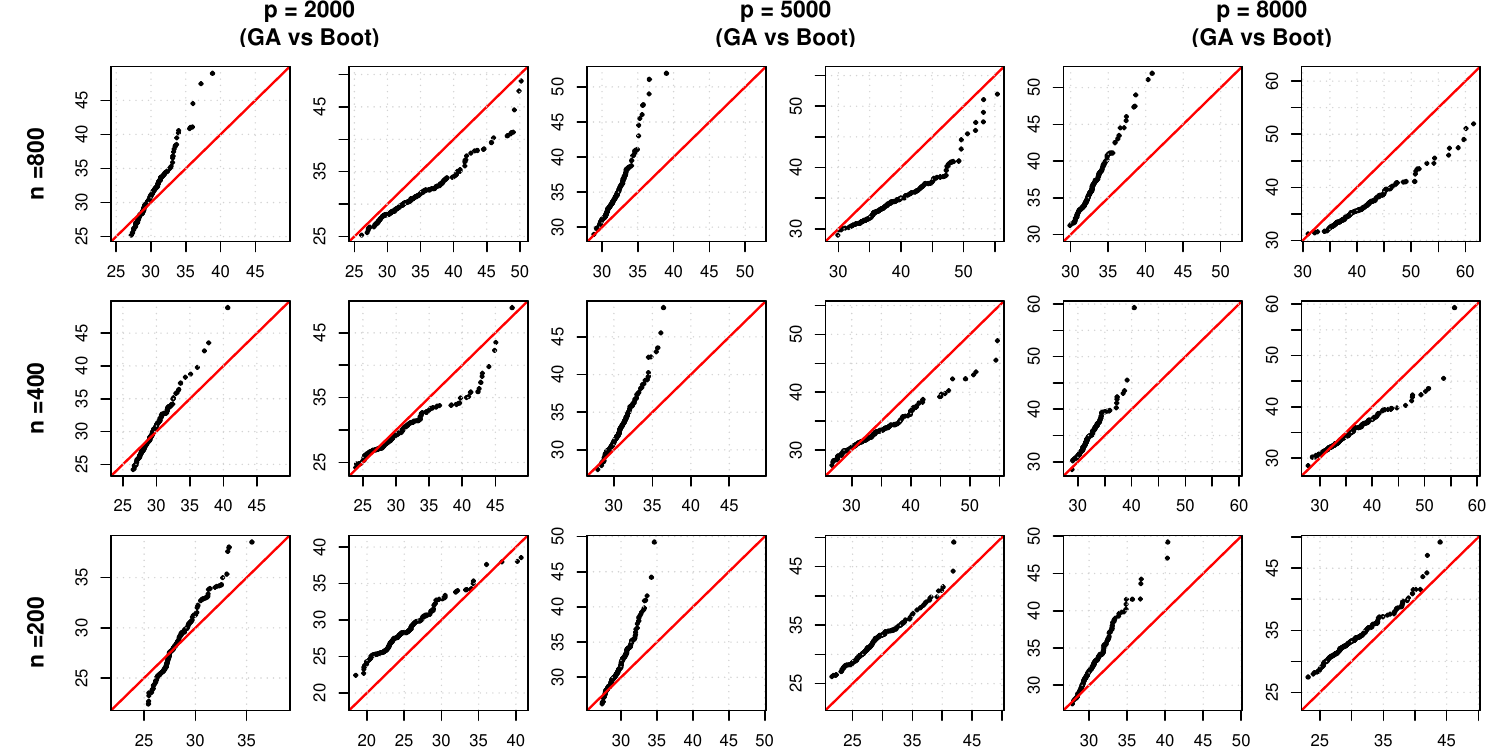}
    \caption{QQ-plots for covariance matrix in the high-dimensional regime, $\beta=0.9$ (long-memory), under the banded setup. Red line represents $y=x$. For each pair of $p$ and $n$ values, the plot on the left compares $n^{1/2}|\hat\Sigma_n - \Sigma|_\infty$ on $y$-axis and $|Z|_\infty$ on $x$-axis.
    Being closer to the $y=x$ line represents a smaller Kolmogorov distance $\rho(n)$ for Gaussian approximation.
    The plot on the right compares $n^{1/2}|\hat\Sigma_n - \Sigma|_\infty$ on $y$-axis and $l^{-1/2}|\check{B}_{i,l}|_\infty$ on $x$-axis.
    Being closer to the $y=x$ line represents a smaller Kolmogorov distance $\rho_B(n,l)$ for block bootstrap.}
    \label{fig:E3B.QQ.cov.0.9}
    
\end{figure}

\begin{figure}[p]
    \centering
    
    \includegraphics[width=0.8\linewidth]{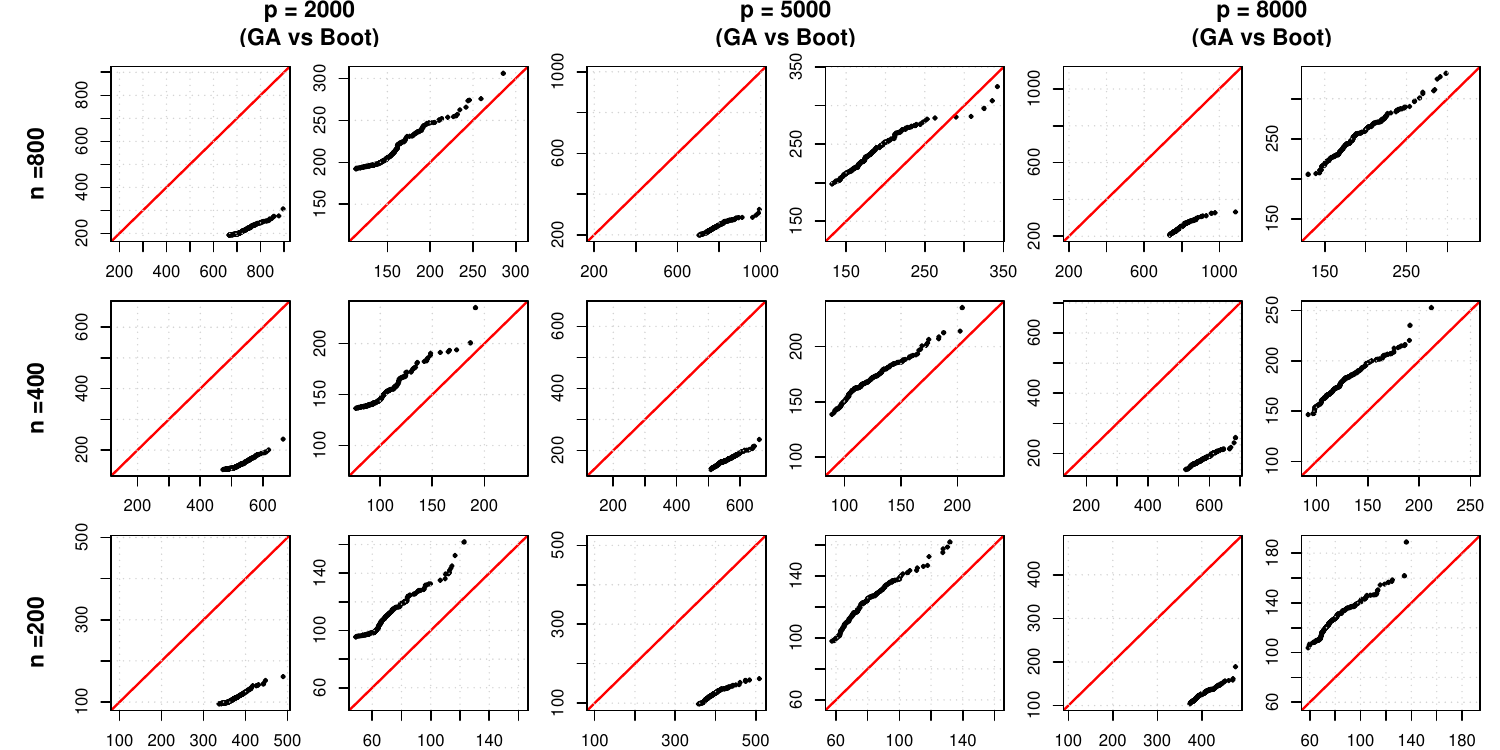}
    \caption{QQ-plots for covariance matrix in the high-dimensional regime, $\beta=0.55$ (ultra-long-memory), under the banded setup. Red line represents $y=x$. For each pair of $p$ and $n$ values, the plot on the left compares $n^{1/2}|\hat\Sigma_n - \Sigma|_\infty$ on $y$-axis and $|Z|_\infty$ on $x$-axis.
    Being closer to the $y=x$ line represents a smaller Kolmogorov distance $\rho(n)$ for Gaussian approximation.
    The plot on the right compares $n^{1/2}|\hat\Sigma_n - \Sigma|_\infty$ on $y$-axis and $l^{-1/2}|\check{B}_{i,l}|_\infty$ on $x$-axis.
    Being closer to the $y=x$ line represents a smaller Kolmogorov distance $\rho_B(n,l)$ for block bootstrap.}
    \label{fig:E3B.QQ.cov.0.55}
    
    \vspace{1cm} 
    
    \includegraphics[width=0.8\linewidth]{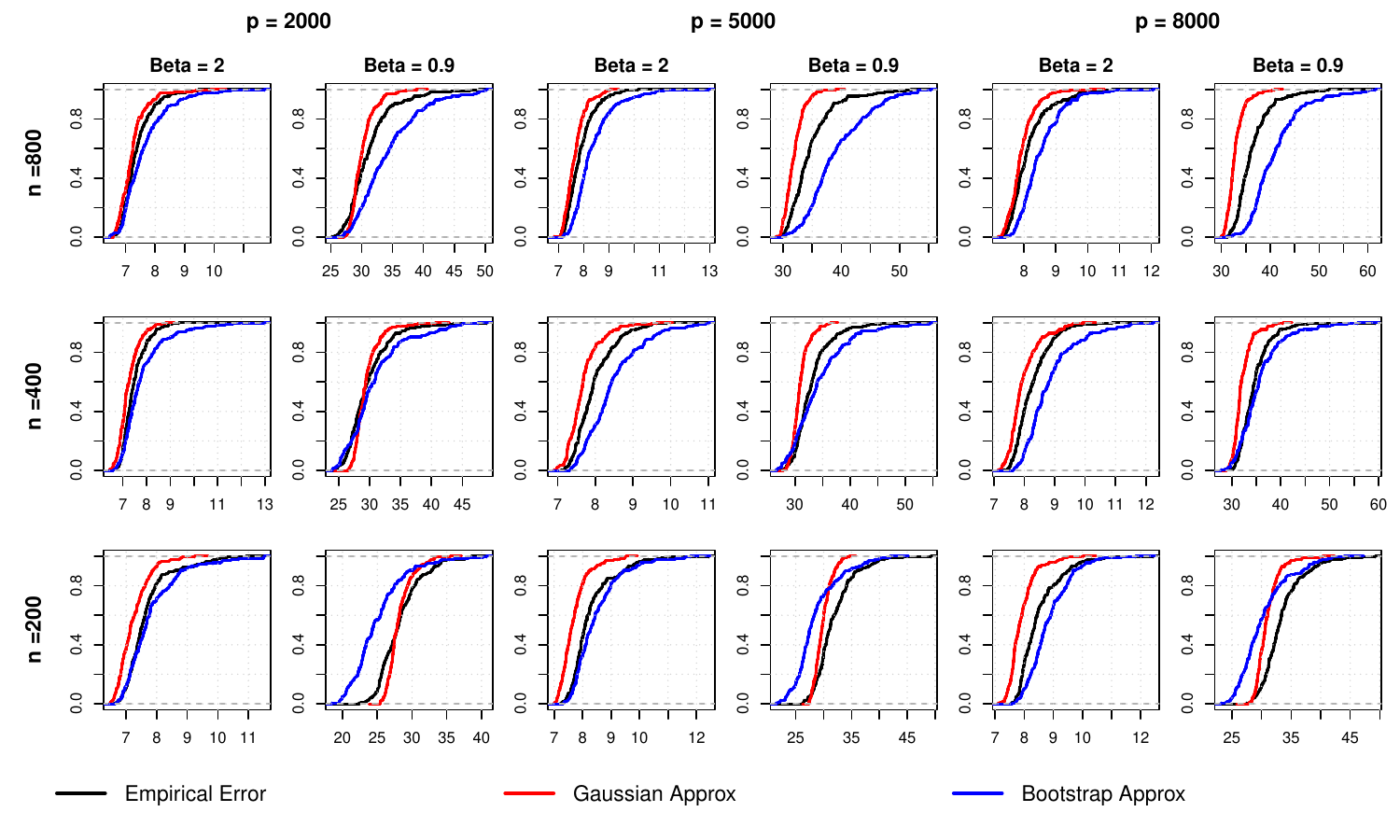}
    \caption{Empirical CDF in the high-dimensional regime under the banded setup, which contains the covariance error $n^{1/2}|\hat\Sigma_n - \Sigma|_\infty$, its Gaussian approximation $|Z|_\infty$, and its block bootstrap approximation $l^{-1/2}|\check{B}_{i,l}|_\infty$, for the short-memory case $\beta=2$ and the long-memory case $\beta=0.9$.}
    \label{fig:E3B.ECDF.cov}
    
\end{figure}

\begin{table}[p]
\centering

\resizebox{\textwidth}{!}{
\begin{tabular}{llccccccccc}
\toprule
 & & \multicolumn{3}{c}{$\beta=2$} & \multicolumn{3}{c}{$\beta=0.9$} & \multicolumn{3}{c}{$\beta=0.55$} \\
 \cmidrule(lr){3-5} \cmidrule(lr){6-8} \cmidrule(lr){9-11}
& $n$ & $p=2000$ & $p=5000$ & $p=8000$ & $p=2000$ & $p=5000$ & $p=8000$ & $p=2000$ & $p=5000$ & $p=8000$ \\
\midrule
\multirow{3}{*}{GA} & 800 & 0.17 & 0.20 & 0.18 & 0.23 & 0.44 & 0.57 & 1.00 & 1.00 & 1.00 \\
 & 400 & 0.24 & 0.32 & 0.29 & 0.20 & 0.41 & 0.44 & 1.00 & 1.00 & 1.00 \\
 & 200 & 0.32 & 0.44 & 0.46 & 0.24 & 0.32 & 0.41 & 1.00 & 1.00 & 1.00 \\
\midrule
\multirow{3}{*}{Bootstrap} & 800 & 0.17 & 0.31 & 0.33 & 0.31 & 0.46 & 0.49 & 0.89 & 0.77 & 0.75 \\
 & 400 & 0.17 & 0.36 & 0.34 & 0.12 & 0.17 & 0.15 & 0.94 & 0.82 & 0.84 \\
 & 200 & 0.12 & 0.17 & 0.30 & 0.50 & 0.50 & 0.48 & 0.92 & 0.91 & 0.92 \\
\bottomrule
\end{tabular}
}
\caption{Kolmogorov distances for covariance matrix in the high-dimensional regime under the banded setup. The GA section compares the covariance error $n^{1/2}|\hat\Sigma_n - \Sigma|_\infty$ against its Gaussian approximation $|Z|_\infty$, while the Bootstrap section compares the covariance error against its block bootstrap approximation $l^{-1/2}|\check{B}_{i,l}|_\infty$.}
\label{tab:E3B.KS.cov}

\vspace{1cm}

\resizebox{\textwidth}{!}{
\begin{tabular}{llccccccccc}
\toprule
 & & \multicolumn{3}{c}{$\beta=2$} & \multicolumn{3}{c}{$\beta=0.9$} & \multicolumn{3}{c}{$\beta=0.55$} \\
 \cmidrule(lr){3-5} \cmidrule(lr){6-8} \cmidrule(lr){9-11}
& $n$ & $p=2000$ & $p=5000$ & $p=8000$ & $p=2000$ & $p=5000$ & $p=8000$ & $p=2000$ & $p=5000$ & $p=8000$ \\
\midrule
\multirow{3}{*}{GA} & 800 & 0.16 & 0.22 & 0.22 & 1.39 & 2.85 & 3.47 & 517.60 & 546.93 & 549.61 \\
 & 400 & 0.24 & 0.30 & 0.30 & 1.07 & 2.10 & 2.51 & 371.12 & 387.89 & 397.64 \\
 & 200 & 0.43 & 0.59 & 0.60 & 1.00 & 2.04 & 2.45 & 264.31 & 274.42 & 276.92 \\
\midrule
\multirow{3}{*}{Bootstrap} & 800 & 0.25 & 0.47 & 0.37 & 3.07 & 4.37 & 4.69 & 56.19 & 54.25 & 61.17 \\
 & 400 & 0.33 & 0.47 & 0.57 & 1.20 & 1.60 & 1.20 & 46.90 & 46.34 & 50.86 \\
 & 200 & 0.16 & 0.19 & 0.34 & 3.26 & 3.27 & 3.17 & 38.67 & 42.40 & 45.52 \\
\bottomrule
\end{tabular}
}
\caption{Wasserstein-1 distances for covariance matrix in the high-dimensional regime under the banded setup. The GA section compares the covariance error $n^{1/2}|\hat\Sigma_n - \Sigma|_\infty$ against its Gaussian approximation $|Z|_\infty$, while the Bootstrap section compares the covariance error against its block bootstrap approximation $l^{-1/2}|\check{B}_{i,l}|_\infty$.}
\label{tab:E3B.W1.cov}

\end{table}

}
\end{document}